\title[Extensions]{Extensions of C*-algebas by a  small ideal}
\author{Huaxin Lin
and Ping Wong Ng}
\newtheorem{thm}{Theorem}[section]  
\newtheorem{lem}[thm]{Lemma}  
\newtheorem{prop}[thm]{Proposition} 
\newtheorem{cor}[thm]{Corollary}
\newtheorem{rmk}[thm]{Remark}
\newtheorem{df}[thm]{Definition}
\newcommand{\A}{{A}}
\newcommand{\B}{{B}}
\newcommand{\C}{\mathcal{C}}
\newcommand{\D}{{D}}
\newcommand{\E}{\mathcal{E}}
\newcommand{\F}{\mathcal{F}}
\newcommand{\G}{\mathcal{G}}
\newcommand{\ep}{\epsilon}
\newcommand{\dt}{\delta}
\newcommand{\hm}{homomorphism}
\newcommand{\U}{\mathcal{U}}
\newcommand{\N}{\mathbb N}
\newcommand{\T}{\mathcal{T}}
\newcommand{\I}{\mathcal{I}}
\newcommand{\K}{\mathcal{K}}
\newcommand{\W}{\mathcal{W}}
\newcommand{\Z}{\mathcal{Z}}
\newcommand{\Aff}{\text{Aff}}
\newcommand{\M}{\mathbb{M}}
\newcommand{\Tt}{\mathbb{T}}
\newcommand{\af}{\alpha}
\newcommand{\zo}{{\mathcal Z}_0}
\newcommand{\R}{\mathbb{R}}
\newcommand{\ZI}{\mathbb{Z}}
\newcommand{\Ext}{\textbf{Ext}}
\newcommand{\diag}{{\rm diag}}
\newcommand{\wilog}{without loss of generality}
\newcommand{\Wlog}{Without loss of generality}
\newcommand{\red}{\textcolor{red}}
\newcommand{\blue}{\color{blue}}
\newcommand{\Green}{\color{Green}}
\newcommand{\Brown}{\color{Brown}}
\newcommand{\beq}{\begin{eqnarray}}
\newcommand{\eneq}{\end{eqnarray}}
\newcommand{\tforal}{\,\,\,\text{for\,\,\,all}\,\,\,}
\newcommand{\la}{\langle}
\newcommand{\ra}{\rangle}
\newcommand{\andeqn}{\,\,\,{\rm and}\,\,\,}
\newcommand{\rforal}{\,\,\,{\rm for\,\,\,all}\,\,\,}
\newcommand{\CA}{$C^*$-algebra}
\newcommand{\SCA}{$C^*$-subalgebra}
\numberwithin{equation}{section}
\begin{document}

\begin{abstract}
We classify all essential extensions of the form
$$0 \rightarrow \W \rightarrow \D \rightarrow A \rightarrow 0$$
where $\W$ is the  unique separable simple C*-algebra
with a unique tracial state,
with finite nuclear dimension and with $K_i(\W)=\{0\}$ ($i=0,1$)
which satisfies the Universal Coefficient theorem (UCT),
and $A$ is a separable
amenable $\W$-embeddable C*-algebra which satisfies the UCT. 
We actually prove more general results.

We also classify a class of amenable \CA s which have only one proper closed ideal $\W.$

\end{abstract}

\maketitle

\section{Introduction}

Motivated by the goal of classifying all essentially normal operators using
Fredholm indices, Brown--Douglas--Fillmore (BDF) 
classified all extensions of the form
$$0 \rightarrow \K \rightarrow \D \rightarrow C(X) \rightarrow 0$$
where $X$ is a compact subset of the plane, and, later for all compact metric space $X$
(\cite{BDFOriginal}, \cite{BrownUCT1}, \cite{BrownUCT2}; see
also \cite{BDFAnnals}). 

 The \CA\, $\K$ is perhaps the  simplest non-unital simple \CA. 
In recent developments of the classification of separable simple amenable \CA s, however, some other seemingly 
nice non-unital simple \CA s  arise. One  {{piquant}}
example is
{{$\W,$}} which was first studied by Razak (\cite{Razak}),
and is a non-unital  separable simple \CA\ with a unique tracial state and $K_i(\W)=\{0\},$ $i=0,1.$
It is in fact stably projectionless. 
 It is proved in \cite{ElliottGongLinNiu} that $\W$ is the only 
separable stably projectionless simple \CA\,  with finite nuclear dimension satisfying the UCT which has 
said properties.  It is also algebraically simple.    
Moreover,  
as we will later elaborate,  
$\W$ has another very nice feature shared with ${\mathcal K},$ namely that\
the corona algebra $\C(\W)=M(\W)/\W$ is a purely
infinite simple \CA. 
A natural question is 
whether  one can classify  essential extensions of the following form:
\beq\label{Ext-c-1}
0\to \W \to E 
{\rightarrow} C(X)\to 0.
\eneq

Since $K_i(\W) = 0$ for $i =0, 1$, one immediately realizes that $KK^1(C(X), \W)
= 0$.  However, as we will see soon, there are many nontrivial essential 
extensions of $\W$ by $C(X)$ and a variety of unitary equivalence classes of
these essential extensions.  In other words, the classification of these
essential extensions will not follow from the usual stable KK theory.  

Other questions also naturally emerge. For example, how many extensions 
have the form
\beq
0\to \W\to E\to \W\to 0?
\eneq
Or more generally, can one classify all the essential extensions of the form
\beq\label{Ext-a-1}
0\to \W\to E
{\rightarrow} A\to 0
\eneq 
for some  general class of separable amenable \CA s $A$? 

{{As mentioned above,}}
the classification
will not follow from the usual stable $KK$-theory.
As one may expect, some restrictions on $A$ will be inevitably added.
If  $A$ is a separable amenable \CA, then, by \cite{KirchbergPhillipsEmbed}, 
$A$ can always be embedded into 
$O_2,$ the separable purely infinite simple \CA\, in the UCT class which 
has trivial $K_i$-group ($i=0,1$).  Since $M(\W)/\W$ is simple purely infinite, 
$O_2$ can be embedded into 
$M(\W)/\W.$ This immediately implies that, for the aforementioned \CA s $A,$ 
essential extensions by $\W$ always exist. 
In order to have some nice description of a class of extensions, like the ones in \eqref{Ext-a-1},
one may at least want to have some trivial essential extensions, i.e., 
those  essential extensions $E$, in \eqref{Ext-a-1}, which split.
However, unlike the classical case, this is in general hopeless. 
Note that if \eqref{Ext-a-1} is a trivial extension then it induces
a *-embedding of $A$ into $M(\W).$ 
But $M(\W)$ has a faithful tracial state, which is the
extension by the unique tracial state of $\W.$
This implies that $A$ has a faithful tracial state.   So we will 
assume that $A$ has a faithful tracial state.   Moreover, 
one may also want to have some diagonal trivial extensions of the form \eqref{Ext-a-1}.
The conventional way to do this is to allow $A$ to be embeddable into $\W.$ 
We will then present a classification of these extensions (see Theorem \ref{MTnoncom}).

Recent successes in the theory of classification of simple \CA s also make
it  impossible to resist 
the attempt to   
classify at least some non-simple \CA s. 
{{It is  an ambitious and challenging task.  At this stage, 
our experiments will be limited into the situation that $K$-theory 
is still manageable and  we will 
avoid the cases that tracial information becomes  non-traceable.}}
One of the goals 
of this research is to classify  some amenable \CA s which has only one ideal $\W.$ 
So these \CA s also have the form \eqref{Ext-a-1}.  Since we assume that $\W$ is the only ideal, 
$A$ will be a separable simple amenable \CA. 
As discussed above, 
we will assume that $A$ is embeddable into $\W,$  {{and}}
$A$ is a stably projectionless simple \CA.
{{Let us point out that for  any separable amenable \CA\, $A$  which 
has a faithful tracial state and satisfies the UCT, $A\otimes {\mathcal Z}_0$ is $\W$ embeddable,
where ${\mathcal Z}_0$ is the unique separable simple \CA\, with a unique tracial state  which 
satisfies the UCT 
such that $K_0({\mathcal Z}_0)=\ZI,$ $K_0({\mathcal Z}_0)_+=\{0\},$ $K_1({\mathcal Z}_0)=\{0\}$ and has finite nuclear dimension
(so $K_*(A)=K_*(A\otimes {\mathcal Z}_0)$ and $T(A)=T(A\otimes {\mathcal Z}_0)$).}

Denote by $\E$ the class of 
\CA s which are essential extensions of the form \eqref{Ext-a-1} such that $A$ is any separable 
simple stably projectionless \CA s with $K_0(A)={\rm ker}\rho_A,$ and, as customary,  $A$ has finite
nuclear dimension and satisfies the UCT.  Note that, in the definition of
the class $\E$, we do not fix the quotient algebra $A.$
We will show that, when $E_1$ and $E_2$ are two such \CA s, then $E_1\cong E_2,$
if and only if they have isomorphic Elliott invariants (see Theorem \ref{MTextbyW} below).

For the remainder of this introduction, we elaborate on some aspects that
were earlier alluded to. 
Perhaps one reason for the success of the BDF theory was that their
multiplier algebra $\mathbb{B}(l_2)$ and corona algebra $\mathbb{B}(l_2)/\K$
have particularly nice and simple structure.  
Among other things, $\mathbb{B}(l_2)$ has real rank zero {{(it is in fact a 
a von Neumann algebra),} and strict comparison, and $\mathbb{B}(l_2)/\K$ is simple
purely infinite.  For example, the the BDF--Voiculescu result, which roughly
says that all essential extensions are absorbing (\cite{Voiculescu};  \cite{ArvesonDuke}), 
would not be true if
the Calkin algebra $\mathbb{B}(l_2)/\K$ were not simple.
{{We may further note that, even in the case that the ideal is stable, as long as the corona algebra
is not simple, Kasparov's $KK^1$ cannot be used to classify these essential extensions 
up to unitary equivalence.}}

Recall that a nonunital $\sigma$-unital simple C*-algebra $\B$
is said to have \emph{continuous scale} if
$\B$ has a sequential approximate unit $\{ e_n \}$ such
that
\begin{enumerate}
\item[(a)] $e_{n+1} e_n = e_n$ for all $n$, and
\item[(b)] for every $a \in \B_+ - \{ 0 \}$,
there exists an $N \geq 1$ such that
for all $m > n \geq N$,
$$e_m - e_n {{\lesssim}} a$$
where $e_m - e_n {{\lesssim}} a$ means that there
exists a sequence $\{ x_k \}$ in $\B$ for which
$x_k a x_k^* \rightarrow e_m - e_n$.
\end{enumerate}
(See \cite{LinContScaleI}.)

  In \cite{LinSimpleCorona} (see also \cite{LinContScaleI}), it was shown that
a simple nonunital nonelementary $\sigma$-unital
 C*-algebra $\B$ has continuous
scale if and only if the corona algebra
$\C(\B)$ is simple, {{and,}}  if and only if $\C(\B)$ is
simple purely infinite.  Simple continuous
scale C*-algebras are basic building blocks for
generalizing extension theory (see, for example,
\cite{LinExtII}, \cite{LinExtRR0III}) and have been much studied.
As alluded earlier, aside from their basic role in the theory, the extension theory of these
algebras are in themselves quite interesting.  For example, unlike
the case of the classical theory of absorbing extensions, there are
no infinite repeats, and one needs to develop
a type of nonstable absorption theory, where, among other things,
the class of a trivial
extension need not be the zero class.  More refined considerations 
are required to take into account the new K theory that arises.
Some results in this direction were first derived many years
ago (see, for example, \cite{LinExtII} and \cite{LinExtRR0III}).

As mentioned above, in the present paper, we classify a class of extensions of
the Razak algebra $\W$ which is a C*-algebra with continuous scale, $K_*(\W) = 0$ and unique trace.
{{Unlike the previous cases, our
canonical ideal $\W$ has no non-zero projections -- in fact, the opposite of being real
rank zero, $\W$ is stably projectionless.}}  We note that {{the property of real rank zero has been presented implicitly}} since the beginnings
of the subject (even though the term ``real rank zero" was invented after
the BDF papers \cite{BDFOriginal}, \cite{BDFAnnals}).  For example, the original
BDF proof of the uniqueness of the neutral element (for the case of compact subset
of the plane) was essentially the Weyl--von Neumann--Berg theorem,
and it is well known
among experts that under mild conditions on a C*-algebra $\B$, $M(\B)$ has 
a Weyl--von Neumann theorem for self-adjoint operators if and only if 
$M(\B)$ has real rank zero (\cite{ZhangWVN}).  
And this phenomenon reoccurs throughout the original
and subsequent papers. We believe that our present result is the first classification
of a class of extensions of a simple projectionless C*-algebra (in fact, the first
{{case of a simple algebra which has real rank greater than zero}}).  

This paper is part of a continuing series of papers 
following in the path of the aforementioned program.  
(See, for example, \cite{LinFullext}, 
\cite{NgNonstableAbsorb}, \cite{NgPICorExt}, \cite{NgRR0PICorona},
\cite{NgRobinExtFunctor}, \cite{KNZPICor}.)

To further illustrate the results of this research and difference from usual stable results in the C*-algebra extension theory,
let us present one of our main results  at the end the introduction. Notations and terminolodges in the statement
will be explained later in the text. 

\begin{thm}[see \ref{MTnoncom}]
Let $A$ be a separable amenable \CA\, which is $\W$ embeddable and satisfies the UCT.

(1) {{Let}}  $\tau_1, \tau_2: A\to \C(\W)$ {{be}}  two essential extensions.
{{Then}} $\tau_1\sim^u \tau_2$ if and only if $KK(\tau_1)=KK(\tau_2).$

(2)  The map
\beq
\Lambda: \Ext^u(A,\W)\to KK(A, \C(\W))\cong {\rm Hom}(K_0(A), \R)
\eneq
defined by $\Lambda([\tau])=KK(\tau)$  is a group isomorphism.

(3) An essential extension $\tau$ is trivial and diagonal if and only if $KK(\tau)=0,$
and all trivial and diagonal extensions are unitarily equivalent.
In fact, the essential trivial diagonal extensions induce the neutral
element of $\Ext^u(A, \W)$.

(4) An essential extension $\tau$ is trivial if and only if
there exist $t \in T_f(A)$ {{(see Definition \ref{Dtrace})}} and $r \in [0,1]$ such that 
$$
\tau_{*0}(x)=r \cdot r_A(t)(x)\rforal x\in K_0(A).
$$

(5) Moreover,  let $\T$ be the set of unitary equivalence classes
 of essential trivial extensions of $A$ by $\W.$
Then
\beq\nonumber
\Lambda(\T)=\{r\cdot h: r\in [0,1], h\in {\rm Hom}(K_0(A), \R)_{{T_f(A)}}\}\,\,\, {{{\rm(see\,\,Definition\,\, \ref{Drho})}.}}
\eneq

(6) All quasidiagonal essential 
extensions are  trivial and are unitarily equivalent.

(7) {{In the case   ${\rm ker}\rho_{f,A}=K_0(A),$}}   
all trivial extensions are unitarily equivalent.
Moreover, in this case, an essential extension 
$\tau$ is trivial if and only if $KK(\tau)=\{0\}.$

(8)  {{In the case ${\rm ker}\rho_{f,A}\not=K_0(A),$ }}
there are essential trivial extensions of  $A$ by $\W$ which
are not quasidiagonal, and not all essential trivial extensions 
are unitarily equivalent (see (5) above).

\end{thm}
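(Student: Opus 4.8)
The statement is a compendium of facts about essential extensions of a $\W$-embeddable, amenable, UCT algebra $A$ by $\W$, organized around the invariant $KK(\tau) \in KK(A, \C(\W)) \cong \mathrm{Hom}(K_0(A),\R)$. The central tool will be a uniqueness/absorption theorem for maps into the corona $\C(\W)$ (a nonstable Weyl--von Neumann type result), together with identification of the $K$-theory of $\C(\W)$. Since $\C(\W) = M(\W)/\W$ is simple purely infinite (as emphasized in the introduction, $\W$ has continuous scale) and $K_i(\W) = 0$, the six-term sequence gives $K_0(\C(\W)) \cong K_1(M(\W))$ and $K_1(\C(\W)) \cong K_0(M(\W))$; combined with the fact that $M(\W)$ has a faithful tracial state extending that of $\W$, one computes $KK(A,\C(\W)) \cong \mathrm{Hom}(K_0(A),\R)$ via the trace pairing --- this is the backbone for (2). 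I would first establish, or invoke from earlier in the paper, the key uniqueness theorem: \emph{two essential extensions $\tau_1,\tau_2 \colon A \to \C(\W)$ with $KK(\tau_1) = KK(\tau_2)$ are approximately unitarily equivalent}, and then upgrade approximate to exact unitary equivalence using that $\C(\W)$ is purely infinite simple with sufficiently many isometries (the standard Elliott intertwining / stable-rank argument for corona algebras). This gives (1), and then (3) and (6) are the ``$KK = 0$'' special cases once one shows trivial diagonal extensions and quasidiagonal extensions both have trivial $KK$-class.

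\textbf{Group structure and the main isomorphism (2).} For $\Ext^u(A,\W)$ to be a group one needs addition via an isomorphism $\C(\W) \cong M_2(\C(\W))$ (or a BDF-type Stinespring sum using two isometries in $M(\W)$ with orthogonal ranges summing to $1$ --- available since $M(\W)$ is properly infinite modulo $\W$), and one needs every extension to have an inverse. Here the crucial point, and the first place the argument genuinely uses that we are \emph{not} in the stable $KK$ setting, is that the neutral element is \emph{not} the zero map: $\Lambda$ is injective by the uniqueness theorem, and surjective because every $h \in \mathrm{Hom}(K_0(A),\R)$ is realized: using that $A$ embeds in $\W$ one builds diagonal-type extensions realizing the ``trace-like'' homomorphisms, and then one must show the full subgroup is hit --- this will follow from a controlled perturbation/addition argument together with the structure of $KK(A,\C(\W))$. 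That $\Lambda$ is a homomorphism is formal from the definition of the addition. I would prove surjectivity by: (i) realizing all of $\mathrm{Hom}(K_0(A),\R)_{T_f(A)}$ via trivial extensions (this is (5)), scaled by $r \in [0,1]$ via cut-downs by projections in the unitization tensored appropriately; (ii) showing that differences and the group generated by these exhaust $\mathrm{Hom}(K_0(A),\R)$, using that $T_f(A) \ne \emptyset$ (faithful trace assumption) so $r_A(t)$ already spans enough, and the $\mathbb{Z}_0$-trick mentioned in the introduction to see the target group is as claimed.

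\textbf{Triviality criteria (4), (5), (7), (8).} For (4): $\tau$ is trivial iff it lifts to a $*$-homomorphism $A \to M(\W)$; such a lift composed with the trace on $M(\W)$ gives a trace on $A$, which after normalization yields the pair $(t,r)$ with $t \in T_f(A)$ and $\tau_{*0} = r\cdot r_A(t)$ on $K_0(A)$ --- the coefficient $r$ appears because the image of the lift need not be ``full'' in $M(\W)$, i.e.\ the lift is a corner. Conversely, given such $(t,r)$ one constructs a diagonal lift into $M(\W)$ realizing it, using $\W$-embeddability to get an embedding $A \to \W \subset M(\W)$ inducing $r_A(t)$, and scaling by a projection/positive element to insert the factor $r$; the resulting extension has the prescribed $KK$-class, hence by the uniqueness theorem is the trivial one. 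Statement (5) is the set-theoretic image of this, and (7) is the degenerate case $\ker\rho_{f,A} = K_0(A)$ where $\mathrm{Hom}(K_0(A),\R)_{T_f(A)} = \{0\}$ so only $KK(\tau) = 0$ survives, collapsing (4) to ``trivial $\iff KK = 0$ $\iff$ quasidiagonal''. Statement (8) is the genuinely new phenomenon: when $\ker\rho_{f,A} \ne K_0(A)$ there is $t \in T_f(A)$ with $r_A(t) \ne 0$, so a trivial extension with $r = 1$ has $KK(\tau) = r_A(t) \ne 0$; by (6) (quasidiagonal $\Rightarrow KK = 0$) this extension is not quasidiagonal, and two trivial extensions with different scalings $r$ are inequivalent by injectivity of $\Lambda$.

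\textbf{Main obstacle.} The hard part is the uniqueness/absorption theorem for maps into $\C(\W)$ --- the nonstable Weyl--von Neumann theorem underlying (1) and the injectivity of $\Lambda$ --- since $\W$ is stably projectionless we cannot cut by projections and the classical BDF/Elliott real-rank-zero machinery is unavailable; one must instead run a $\mathrm{KK}$-uniqueness argument at the level of $M(\W)$ using the classification of $*$-homomorphisms into $\W$-stable (or $\mathcal{Z}_0$-stable) algebras from the simple-case classification theory, and carefully track the trace data in $M(\W)$. Secondarily, pinning down exactly which subgroup of $\mathrm{Hom}(K_0(A),\R)$ is $\Lambda(\T)$ in (5), versus all of $\mathrm{Hom}(K_0(A),\R)$ for $\Lambda(\Ext^u)$, requires precise bookkeeping of which homomorphisms are realized by lifts into $M(\W)$ as opposed to only by extensions (which may fail to lift), and this is where the distinction $\ker\rho_{f,A} = K_0(A)$ vs.\ $\ne$ does its work.
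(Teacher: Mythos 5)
Your overall architecture mirrors the paper: classify via $KK(\tau)\in KK(A,\C(\W))\cong{\rm Hom}(K_0(A),\R)$, use a nonstable absorption/uniqueness theorem to get (1), identify trivial extensions via the trace pairing on $K_0$ to get (4)--(5), and deduce (3),(6),(7),(8) as special cases. That part is sound, and you correctly identify the hard core as the absorption theorem for $\C(\W)$ (in the paper this is carried by the $\T_d$-extension machinery in Lemmas~\ref{Uniq-2}, \ref{Lexpo-zeroabsb}, \ref{Lpermzero21}, fed by the stable uniqueness Theorem~\ref{TK1stuniq} with exponential-length control, together with the existence Theorems~\ref{T-existoI} and~\ref{TExpon}).

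However, your plan for surjectivity of $\Lambda$ in (2) has a genuine gap. You propose to realize all of ${\rm Hom}(K_0(A),\R)$ as the group generated by the $KK$-classes of trivial extensions, i.e.\ by the set in (5), using the faithful-trace assumption. This cannot work: the trivial extensions realize only $\Lambda(\T)=\{r\cdot r_A(t): r\in[0,1],\, t\in T_f(A)\}$, and the subgroup this generates is a very small part of ${\rm Hom}(K_0(A),\R)$ in general. A concrete counterexample is $A=C_0(\mathbb{T}^2\setminus\{pt\})$: here $K_0(A)=\ker\rho_{f,A}$ (Proposition~\ref{LCWemb}), so $\Lambda(\T)=\{0\}$, yet $\Ext^u(A,\W)\cong{\rm Hom}(K_0(A),\R)\cong\R$ is nontrivial. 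Trivial extensions are exactly what you \emph{cannot} use to exhibit nonzero $KK$-classes in this case; the whole point of (7) is that they are all quasidiagonal and equivalent. The paper instead obtains surjectivity by realizing an arbitrary $x\in KK(A,\C(\W))$ directly by a genuine $*$-monomorphism $\widetilde A\to\C(\W)$ using Corollary~8.5 of~\cite{LinFullext} (an existence theorem for homomorphisms into a purely infinite simple $C^*$-algebra with prescribed $KK$-class), and then restricting to $A$. This is an extension into the \emph{corona} that need not lift to $M(\W)$ --- precisely the non-trivial extensions your trivial-extension bookkeeping misses. A minor slip worth flagging as well: since $K_*(\W)=0$, the six-term sequence gives $K_i(\C(\W))\cong K_i(M(\W))$ for each $i$ (no index flip), not the crossed isomorphisms you wrote; this does not affect the final computation because $K_0(M(\W))=\R$ and $K_1(M(\W))=0$, but the indexing matters if one tries to track the argument carefully.
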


{{{\bf Acknowledgements:}
The first named author is partially supported by a NSF grant (DMS-1665183). 
Both authors would like to acknowledge the support during their visits 
to the Research Center of Operator Algebras at East China Normal University 
which is partially supported by Shanghai Key Laboratory of PMMP, Science and Technology Commission of Shanghai Municipality (STCSM), grant \#13dz2260400 and a NNSF grant (11531003).}}

\section{Notation}

\begin{df}\label{Dmul}
{\rm For a \CA\, $\B$, $M(\B)$ denotes the multiplier algebra of
$\B$, and  $\C(\B) := M(\B)/\B$ is the corresponding corona algebra.
$$0 \rightarrow \B \rightarrow \D \rightarrow  {{C}} \rightarrow 0$$
(of $C$ by $B$)\footnote{In the literature, the terminology is
sometimes reversed and this is sometimes called an ``extension of
$B$ by $C$".}
we will work with the corresponding \emph{Busby invariant}
 which is a {{monomorphism}}
$\phi : {{C}} \rightarrow \C(\B)$.   
An extension is unital if the corresponding Busby invariant is a unital
map.  We will mainly be working with nonunital extensions. 

Let  $\phi, \psi : C \rightarrow \C(\B)$ be  two essential extensions.
We say $\phi$ and $\psi$ are \emph{(weakly) equivalent} and 
write $\phi\sim \psi$ 
if there is a partial isometry $v\in \C(\B)$ such that 
$v^*v\phi(c)=\phi(c)v^*v=\phi(c)$ and $vv^*\psi(c)=\psi(c)vv^*=\psi(c)$
for all $c\in B$ and 
\beq
v^*\phi(c)v=\psi(c)\rforal c\in C.
\eneq

{{$\Ext(\A, \B)$ denotes the collection of all 
(weak) equivalence classes
of essential extensions $\phi :  \A \rightarrow \C(\B).$}}

{{Let $\pi: M(B)\to M(B)/B=\C(B)$ be the quotient map. Throughout these notes
$\pi$  always denotes this quotient map, unless otherwise stated.}}

We say that $\phi$ and $\psi$ are \emph{unitarily equivalent} (and
write $\phi \sim^u \psi$) if
there exists a unitary $u \in M(\B)$ such that
$$\phi(c) = \pi(u) \psi(c) \pi(u)^*$$
for all $c \in C$.  

{{$\Ext^u(\A, \B)$ denotes the collection of all 
unitary equivalence classes
of 
essential extensions $\phi :  \A \rightarrow \C(\B)$.}}}
\end{df}

\begin{df}\label{Dtrace}

{\rm Let $A$ be a \CA.  Denote by $T(A)$ the tracial state space of $A$ {{(which could be an empty set),}}
given the weak* topology.
Denote by $T_f(A)$ the set of all faithful
tracial states.  It is a convex subset of $T(A).$ 
Let ${\tilde{T}}(A)$ be the cone of densely defined,
positive (norm) lower semi-continuous traces on $A$, equipped with the topology
of point-wise convergence on elements of the Pedersen ideal  ${\rm Ped}(A)$ of $A.$
Let $B$ be another \CA\, with $T(B)\not=\emptyset$ and let $\phi: A\to B$ be a \hm.
We will use then  $\phi_T: T(B)\to T(A)$ for the induced continuous affine map.

{{Let $I$ be a  (closed two-sided) ideal of $A$ and $\tau\in T(I).$ 
It is well known that $\tau$ can be uniquely extended to a  tracial state of $A$
($\tau(a)=\lim_\af \tau(ae_\af)$ for all $a\in A,$ where $\{e_\af\}$ is an approximate identity for $I.$) In what follows we will continue to use $\tau$ for the extension.
{{Also,}} when $A$ is not unital and $\tau\in T(A),$ we will use 
$\tau$ for the extension on ${\widetilde{A}}$ as well as on $M(A),$ 
the multiplier algebra of $A.$}}
}
\end{df}

\begin{df}\label{DTtilde}

{\rm Let $r\ge 1$ be an integer and $\tau\in {\tilde T}(A).$
We will continue to use $\tau$ to denote  the trace $\tau\otimes {\rm Tr}$ on
$A \otimes M_r$, where ${\rm Tr}$ is the standard
non-normalized trace on $M_r.$
Let  $S\subset {\tilde T}(A)$  be a convex subset.  Denote by ${\rm Aff}(S)$ the space of 
all continuous real affine functions on $S.$
Define (see \cite{RI})
\beq
{\rm Aff}_+(S)&=&\{f: C(S, \R)_+: f \,\, 
{{\rm affine}}, f(\tau)>0\,\,{\rm for}\,\,\tau\not=0\}\cup \{0\},\\
{\rm LAff}_+(S)&=&\{f:S\to [0,\infty]: \exists \{f_n\}, f_n\nearrow f,\,\,
 f_n\in {\rm Aff}_+(S)\}\andeqn\\
 {\rm LAff}^{\sim}(S) &=&\{f_1-f_2: f_1\in {\rm LAff}_+(S)\andeqn f_2\in
 {\rm Aff}_+(S)\}.
 \eneq
 Note $0\in {\rm LAff}_+(S).$
 For most part of this paper, $S={\tilde T}(A)$ {{or}} $S=T(A)$
in the above definition will be used.}
\end{df}

Recall that $T(A)$ is compact, and hence a compact convex set,
when $A$ is unital or $A$ is simple separable finite and has
continuous scale.
Also, when $A$ is simple, $T_f(A) = T(A)$.  

\begin{df}\label{Df-ep}
{\rm For 
{{$\delta > 0,$
we let 
$f_{\delta},  
: [0,\infty) \rightarrow [0,1]$ be the unique
continuous maps satisfying
\[
f_{\delta}(t) {{:=}}
\begin{cases}
0 & t \in [0,\dt/2] \\
1 & t \in [\delta, \infty)\\
\makebox{linear on  } & [\dt/2, \delta].
\end{cases}
\]
}}}

\end{df}

\begin{df}\label{Ddimf}
{\rm Recall  that every $\tau \in \tilde{T}(A)$ extends uniquely to a strictly lower 
semicontinuous trace
on $M(A)_+$, which we also denote by $\tau$.

For $\tau \in \tilde{T}(A)$ 
for $a \in A_+$ (or, $a\in M_m(A)_+$ for some integer $m\ge 1$),
$$
d_\tau(a):= \lim_{n\to\infty} \tau(f_{1/n}(a)).
$$
Note that $f_{1/n}(a)$ is in the Pedersen ideal of $A.$ It follows that $d_\tau(a)$ 
is  {{a}} lower semicontinuous on $\tilde{T}(A).$
(Recall, in the case $a\in M_m(A)_+,$ we continue to use  $\tau$ for $\tau\otimes Tr,$
where $Tr$ is the standard non-normalized trace on $M_m$). 
}
\end{df}


\begin{df}\label{Drho}
{\rm Let $A$ be a \CA.   
{{Let ${\rm Hom}(K_0(A), \R)$ be the set of \hm s from $K_0(A)$ to $\R.$
Denote by ${\rm Hom}(K_0(A), \R)_+$ the set of all 
\hm s $f: K_0(A)\to \R$ such that $f(x)\ge 0$ for all $x\in K_0(A)_+.$ 
Denote 
$$
{\rm ker}\rho_A=\{x\in  K_0(A): f(x)=0\rforal f\in {\rm Hom}(K_0(A),\R)_+\}.
$$ 
It is possible that ${\rm Hom}(K_0(A), \R)_+=\{0\}.$ In that case, ${\rm ker}\rho_A=K_0(A).$
 There is a \hm\, $r_A: T(A)\to {\rm Hom}(K_0(A), \R)_+$ induced 
 by $r_A(\tau)([p])=\tau(p)$ for all projections $p\in M_m({\widetilde{A}}).$ 
 The image of $r_A$ is denoted by
 ${\rm Hom}(K_0(A), \R)_{T(A)}$ (or just  ${\rm Hom}(K_0(A), \R)_T$).
 Note that, for any $\tau\in T(A),$ $\tau([1_{\widetilde{A}}])=1.$
 If $A$ is unital and exact, then, by Corollary 3.4 of \cite{BR},
 \beq
 {\rm Hom}(K_0(A), \R)_+=\{ r\cdot s: r\in \R_+, s\in {\rm Hom}(K_0(A),\R)_{T(A)}\}.
 \eneq
Let $Y$ be a locally compact metric space {{and}} $A=C_0(Y).$  Then
$$
{\rm Hom}(K_0(A), \R)_+=\{r\cdot s|_{K_0(C)}: r\in \R_+: s\in {\rm Hom}(K_0({\widetilde{A}}), \R)_{T(A)}\}.
$$
}}

{{Let  $A$ be a separable exact simple \CA. Choose a nonzero element $e\in {\rm Ped}(A)_+.$
Let $A_e={\rm Her}(e)=\overline{eAe}.$ 
Then $r_{A_e}(T(A_e))={\rm Hom}(K_0(A_e), \R)_+.$ 
By \cite{Brstable},  the embedding $\iota: A_e\to A$ induces an isomorphism $\iota_*: K_0(A_e)\cong K_0(A).$
}}
{{Then 
$$
{\rm Hom}(K_0(A),\R)_+=\{r\cdot s\circ {\iota_*}^{-1}:r\in\R_+\andeqn s\in {\rm Hom}(K_0(A_e), \R)_{T(A_e)}\}.
$$
In particular, 
$$
{\rm ker}\rho_A=\{x\in K_0(A): r_{A_e}(\tau)(\iota_*(x))=0\rforal \tau\in T(A_e)\}.
$$
}}


Denote by ${\rm Hom}(K_0(A), \R)_{T_f}=\{r\cdot r_A(\tau): r\in  [0,1],\,\,\, \tau\in T_f(A)\}.$ 
{{Define 
$$
{\rm ker}\rho_{f,A}=\{x\in K_0(A): \lambda(x)=0\rforal \lambda\in {\rm Hom}(K_0(A), \R)_{T_f}\}.
$$}}
It should be noted that ${\rm ker}\rho_A\subset {\rm ker}\rho_{f,A}\subset K_0(A).$
{{Recall if}} $A$ is simple, then $T_f(A)=T(A)$ {{(see \ref{Rhomtf} for more comments on 
${\rm Hom}(K_0(A), \R)_{T_f}.$)}}

{{Suppose that $A$ is a simple \CA\, which has continuous scale,  
every 2-quasi-trace  of $A$ is a trace   and ${\tilde T}(A)\not=\{0\}.$
 Then $T(A)$ is compact and ${\tilde T}(A)$ is a cone with the base 
$T(A).$ There is an order preserving \hm\, $\rho_A: K_0(A)\to {\rm Aff}(A)$
such that $\rho_A([p])(\tau)=\tau(p)$ for all projections $p\in M_\infty({\widetilde{A}}).$
For unital stably finite \CA s, $\rho_A$ can also similarly  defined (see Theorem 3.3 of \cite{BR}.) }}
}

\end{df}

\begin{df}\label{Dcuntz}
{\rm For a \CA\, $\D$ and for $a, b \in \D_+$,
$a \lesssim b$ means that there exists a sequence $\{ x_n \}$ in $\D$
such that $x_n b x_n \rightarrow a$.  
We write $a\sim b$ if $a\lesssim b$ and $b\lesssim a.$
{{To avoid possible confusion,  if both {{$p$ and $q$}} are projections, 
we write $p \approx  q$}}
to mean that $a$ and $b$ are Murray--von Neumann equivalent.  
For  $a \in \D_+$, we let
${\rm Her}_{\D}(a) := \overline{a \D a}$, the hereditary C*-subalgebra of
$\D$ generated
by $a$.  Sometimes, for simplicity, we write ${\rm Her}(a)$ in place of
${\rm Her}_{\D}(a)$.
Similarly, for a C*-subalgebra ${{C}} \subseteq \D$, we let
${\rm Her}_{\D}(C)$ or ${\rm Her}(C)$ denote $\overline{C \D C}$, the hereditary
C*-subalgebra of $\D$ generated by $C$.}

\end{df}

\begin{df}\label{Dcpc+fepmult}
{\rm Let {{$A$ and}}  $C$ be \CA s.  Throughout this paper,
we will write that a map $\phi : \A \rightarrow C$ is \emph{c.p.c.}
if it is linear and completely positive contractive.
Let $\F \subset \A$ be a finite subset and let $\delta > 0$.
A c.p.c. map $\psi : \A \rightarrow \C$ is said to be
\emph{$\F$-$\delta$-multiplicative} if
$\| \psi(fg) - \psi(f)\psi(g) \| < \delta$ for all $f, g \in \F$.}

\end{df}

\begin{df}
{\rm Recall that a nonunital \CA\, $\B$ has \emph{almost stable rank one}
if for any integer $m \geq 1$,  {{and}} for any hereditary C*-subalgebra
$\D \subseteq M_m(\B)$,  $\D \subseteq \overline{GL(\widetilde{D})}$.}
\end{df}

\begin{df}
Let $A$ and $B$ be two \CA s and let $\phi: A\to B$ be a \hm.
We may write $KK(\phi)$ for the element in $KK(A,B)$ induced by $\phi,$
and $KL(\phi)$ for the element in $KL(A,B)$ induced  by $\phi,$ respectively.
\end{df}

Finally, we will be a bit loose in our terminology and use the term
``extension" to refer both to an extension $0 \rightarrow B \rightarrow E
\rightarrow A \rightarrow 0$ as well as the extension algebra $E$ in
the exact sequence.

\section{Nonstable absorption}


\begin{df}\label{DBDFsum}
Let $\A$ be a separable \CA, and let $\B$ be a non-unital but $\sigma$-unital
\CA\, with continuous scale.
Let $\phi, \psi : \A \rightarrow \C(\B)$ be two 
essential extensions.  
The \emph{BDF sum} of $\phi$ and $\psi$ is defined to be 
$$(\phi \dot{+} \psi)(.) := S \phi(.) S^* + T \psi(.) T^*,$$ 
where $S, T \in \C(\B)$ are any two isometries such that
$$S S^* + T T^* \leq 1.$$

The BDF sum $\phi \dot{+} \psi$ is well-defined (i.e., independent of choice of
$S$ and $T$) up to {{weak}}  equivalence.  If, in addition, 
$\phi$ or $\psi$ is nonunital then the BDF sum $\phi \dot{+} \psi$
is well-defined up to unitary equivalence  
(e.g., see \cite{NgNonstableAbsorb},
\cite{NgRobinExtFunctor}). 
\end{df}

\begin{df}
Let $\A$ be a separable \CA, and let $\B$ be a   nonunital but $\sigma$-unital 
\CA\,
$\Ext(\A, \B)$ denotes the collection of all 
(weak) equivalence classes
of essential extensions $\phi :  \A \rightarrow \C(\B).$ 

We make $\Ext(\A, \B)$ into an abelian 
semigroup with the sum induced by the BDF sum,
i.e., for all $[\phi], [\psi] \in \Ext(\A, \B)$,
$$[\phi] + [\psi] := {{ [\phi \dot{+} \psi].}}$$

We often also call the sum on $\Ext(\A, \B)$ the \emph{BDF sum}. 
{ {Similarly, when $A$ is nonunital,  with the BDF sum, $\Ext^u(A,B)$ is also a 
semigroup.} }

\end{df}

The next result is well-known, but we provide it for the convenience
of the reader.

\begin{lem}\label{lem:largecomplement}
Let $A$ be a separable \CA, and let $B$ be a $\sigma$-unital \CA.
Let $\phi : A \rightarrow \C(B)$ be a nonunital essential extension.
Then there exists a nonzero element $c \in \C(B)_+$ such that
$$c \perp {{{\rm ran}(\phi)}}.$$
\end{lem}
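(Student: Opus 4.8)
The statement asks: given a nonunital essential extension $\phi : A \to \C(B)$ (with $A$ separable, $B$ $\sigma$-unital), to produce a nonzero positive element $c \in \C(B)$ orthogonal to the range of $\phi$. I would work at the level of the multiplier algebra and exploit nonunitality of $\phi$ directly.

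First I would reformulate. Let $\pi : M(B) \to \C(B)$ be the quotient map. Since $\phi$ is nonunital, $p := 1_{\C(B)} - \phi(1)$ (interpreting $\phi(1)$ via the strictly continuous extension $\bar\phi$ of $\phi$ to $M(\widetilde A)$, or, if $A$ is nonunital, simply the open projection which is the strict limit of $\phi(e_n)$ for an approximate unit $\{e_n\}$ of $A$) is a nonzero projection in the enveloping von Neumann algebra of $\C(B)$ orthogonal to $\mathrm{ran}(\phi)$ — but it need not lie in $\C(B)$ itself. So the real content is to manufacture an honest element of $\C(B)$ inside the ``complement''. The plan is to pick a quasicentral approximate unit and carve out room in $B$ that $\phi$ does not see.

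Here is the concrete route I would carry out. Choose a countable dense subset $\{a_k\}$ of $A$ and lift $\phi$ to a c.p.c. map $\Phi : A \to M(B)$ (Choi–Effros / Arveson, using separability of $A$ and $\sigma$-unitality of $B$). Fix a sequential approximate unit $\{e_n\}$ of $B$ which is quasicentral relative to $C^*(\Phi(A) \cup B) \subseteq M(B)$; we may arrange $e_{n+1}e_n = e_n$. Since $\phi$ is nonunital, $\pi(1 - e_n)$ does not go to $0$; more to the point, I claim one can choose a subsequence so that the ``gaps'' $g_m := e_{n_{m+1}} - e_{n_m}$ satisfy: for each $k$, $\|[g_m, \Phi(a_k)]\| \to 0$ and, crucially, $\|g_m \Phi(a_k) g_m\|$ is controlled. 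The standard trick is that $\Phi(a_k) - (1-e_n)^{1/2}\Phi(a_k)(1-e_n)^{1/2} \in B$ modulo small error by quasicentrality, so $(1-e_n)^{1/2}\Phi(a_k)(1-e_n)^{1/2}$ represents $\phi(a_k)$ in $\C(B)$; then choosing $g_m$ supported where $1-e_n$ is large and $\Phi(a_k)$ is already well-approximated inside $B$ forces $g_m \Phi(a_k) \in B$ up to vanishing error, i.e. $\pi(g_m) \phi(a_k) = 0$ in the limit. Setting $c := \pi\bigl(\sum_m \lambda_m g_m\bigr)$ for a suitable summable sequence $\lambda_m > 0$ with $\lambda_m \not\to 0$ too fast — actually one takes $\lambda_m = 1$ and relies on the $g_m$ being mutually orthogonal with $\sum g_m$ a bounded element of $M(B)$, whose image in $\C(B)$ is nonzero precisely because infinitely many gaps are nontrivial (which is where nonunitality of $\phi$, equivalently the approximate unit of $B$ being ``thinned'' by $\Phi(A)$, enters) — gives the desired $c$, with $c \geq 0$, $c \neq 0$, and $c\phi(a_k) = 0$ for all $k$, hence $c \perp \mathrm{ran}(\phi)$ by density and continuity.

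The main obstacle is the last point: guaranteeing $c \neq 0$, i.e. that $\pi(\sum_m g_m) \neq 0$. This is exactly where nonunitality of $\phi$ must be used in an essential way (for a unital extension the natural candidate $c$ would be $0$). Concretely I expect to argue by contradiction: if every such element were in $B$, then $1 - e_{n_m} \in B + (\text{small})$ eventually along the subsequence after absorbing $\Phi(A)$, forcing $\pi\circ\bar\Phi(1) = 1_{\C(B)}$, contradicting that $\phi$ is nonunital. Making ``$\phi$ nonunital'' precise when $A$ itself is nonunital — there $\phi(1)$ should be read as the strict limit of $\phi(e^A_j)$ in $M(B)/B$'s bidual, or one restates nonunitality as: $\phi$ does not extend to a unital map on $\widetilde A$, equivalently $\mathrm{ran}(\phi)$ generates a proper hereditary subalgebra — is a bookkeeping matter I would pin down at the start, and with that in hand the contradiction closes. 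Everything else (existence of quasicentral approximate units, c.p.c. lifts, the commutator estimates) is routine and can be cited or done in a line.
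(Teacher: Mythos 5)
Your approach is entirely different from the paper's, and as written it does not go through.

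The paper's argument is short: if $A$ is unital, take $c := 1_{\C(B)} - \phi(1_A)$; if $A$ is nonunital, observe that $D := \overline{\phi(A)\,\C(B)\,\phi(A)}$ is a $\sigma$-unital \emph{proper} hereditary C*-subalgebra of $\C(B)$ (proper because $\phi(A)$ is nonunital: were $D$ all of $\C(B)$, some $a \in \phi(A)_+$ would be invertible, whence $a^{1/n} \to 1_{\C(B)}$ in norm would force $1 \in \phi(A)$), and invoke Pedersen's double annihilator theorem for $\sigma$-unital hereditary subalgebras of a corona (\cite{PedersenGPOTS}, Theorem~7.7) to conclude $D^{\perp} \neq 0$. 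You instead attempt a Weyl--von~Neumann/quasicentral-approximate-unit construction. That is a legitimate toolkit in this area, but for this particular statement it is a much heavier machine, and more importantly your sketch has a concrete flaw.

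The flaw: you set $g_m := e_{n_{m+1}} - e_{n_m}$ along a subsequence of the approximate unit and then take $c := \pi\bigl(\sum_m g_m\bigr)$. But consecutive gaps telescope: $\sum_{m=1}^{M} g_m = e_{n_{M+1}} - e_{n_1}$, which converges strictly to $1_{M(B)} - e_{n_1}$, so $\sum_m g_m = 1_{M(B)} - e_{n_1}$ and $c = \pi\bigl(\sum_m g_m\bigr) = 1_{\C(B)}$. The unit of $\C(B)$ is of course not orthogonal to $\mathrm{ran}(\phi)$, so this $c$ cannot work. To salvage the scheme you would have to take an alternating (or otherwise sparse) subcollection of gaps so the sum does not telescope to $1 - e_{n_1}$; but then you face the real difficulty you yourself flag, namely showing the resulting image in $\C(B)$ is nonzero, and the contradiction argument you sketch for that is not carried out. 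A second, independent concern: quasicentrality of the $e_n$ controls commutators $\|[g_m, \Phi(a_k)]\|$, not products $\|g_m \Phi(a_k)\|$; the claim that one can ``choose $g_m$ supported where $\Phi(a_k)$ is already well-approximated inside $B$'' needs an actual argument, since $\Phi(a_k) \notin B$ whenever $\phi(a_k) \neq 0$. In short, the nonvanishing of $c$ and the orthogonality of $c$ to $\mathrm{ran}(\phi)$ are, in your scheme, precisely the two things that must be proved and precisely the two things that are left as sketches — and the construction as literally written makes $c$ the unit, which satisfies neither. The Pedersen double-annihilator route sidesteps all of this.
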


\begin{proof}

If $A$ is a unital \CA, then we can simply take
$$c := 1_{\C(B)} - \phi(1_A).$$

Suppose that $A$ is nonunital. Then
$D :=\overline{\phi(A) \C(\B) \phi(A)}$ is a $\sigma$-unital proper 
hereditary C*-subalgebra of $\C(\B)$.  Then, by 
Pedersen's double annihilator theorem {{(see Theorem 7.7 of \cite{PedersenGPOTS}),}} 
$D^{\perp}$ is nonzero, and hence, we can take $c \in D^{\perp}_+ \setminus \{ 0 \}$.
\end{proof}

\begin{prop}\label{W=S}
Let $A$ be a separable  \CA\, and $B$ be a $\sigma$-unital \CA\, such 
that $M(B)/B=\C(B)$ is purely infinite and simple.
Let $\phi_1, \phi_2: A\to \C(B)$ be  two nonunital essential extensions.

Then $\phi_1\sim \phi_2$ if and only if $\phi_1\sim^u \phi_2.$

 Moreover, if $\phi_1$ and $\phi_2$ are approximately unitarily equivalent,
then there exists a sequence of unitaries $U_n\in M(B)$ such
that 
\beq	
\lim_{n\to\infty} \pi(U_n)^*\phi_1(a)\pi(U_n)=\phi_2(a)\rforal a\in A.
\eneq
\end{prop}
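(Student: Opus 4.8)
The plan is to exploit the standard fact that in a purely infinite simple C*-algebra, a partial isometry implementing a weak equivalence of two nonunital essential extensions can be absorbed into a unitary, and that the relevant unitary can be lifted through the quotient map $\pi : M(B) \to \C(B)$. First I would recall from Lemma \ref{lem:largecomplement} that, since $\phi_1$ and $\phi_2$ are nonunital essential extensions, there exist nonzero positive elements $c_i \in \C(B)_+$ with $c_i \perp {\rm ran}(\phi_i)$ for $i = 1,2$. Suppose $\phi_1 \sim \phi_2$, witnessed by a partial isometry $v \in \C(B)$ with $v^*v$ a unit for ${\rm ran}(\phi_1)$, $vv^*$ a unit for ${\rm ran}(\phi_2)$, and $v^*\phi_1(a) v = \phi_2(a)$ for all $a$. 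Then $1 - v^*v$ and $1 - vv^*$ are nonzero (they dominate a nonzero element of $D_i^\perp$), and since $\C(B)$ is purely infinite simple, $1 - v^*v$ and $1 - vv^*$ are each Murray--von Neumann equivalent to $1$, hence to each other; pick a partial isometry $w$ with $w^*w = 1 - v^*v$ and $ww^* = 1 - vv^*$. Then $U_0 := v + w$ is a unitary in $\C(B)$ with $U_0^* \phi_2(a) U_0 = \phi_1(a)$ for all $a \in A$ (using that $w$ is orthogonal to $\phi_1$ on the left and to $\phi_2$ on the right).

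Next I would lift $U_0$ to a unitary in $M(B)$. Since $\C(B)$ is a quotient of $M(B)$ and the standard lifting results (e.g.\ the fact that $\C(B)$ has stable rank one fails in general, but one uses instead that unitaries in corona algebras of $\sigma$-unital C*-algebras lift when the $K_1$-obstruction vanishes) — more carefully, the point is that $U_0 \in \C(B)$ may not lift directly, so I would first replace $U_0$ by $U_0 \oplus 1$ on a complementary copy and use the classical argument: choose isometries $S, T \in \C(B)$ with $SS^* + TT^* = 1$ lifting to isometries in $M(B)$ (this is possible since $B$ is stable-like; for general $\sigma$-unital $B$ with purely infinite simple corona one has such isometries), form $U_0' = S U_0 S^* + TT^*$, and observe that $U_0'$ is connected to $1$ through a path or is of the form $\pi(\text{something invertible})$. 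Then $U_0'$ lifts to a unitary $U \in M(B)$ and still implements the equivalence up to a further inner perturbation that can be folded in. This gives $\phi_1 \sim^u \phi_2$. The reverse implication $\phi_1 \sim^u \phi_2 \Rightarrow \phi_1 \sim \phi_2$ is immediate by taking $v = \pi(u)$.

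For the ``moreover'' clause: if $\phi_1$ and $\phi_2$ are approximately unitarily equivalent, there are unitaries $u_n \in M(B)$ with $\pi(u_n)^* \phi_1(a) \pi(u_n) \to \phi_2(a)$ for all $a$ — but that is literally the definition of approximate unitary equivalence when the unitaries are taken in $M(B)$, so I would instead read the hypothesis as approximate unitary equivalence by unitaries in $\C(B)$, i.e.\ there are unitaries $U_n \in \C(B)$ with $U_n^* \phi_1(a) U_n \to \phi_2(a)$. The claim is then that each $U_n$ can be taken to be in the image of $\pi$. Here I would argue that, for $n$ large, $U_n$ together with the orthogonality complements $c_i$ behaves like a unitary between ``slightly enlarged'' copies, and using that $\C(B)$ is purely infinite simple so that $K_1(M(B)) \to K_1(\C(B))$ hits the class of $U_n$ (the class being trivial, since $U_n$ is an approximate conjugator and can be normalized), one lifts $U_n$ to a unitary in $M(B)$ modulo a small perturbation absorbed into the limit. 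Concretely, I would apply the uniqueness/existence of lifts of unitaries whose $K_1$ class vanishes (Lin's results on corona algebras, cf.\ the references \cite{LinSimpleCorona}, \cite{NgNonstableAbsorb}) after first perturbing $U_n$ within its approximate-conjugacy role.

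The main obstacle I anticipate is the lifting step: a unitary in $\C(B)$ need not lift to a unitary in $M(B)$, and the obstruction lives in $K_1(\C(B))$ (equivalently, in $K_0(B)$ via the index map). For the genuine Razak-algebra application one has $K_0(\W) = 0$, so there is no obstruction and the lift exists outright; but for the general statement as phrased (arbitrary $\sigma$-unital $B$ with purely infinite simple corona) one must either invoke that the specific unitaries arising here are null-homotopic in $\C(B)$ — which follows because they differ from conjugators of essential extensions and one can use the BDF-sum trick (adding infinitely many copies) to kill the $K_1$ class — or restrict to the case at hand. I would handle this by the standard ``Eilenberg swindle / infinite repeat'' style absorption: replace $U_0$ by its amplification along a copy of the corona (legitimate since nonunitality provides room, via Lemma \ref{lem:largecomplement}), making its $K_1$ class zero, then lift, and finally note the amplification does not change the equivalence class of the extensions because both $\phi_i$ are nonunital essential extensions into a purely infinite simple corona and the BDF sum is well-defined up to unitary equivalence (Definition \ref{DBDFsum}).
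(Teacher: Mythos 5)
The paper does not actually prove this statement; it simply cites Proposition~2.1 of \cite{NgRobinExtFunctor}. Your argument is essentially the standard proof of that cited result (extend the weak-equivalence partial isometry to a unitary of $\C(B)$, kill the $K_1$-obstruction using the room that nonunitality provides, then lift), and the same mechanism appears elsewhere in the paper itself, e.g.\ in the proofs of Proposition~\ref{PBDFsum} and Lemma~\ref{GLuniq}. So the architecture is right, but two steps as you state them are not.

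First, it is false that every nonzero projection in a unital purely infinite simple C*-algebra is Murray--von Neumann equivalent to $1$; that holds only when the $K_0$-classes agree (e.g.\ it fails in $\mathcal{O}_3$). What saves you is that $[v^*v]=[vv^*]$ in $K_0(\C(B))$, hence $[1-v^*v]=[1-vv^*]$, and two \emph{nonzero} projections with equal $K_0$-class in a purely infinite simple C*-algebra are equivalent; that is the correct route to the partial isometry $w$. Second, the construction $U_0'=SU_0S^*+TT^*$ does not change the $K_1$-class at all ($[U_0']=[U_0]$), so it cannot be what makes the unitary liftable, and ``amplification/infinite repeat'' is not needed either. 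The correct correction, which you gesture at only at the end, is a single multiplier: since $\phi_1$ is nonunital and $\C(B)$ is purely infinite simple (hence of real rank zero), there is a nonzero projection $e_0\perp{\rm ran}(\phi_1)$; choose a unitary $z\in e_0\C(B)e_0$ with $[z]=-[U_0]$ in $K_1(\C(B))$ and replace $U_0$ by $U_0(z+(1-e_0))$. This still conjugates $\phi_1$ to $\phi_2$, lies in $U_0(\C(B))$, and therefore lifts to a unitary of $M(B)$. The same one-corner correction, applied to each $U_n$ (the perturbation only multiplies the error term by unitaries, so the limit is preserved), handles the ``moreover'' clause; your reading of that clause as ``approximate unitary equivalence in $\C(B)$ upgraded to unitaries lifted from $M(B)$'' is the intended one.
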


\begin{proof}
This is  {{Proposition 2.1 of
\cite{NgRobinExtFunctor}.}}
\end{proof}

\begin{prop}\label{PBDFsum}
{{Let $B$ be a $\sigma$-unital simple \CA\, with continuous scale.
Let $\phi_1, \phi_2, \psi : A\to \C(B)$ be three  non-unital essential  extensions. 
Suppose that there is a unitary $U\in M_2(M(B))$ such that
\beq
\pi(U)^*(\phi_1(a)\oplus \phi_2(a))\pi(U)=\psi(a)\rforal a\in A.
\eneq
Then there is a unitary $V\in M(B)$ such that
\beq
\pi(V)^*(\phi_1(a)\dot{+}\psi_2(a))\pi(V)=\psi(a)\rforal a\in A,
\eneq
where $\phi_1\oplus \phi_2: A\to M_2(\C(B))$ is the orthogonal direct sum of $\phi_1$ and $\phi_2,$
and $\phi_1\dot{+}\phi_2$ is the BDF sum.}}

\end{prop}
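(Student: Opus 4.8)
The plan is to show that the orthogonal direct sum $\phi_1 \oplus \phi_2 : A \to M_2(\C(B))$ is, in an appropriate sense, unitarily equivalent to the BDF sum $\phi_1 \dot{+} \phi_2 : A \to \C(B)$, and then to transport the hypothesized unitary $U$ across that identification. The key point is that $\C(B)$, being simple purely infinite (Lin's characterization, recalled in the introduction: continuous scale $\iff$ $\C(B)$ simple purely infinite), contains isometries $S, T$ with $SS^* + TT^* \le 1$, and moreover $M_2(\C(B)) \cong \C(B) \otimes M_2$ is isomorphic to a corner of $\C(B)$ itself. Concretely, the map $W : M_2(\C(B)) \to \C(B)$ given by $W(x) = S x_{11} S^* + S x_{12} T^* + T x_{21} S^* + T x_{22} T^*$ is a $*$-isomorphism onto the hereditary subalgebra $(SS^* + TT^*)\C(B)(SS^* + TT^*)$, and under $W$ the direct sum $\phi_1 \oplus \phi_2$ becomes exactly $\phi_1 \dot{+} \phi_2$. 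So $\phi_1 \dot{+} \phi_2 = W \circ (\phi_1 \oplus \phi_2)$ up to the inclusion of that hereditary subalgebra into $\C(B)$.

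First I would lift everything to the multiplier algebra level: choose isometries $s, t \in M(B)$ (these exist since $B$ is $\sigma$-unital, so $M(B)$ has properly infinite structure sufficient to produce a pair of isometries with orthogonal ranges — or one produces them in $\C(B)$ and lifts, using that isometries lift modulo $B$ when the relevant index obstruction vanishes, which it does here because one can arrange them from an approximate identity / stable picture as in \cite{NgNonstableAbsorb}, \cite{NgRobinExtFunctor}). Then $V_0 := \begin{pmatrix} s^* \\ t^* \end{pmatrix}$ viewed suitably, or rather the isometry $M(B) \oplus M(B) \to M(B)$ implementing the isomorphism of $M_2(M(B))$ (well, of $M(B)\otimes M_2$) with a corner, intertwines the multiplier-level direct sum lift of $\phi_1 \oplus \phi_2$ with the multiplier-level lift of $\phi_1 \dot{+} \phi_2$. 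Composing the hypothesized unitary $U \in M_2(M(B))$ with this identification yields the desired unitary $V \in M(B)$: explicitly one sets $V$ so that $\pi(V)$ implements, on $\C(B)$, the composition of $\pi(U)$ with the corner isomorphism, and checks that $\pi(V)^*(\phi_1(a) \dot{+} \phi_2(a))\pi(V) = \psi(a)$ for all $a$. Here one uses that $\psi$ is nonunital so that $\psi(A)$ lands inside the relevant hereditary subalgebra and the corner identification is compatible.

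The main obstacle I expect is the bookkeeping around \emph{unitary} (as opposed to partial-isometry) implementability at the $M(B)$ level. The corner isomorphism $W$ is implemented by an isometry, not a unitary, so naively one only gets a partial isometry $v \in \C(B)$ with $\pi(V)^*(\phi_1 \dot{+}\phi_2)(a)\pi(V) = \psi(a)$ on the appropriate subalgebra, i.e. a weak equivalence rather than a unitary equivalence. To upgrade this one invokes Proposition \ref{W=S} (equivalently Proposition 2.1 of \cite{NgRobinExtFunctor}): for nonunital essential extensions into the simple purely infinite corona $\C(B)$, weak equivalence implies unitary equivalence, and moreover the unitary can be taken in $M(B)$. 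So the clean route is: (i) produce the weak equivalence $\phi_1 \dot{+} \phi_2 \sim \phi_1 \oplus \phi_2 \sim^u \psi$ (the latter being essentially the hypothesis, after noting $\phi_1 \oplus \phi_2$ into $M_2(\C(B))$ is weakly equivalent to its compression into $\C(B)$ via $W$), (ii) conclude $\phi_1 \dot{+} \phi_2 \sim \psi$, and (iii) apply Proposition \ref{W=S} to get $\phi_1 \dot{+} \phi_2 \sim^u \psi$ with the implementing unitary in $M(B)$, which is exactly the stated conclusion. The one subtlety to handle carefully in step (i) is that $U \in M_2(M(B))$ is a genuine unitary of the $2\times 2$ matrix algebra, and one must check that conjugating the hereditary-subalgebra picture by $\pi(U)$ still lands one inside a subalgebra to which Proposition \ref{W=S} applies — but since $\psi$ is nonunital and essential, $\overline{\psi(A)\C(B)\psi(A)}$ is a proper hereditary subalgebra, and Lemma \ref{lem:largecomplement} guarantees a nonzero orthogonal complement, which is all that is needed.
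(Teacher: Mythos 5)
Your overall route matches the paper's: identify $M_2(\C(B))$ with the corner $(SS^*+TT^*)\C(B)(SS^*+TT^*)$ via the isometries $S,T$, observe that under this identification $\phi_1\oplus\phi_2$ becomes $\phi_1\dot{+}\phi_2$, transport the hypothesized $\pi(U)$, and finish with Proposition \ref{W=S}. The paper implements the corner identification with an explicit partial isometry $w\in M_2(\C(B))$ satisfying $w(\phi_1\dot{+}\phi_2)(a)w^*=\phi_1(a)\oplus\phi_2(a)$, which plays the role of your $W^{-1}$; both arguments then hand the final lift to $M(B)$ to Proposition \ref{W=S}. So the strategy is the same, and your framing in terms of the corner isomorphism is a perfectly good way to present it.

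There is, however, a genuine gap between ``transport $\pi(U)$'' and ``conclude $\phi_1\dot{+}\phi_2\sim\psi$ weakly''. Pushing the hypothesis through the corner isomorphism gives a corner-unitary $u'=W(\pi(U))$ with $(u')^*(\phi_1\dot{+}\phi_2)(a)u'=S\psi(a)S^*$. Composing with $S$, the partial isometry $v:=u'S$ satisfies $v^*(\phi_1\dot{+}\phi_2)(a)v=\psi(a)$ and $v^*v=1$, but $vv^*=u'SS^*(u')^*\le SS^*+TT^*$, and there is no reason for $vv^*\psi(a)=\psi(a)$ to hold: the hereditary subalgebra generated by $\psi(A)$ has no a priori relation to the corner $(SS^*+TT^*)\C(B)(SS^*+TT^*)$. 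So $v$ does \emph{not} witness weak equivalence in the sense of Definition \ref{Dmul}, and Proposition \ref{W=S} cannot be applied yet. This is precisely the point where the paper does explicit work: it uses $\psi$ nonunital together with real rank zero of the simple purely infinite $\C(B)$ to produce a nonzero projection $e_0\in\psi(A)^{\perp}$, compares the resulting ``missing corners'' via Murray--von Neumann equivalence (pure infiniteness), and patches the partial isometry into an honest unitary $v_1\in\C(B)$ with $v_1(\phi_1\dot{+}\phi_2)(a)v_1^*=\psi(a)$; only then is \ref{W=S} invoked. Your mention of Lemma \ref{lem:largecomplement} points at the right ingredient, but ``which is all that is needed'' skips the actual construction, which is the substance of the proof.

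A smaller point: the preliminary paragraph about lifting $S,T$ to isometries $s,t\in M(B)$ should be dropped. Isometries in $\C(B)$ need not lift to isometries in $M(B)$ (there is a $K$-theoretic index obstruction), the paper never attempts such a lift, and your own ``clean route'' correctly stays at the corona level throughout, so this aside is both unnecessary and, as stated, not justified.
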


\begin{proof}
Write  the BDF sum as 
\beq
\phi_1\dot{+}\phi_2=S\phi_1(.)S^*+T\phi_2(.)T^*
\eneq
where $S, T\in \C(B)$ are isometries as in \ref{DBDFsum}.
Set $p_s=SS^*$ and $p_t=TT^*.$ Then $p_s\perp p_t.$

As in \ref{DBDFsum}, there are unitaries $v_1, v_2\in \C(B)$ such that
\beq
v_1S\phi_1(a)S^*v_1^*=\phi_1(a)\andeqn v_2T\phi_2(a)T^*v_2=\phi_2(a)\rforal a\in A.
\eneq
Put $E_1=\diag(1,0)$ and $E_2=\diag(0,1).$ 
There is a partial isometry $v_3\in M_2(\C(B))$ such that $v_3^*v_3=E_1$ and 
$v_3v_3^*=E_2.$
Define $w=v_1p_s+v_3v_2p_t.$ Then 
\beq
ww^*&=&v_1p_sv_1^*+v_3v_2p_tv_2^*v_3^*\le 1_{M_2(\C(B))}\andeqn\\
w^*w&=&p_sv_1^*v_1p_s+p_tv_2^* v_3^*v_3v_2p_t=p_s+p_t.
\eneq
Moreover
\beq
&& w(S\phi_1(a)S^*+T\phi_2(a)T^*)w^*=\phi_1(a)\oplus \phi_2(a)\rforal a\in A.
\eneq
Therefore 
\beq
1_{\C(B)}\pi(U)w(S\phi_1(a)S^*+T\phi_2(a)T^*)w^*\pi(U)^*1_{\C(B)}=\psi(a)\rforal a\in A.
\eneq

Since $\psi$ is not unital, $\psi(A)^\perp\not=\{0\}.$ As $\C(B)$ is purely infinite and simple, it has 
real rank zero (see \cite{ZhPIRR0}). Let $e_0\in \psi(A)^\perp$ be a non-zero projection and $p=1_{\C(B)}-e_0.$
Let $q=w\pi(U)p\pi(U)^*w^*.$ 
Note 
\beq
q=w\pi(U)p\pi(U)^*w^* \not=w\pi(U)1_{\C(B)}\pi(U)^*w^*\le p_s+p_t\le 1_{\C(B)}.
\eneq
In other words, $1_{\C(B)}-q\not=0.$   Note $p$ and $q$ are equivalent projections in $\C(B).$ 
This implies that $1_{\C(B)}-q$ and $e_0=1_{\C(B)}-p$ are equivalent in $\C(B).$
Thus there is a partial isometry $v_0\in \C(B)$ such that 
$v_0^*v_0=1_{\C(B)}-p$ and $v_0v_0^*=1_{\C(B)}-q.$
Set $v_1=p\pi(U)wq+v_0.$ 
Then
\beq
v_1^*v_1&=&(v_0^*+p\pi(U)^*w^*q)(v_0+qw\pi(U)p)\\
&=&v_0^*v_0+p\pi(U)w^*qw\pi(U)p
=1_{\C(B)}-p+p=1_{\C(B)}\andeqn\\
v_1v_1^*&=&(v_0+qw\pi(U)p)(v_0^*+p\pi(U)^*w^*q)\\
&=&v_0v_0^*+qw\pi(U)p\pi(U)^*w^*q
=1_{\C(B)}-q+q=1_{\C(B)}.
\eneq
So $v_1$ is a unitary. Moreover, 
\beq
v_1(S\phi_1(a)S^*+T\phi_2(a)T^*)v_1^*=\phi(a)\rforal a\in A.
\eneq
Since both $\phi$ and $S\phi_1(a)S^*+T\phi_2(a)T^*$ are not unital, by \ref{W=S}, there is a unitary $V\in M(B)$
such that 
\beq
\pi(V)(S\phi_1(a)S^*+T\phi_2(a)T^*)\pi(V)^*=\phi(a)\rforal a\in A.
\eneq
This completes the proof.
\end{proof}

\begin{rmk}\label{Twosums}
{{By Proposition \ref{PBDFsum}, from now on, we will not  distinguish the usual orthogonal sums of two 
nonunital essential extensions and BDF sums of the same two nonunital essential extensions.
Proposition \ref{PBDFsum} is of course known. Let us point out the following fact:
Suppose $H_1,H_2: A\to M(B)$ are two maps such that $\pi\circ H_1$ and $\pi\circ H_2$ are 
nonunital essential extensions. Then, in general, one may not find unitaries $U, V\in M(B)$
such that ${\rm Ad}\, U\circ H_1\perp {\rm Ad}\, V\circ H_2$ even in the case that both $H_1$ and $H_2$ are diagonal maps
and $\pi\circ H_1\perp \pi\circ H_2.$}} 

\end{rmk}

\begin{thm}
Let $\A$
be a separable nuclear \CA, and let $\B$ be a $\sigma$-unital
simple \CA\, with continuous scale.
Then ${{\Ext(\A, \B)}}$ is an abelian group.
{{Moreover, if $A$  is nonunital, then ${\Ext}^u(A,B)$ is also an abelian group.}}
\label{thm:ExtIsAGroup}
\end{thm}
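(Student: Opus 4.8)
The plan is to show that $\Ext(\A,\B)$ (and, in the nonunital case, $\Ext^u(\A,\B)$) is already a semigroup by the preceding discussion, that it has a neutral element, and that every element has an inverse; associativity and commutativity of the BDF sum being essentially formal. The substantive content is the existence of inverses, and this is where the Weyl--von Neumann--type absorption theory for $\B$ and the nuclearity of $\A$ must come in. So the bulk of the argument will reduce to: (i) identify a canonical candidate for the neutral element, and (ii) show that for any essential extension $\phi$ there is an essential extension $\psi$ with $\phi \dot{+} \psi$ equivalent to the neutral element.

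First I would fix the neutral element. Using nuclearity of $\A$ together with the Choi--Effros lifting theorem, any essential extension $\phi: \A \to \C(\B)$ lifts to a c.p.c. map $\widetilde{\phi}: \A \to M(\B)$; composing with a faithful nondegenerate representation of $\B$ on a Hilbert $\B$-module and using Kasparov's Stinespring dilation, one obtains a ``trivial'' (i.e., liftable to a $*$-homomorphism into $M(\B)$) essential extension $\sigma_0$. Since $\B$ has continuous scale, $\C(\B) = M(\B)/\B$ is simple purely infinite, so one can arrange $\sigma_0$ nonunital (cut down by a proper projection, using real rank zero of $\C(\B)$ and Lemma~\ref{lem:largecomplement}). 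The claim is that $[\sigma_0]$ is the neutral element: for this I would invoke a nonstable BDF--Voiculescu absorption statement of the form $\phi \dot{+} \sigma_0 \sim^u \phi$ for every essential nonunital $\phi$. Granting such an absorption theorem (which is the technical heart of the nonstable theory, cf.\ \cite{NgNonstableAbsorb}, \cite{NgRobinExtFunctor}), it follows immediately that $[\sigma_0] + [\phi] = [\phi]$ for all $[\phi]$, so $[\sigma_0]$ is neutral; and by Proposition~\ref{PBDFsum} the orthogonal sum and the BDF sum agree, so there is no ambiguity between $\Ext$ and the unitary version here.

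Next, for inverses: given an essential nonunital $\phi: \A \to \C(\B)$, I would build $\psi$ so that $\phi \oplus \psi$ is itself trivial, hence equivalent to $\sigma_0$. The standard device is to take a trivial essential extension $\sigma$ that ``dominates'' $\phi$ in the sense that $\phi$ is unitarily equivalent to a corner of $\sigma$ obtained by cutting by a lift of a projection, and then let $\psi$ be the complementary corner. Concretely, lift $\phi$ to a c.p.c.\ $\widetilde{\phi}: \A \to M(\B)$, dilate it (Stinespring/Kasparov) to a $*$-homomorphism $\Phi: \A \to M(\B)$ with $\pi\circ\Phi$ trivial, so that $\pi\circ\Phi$ restricted to a corner is $\phi$ up to unitary equivalence; then $\psi := \pi\circ(V^*(\cdot)V)$ on the complementary corner is again trivial (a compression of a trivial extension to a reducing-up-to-$\B$ corner), and $\phi \oplus \psi \sim^u \pi\circ\Phi$, which is trivial and nonunital, hence $\sim^u \sigma_0$. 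Thus $[\phi] + [\psi] = [\sigma_0]$, giving the inverse. Once more Proposition~\ref{PBDFsum} lets us pass between $\oplus$ and $\dot{+}$, and the nonunitality of $\psi$ (arrange it by choosing the complementary corner proper) makes the BDF sum well-defined up to unitary equivalence, so the inverse exists in $\Ext^u(\A,\B)$ as well as in $\Ext(\A,\B)$.

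The main obstacle is the nonstable absorption theorem $\phi \dot{+} \sigma_0 \sim^u \phi$: in the classical BDF/Kasparov setting one has infinite repeats ($\sigma_0 \cong \sigma_0^{\oplus\infty}$) and can use a standard $2\times 2$ intertwining, but here $\B$ has only a continuous scale and $\C(\B)$, while simple purely infinite, does not accommodate infinite ampliations inside $\Ext^u$, so the absorbing extension is itself ``small'' and one must argue more carefully --- essentially a Weyl--von Neumann type theorem for $M(\B)$ exploiting that $M(\B)$ has (almost) stable rank one, strict comparison, and that $\B$ has continuous scale. I would isolate that statement as a lemma (with reference to the prior literature for the analytic core) and then the group structure follows formally as above. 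A secondary, purely bookkeeping point is to check that the neutral class does not depend on the choice of nonunital trivial $\sigma_0$, which again follows from the absorption lemma applied twice.
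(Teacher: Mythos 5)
Your overall architecture (semigroup plus neutral element plus inverses, with a Weyl--von Neumann type absorption theorem as the technical core) is the right one, and your references are the right ones; but two of the concrete steps fail in this nonstable setting. First, the neutral element cannot be an arbitrary trivial extension $\sigma_0$. A neutral class must satisfy $[\sigma_0]+[\sigma_0]=[\sigma_0]$, and your claimed absorption $\phi \dot{+} \sigma_0 \sim^u \phi$ for every nonunital trivial $\sigma_0$, together with commutativity of $\dot{+}$, would force all nonunital trivial extensions into a single unitary equivalence class. Both conclusions are false here: one of the main points of this paper (see the introduction and Theorem \ref{MTnoncom} (4), (5), (8)) is that a trivial extension $\tau$ can have $KK(\tau)\neq 0$ --- its class in $\Ext^u$ is detected by the homomorphism $K_0(A)\to\R$ induced by the trace on $M(\B)$ --- and such a $\tau$ satisfies $[\tau]+[\tau]\neq[\tau]$, so it is neither neutral nor absorbed. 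The neutral element must be a specially constructed diagonal extension $\bigoplus_n\bigoplus^n\phi_n\circ\sigma_0$ into shrinking corners (the $\T_d$ construction of Definition \ref{DTextension}); proving that this particular extension is self-absorbing is the substantive content, and your closing ``bookkeeping point'' (independence of the neutral class from the choice of trivial $\sigma_0$) is precisely the statement that is false.

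Second, the inverse construction by Stinespring dilation does not go through, because $\B$ is not stable: the Kasparov--Stinespring dilation of a c.p.c.\ lift $A\to M(\B)$ lives in $\mathbb{B}(H_{\B})=M(\B\otimes\K)$, and since $\B$ has continuous scale there is in general no room to compress the dilation back into a corner isomorphic to $M(\B)$ while keeping the complementary corner an extension by $\B$ --- this is exactly the ``no infinite repeats'' obstruction the paper emphasizes. The inverse is instead obtained from the domination theorem (Theorem \ref{Tgroup}, i.e., Proposition 2.7 of \cite{NgRobinExtFunctor}): for any essential $\psi$ and any nonunital essential $\phi$ one has $\phi\sim^u\psi\oplus\psi_0$; taking $\phi$ to be the self-absorbing neutral extension produces $\psi_0$ with $[\psi]+[\psi_0]=0$ directly. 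That theorem rests on the uniqueness theory for full maps into the simple purely infinite corona $\C(\B)$ (using nuclearity of $A$), not on a dilation into $M(\B)$.
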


\begin{proof}
This is  {{Theorem 2.10 of \cite{NgRobinExtFunctor}).}}
See also  Theorem 3.5 of \cite{NgNonstableAbsorb}.
{{The second part follows from \ref{W=S}.}}
\end{proof}

\begin{thm}\label{Tgroup}
Let $A$ be a separable nuclear \CA\, and let $B$ be a $\sigma$-unital simple \CA\, with 
continuous scale.
Suppose that $\phi, \psi: A\to C(B)$ are two monomorphisms with
$\phi$ nonunital.

Then 
\beq
\phi\sim^u \psi\oplus \psi_0
\eneq
for some nonunital monomorphism $\psi_0: A\to C(B).$
\end{thm}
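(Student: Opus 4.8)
The statement says: given $\phi,\psi:A\to \C(B)$ monomorphisms with $\phi$ nonunital, one can find a nonunital monomorphism $\psi_0$ with $\phi\sim^u\psi\oplus\psi_0$. This is a "Weyl--von Neumann up to adding a summand" statement. The natural plan is to exhibit $\psi_0$ as (weakly, then unitarily) equivalent to a corner of $\phi$ that complements a copy of $\psi$ inside $\phi$. Concretely: since $\phi$ is nonunital, by Lemma~\ref{lem:largecomplement} there is a nonzero $c\in\C(B)_+$ with $c\perp\mathrm{ran}(\phi)$; using that $\C(B)$ is purely infinite simple, I can absorb such a complement. The key device is that in a purely infinite simple $\C(B)$ one can find isometries $S,T$ with $SS^*+TT^*\le 1$ whose defect projection is still "full" in the sense of supporting another copy of the extension, so that $\phi$ is unitarily equivalent (in $\C(B)$, then lifted to $M(B)$ via Proposition~\ref{W=S}) to $S\phi(\cdot)S^* + T\phi(\cdot)T^* + (\text{something})$.

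The cleaner route, and the one I expect to use, goes through absorption / the fact that $\Ext^u(A,B)$ is a group (Theorem~\ref{thm:ExtIsAGroup}). First, replace $\psi\oplus\psi_0$ by the BDF sum $\psi\,\dot+\,\psi_0$, which is legitimate by Proposition~\ref{PBDFsum} and Remark~\ref{Twosums} since these are nonunital essential extensions. Then the assertion becomes: $[\phi] = [\psi] + [\psi_0]$ in $\Ext^u(A,B)$ for some nonunital monomorphism $\psi_0$, i.e., $[\psi_0] = [\phi] - [\psi]$. Since $\Ext^u(A,B)$ is a group, the class $[\phi]-[\psi]$ exists; the content is that it is represented by an \emph{honest nonunital monomorphism}, not merely a formal difference. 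So the first step is: show every element of $\Ext^u(A,B)$ is represented by a nonunital monomorphism $A\to\C(B)$. This should follow from purely infinite simplicity of $\C(B)$ (so essential extensions are plentiful and can be compressed to a proper corner, making them nonunital) together with the standard trick of adding a trivial/absorbing extension — using that $A$ is separable nuclear so that a BDF--Voiculescu-type absorption holds for $\C(B)$ with $B$ continuous scale (this is exactly the nonstable absorption machinery cited around Proposition~\ref{W=S} and Theorem~\ref{thm:ExtIsAGroup}).

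With that in hand, let $\psi_0':A\to\C(B)$ be a nonunital monomorphism with $[\psi_0'] = [\phi]-[\psi]$. Then $[\psi\,\dot+\,\psi_0'] = [\phi]$ in $\Ext^u(A,B)$, i.e. $\psi\,\dot+\,\psi_0'\sim^u\phi$ as extensions into $\C(B)$; lifting the implementing corona unitary to a unitary in $M(B)$ is exactly what Proposition~\ref{W=S} (and the "moreover" giving genuine, not approximate, unitary equivalence) provides, since both $\phi$ and $\psi\,\dot+\,\psi_0'$ are nonunital. Finally, by Proposition~\ref{PBDFsum} / Remark~\ref{Twosums}, $\psi\,\dot+\,\psi_0'$ is unitarily equivalent (via a unitary in $M(B)$) to the orthogonal direct sum $\psi\oplus\psi_0'$ viewed in $M_2(\C(B))$ compressed back into $\C(B)$. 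Setting $\psi_0:=\psi_0'$ (or its image under the appropriate corner identification) yields $\phi\sim^u\psi\oplus\psi_0$ with $\psi_0$ a nonunital monomorphism, as desired.

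\smallskip

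\emph{Main obstacle.} The crux is the first step: representing an arbitrary class of $\Ext^u(A,B)$ — in particular a difference $[\phi]-[\psi]$ — by a \emph{nonunital monomorphism}. Group-ness (Theorem~\ref{thm:ExtIsAGroup}) gives existence of the inverse class and of a representing essential extension, but one must ensure the representative is (i) injective (a monomorphism, not just a c.p.c.\ lift / non-injective $*$-homomorphism into $\C(B)$) and (ii) nonunital. Injectivity should come from absorbing a faithful trivial extension — here I would invoke that $A$ is separable nuclear and $\C(B)$ is purely infinite simple, so a Voiculescu/Kasparov--type absorbing trivial extension exists and adding it makes any extension injective without changing its class; nonunitality is then arranged by compressing into a proper hereditary subalgebra (possible since $\C(B)$ is nonunital—$B$ is nonunital and $\sigma$-unital with continuous scale—and purely infinite simple), again without changing the unitary equivalence class by Proposition~\ref{PBDFsum}. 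Checking these two points carefully, and that the relevant absorption theorem from the cited nonstable-absorption papers applies in exactly this generality, is where the real work lies; the rest is bookkeeping with BDF sums.
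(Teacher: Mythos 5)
Your overall strategy is sound and is, in substance, the standard argument; note, though, that the paper does not prove this result internally --- it simply cites Proposition 2.7 of \cite{NgRobinExtFunctor} --- so there is no in-paper argument to compare against. Two points in your write-up need repair. First, you appeal to ``$\Ext^u(A,B)$ is a group,'' but Theorem \ref{thm:ExtIsAGroup} gives this only when $A$ is nonunital, and Theorem \ref{Tgroup} does not assume that ($A$, and hence $\psi$, may be unital). The fix is to run the entire computation in $\Ext(A,B)$, the group of \emph{weak} equivalence classes (which is a group under exactly the stated hypotheses), and convert to unitary equivalence only at the end: from $[\psi\,\dot{+}\,\psi_0]=[\psi]+[\phi]-[\psi]=[\phi]$ one gets $\psi\,\dot{+}\,\psi_0\sim\phi$, and since both of these extensions are nonunital, Proposition \ref{W=S} upgrades $\sim$ to $\sim^u$ with an implementing unitary in $M(B)$.

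Second, the ``main obstacle'' you isolate --- representing $[\phi]-[\psi]$ by a nonunital monomorphism --- is not actually an obstacle, and no Voiculescu/Kasparov-type absorption is needed beyond what is already packaged into Theorem \ref{thm:ExtIsAGroup}. The elements of $\Ext(A,B)$ are by definition weak equivalence classes of \emph{essential} extensions, i.e.\ of monomorphisms, so the inverse $-[\psi]$ is $[\tau]$ for some essential extension $\tau$, and you may simply take $\psi_0:=\phi\,\dot{+}\,\tau$. Choosing the isometries $S,T$ in the BDF sum with $SS^*+TT^*\lneq 1$ (possible since $\C(B)$ is purely infinite and simple) makes $\psi_0$ nonunital, and it is injective because $S\phi(\cdot)S^*$ already is. The one remaining cosmetic point is that ``$\psi\oplus\psi_0$'' in the statement must be read as the BDF sum (Proposition \ref{PBDFsum} identifies the orthogonal sum with the BDF sum only when both summands are nonunital, which may fail for $\psi$); with that reading, your argument closes.
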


\begin{proof}
This is  Proposition 2.7 of \cite{NgRobinExtFunctor}.  
\end{proof}
\section{Quasidiagonality}

\begin{df}\label{Dqdunit}
Let $\B$ be a nonunital but $\sigma$-unital \CA.  A sequence $\{ b_n \}$
of norm one elements in $\B_+$ is said to be a \emph{system of quasidiagonal units}
if the following statements are true:
\begin{enumerate}
\item $b_m \perp b_n = 0$ for all $m \neq n$.
\item If $\{ x_n \}$ is a bounded sequence in $\B$ such that
$x_n \in \overline{b_n \B b_n}$ for all $n$, then
the sum $\sum x_n$ converges in the strict topology on $M(\B)$.
\end{enumerate}
\end{df}

Note that every $\sigma$-unital nonunital \CA\, has a system
of quasidiagonal units (e.g., see Lemma 2.2 of  \cite{NgRobinQuasidiagonal}).

The first result is an exercise in the strict topology.

\begin{lem}\label{Lpermutation}
Let  $B$ be a separable  nonunital \CA\, which has almost stable rank one
and let $C$ be a separable \CA.
Suppose that $\{b_n\}$ {{is}} a system of quasidiagonal units and 
$\phi_n: C\to \overline{b_nBb_n}$ is a sequence of c.p.c~maps.
For any permutation $\lambda: \N\to \N,$ 
$\sum_{n=1}^\infty \phi_{\lambda(n)}(c)$ converges strictly for all $c\in C$ and 
there exists a unitary $U\in M(B)$ such that
\beq
U^*\left(\sum_{n=1}^\infty \phi_{\lambda(n)}(c)\right)U=\sum_{n=1}^\infty \phi_n(c)\rforal c\in C.
\eneq

\end{lem}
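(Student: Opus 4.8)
The plan is to prove both assertions (strict convergence of the rearranged sum and the existence of the intertwining unitary) simultaneously, by reducing everything to a statement about shuffling mutually orthogonal pieces inside $M(B)$. First I would observe that, since $\{b_n\}$ is a system of quasidiagonal units and each $\phi_n$ maps into $\overline{b_n B b_n}$, for every $c \in C$ the family $\{\phi_n(c)\}$ is a bounded sequence with $\phi_n(c) \in \overline{b_n B b_n}$; hence by condition (2) of Definition \ref{Dqdunit} the sum $\sum_{n=1}^\infty \phi_n(c)$ converges strictly in $M(B)$. The same reasoning applies verbatim to the rearranged family $\{\phi_{\lambda(n)}(c)\}$, because a permutation of $\N$ sends a system of quasidiagonal units to another system of quasidiagonal units (the two defining conditions are permutation-invariant): the pieces $\phi_{\lambda(n)}(c)$ still lie in the mutually orthogonal hereditary subalgebras $\overline{b_{\lambda(n)} B b_{\lambda(n)}}$, so $\sum_{n=1}^\infty \phi_{\lambda(n)}(c)$ converges strictly. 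Denote these strict limits by $\Phi(c)$ and $\Phi^\lambda(c)$ respectively; both define c.p.c.\ maps $C \to M(B)$.

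Next I would build the unitary $U$. Let $\{e_n\}$ be a sequential approximate identity for $B$ with $e_n \in \overline{b_n B b_n}$, say $e_n$ a strictly positive element of $\overline{b_n B b_n}$ of norm one (using that $B$, being $\sigma$-unital, makes each $\overline{b_n B b_n}$ $\sigma$-unital). Then $p := \sum_n e_n$ (strict sum) and, for the rearranged side, $p^\lambda := \sum_n e_{\lambda(n)}$ are both positive contractions in $M(B)$ that act as strict units on the ranges of $\Phi$ and $\Phi^\lambda$; in fact by reindexing $p^\lambda = p$ as an element of $M(B)$ once we note both are just $\sum_n e_n$ over all $n \in \N$. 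The point is that we want a partial isometry, and more precisely a unitary, in $M(B)$ that carries the $n$-th summand into the $\lambda(n)$-th position. Here I invoke almost stable rank one of $B$: within each $\overline{b_n B b_n}$, which is a nonunital $\sigma$-unital C*-algebra, and across the orthogonal family, one constructs for each $n$ a partial isometry $v_n$ with support in $\overline{b_n B b_n}$ and range in $\overline{b_{\lambda(n)} B b_{\lambda(n)}}$ implementing an isomorphism $\overline{b_n B b_n} \cong \overline{b_{\lambda(n)} B b_{\lambda(n)}}$ that is compatible with the approximate identities — this is where one uses that hereditary subalgebras of a nonunital C*-algebra with almost stable rank one are again "full of approximate units inside $\overline{GL(\widetilde{\cdot})}$", allowing one to pass from two orthogonal copies of the same hereditary subalgebra to an honest partial isometry intertwining them. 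Taking $U := \sum_n v_n$ (strict sum, which converges because the $v_n$ have orthogonal supports and orthogonal ranges and the system of quasidiagonal units guarantees strict convergence of such sums), one checks $U^* U = \sum_n v_n^* v_n = \sum_n (\text{support of }\overline{b_n B b_n})$ and $U U^* = \sum_n (\text{range, i.e. }\overline{b_{\lambda(n)} B b_{\lambda(n)}})$; since $\lambda$ is a bijection, both sums range over all of $\N$ and so $U^* U = U U^* = 1_{M(B)}$, i.e.\ $U$ is a unitary.

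Finally I would verify the intertwining identity. For fixed $c \in C$ and fixed $n$, $v_n^* \phi_{\lambda(n)}(c) v_n$ lies in $\overline{b_n B b_n}$ and equals the transport of $\phi_{\lambda(n)}(c)$ under the chosen isomorphism; but $\phi_{\lambda(n)}(c)$ is already the summand sitting in position $\lambda(n)$ of $\Phi^\lambda(c)$, and by construction $v_n$ was chosen precisely so that conjugation moves position $\lambda(n)$ to position $n$. More carefully: $U^*\Phi^\lambda(c)U = \big(\sum_m v_m^*\big)\big(\sum_k \phi_{\lambda(k)}(c)\big)\big(\sum_j v_j\big)$, and by orthogonality of supports and ranges only the diagonal terms $m=k=j$ survive, giving $\sum_n v_n^* \phi_{\lambda(n)}(c) v_n$. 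Since this is a strictly convergent sum of mutually orthogonal pieces, each lying in $\overline{b_n B b_n}$, and each such piece is (by the compatible choice of $v_n$) exactly $\phi_n(c)$, we conclude $U^*\Phi^\lambda(c)U = \sum_n \phi_n(c) = \Phi(c)$, which is the desired identity $U^*\left(\sum_{n=1}^\infty \phi_{\lambda(n)}(c)\right)U = \sum_{n=1}^\infty \phi_n(c)$ for all $c \in C$.

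The step I expect to be the main obstacle is the construction of the partial isometries $v_n$ intertwining $\overline{b_n B b_n}$ with $\overline{b_{\lambda(n)} B b_{\lambda(n)}}$ as genuine elements of $M(B)$ (rather than merely unitary equivalences of abstract C*-algebras) in a way that is uniformly compatible with the strict topology so that $U = \sum_n v_n$ converges strictly and is unitary. This is exactly where almost stable rank one of $B$ enters — it is what lets one upgrade the abstract isomorphism of the two orthogonal hereditary subalgebras to an implementing partial isometry in the ambient multiplier algebra — and the bookkeeping needed to see that $U^*U = UU^* = 1$ (i.e.\ that no "mass" is lost off the supports and ranges) uses the system-of-quasidiagonal-units hypothesis in an essential way. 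Everything else is, as the statement says, an exercise in the strict topology.
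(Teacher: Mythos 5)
The lemma is true, but your construction of $U$ has a genuine gap, and the correct argument is much simpler: one may take $U = 1$. The two sums are \emph{equal} as elements of $M(B)$, because a series $\sum_n x_n$ with $x_n \in \overline{b_n B b_n}$ for a system of quasidiagonal units converges \emph{unconditionally} in the strict topology. Indeed, put $x_n := \phi_n(c)$ and note that by orthogonality of the hereditary subalgebras one has $x_m^* x_n = 0 = x_m x_n^*$ for $m \neq n$, so for any finite $F \subset \N$,
\begin{equation*}
\Bigl\|\Bigl(\sum_{n\in F} x_n\Bigr) b\Bigr\|^2 \;=\; \Bigl\|\sum_{n\in F} b^* x_n^* x_n\, b\Bigr\| \;\le\; \Bigl\|\sum_{n > \min F - 1} b^* x_n^* x_n\, b\Bigr\|,
\end{equation*}
where the last step uses that the $b^* x_n^* x_n b$ are positive and therefore the sum increases in $F$. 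Since $\{x_n^*x_n\}$ is a bounded sequence with $x_n^*x_n\in\overline{b_nBb_n}$, condition (2) of Definition \ref{Dqdunit} gives that $\sum_n x_n^* x_n$ converges strictly, so the right-hand side tends to $0$ as $\min F \to \infty$. The symmetric estimate handles left multiplication by $b$. Thus the tails over \emph{any} finite index set far out are strictly small; a routine three-$\epsilon$ argument then shows that $\sum_n x_{\lambda(n)}$ converges strictly to the same limit as $\sum_n x_n$. This proves both assertions with $U = 1$, which is presumably why the paper calls it an exercise in the strict topology and offers no proof.

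Your proposal, by contrast, tries to build $U$ from partial isometries $v_n$ implementing isomorphisms $\overline{b_n B b_n}\cong\overline{b_{\lambda(n)} B b_{\lambda(n)}}$ inside $M(B)$, and this step cannot be carried out. First, there is no reason these hereditary subalgebras are isomorphic at all (nothing is assumed about the ``sizes'' of the $b_n$), and almost stable rank one certainly does not provide implementing partial isometries in $M(B)$ between unrelated orthogonal hereditary subalgebras --- that would amount to a Cuntz equivalence which is simply not in the hypotheses. Second, and more fundamentally, even if such $v_n$ existed you would need $v_n^*\phi_{\lambda(n)}(c)v_n = \phi_n(c)$ simultaneously for all $c$, but $\phi_n$ and $\phi_{\lambda(n)}$ are arbitrary unrelated c.p.c.\ maps, so no choice of $v_n$ can intertwine them. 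You glimpse the right idea when you observe that $\sum_n e_{\lambda(n)} = \sum_n e_n$ ``after reindexing''; that observation is exactly the unconditional convergence just described, and it already applies to the $\phi_n(c)$ themselves, making the entire $v_n$-machinery unnecessary.
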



\begin{df}

Let $\B$ be a $\sigma$-unital nonunital \CA.

An 
element $x \in M(\B)$ is said to be
\emph{diagonal},
if there exists a system $\{ b_n \}$ of quasidiagonal units in $\B$, and
there exists a bounded sequence $\{ x_n \}$ for which
$x_n \in \overline{b_n \B b_n}$ for all $n$, such that
$$x = \sum x_n.$$

An {{element}} $x \in M(\B)$ is said to be \emph{(generalized) quasidiagonal}
if $x$ is a sum of a 
diagonal 
element
with an element of $\B$.

A collection $\mathcal{S} \subseteq M(\B)$ is \emph{(generalized) quasidiagonal}
if there exists a single system of quasidiagonal units with respect to which all
the 
{{elements}} in $\mathcal{S}$ can be simultaneously (generalized) 
quasidiagonalized.
\end{df}

\begin{df}
Let $\A$ be a separable \CA\, and $\B$ a $\sigma$-unital nonunital \CA.

An extension $\phi : \A \rightarrow \C(\B)$ is said to be \emph{(generalized) 
quasidiagonal} if $\pi^{-1}(\phi(\A))$ is a (generalized) quasidiagonal 
collection of operators.
\label{df:QDExt}
\end{df}

For the rest of this paper, unless it is clearly false, when we write
``quasidiagonal", we mean generalized quasidiagonal.  


It is easy to prove the following analogue of a classical
quasidiagonality result: 

\begin{prop}
Let $\A$ be a separable \CA, and $\B$ a nonunital but $\sigma$-unital
\CA.

Suppose that $\phi : \A \rightarrow \C(\B)$ is a 
{{quasidiagonal}}
extension such that $\phi$ can be lifted to a c.p.c. map ${{\Phi': A \rightarrow M(\B)}}$
(and $\pi\circ \Phi'=\phi$).

Then there exist a system $\{ b_n \}$ of quasidiagonal units, and,  for {{each}} $n$,
a c.p.c. map $\phi_n : \A \rightarrow \overline{b_n \B b_n}$ such that
$
\phi=\pi\circ \Phi,
$
where 
$\Phi : \A \rightarrow M(\B)$
is the c.p.c. map defined by 
$$\Phi := \sum \phi_n.$$

Moreover, $\{ \phi_n \}$ is asymptotically multiplicative, i..e., for all
$a, b \in \A$,
$$\| \phi_n(ab) - \phi_n(a) \phi_n(b) \| \rightarrow 0\,\,\,  {\rm as}\,\ n\to\infty.$$

In the above setting, we often write $\Phi = \bigoplus_{n=1}^{\infty} \phi_n$.
\label{prop:QDExt} 
\end{prop}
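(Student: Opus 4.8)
The plan is to reduce the statement to the standard construction of a quasidiagonal lift as a strictly convergent block sum. Start with the given c.p.c. lift $\Phi' : \A \to M(\B)$ of $\phi$, and the hypothesis that $\pi^{-1}(\phi(\A))$ is a (generalized) quasidiagonal collection: there is a single system $\{b_n\}$ of quasidiagonal units in $\B$ so that every element of $\pi^{-1}(\phi(\A))$ is, modulo $\B$, a strict sum $\sum_n x_n$ with $x_n \in \overline{b_n \B b_n}$. First I would fix an increasing sequence of finite subsets $\F_1 \subseteq \F_2 \subseteq \cdots$ with dense union in $\A$, and a positive sequence $\ep_n \to 0$. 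Using that $\B$ has a quasicentral approximate unit relative to itself (and that the $b_n$ can be arranged so that partial sums $e_N := \sum_{n \le N} b_n$ form an approximate unit of $\B$, after replacing $b_n$ by suitable pieces and relabeling), I would pass to a subsequence $N_1 < N_2 < \cdots$ and set $f_k := e_{N_k} - e_{N_{k-1}}$ (with $e_{N_0} = 0$); then each $f_k$ is supported on the block $\overline{(\sum_{N_{k-1} < n \le N_k} b_n)\,\B\,(\cdots)}$, which is again of the form $\overline{b_k' \B b_k'}$ for a quasidiagonal unit $b_k'$, after renaming. The key point is the standard ``cutting down'' estimate: since $\Phi'$ lifts $\phi$ and $\phi(\A)$ is quasidiagonalizable against $\{b_n\}$, the commutators $[\Phi'(a), e_N]$ lie, up to small error, in $\B$; choosing the $N_k$ growing fast enough we can arrange $\|[\Phi'(a), f_k]\| < \ep_k$ and $\|f_k \Phi'(ab) f_k - f_k \Phi'(a) f_k \cdot f_k \Phi'(b) f_k\| < \ep_k$ for all $a,b \in \F_k$.

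Next I would define $\phi_k : \A \to \overline{b_k' \B b_k'}$ by $\phi_k(a) := f_k^{1/2}\,\Phi'(a)\,f_k^{1/2}$ (or $g_k \Phi'(a) g_k$ for a suitable normalized element $g_k$ supported in the $k$-th block with $g_k f_k = f_k$), so that $\phi_k$ is c.p.c. and takes values in the prescribed hereditary subalgebra. Then $\Phi := \sum_k \phi_k$ converges strictly in $M(\B)$ by property (2) of a system of quasidiagonal units (the summands $\phi_k(a)$ are bounded and live in the mutually orthogonal blocks $\overline{b_k' \B b_k'}$), so $\Phi$ is a well-defined c.p.c. map $\A \to M(\B)$. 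The asymptotic multiplicativity $\|\phi_k(ab) - \phi_k(a)\phi_k(b)\| \to 0$ follows from the commutator estimates above: on a fixed finite set $\F_m$, for $k \ge m$ we have $\ep_k$-bounds, and density of $\bigcup \F_m$ handles the rest by an $\ep/3$ argument.

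Finally I would check $\pi \circ \Phi = \phi$. One has $\Phi(a) - \Phi'(a) = \sum_k (f_k^{1/2}\Phi'(a)f_k^{1/2}) - \Phi'(a)$; using $\sum_k f_k = 1_{M(\B)}$ strictly together with the quasicentrality arrangement $\|[\Phi'(a), e_N]\| \to 0$ in norm \emph{modulo} $\B$, one sees that $\Phi(a) - \Phi'(a) \in \B$ for every $a$, whence $\pi(\Phi(a)) = \pi(\Phi'(a)) = \phi(a)$. The main obstacle — and the step I expect to require the most care — is the simultaneous arrangement: producing a \emph{single} system $\{b_k'\}$ of quasidiagonal units (obtained by regrouping the original $\{b_n\}$) against which $\Phi'(a)$ is approximately block-diagonal \emph{for all} $a$ in a dense set at once, with errors summable along $k$; this is exactly where one exploits that $\phi$ is quasidiagonal as a collection (not merely elementwise) plus separability of $\A$, combined with a diagonal-sequence extraction. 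Everything else is a routine translation of the classical Voiculescu-type quasidiagonalization into the corona/multiplier setting, and the convergence bookkeeping is handled cleanly by Definition \ref{Dqdunit}(2) and Lemma \ref{Lpermutation}-style strict-topology arguments.
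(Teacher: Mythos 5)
Your overall plan --- regroup the given system $\{b_n\}$ of quasidiagonal units into larger blocks, define $\phi_k$ as a cut-down of $\Phi'$ by elements supported in the $k$-th block, and use a diagonal extraction to control the errors --- is the right one; note that the paper does not reproduce a proof of this proposition, citing Proposition 2.5 of \cite{NgRobinQuasidiagonal}, and the argument there follows the same template. However, two of the specific mechanisms you invoke do not hold up as stated, and it is worth isolating what actually drives the estimates.

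First, the assertion that ``the commutators $[\Phi'(a), e_N]$ lie, up to small error, in $\B$'' is vacuous: since $e_N \in \B$ and $\B$ is an ideal, $[\Phi'(a), e_N]$ is \emph{exactly} in $\B$. The substantive claim you need is that these commutators are small \emph{in norm}, for $a$ ranging over a finite set depending on $N$. Second, this smallness does not come from an Arveson-type quasicentral approximate unit --- such an approximate unit would not respect the block structure $\overline{b_n \B b_n}$, and the hypothesis on $\phi$ is precisely quasidiagonality against the \emph{given} system $\{b_n\}$, which is what you must exploit. Concretely: quasidiagonality gives, for each $a_j$ in a countable dense set of $\A$, a decomposition $\Phi'(a_j)=D_j+c_j$ with $D_j=\sum_n x_n^{(j)}$ diagonal against $\{b_n\}$ and $c_j\in\B$. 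After regrouping into blocks indexed by $k$, choose $g_k$ in the $k$-th block hereditary subalgebra with $\|g_k\|\le 1$ that is an $\ep_k$-approximate local unit for the finitely many regrouped diagonal pieces $\tilde{x}_k^{(j)}$ with $j\le k$; this gives $\|[g_k,\tilde{x}_k^{(j)}]\|<2\ep_k$, and together with $\|g_k c_j g_k\|\to 0$ (from strict convergence of $g_k\to 0$ and $c_j\in\B$, accelerated by the regrouping) one obtains the commutator control you wanted, now from the right source. Defining $\phi_k:=g_k\Phi'(\,\cdot\,)g_k$ then lands in the prescribed hereditary subalgebra, is c.p.c., and the asymptotic multiplicativity follows because $\Phi'(ab)-\Phi'(a)\Phi'(b)\in\B$ is annihilated in the limit by the $g_k$'s. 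Third, there is no reason to arrange $\sum_k f_k=1_{M(\B)}$ strictly, and attempting to do so by replacing $\{b_n\}$ risks destroying the very block structure against which quasidiagonality is assumed; fortunately it is not needed. To see $\Phi(a)-\Phi'(a)\in\B$ directly, observe that on the dense set (and with the summable errors arranged by the diagonal extraction) both $\sum_k g_k\tilde{x}_k^{(j)}g_k - D_j$ and $\sum_k g_k c_j g_k - c_j$ are norm-convergent series in $\B$; continuity of $\Phi$ and $\Phi'$ and density then give the claim for all $a\in\A$.
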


\begin{proof}
This is Proposition 2.5 of \cite{NgRobinQuasidiagonal}.
\end{proof}

\begin{thm}\label{Tappt=qd}
Let $A$ be a separable \CA\, and $B$ be a $\sigma$-unital nonunital
simple \CA.
Then 
the pointwise norm limit of quasidiagonal extensions of $B$ by $A$
 is quasidiagonal.

In particular,
let $h: A\to \C(B)$ be {{an essential}} quasidiagonal {{extension}}
and 
let $\sigma: A\to \C(B)$ be an essential extension.
If there exists a sequence of unitaries $U_n\in M(B)$ such 
that
\beq
\lim_{n\to\infty}\pi(U_n)^* h(a)\pi(U_n)=\sigma(a)\rforal a\in A.
\eneq
Then $\sigma$ is an quasidiagonal extension.

If, in addition, $B$ has continuous scale and 
the extension $h$ is nonunital, then the unitaries
can be taken to simply be in $\C(\B)$. 
\end{thm}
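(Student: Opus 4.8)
The plan is to reduce everything to the first sentence — that the pointwise norm limit of quasidiagonal extensions is quasidiagonal — and prove that sentence directly, using a diagonal/telescoping argument over a single system of quasidiagonal units. First I would set up the reduction: given a sequence $\sigma_k \colon A \to \C(B)$ of quasidiagonal essential extensions with $\sigma_k(a) \to \sigma(a)$ in norm for all $a$ in a dense set (it suffices to check convergence on a countable dense subset of $A$, since all maps are contractive), I want to produce a single system $\{b_n\}$ of quasidiagonal units in $B$ and c.p.c.\ maps $\psi_n \colon A \to \overline{b_n B b_n}$ such that $\pi \circ (\sum_n \psi_n)$ agrees with $\sigma$ modulo the compact ideal $B$, which is exactly the assertion that $\pi^{-1}(\sigma(A))$ is generalized quasidiagonal. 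For the ``in particular'' clause, the hypothesis $\pi(U_n)^* h(a) \pi(U_n) \to \sigma(a)$ means $\sigma$ is a pointwise norm limit of the extensions $a \mapsto \pi(U_n)^* h(a) \pi(U_n)$, each of which is unitarily equivalent to the quasidiagonal extension $h$ and hence quasidiagonal (conjugating the system of quasidiagonal units by $U_n$); so the ``in particular'' statement follows immediately from the first sentence. The last sentence, about replacing the $U_n \in M(B)$ by unitaries in $\C(B)$, is where Proposition~\ref{W=S} enters: if $B$ has continuous scale and $h$ is nonunital, then so is $\sigma$ (as a pointwise limit of nonunital extensions — one checks the complement survives in the limit using Lemma~\ref{lem:largecomplement}), and Proposition~\ref{W=S} says that for nonunital essential extensions into a purely infinite simple corona, approximate unitary equivalence by multiplier unitaries can be upgraded to approximate unitary equivalence by corona unitaries $\pi(U_n)$, which are literally unitaries in $\C(B)$.

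For the core statement, the key steps in order are as follows. First, fix a countable dense subset $\{a_j\}$ of $A$ and, using a lift of each $\sigma_k$ to a c.p.c.\ map and Proposition~\ref{prop:QDExt}, write each $\sigma_k = \pi \circ \bigoplus_{n} \phi^{(k)}_n$ with respect to a system $\{b^{(k)}_n\}_n$ of quasidiagonal units, where the $\phi^{(k)}_n$ are asymptotically multiplicative in $n$. Second, pass to a subsequence so that $\|\sigma_{k+1}(a_j) - \sigma_k(a_j)\| < 2^{-k}$ for $j \le k$. Third — the main combinatorial step — build a single system $\{b_n\}$ of quasidiagonal units by a telescoping procedure: use the fact (cf.\ the strict-topology manipulations underlying Lemma~\ref{Lpermutation} and the remark that every $\sigma$-unital nonunital \CA\ has a system of quasidiagonal units) that one can arrange, after unitary conjugation in $M(B)$, for an initial block of the $b^{(k)}_n$'s to dominate the error element $\sigma_{k+1} - \sigma_k$ (an element of $B$, so supported in a corner $\overline{b B b}$ for $b$ a single quasidiagonal unit), and then stitch together: define $\psi_n$ on blocks so that $\bigoplus_n \psi_n$ agrees with $\bigoplus_n \phi^{(k)}_n$ on the first $k$ blocks up to a controlled error, and the errors are summable and lie in $B$. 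Fourth, verify that $\Phi := \sum_n \psi_n$ converges strictly (because $\{b_n\}$ is a genuine system of quasidiagonal units), that $\pi \circ \Phi$ is a \hm\ (using asymptotic multiplicativity of the $\psi_n$), and that $\pi \circ \Phi = \sigma$ (the accumulated differences lie in $B$, hence vanish in $\C(B)$); this exhibits $\pi^{-1}(\sigma(A))$ as generalized quasidiagonal.

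I expect the main obstacle to be the third step: aligning the distinct systems $\{b^{(k)}_n\}_k$ of quasidiagonal units into one common system while keeping track of which operators land in which corner. The subtlety is that the direct-sum decompositions produced by Proposition~\ref{prop:QDExt} for different $k$ are a priori incompatible, and the almost-stable-rank-one hypothesis on $B$ (available in our application, where $B = \W$) is what lets one conjugate by multiplier unitaries to absorb one finite block of quasidiagonal units into the tail of another (this is the content one extracts from Lemma~\ref{Lpermutation} and its proof). The bookkeeping is standard ``intertwining through quasidiagonal units,'' but it must be done carefully so that the telescoping errors genuinely lie in $B$ (not merely in the corona) and are strictly summable; once that is in place, the remaining verifications in step four are routine. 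The nonunitality propagation in the last sentence is an easy separate check: a pointwise norm limit of nonunital essential extensions has nonzero complement by a short argument with Lemma~\ref{lem:largecomplement}, after which Proposition~\ref{W=S} applies verbatim.
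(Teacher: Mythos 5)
Your top-level reductions are correct: the \textit{in particular} clause does follow from the first sentence, since conjugation by $\pi(U_n)$ with $U_n\in M(B)$ sends a quasidiagonal extension to a quasidiagonal extension (one conjugates the system of quasidiagonal units by $U_n$); and the last sentence is indeed handled by Proposition~\ref{W=S}, which upgrades approximate unitary equivalence by arbitrary unitaries in $\C(B)$ to approximate unitary equivalence by lifted unitaries $\pi(U_n)$, $U_n\in M(B)$ (note you have this direction inverted in your write-up --- W=S goes \emph{from} corona unitaries \emph{to} lifted ones, not the other way). The paper's own proof of the first sentence is a citation to Theorem~3.7 of \cite{NgRobinQuasidiagonal}, so all the substantive work in your proposal is concentrated in your step~3.

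Step~3 contains a genuine gap, anchored by a concrete false claim. You describe the telescoping error $\sigma_{k+1}-\sigma_k$ as ``an element of $B$, so supported in a corner $\overline{bBb}$.'' It is not: for each fixed $a$, the difference $\sigma_{k+1}(a)-\sigma_k(a)$ is a small element of $\C(B)=M(B)/B$, and the difference of any two lifts $\Sigma_{k+1}(a)-\Sigma_k(a)\in M(B)$ is merely within $2^{-k}$ of $B$ in the quotient norm; it lies in $B$ only if $\sigma_{k+1}(a)=\sigma_k(a)$. This is precisely what makes the stitching delicate. Because the errors are only \emph{approximately} in $B$, a naive telescoping produces a diagonal $\Phi=\sum\psi_n$ with $\pi\circ\Phi$ merely \emph{close} to $\sigma$, not equal to it, and so does not exhibit $\pi^{-1}(\sigma(A))$ as a generalized quasidiagonal collection. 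The fix is to abandon direct block-matching and work instead with the equivalent quasicentral-approximate-identity characterization of quasidiagonality: one seeks an approximate identity $\{e_m\}$ of $B$ with $e_{m+1}e_m=e_m$ and $\|[e_m,\Sigma(a_j)]\|\to 0$ on a dense set $\{a_j\}$, where $\Sigma$ lifts $\sigma$. There the diagonal argument is clean because elements of $B$ automatically asymptotically commute with any approximate identity, so the small-but-nonzero corona discrepancies are harmless; one estimates $\|[e^{(k)}_n,\Sigma(a_j)]\|\le\|[e^{(k)}_n,\Sigma_k(a_j)]\|+2\,\mathrm{dist}(\Sigma(a_j)-\Sigma_k(a_j),B)+o(1)$ and then diagonalizes over $k$ and $n$. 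Separately, your appeal to almost stable rank one to align two systems of quasidiagonal units by unitary conjugation is not available: the theorem is stated for an arbitrary $\sigma$-unital nonunital simple $B$, with no rank hypothesis, so a proof that requires it does not establish the theorem in the generality in which it is stated and later used.
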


\begin{proof}
This is Theorem 3.7 of \cite{NgRobinQuasidiagonal} together with
the present paper's Proposition \ref{W=S}.

\end{proof}

{Quasidiagonality was first defined by Halmos \cite{Halmos10} in 1970.
There is a long history of K-theoretical characterizations of quasidiagonality,
going back to BDF's observation that an essentially normal operator is
quasidiagonal if and only if it induces the zero element in 
$Ext$ (\cite{BDFOriginal}).  BDF were essentially the first to recognize that
quasidiagonal extensions might be approached by K-theory, and another one of
their fundamental results was that (in their setting) 
limits of trivial extensions correspond to quasidiagonal 
extensions (\cite{BDFAnnals}).  
L. G. Brown pursued this further in \cite{BrownUCT2}.  Further developments
in the study of quasidiagonality can be found in 
\cite{SalinasQuasi}, \cite{VoiculescuQuasi1},  and \cite{VoiculescuQuasi2}.
Schochet proved that stable quasidiagonal extensions are the same as limits
of stable trivial extensions and can be characterized by $Pext(K_*(A), K_*(B))$
if $A$ is assumed to be nuclear, quasidiagonal relative to $B$ and satisfying
the
University Coefficient Theorem (\cite{SchochetQuasi}).  
More recent 
developments in the general nonstable case, with
additional regularity assumptions on $B$, and a historical summary,
can be found in \cite{LinExtQuasidiagonal}.  
We will be implicitly using ideas with its origins in the above paper.
Starting with the next result, we will be presenting various K theoretic
conditions and characterizations for quasidiagonality in our setting.
A good example of this is Proposition \ref{QuasidiagonalKTh}.}

\begin{thm}
Let $\A$ be a separable nuclear \CA\,  {{which satisfies the UCT}} and 
let $\B$ be a nonunital separable simple \CA\, with continuous scale.
Suppose that there exists a nonunital essential quasidiagonal extension
$\sigma : \A \rightarrow \C(\B)$ such that $KL(\sigma) = 0$.
If $\phi : A \rightarrow \C(B)$ is a nonunital essential
extension such that $KL(\phi) = 0$
then $\phi$ is quasidiagonal.
\label{thm:SufficientConditionQD}
\end{thm}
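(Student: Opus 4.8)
The plan is to reduce everything to the hypothesis via the group structure on $\Ext^u(\A,\B)$ and the stability of quasidiagonality under approximate unitary equivalence. First I would invoke Theorem \ref{thm:ExtIsAGroup}: since $\A$ is separable nuclear and $\B$ is $\sigma$-unital simple with continuous scale, $\Ext^u(\A,\B)$ is an abelian group under BDF sum, and by Remark \ref{Twosums} I may freely use orthogonal direct sums in place of BDF sums for nonunital essential extensions. The given $\sigma$ has $KL(\sigma)=0$, hence $KK(\sigma)=0$ as well (over a separable nuclear UCT algebra, the relevant $KK$ and $KL$ classes agree on the pertinent invariant, or more directly $KL$ is a quotient of $KK$ and we only need the $KL$-level statement below). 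The point is that $[\sigma]$ and $[\phi]$ should represent the same class in $\Ext^u(\A,\B)$ once we know that the $KL$-invariant is a complete invariant for unitary equivalence — but that is exactly one of the main theorems of the paper (the analogue of Proposition \ref{W=S}-type rigidity combined with UCT classification), so instead I would argue more hands-on.

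The cleaner route: I would show $\phi \sim^u \phi \oplus \sigma'$ and $\sigma \sim^u \sigma \oplus \phi'$ for suitable nonunital essential extensions $\sigma', \phi'$ absorbing the other, using Theorem \ref{Tgroup}. Concretely, apply Theorem \ref{Tgroup} with the pair $(\sigma,\phi)$: since $\sigma$ is nonunital, there is a nonunital monomorphism $\phi_0$ with $\sigma \sim^u \phi \oplus \phi_0$. Now $\sigma$ is quasidiagonal, so $\phi \oplus \phi_0$ is quasidiagonal, and since a direct summand of a quasidiagonal extension is quasidiagonal (restrict the system of quasidiagonal units via the corner projection), it would follow immediately that $\phi$ is quasidiagonal — \emph{provided} the summand-of-quasidiagonal fact holds. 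That last fact is the crux: if $\psi_1 \oplus \psi_2$ is quasidiagonal with respect to a system $\{b_n\}$, compress by $E_1 = \diag(1,0)$; one needs to check that $\pi^{-1}(\psi_1(\A))$ is still quasidiagonalized by an honest system of quasidiagonal units in $\B$, not merely in $E_1 M(\B) E_1$. This should follow because $E_1$ lifts (after the usual adjustments, since $\B$ has almost stable rank one / real rank considerations on the corona) to a multiplier projection, and the corner $E_1\B E_1$ is again a $\sigma$-unital nonunital simple \CA\ with continuous scale, so Lemma \ref{Lpermutation} / Proposition \ref{prop:QDExt}-style bookkeeping applies.

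However, Theorem \ref{Tgroup} requires $\sigma$ to have a nonunital \emph{monomorphism} decomposition and absorbs an arbitrary $\psi$; what I actually want is to absorb $\phi$ into $\sigma$, which is fine, but I get $\sigma\sim^u \phi\oplus\phi_0$ with $\phi_0$ a monomorphism of unknown $KL$-class. To make this bite I would also run Theorem \ref{Tgroup} the other way: $\phi$ nonunital, so $\sigma \sim^u \phi \oplus \phi_0$ and also I can write $\phi \sim^u \sigma \oplus \sigma_0$? — no, that direction needs $\phi$ quasidiagonal which is what we are proving. So the honest argument is: from $\sigma \sim^u \phi \oplus \phi_0$ and quasidiagonality of $\sigma$, deduce $\phi \oplus \phi_0$ is quasidiagonal (Theorem \ref{Tappt=qd} says quasidiagonality passes along unitary equivalence), then deduce $\phi$ alone is quasidiagonal by compressing. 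The restriction-to-a-summand step is where I would spend the real effort; everything else is an invocation.

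\medskip
\noindent\textbf{Proof proposal.}
The strategy is to absorb $\phi$ into the given quasidiagonal $\sigma$ and then pass quasidiagonality to the summand. Since $\A$ is separable nuclear and $\B$ is $\sigma$-unital simple with continuous scale, and since $\sigma$ is a nonunital monomorphism, Theorem \ref{Tgroup} provides a nonunital monomorphism $\phi_0 : \A \to \C(\B)$ with
\[
\sigma \sim^u \phi \oplus \phi_0 .
\]
By hypothesis $\sigma$ is a nonunital essential quasidiagonal extension, so by Theorem \ref{Tappt=qd} (quasidiagonality is preserved under unitary equivalence, indeed under the weaker approximate unitary equivalence) the extension $\phi \oplus \phi_0$ is quasidiagonal. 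It remains to show that a direct summand of a quasidiagonal extension is again quasidiagonal. Let $E_1 = \diag(1_{\C(\B)},0)$ and $E_2 = \diag(0,1_{\C(\B)})$ in $M_2(\C(\B))$; note $\phi = E_1(\phi\oplus\phi_0)(\cdot)E_1$ under the obvious identification of $E_1 M_2(\C(\B)) E_1$ with $\C(\B)$. Choose, via Proposition \ref{prop:QDExt} applied to $\phi\oplus\phi_0$ (both $\phi,\phi_0$ are nonunital essential and $\A$ is separable, so a c.p.c.\ lift exists), a system $\{b_n\}$ of quasidiagonal units in $M_2(\B)$ and c.p.c.\ maps $\psi_n : \A \to \overline{b_n M_2(\B) b_n}$, asymptotically multiplicative, with $\phi\oplus\phi_0 = \pi\circ\bigl(\sum_n \psi_n\bigr)$. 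Compressing by $E_1$ (which lifts to the multiplier projection $\diag(1_{M(\B)},0)$), set $b_n' := E_1 b_n E_1$ and $\psi_n' := E_1 \psi_n(\cdot) E_1$; then $\{b_n'\}$ is a system of quasidiagonal units in the corner $E_1 M_2(\B) E_1 \cong \B$, which is again $\sigma$-unital nonunital simple with continuous scale, the $\psi_n'$ are c.p.c.\ with $\psi_n'(\A)\subseteq \overline{b_n' \B b_n'}$, and $\phi = \pi\circ\bigl(\sum_n \psi_n'\bigr)$. Hence $\pi^{-1}(\phi(\A))$ is generalized quasidiagonal, i.e.\ $\phi$ is a quasidiagonal extension. \qed

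\medskip
\noindent The delicate point, as noted, is the compression step: one must verify that $\{E_1 b_n E_1\}$ is an \emph{honest} system of quasidiagonal units (conditions (1)--(2) of Definition \ref{Dqdunit}) inside $\B$, not merely inside the corner viewed abstractly; this uses that the strict convergence in (2) is inherited by compressions since $\diag(1_{M(\B)},0)$ is a multiplier projection, and that orthogonality of the $b_n$ passes to the $E_1 b_n E_1$. If any $E_1 b_n E_1$ vanishes one simply discards that index and re-indexes, appealing to Lemma \ref{Lpermutation} to absorb the reordering.
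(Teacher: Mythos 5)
Your reduction hinges on the claim that a direct summand of a quasidiagonal extension is again quasidiagonal, and this claim is false in the very setting of the paper. Indeed, take $A$ with ${\rm Hom}(K_0(A),\mathbb{R})\neq 0$ and $\B=\W$. By Theorem \ref{MTnoncom}(2) there is an essential extension $\tau_1$ with $KK(\tau_1)=x\neq 0$; take $\tau_2$ with $KK(\tau_2)=-x$. Then $KK(\tau_1\dot{+}\tau_2)=0$, so by Proposition \ref{QuasidiagonalKTh} the BDF sum $\tau_1\dot{+}\tau_2$ is quasidiagonal, whereas $\tau_1$ is \emph{not} quasidiagonal, again by Proposition \ref{QuasidiagonalKTh}, since $KK(\tau_1)\neq 0$. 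So you cannot deduce quasidiagonality of $\phi$ from quasidiagonality of $\phi\oplus\phi_0$ alone: the step loses the crucial information that $KL(\phi)=0$, and any argument that repairs it by reinserting $KL(\phi_0)=0$ ends up needing the implication ``$KL=0\Rightarrow$ quasidiagonal'', which is exactly what you are trying to prove.

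The specific compression you propose also fails at the very first checkpoint: if $b_m\perp b_n$ in $M_2(\B)$ it does \emph{not} follow that $E_1b_mE_1\perp E_1b_nE_1$. Already in $M_2(\mathbb{C})$, the rank-one projections $b_1=\tfrac12\left(\begin{smallmatrix}1&1\\1&1\end{smallmatrix}\right)$ and $b_2=\tfrac12\left(\begin{smallmatrix}1&-1\\-1&1\end{smallmatrix}\right)$ satisfy $b_1b_2=0$ while $E_1b_1E_1=E_1b_2E_1=\tfrac12\left(\begin{smallmatrix}1&0\\0&0\end{smallmatrix}\right)$; the system of quasidiagonal units produced by Proposition \ref{prop:QDExt} may be entirely skew to $E_1$, so neither condition (1) nor the asymptotic multiplicativity survives the corner cut $E_1\psi_n(\cdot)E_1$. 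The paper's actual route avoids all of this: it unitizes $\phi$ and $\sigma$ to unital monomorphisms $\phi^+,\sigma^+:A^+\to\C(\B)$, observes $KL(\phi^+)=KL(\sigma^+)$, invokes the full-extension classification (\cite{LinExtQuasidiagonal}, Theorem~3.7) to conclude $\phi^+\sim_{a.u.}\sigma^+$, hence $\phi\sim_{a.u.}\sigma$, and then applies Theorem \ref{Tappt=qd} to transfer quasidiagonality along the approximate unitary equivalence. That last theorem (stability of quasidiagonality under a.u.e.) is the tool you want here, not a summand-restriction lemma.
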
 

\begin{proof}
Let $$A^+ := \begin{cases}
A^{\sim} & \makebox{  if  } A \makebox{  is nonunital }\\ 
A \oplus \mathbb{C} & \makebox{  otherwise.  }
\end{cases}$$

Let 
$${\phi}^+, {\sigma}^+ : A^{+} \rightarrow \C(\B)$$
be the unique unital monomorphisms that 
extend $\phi$ and $\sigma$ respective.
Then
$$KL({\phi}^+) = KL({\sigma}^+).$$

Hence, by \cite{LinExtQuasidiagonal} Theorem 3.7, ${\phi}^+$
and ${\sigma}^+$ are approximately unitarily equivalent. 
Consequently, $\phi$ and $\sigma$ are approximately unitarily equivalent.
It follows from 
Theorem \ref{Tappt=qd} that $\phi$ is quasidiagonal.  
\end{proof}

\section{Stable uniqueness}

  Moving towards a nonunital stable uniqueness result, we next
provide some definitions and results from \cite{ElliottGongLinNiu} (see also
\cite{GongLinNonunitalAUE}).

\begin{df} (\cite{ElliottGongLinNiu} Definition 3.13.  See also
\cite{GongLinNonunitalAUE} Definition 7.7)

Let $r_0, r_1 : \mathbb{N} \rightarrow \mathbb{Z}_+$ be maps,
$T : \mathbb{N} \times \mathbb{N} \rightarrow \mathbb{Z}_+$ be a map,
and $s, R \geq 1$ be integers.  We say that a \CA\, $\D$ belongs to
the class $\textbf{C}_{(r_0, r_1, T, s, R)}$ if all of the
the following statements hold:
\begin{enumerate}
\item[(a)] For any integer $n \geq 1$ and any pair of projections
$p, q \in \M_n(\widetilde{\D})$ with 
$$[p] = [q]\,\,\,{{{\rm in}}}\,\,\,K_0(\widetilde{\D}),$$
$$p \oplus 1_{\M_{r_0(n)}(\widetilde{\D})}
\sim^u q \oplus 1_{\M_{r_0(n)}(\widetilde{\D})}.$$
\item[(a')] For any integer $n \geq 1$ and any pair of projections
$p, q \in \M_n(\widetilde{\D})$, if $$[p] - [q] \geq 0,$$ then there exists
a projection $p' \in \M_{n + r_0(n)}(\widetilde{\D})$ such that
$$p' \leq p \oplus 1_{\M_{r_0(n)}(\widetilde{\D})}{{\andeqn}} 
p' \sim q \oplus 1_{\M_{r_0(n)}(\widetilde{\D})}.$$
\item[(b)] For any $n, k \geq 1$ and any $x \in K_0(\widetilde{\D})$ such that
$$-n[1_{\widetilde{\D}}] \leq k x \leq n [1_{\widetilde{\D}}],$$
$$-T(n,k) [1_{\widetilde{\D}}] \leq x \leq T(n,k) [1_{\widetilde{\D}}].$$
\item[(c)]  The canonical map
$$U(\M_s(\widetilde{\D}))/U_0(\M_s(\widetilde{\D})) \rightarrow K_1(\D)$$
is surjective.
\item[(d)] For any integer $n \geq 1$, if
$u \in U( \M_n(\widetilde{\D}))$ and $[u] = 0 \makebox{  in  } K_1(\D),$ then
$$u \oplus 1_{\M_{r_1(n)}(\widetilde{\D})} \in
U_0(\M_{n + r_1(n)}(\widetilde{\D})).$$
\item[(e)] For all $m \geq 1$, the exponential rank
$$cer(\M_m(\widetilde{\D})) \leq R.$$
\end{enumerate}
\end{df}

\begin{prop}\label{Wprop}
Let $B$ be a separable simple stably finite  \CA \makebox{ }  with continuous scale,
finite nuclear dimension, UCT, unique tracial state and  
$K_0(B) = ker \rho_B$.
Let $T: \N\times \N\to \ZI_+$ defined by
$T(n,k)=n.$
Then $$B\in {\bf C}_{0,0, 1, T, 7}.$$
\end{prop}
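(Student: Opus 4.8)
The plan is to verify the five conditions (a)--(e) (together with (a')) in the definition of $\textbf{C}_{(r_0,r_1,T,s,R)}$ with $r_0 \equiv 0$, $r_1 \equiv 0$, $s = 1$, $T(n,k) = n$, and $R = 7$, using the hypotheses that $B$ is separable, simple, stably finite, has continuous scale, finite nuclear dimension, satisfies the UCT, has a unique tracial state, and has $K_0(B) = \ker\rho_B$. The key point driving conditions (a), (a') and (d) is that $K_0(B)$ and $K_1(B)$ behave like groups with no ``order obstruction'' and that $B$ absorbs the Jiang--Su algebra (indeed, by \cite{ElliottGongLinNiu}, such a $B$ is isomorphic to $\W$, which is $\mathcal Z$-stable, has almost stable rank one, strict comparison, and stable rank that forces the various $K$-theoretic comparison statements to hold with no stabilization).

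First I would handle (a) and (a'): since $B$ has a unique tracial state $\tau$ and $K_0(B) = \ker\rho_B$, the trace $d_\tau$ on projections over $\widetilde B$ is determined entirely by the component in $K_0(\widetilde B)/K_0(B) \cong \mathbb Z$; combined with strict comparison of positive elements (which holds because $B \cong \W$ has strict comparison) and the fact that $B$ has almost stable rank one, any two projections $p,q \in \M_n(\widetilde B)$ with $[p] = [q]$ in $K_0(\widetilde B)$ are actually Murray--von Neumann equivalent, hence unitarily equivalent in $\M_n(\widetilde B)$ itself — so $r_0(n) = 0$ works. The same comparison argument, applied to $[p] - [q] \geq 0$, produces the required subprojection $p' \leq p$ with $p' \sim q$, giving (a') with $r_0 \equiv 0$. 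For (b): here $1_{\widetilde B}$ generates the free part of $K_0(\widetilde B) \cong K_0(B) \oplus \mathbb Z$, and since $K_0(B) = \ker\rho_B$ and the unique trace pairs with $[1_{\widetilde B}]$ as $1$, the inequality $-n[1_{\widetilde B}] \leq kx \leq n[1_{\widetilde B}]$ forces the $\mathbb Z$-component $m$ of $x$ to satisfy $|km| \leq n$, hence $|m| \leq n$ (as $k \geq 1$), i.e. $-n[1_{\widetilde B}] \leq x \leq n[1_{\widetilde B}]$ after adding an element of $\ker\rho_B$ which contributes nothing to the order; thus $T(n,k) = n$ suffices.

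Next, (c) and (d) concern $K_1$. Since $K_1(B) = K_1(\W) = 0$, condition (c) is vacuous (the map into the trivial group is trivially surjective) and for (d) I must show that any unitary $u \in U(\M_n(\widetilde B))$ with $[u] = 0$ in $K_1(B) = 0$ — i.e. any unitary over $\widetilde B$ — already lies in $U_0(\M_n(\widetilde B))$, so that $r_1(n) = 0$ works. This follows because $\widetilde B$ has stable rank one (as $B$ has almost stable rank one and $K_0(B) = \ker\rho_B$ forces the unitization to have stable rank one) together with $K_1(\widetilde B) = K_1(B) = 0$: over a stable-rank-one algebra with trivial $K_1$, $U(\M_n) = U_0(\M_n)$. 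Finally, for (e), I would invoke the exponential rank computation for $\W$ (and its unitization and matrix amplifications): $\W \cong$ the Razak algebra is an inductive limit of Razak building blocks, and one checks — or cites from \cite{ElliottGongLinNiu} or the stable-rank-one/$\mathcal Z$-stability theory — that $\mathrm{cer}(\M_m(\widetilde B)) \leq 1 + \varepsilon$ for all $m$, which in particular is $\leq 7$; more conservatively, any $\mathcal Z$-stable algebra of stable rank one has exponential rank bounded by a universal small constant well below $7$.

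The main obstacle I anticipate is condition (e), the exponential rank bound: conditions (a)--(d) are essentially bookkeeping with the known $K$-theory of $\W$ and its strict comparison, but pinning down $\mathrm{cer}(\M_m(\widetilde B)) \leq 7$ uniformly in $m$ requires either citing a specific structural result about $\W$ (e.g. that it has almost stable rank one and hence, by Lin's exponential rank estimates for such algebras, $\mathrm{cer} \leq 1 + \varepsilon$) or redoing the inductive-limit estimate over Razak building blocks. I would cite the relevant result from \cite{ElliottGongLinNiu} rather than reprove it. A secondary subtlety is being careful that all the comparison statements are genuinely \emph{unstable} (no $\oplus 1_{\M_{r_0(n)}}$ needed), which is exactly where $K_0(B) = \ker\rho_B$ (ruling out any nontrivial positive homomorphism obstruction) plus almost stable rank one of $\W$ get used together.
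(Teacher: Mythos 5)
There is a genuine gap, and it is the very first move: you assert that ``by \cite{ElliottGongLinNiu}, such a $B$ is isomorphic to $\W$.'' That uniqueness theorem characterizes $\W$ among algebras with \emph{trivial} $K$-theory ($K_i(\W)=0$), whereas the proposition only assumes $K_0(B)=\ker\rho_B$; both $K_0(B)$ and $K_1(B)$ can be arbitrary (e.g.\ $B=\zo$ satisfies every hypothesis and has $K_0=\ZI$). This false identification is not cosmetic: your verification of condition (c) declares it ``vacuous'' because $K_1(B)=K_1(\W)=0$, and your verification of (d) also leans on $K_1(\widetilde B)=0$. When $K_1(B)\neq 0$, condition (c) with $s=1$ is a real statement — surjectivity of $U(\widetilde B)/U_0(\widetilde B)\to K_1(B)$ — and the correct (and sufficient) reason for both (c) and (d) with $r_1\equiv 0$ is simply that $B$ has stable rank one, which gives $U(M_n(\widetilde B))/U_0(M_n(\widetilde B))\cong K_1(B)$ for every $n$; no vanishing of $K_1$ is needed or available. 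Similarly, your argument for (e) via ``$\W$ is an inductive limit of Razak building blocks'' does not apply to general $B$ in this class, and the claimed bound $\mathrm{cer}\le 1+\varepsilon$ is unsupported; the paper instead quotes Theorem 4.3 of \cite{GLII}, which gives $\mathrm{cer}(M_n(\widetilde B))\le 6+\varepsilon$ for this class, whence $R=7$.

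Once the spurious isomorphism is removed, the remaining skeleton of your argument is essentially the paper's (which is deliberately terse): conditions (a), (a') and (b) follow from stable rank one together with weak unperforation of $K_0(\widetilde B)$ (your computation for (b) using $K_0(\widetilde B)\cong K_0(B)\oplus\ZI$ with $K_0(B)=\ker\rho_B$ and the unique trace is fine), (c) and (d) follow from stable rank one, and (e) is a citation. You should restate (c) and (d) so that they rest only on stable rank one, and replace the $\W$-specific exponential rank discussion with the general estimate for this class of algebras.
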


\begin{proof}
It follows from Theorem 4.3  of \cite{GLII} that
${\rm cer}(M_n({\widetilde{B}}))\le 6+\ep.$

Since $B$ has stable rank one and
$K_0({\widetilde{B}})$ is weakly unperforated,
it is easy to check that
$\B\in {\bf C}_{0, 0, 1, T, 7}.$
\end{proof}

\begin{df}  Let $\A$ be a separable \CA, $\B$ be a nonunital \CA,
and let $\sigma : \A \rightarrow \B$ be a positive map.
Let $F : \A_+ \setminus \{ 0 \} \rightarrow \mathbb{N} \times \mathbb{R}$, and let
$\E \subset \A_+\setminus \{0\}$ be a finite set.

We say that $\sigma$ is \emph{$F$-$\E$ full} if for any $\epsilon > 0$,
for any $b \in \B_+$ with
$\| b \| \leq 1$ and for any $a \in \E$,
 there are $x_1, x_2, ..., x_m \in \B$ with
$m \leq N(a)$ and $\| x_j \| \leq M(a)$,
where $F(a) = (N(a), M(a))$, and such that
\begin{equation} \| \sum_{j=1}^m x_j^* \sigma(a) x_j - b \| \leq
\epsilon.
\label{equ:FEFull} \end{equation}

We say that $\sigma$ is \emph{exactly $F$-$\E$ full} if (\ref{equ:FEFull}) holds
with $\epsilon = 0$.
If $\sigma$ is $F$-$\E$-full for every finite subset $\E$ of $A,$ then we say $\sigma$ is $F$-full.
\end{df}

\begin{df}\label{Dlceil+}
{\rm{ We introduce some notation that will be used in the next result and
later parts of the paper.
For a linear map $\phi : A \rightarrow B$ between C*-algebras,
we often let $\phi$ also denote the induced map
$\phi \otimes {\rm id}_{M_m} : M_m(A) \rightarrow M_m(B)$ for all $m$.
{{If $A$ and $B$ are not unital, to simplify notation, 
we understand that $\phi(x)$ is $\phi^\sim(x)$ for $x\in {\widetilde{A}},$
where $\phi^\sim: {\widetilde{A}}\to {\widetilde{B}}$ is the unitization
of $\phi.$}} 

{{Let $A$ be a unital 
\CA\, and let  $x\in A.$ Suppose
that
$\|xx^*-1\|<1$ and $\|x^*x-1\|<1.$ Then $x|x|^{-1}$ is a unitary.
Let us use $\lceil x \rceil$
to denote $x|x|^{-1}.$ 
Now let $A$ be any  separable amenable \CA.  Let ${\mathcal
P}\subset \underline{K}(A)$ be a finite subset.   Then there exist a finite subset
${\mathcal F}$ and $\ep>0$ such that, for any \CA\, $B$ and 
any ${\mathcal F}$-$\dt$-multiplicative c.p.c~map $L: A\to B,$
$L$ induces a \hm\, $[L]$ on $G({\mathcal P}),$ where $G({\mathcal
P})$ is the subgroup generated by ${\mathcal P},$ to $\underline{K}(B).$
Moreover (by choosing sufficiently small $\ep$ and large ${\mathcal F}$), 
if $L': A\to B$ is another ${\mathcal F}$-$\ep$-multiplicative c.p.c.~map such
that $\|L(x)-L'(x)\|<\ep$ for all $x\in {\mathcal F},$ then $[L']|_{G(\mathcal P)}=[L]|_{G(\mathcal P)}.$
Such a triple $(\ep, {\mathcal F},{\mathcal P})$ is sometime called a $KL$-triple.
In what follows, when we write $[L]|_{\mathcal P},$ we assume 
that $L$ is at least ${\mathcal F}$-$\ep$-multiplicative  so that
$[L]|_{G({\mathcal P})}$ is well defined (see 1.2  of \cite{Lnamj98}, 3.3 of \cite{DE},  and  2.11 of \cite{Lnclasn}, or, 
2.12 of \cite{GLN}).}}}}
\end{df}

\begin{thm}\label{thm:LinStableUniqueness}
Let $\A$ be a nonunital separable amenable \CA\, which satisfies the UCT,
let $r_0, r_1 : \mathbb{N} \rightarrow \mathbb{Z}_+$ and $T : \mathbb{N} \times
\mathbb{N} \rightarrow \mathbb{N}$ be three maps, $s, R \geq 1$ be integers,
and let $F : \A_+ \setminus \{ 0 \} \rightarrow \mathbb{N} \times (0, \infty)$
and $L : \bigcup_{m=1}^\infty U(M_m ( \widetilde{\A})) \rightarrow [0, \infty)$ be two
additional maps.

For any $\epsilon > 0$ and any finite subset $\F \subset \A$, there exists
$\delta > 0$, a finite subset $\G \subset \A$, a finite subset
$\mathcal{P} \subset \underline{K}(\A)$, a finite subset
$\mathcal{U} \subset \bigcup_{m=1}^\infty U(M_m(\widetilde{A}))$, a finite subset
$\E \subset \A_+ \setminus \{ 0 \}$, and an integer $K \geq 1$ satisfying the
following:

For any \CA\, $\B \in \textbf{C}_{(r_0, r_1, T, s, R)}$,
for any two $\G$-$\delta$-multiplicative c.p.c. maps
$\phi, \psi : \A \rightarrow \B$,
and for any $F$--$\E$ full $\G$-$\delta$-multiplicative map
$\sigma : \A \rightarrow M_l(\B)$ such that
$${\rm cel}(\lceil \phi(u) \rceil \lceil \psi(u)^* \rceil) \leq L(u)$$
for $u \in \mathcal{U}$, and
$$[\phi]|_{\mathcal{P}} = [\psi]|_{\mathcal{P}},$$
there exists a unitary $U \in M_{1 + Kl}(\widetilde{\B})$
such that
$$\| Ad(U) \circ (\phi \oplus S)(a) - (\psi \oplus S)(a) \| < \epsilon$$
for all $a \in \F$, where
$$S(a) := diag(\sigma(a), \sigma(a), ..., \sigma(a))$$
(the ``$\sigma(a)$" is repeated $K$ times in the diagonal).

Furthermore, if $\B$ has almost stable rank one, then one can choose
$U \in \widetilde{M_{1 + Kl}(\B)}$.

\end{thm}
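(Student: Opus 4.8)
The plan is to recognize the statement as (essentially) the nonunital stable uniqueness theorem of \cite{ElliottGongLinNiu} (see also \cite{GongLinNonunitalAUE}), whose architecture I would reproduce, and then to supply the almost stable rank one refinement separately. The whole content is a matter of uniformity: the output data $\delta$, $\G$, $\mathcal{P}$, $\mathcal{U}$, $\E$, $K$ must depend only on $\epsilon$, $\F$, on the maps $r_0,r_1,T$, on the integers $s,R$, and on the functions $F,L$, and not at all on the particular $\B\in\textbf{C}_{(r_0,r_1,T,s,R)}$ or on $\phi,\psi,\sigma$; it is precisely for this uniformity that the class $\textbf{C}_{(r_0,r_1,T,s,R)}$ was isolated. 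Throughout, the quantities $\lceil\phi(u)\rceil$ and $[\phi]|_{\mathcal{P}}$ (with $\mathcal{P}\subset\underline K(\A)$ and $\mathcal{U}\subset\bigcup_m U(M_m(\widetilde\A))$) are read via the unitization of $\phi$ as fixed by the convention of Definition \ref{Dlceil+}, while the direct sums $\phi\oplus S$, $\psi\oplus S$ and the absorption take place inside the (nonunital) matrix algebra $M_{1+Kl}(\B)$, the conjugating unitary $U$ living in $M_{1+Kl}(\widetilde\B)$.

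I would organize the proof in three stages. (i) Since $\A$ is amenable and satisfies the UCT, for suitably chosen $\delta,\G,\mathcal{P},\mathcal{U}$ the hypotheses $[\phi]|_{\mathcal{P}}=[\psi]|_{\mathcal{P}}$ and ${\rm cel}(\lceil\phi(u)\rceil\lceil\psi(u)^*\rceil)\le L(u)$ on $\mathcal{U}$ can be promoted to the assertion that the ``difference'' of $\phi$ and $\psi$ vanishes, in a quantitatively controlled manner, in $\underline K$-theory and in exponential length; this is the step that consumes the unperforation bound of condition (b), the $K_1$-lifting properties (c), (d), and the exponential-rank bound (e) of membership in $\textbf{C}_{(r_0,r_1,T,s,R)}$. (ii) Taking $S$ to be $K$ orthogonal copies of the $F$-$\E$ full map $\sigma$, the direct sums $\phi\oplus S$ and $\psi\oplus S$ absorb this controlled difference; the integer $K$ and the ambient size $1+Kl$ are dictated jointly by the fullness data $F$ and by the comparison and cancellation bounds of conditions (a), (a$'$), (b). (iii) A rotation homotopy, i.e. a path of unitaries from $1$ to $U$ whose exponential length is bounded in terms of $R$ and $L$, then produces the approximate intertwining $\|\mathrm{Ad}(U)\circ(\phi\oplus S)(a)-(\psi\oplus S)(a)\|<\epsilon$ for $a\in\F$, with $U\in M_{1+Kl}(\widetilde\B)$. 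The genuine obstacle is to execute (i)--(iii) with every estimate uniform over $\B\in\textbf{C}_{(r_0,r_1,T,s,R)}$; the extra nonunital bookkeeping --- tracking $\widetilde\A$ throughout, keeping $S$ honestly inside $M_{Kl}(\B)$ --- is routine by comparison.

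Finally I would prove the almost stable rank one refinement. Assume $\B$ has almost stable rank one; then so does $M_{1+Kl}(\B)$ and, with it, every hereditary C*-subalgebra $\D$, so that $\D\subseteq\overline{GL(\widetilde\D)}$. Let $U\in M_{1+Kl}(\widetilde\B)$ be as produced above and let $\bar U\in U(M_{1+Kl}(\mathbb C))$ denote its image modulo $M_{1+Kl}(\B)$. I would first enlarge $K$, if necessary, so that the (genuinely nonunital) images of $\phi\oplus S$ and $\psi\oplus S$ possess a common nonzero orthogonal complement in $M_{1+Kl}(\B)$, which is possible in the spirit of Lemma \ref{lem:largecomplement}. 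Since $U(M_{1+Kl}(\mathbb C))$ is connected, the scalar part $\bar U$ can be cancelled by conjugating $U$ by a unitary supported on the unitization of that complementary corner; this leaves the intertwining estimate on $\F$ undisturbed, being performed orthogonally to the image of $\phi\oplus S$, and by almost stable rank one the correcting unitary, together with the unitaries approximating it along the way, can be taken inside $\widetilde{M_{1+Kl}(\B)}$ rather than merely in $M_{1+Kl}(\widetilde\B)$. The corrected unitary then lies in $\widetilde{M_{1+Kl}(\B)}$, as required. The delicate point here is precisely to perform the scalar correction without spoiling the approximate intertwining, which is what forces the orthogonal-complement arrangement and hence, possibly, a larger $K$.
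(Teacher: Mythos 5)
The paper's entire proof of this statement is a citation: it is Theorem 7.9 of \cite{GongLinNonunitalAUE}, quoted verbatim, \emph{including} the ``Furthermore'' clause about almost stable rank one. Your opening move --- recognizing the statement as the nonunital stable uniqueness theorem of \cite{ElliottGongLinNiu}/\cite{GongLinNonunitalAUE} --- is therefore exactly what the paper does, and if you had stopped there the proposal would match the paper. Your stages (i)--(iii) are a reasonable description of the architecture of the quoted theorem's proof (controlled vanishing of the $\underline{K}$-theoretic and exponential-length difference, absorption by $K$ copies of the full map, rotation to a unitary), but as written they are an outline of what must happen rather than an argument that it does; the uniformity over $\B\in\textbf{C}_{(r_0,r_1,T,s,R)}$, which you correctly identify as the genuine content, is asserted rather than established.

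The concrete gap is in your separate treatment of the almost stable rank one refinement, which is in any case unnecessary since it is part of the quoted theorem. Your correction mechanism cannot work as described: any unitary $V$ that is ``supported on the unitization of a hereditary C*-subalgebra of $M_{1+Kl}(\B)$ orthogonal to the ranges'' maps, under the quotient $M_{1+Kl}(\widetilde\B)\to M_{1+Kl}(\widetilde\B)/M_{1+Kl}(\B)\cong M_{1+Kl}(\mathbb C)$, to a scalar multiple of the identity matrix, because the hereditary subalgebra sits inside $M_{1+Kl}(\B)$ and dies in the quotient. Hence neither multiplying nor conjugating $U$ by such a $V$ can alter the class of the scalar part $\bar U\in U(M_{1+Kl}(\mathbb C))$ modulo scalars; if $\bar U$ is not already a multiple of $1$, no ``local'' move in an orthogonal corner will place the product in $\widetilde{M_{1+Kl}(\B)}$. (Conjugation is worse still: $\bar W\bar U\bar W^*$ is scalar if and only if $\bar U$ is.) In the source, almost stable rank one is not used for a post-hoc correction of a given $U$; it enters the construction of $U$ itself, where it allows certain Cuntz subequivalences and approximate polar decompositions to be implemented by unitaries already lying in $\widetilde{M_{1+Kl}(\B)}$.
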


\begin{proof}
This is \cite{GongLinNonunitalAUE}
Theorem 7.9.
\end{proof}

\begin{rmk}\label{Rsuniq}
{\rm{Note that the finite subset ${\mathcal U}$ in the statement of \ref{thm:LinStableUniqueness} may be assumed 
to a subset of $U(M_m({\widetilde{A}}))$ for some integer $m\ge 1.$ 
Let $\lceil {\mathcal U}\rceil$ be the image of ${\mathcal U}$ in $U(M_m(\widetilde{A}))/U_0(M_m(\widetilde{A})$
and let $G(\lceil {\mathcal U}\rceil)$ be the (finitely generated) subgroup generated by $\lceil {\mathcal U}\rceil.$}}
\end{rmk}

We provide some notation which will be used in Theorem \ref{TK1stuniq} as well as the proof of
Lemma \ref{Uniq-2}, and other places. 

\begin{df}\label{Dcoset}
Let $A$ be a nonunital \CA.
\begin{enumerate}
\item
{\rm Let  
$$\Pi_1: U(M_\infty({\widetilde{A}}))\to U(M_\infty({\widetilde{A}}))/U_0(M_\infty(\widetilde{A}))=K_1(A),$$
$$\Pi_{cu}: U(M_\infty(\widetilde{A}))\to U(M_\infty(\widetilde{A}))/CU(M_\infty(\widetilde{A})),$$ 
and
$$\Pi_{1,cu}: U(M_\infty(\widetilde{A}))/CU(M_\infty(\widetilde{A}))\to K_1(A)$$ 
be the usual quotient maps.} 
\item {\rm For each $A,$ fix a \hm\, 
$$J_A: K_1(A)\to U(M_\infty(\widetilde{A}))/CU(M_\infty(\widetilde{A}))$$
{{so that the following short exact sequence splits
\beq\label{DJspliting}
0 \longrightarrow U_0(M_\infty(\widetilde{A}))/CU(M_\infty(\widetilde{A}))
\longrightarrow U(M_\infty(\widetilde{A}))/CU(M_\infty(\widetilde{A}))\stackrel{\Pi_{1,cu}}{\rightleftarrows}_{J_A} K_1(A)\to 0
\eneq}}
(see  Cor. 3.3 of \cite{Thomsen}).
In other words, $\Pi_{1,cu}\circ J_A(x)=x$ for all $x\in K_1(A).$ {{We will also use $J$ instead of $J_A$ for brief.
Moreover, in what follows, once $A$ is given, we assume that $J$ is fixed.}} 

{\rm 

Fix a 
map $\Pi_{cu}^-: U(M_\infty(\widetilde{A}))/CU(M_\infty(\widetilde{A}))\to U(M_\infty(\widetilde{A}))$
such that $\Pi_{cu}(\Pi_{cu}^-(z))=z$ for all $z\in U(M_\infty(\widetilde{A})).$
Then, for each $u\in U(M_\infty(\widetilde{A})),$   we write 
$$
u=(\Pi_{cu}^-(\Pi_{cu}(u))u_{cu}
$$
where 
$$
u_{cu}=(\Pi_{cu}^-(\Pi_{cu}(u))^*u\in CU(M_\infty(\widetilde{A})).
$$
}

Note that 
$\Pi_{cu}^-$ is just a map between sets.}   {{Once $A$ is given, $\Pi_{cu}^-$ is fixed.}}

\item {\rm Let $$J^\sim: K_1(A)\to U(M_\infty(\widetilde{A}))$$
be given by 
$$J^{\sim} := \Pi_{cu}^-\circ J.$$
\item 
{\rm For $u\in U(M_\infty(\widetilde{A})),$ once $\Pi_{cu}^-$ is fixed, one may uniquely  write
\beq
u= \Pi_{cu}^-(J\circ \Pi_1(u))u_{0,cu},
\eneq
where 
\beq\label{Du0cu}
u_{0, cu}=\Pi_{cu}^-(J\circ \Pi_1(u))^*u\in U_0(M_\infty(\widetilde{A})).
\eneq}

Denote by  $J^\sim_0: U(M_\infty(\widetilde{A}))\to   U_0(M_\infty(\widetilde{A}))$
defined by $J^\sim_0(u)=J^\sim(\Pi_1(u))^*u\,(=u_{0,cu} \,\,{\rm as\,\,\, \eqref{Du0cu}}).$
For a fixed \CA\, $A,$ let us fix one splitting map $J$ and a map $\Pi_{cu}^-$ above  which then determine $J^\sim$ and $J^\sim_0.$

\item
{{Suppose that $B$ is another \CA\, and $h: A\to B$ be a \hm. 
Denote by $h^\dag: U(M_\infty(\widetilde{A}))/CU(M_\infty(\widetilde{A}))\to U(M_\infty(\widetilde{B}))/CU(M_\infty(\widetilde{B}))$
 the induced \hm. Denote by $h^\ddag: K_1(A)\to U(M_\infty(\widetilde{B}))/CU(M_\infty(\widetilde{B}))$ 
the \hm\, definded by $h^\dag\circ J$ as $J$ is fixed.  Note, in the case that $B$ has stable rank one (see Cor. 3.4 
and its remark 
of \cite{Thomsen}),
 $U(M_\infty(\widetilde{B}))/CU(M_\infty(\widetilde{B}))=U(\widetilde{B})/CU(\widetilde{B}).$ 
 In this case, $h^\dag$ is a \hm\, from $U({\widetilde{A}})/CU({\widetilde{A}})$ to 
 $U({\widetilde{B}}/CU({\widetilde{B}})$ and $h^\dag$ maps $K_1(A)$ to $U({\widetilde{B}}/CU({\widetilde{B}}).$
 }}
 
 \item
 Denote by $\Delta: U({\widetilde{B}}/CU(\widetilde{B})
 \to \Aff(T(\widetilde{B}))/\rho_B(K_0({\widetilde{B}}))$ the determinant map which 
 is an isometric isomorphism (see  section 3 of \cite{Thomsen} and Proposition 3.23 of \cite{GLN}).}
 
\end{enumerate}

\end{df}

\begin{thm}\label{TK1stuniq}
Let $\A$ be a nonunital separable amenable \CA\, which satisfies the UCT,
let $r_0, r_1 : \mathbb{N} \rightarrow \mathbb{Z}_+$ and $T : \mathbb{N} \times
\mathbb{N} \rightarrow \mathbb{N}$ be three maps, $s, R \geq 1$ be integers,
and let $F : \A_+ \setminus \{ 0 \} \rightarrow \mathbb{N} \times (0, \infty)$
and $L : J^\sim(K_1(A))
\rightarrow [0, \infty)$ 
be two
additional maps.  

For any $\epsilon > 0$ and any finite subset $\F \subset \A$, there exists
$\delta > 0$, a finite subset $\G \subset \A$, a finite subset
$\mathcal{P} \subset \underline{K}(\A)$, a finite subset
$\mathcal{U} \subset 
J^\sim(K_1(A))$, a finite subset
$\E \subset \A_+ \setminus \{ 0 \}$, and an integer $K \geq 1$ satisfying the
following:

For any \CA\, $\B \in \textbf{C}_{(r_0, r_1, T, s, R)},$
for any two $\G$-$\delta$-multiplicative c.p.c. maps
$\phi, \psi : \A \rightarrow \B$,
and for any $F$--$\E$ full $\G$-$\delta$-multiplicative map
$\sigma : \A \rightarrow M_l(\B)$ such that
$$cel(\lceil \phi(u) \rceil \lceil \psi(u)^* \rceil) \leq L(u)\rforal u\in 
\mathcal{U},\andeqn$$ 
$$[\phi]|_{\mathcal{P}} = [\psi]|_{\mathcal{P}},$$
there exists a unitary $U \in \M_{1 + Kl}(\widetilde{\B})$
such that
$$\| Ad(U) \circ (\phi \oplus S)(a) - (\psi \oplus S)(a) \| < \epsilon$$
for all $a \in \F$, where
$$S(a) := diag(\sigma(a), \sigma(a), ..., \sigma(a))$$
(the ``$\sigma(a)$" is repeated $K$ times in the diagonal).

Furthermore, if $\B$ has almost stable rank one, then one can choose
$U \in \widetilde{\M_{1 + Kl}(\B)}$.

\end{thm}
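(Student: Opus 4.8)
The plan is to derive Theorem \ref{TK1stuniq} directly from Theorem \ref{thm:LinStableUniqueness}. The only genuine difference between the two statements is that the control map $L$ is now supplied only on the canonical lifts $J^\sim(K_1(A))$, and that the finite set $\mathcal{U}$ produced is required to sit inside $J^\sim(K_1(A))$. So the strategy is: (i) manufacture from $L$ a map $\tilde L$ defined on all of $\bigcup_m U(M_m(\widetilde{A}))$; (ii) feed $\tilde L$ into Theorem \ref{thm:LinStableUniqueness}; and (iii) show that the $\mathrm{cel}$-hypothesis of Theorem \ref{thm:LinStableUniqueness} on its output set $\mathcal{U}_1$ of arbitrary unitaries is in fact forced, once $\phi$ and $\psi$ are multiplicative enough, by the $\mathrm{cel}$-hypothesis of Theorem \ref{TK1stuniq} on $\mathcal{U}:=\{\,J^\sim(\Pi_1(u)):u\in\mathcal{U}_1\,\}$.

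For step (i): using the fixed splitting data of Definition \ref{Dcoset}, each $u\in U(M_m(\widetilde{A}))$ is written uniquely as $u=J^\sim(\Pi_1(u))\,u_{0,cu}$ with $u_{0,cu}\in U_0(M_\infty(\widetilde{A}))$; since elements of $U_0$ of a unital $C^*$-algebra are finite products of exponentials, $\mathrm{cel}(u_{0,cu})<\infty$, so I put
\[
\tilde L(u):=L\big(J^\sim(\Pi_1(u))\big)+2\,\mathrm{cel}(u_{0,cu})+5 ,
\]
a well-defined $[0,\infty)$-valued map. For step (ii), apply Theorem \ref{thm:LinStableUniqueness} with the same $A,r_0,r_1,T,s,R,F$, with $\tilde L$ in place of $L$, and with the given $\epsilon,\mathcal{F}$; this produces $\delta_1,\mathcal{G}_1,\mathcal{P}_1,\mathcal{U}_1,\mathcal{E}_1,K_1$, and by Remark \ref{Rsuniq} I may take $\mathcal{U}_1\subseteq U(M_{m_0}(\widetilde{A}))$. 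For each $u\in\mathcal{U}_1$ set $u^\flat:=J^\sim(\Pi_1(u))\in J^\sim(K_1(A))$ and $v_u:=(u^\flat)^*u=u_{0,cu}\in U_0$, and fix an expression $v_u=\exp(ih_{u,1})\cdots\exp(ih_{u,N_u})$ with $h_{u,j}=h_{u,j}^*$ and $\sum_j\|h_{u,j}\|\le\mathrm{cel}(v_u)+1$. I then declare the outputs of Theorem \ref{TK1stuniq} to be $\mathcal{U}:=\{u^\flat:u\in\mathcal{U}_1\}\subseteq J^\sim(K_1(A))$, $K:=K_1$, $\mathcal{E}:=\mathcal{E}_1$, a finite $\mathcal{P}\supseteq\mathcal{P}_1$, a finite $\mathcal{G}\supseteq\mathcal{G}_1$ containing the pullbacks to $A$ of the entries of all $h_{u,j}$, their powers, the partial products of the chosen factorizations, and the $u^\flat$ and $v_u$, and finally $\delta\le\delta_1$ small enough that $(\delta,\mathcal{G},\mathcal{P})$ is a $KL$-triple (Definition \ref{Dlceil+}) and all the estimates below hold with total accumulated error $<1$.

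Step (iii) is a routine $\mathrm{cel}$-computation. Given $\phi,\psi,\sigma$ satisfying the hypotheses of Theorem \ref{TK1stuniq} relative to these data, they automatically satisfy the multiplicativity, fullness, and $\underline{K}$-hypotheses of Theorem \ref{thm:LinStableUniqueness} relative to $\delta_1,\mathcal{G}_1,\mathcal{P}_1,\mathcal{E}_1$. To check the remaining hypothesis, fix $u\in\mathcal{U}_1$ and write $u=u^\flat v_u$. Using multiplicativity, $\lceil\phi(u)\rceil\approx\lceil\phi(u^\flat)\rceil\lceil\phi(v_u)\rceil$ and likewise for $\psi$, whence
\[
\lceil\phi(u)\rceil\lceil\psi(u)^*\rceil \approx \lceil\phi(u^\flat)\rceil\,\big(\lceil\phi(v_u)\rceil\lceil\psi(v_u)\rceil^*\big)\,\lceil\psi(u^\flat)\rceil^* .
\]
Subadditivity of $\mathrm{cel}$ under products and its conjugation-invariance, together with $\lceil\psi(u^\flat)\rceil^*=\lceil\psi(u^\flat)^*\rceil$, give
\[
\mathrm{cel}\big(\lceil\phi(u)\rceil\lceil\psi(u)^*\rceil\big) \le \mathrm{cel}\big(\lceil\phi(u^\flat)\rceil\lceil\psi(u^\flat)^*\rceil\big)+\mathrm{cel}\big(\lceil\phi(v_u)\rceil\big)+\mathrm{cel}\big(\lceil\psi(v_u)\rceil\big)+1 .
\]
The first term is $\le L(u^\flat)$ by the hypothesis of Theorem \ref{TK1stuniq}, since $u^\flat\in\mathcal{U}$; and since $\phi(\exp(ih_{u,j}))$ is close to $\exp(i\phi(h_{u,j}))$ with $\|\phi(h_{u,j})\|\le\|h_{u,j}\|$, the element $\lceil\phi(v_u)\rceil$ lies on a short path from $\prod_j\exp(i\phi(h_{u,j}))$, so $\mathrm{cel}(\lceil\phi(v_u)\rceil)\le\mathrm{cel}(v_u)+2$, and similarly for $\psi$. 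Hence $\mathrm{cel}(\lceil\phi(u)\rceil\lceil\psi(u)^*\rceil)\le L(u^\flat)+2\,\mathrm{cel}(v_u)+5=\tilde L(u)$, which is exactly the missing hypothesis. Theorem \ref{thm:LinStableUniqueness} now yields a unitary $U\in M_{1+Kl}(\widetilde{B})$ (in $\widetilde{M_{1+Kl}(B)}$ if $B$ has almost stable rank one) with $\|Ad(U)\circ(\phi\oplus S)(a)-(\psi\oplus S)(a)\|<\epsilon$ for $a\in\mathcal{F}$, and since $K=K_1$ and $\mathcal{F}$ is unchanged this is precisely the conclusion of Theorem \ref{TK1stuniq}.

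The main obstacle is the cluster of $\mathrm{cel}$-estimates in step (iii): one must compare the exponential length of $v_u$ inside $M_{m_0}(\widetilde{A})$ with that of its image $\lceil\phi(v_u)\rceil$ under an only approximately multiplicative c.p.c.\ map (which requires the truncated-power-series estimate $\phi(\exp(ih))\approx\exp(i\phi(h))$ together with the polar-part estimate that defines $\lceil\cdot\rceil$), and then keep all the accumulated approximation errors below the fixed constant built into $\tilde L$. The surrounding bookkeeping — arranging $\mathcal{G},\delta,\mathcal{P}$ so that they simultaneously support the matrix-amplified approximation arguments and dominate the data $\delta_1,\mathcal{G}_1,\mathcal{P}_1$ demanded by Theorem \ref{thm:LinStableUniqueness} — is standard.
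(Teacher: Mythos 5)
Your proposal is correct and follows essentially the same route as the paper's own proof: you decompose each $u\in\mathcal{U}_1$ as $J^\sim(\Pi_1(u))\,u_{0,cu}$ via the fixed splitting of Definition \ref{Dcoset}, define the auxiliary control function $\tilde L(u)=L(J^\sim(\Pi_1(u)))+2\,\mathrm{cel}(u_{0,cu})+\mathrm{const}$ (the paper's $L_1$), transfer the exponential-length bound through the approximately multiplicative maps using the factorization of $u_{0,cu}$ into exponentials, and then invoke Theorem \ref{thm:LinStableUniqueness}. The only differences are the size of the absorbed constants and a slightly more explicit ordering of the quantifiers, neither of which affects the argument.
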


\begin{proof}
It suffices to show that there is a map $L_1: U(M_\infty({\widetilde{A}}))\to {[0, \infty)}$
which depends only on $A$ and $L$ such that, for any finite subset $\U\in 
M_\infty(\widetilde{A}),$
if 
$$
{\rm cel}(\lceil \phi(u)\rceil \lceil \psi(u)^*\rceil )\le L(u)
\rforal u\in J^\sim\circ \Pi_1(\U),
$$ 
for any $\phi$ and $\psi$ come later in the statement,
one always has 
\beq\label{TstuniqK1-1}
{\rm cel}(\lceil \phi(u)\rceil \lceil \psi(u)^*\rceil )
\le  L_1(u)\rforal u\in \U,
\eneq
provided that $\phi$ and $\psi$ are $\G_1$-$\dt_1$-multiplicative, where 
$\dt$ is sufficiently small and $\G$ is sufficiently large which depends 
only on $\U$ (in the statement of ) and $L,$ as we then apply Theorem \ref{thm:LinStableUniqueness} 
for this $L_1$ (choosing  $\dt<\dt_1$ large $\G\supset \G_1$ and $K$  and so on). 


Let us provide the details for the issue. {{As $A$  is given, we fix a splitting \hm\, $J$ and a map $\Pi_{cu}^-$ as in \ref{Dcoset}
(so $J^\sim$ and $J_0^\sim$ are also fixed).}} 
Define $L_0: U_0(M_\infty({\widetilde{A}}))\to {[0, \infty)}$ as follows.
For each $u\in U_0(M_\infty(\widetilde{A})),$ there is a smallest $m(u)\ge 1$
such that $u\in U_0(M_{m(u)}(\widetilde{A})).$
Define $L_0(u)={\rm cel}(u)$ (in 
$M_m(\widetilde{A}).$ 

Once $\U\in U(M_m(\widetilde{A}))$ is fixed.
Define 
\beq
\U_1=\{ J^\sim(\Pi_{1,cu}(u)), J^\sim_0(u):  u\in \U\}.
\eneq
{{\Wlog, we may assume that $J_0^\sim(u)\in U_0(M_m(\widetilde{A})).$}}

For each $u\in \U,$ there are $h_1(u), h_2(u),...,h_{k(u)}(u)\in M_m({\widetilde{A}})_{s.a.}$
such that
\beq
\exp(ih_1(u))\exp(i h_2(u))\cdots \exp(i h_{k(u)}(u))=J^\sim_0(u).
\eneq
We choose a small $\dt_1>0$  and  large $\G_1$  such that, for all $u\in \U,$
\beq
{\,\,\,\,\,\,\,\,\,\,\,\,\,\,\,\,\,}{\|\phi(J^\sim_0(u))-\exp(i\phi(h_1(u)))\exp(i \phi(h_2(u)))\cdots \exp(i \phi(h_{k(u)}(u)))\|<1/16\pi}
\eneq
for any $\G_1$-$\dt_1$-multiplicative c.p.c.~map $\phi$ from ${\widetilde{A}}.$
In particular,
\beq
{\rm cel}(\lceil \phi(J^\sim_0(u))\rceil)\le {\rm cel}(J^\sim_0(u))+1/4\rforal u\in \U.
\eneq
We may also assume that
\beq
\lceil \phi(u)\rceil\approx_{1/64\pi} \lceil \phi(J^\sim(\Pi_1(u)))\rceil \lceil \phi(J^\sim_0(u))\rceil\rforal u\in \U.
\eneq

Define $L_1(u)=L(J^\sim(\Pi_1(u)))+ 2L_0(J^\sim_0(u))+1$ for all $u\in U(M_\infty({\widetilde{A}})).$

Note that, as had been demonstrated, if $\dt_1$ is small enough and $\G_1$ is large enough
independent of $\phi$ or $\psi$ (and also independent of $B$ in the class of ${\bf C}_{r_0, r_1, t, s, R}$),
when both $\phi$ and $\psi$ are $\G_1$-$\dt_1$-multiplicative,
for all $u\in \U,$
\beq
&&{\rm cel}(\lceil \phi(u)\rceil \lceil \psi(u)\rceil^*)\\
&&\le  1/16+{\rm cel} \phi(J^\sim(\Pi_1(u)))\rceil \lceil \phi(J^\sim_0(u))\lceil \psi(J^\sim_0(u))\rceil
\lceil \psi(J^\sim (\Pi_1(u)))\rceil\\
&&\le 2(L_0(J^\sim_0(u))+1/4)+L(J^\sim(\Pi_1(u)))\le L_1(u).
\eneq
In other words, \eqref{TstuniqK1-1} holds.  The theorem then follows from Theorem \ref{thm:LinStableUniqueness}.

\end{proof}

\section{Existence and exponential length}


\begin{lem}\label{lem:Feb2620203PM}
Let $A$ be a separable algebraically simple \CA\, with finite nuclear dimension which satisfies the UCT
and has a unique tracial state $\tau_A.$
Suppose that $A$ is  nonunital and stably projectionless. 
Then we have the following:
\begin{enumerate}
\item $A$ is $\Z$-stable and has stable rank one.
\item $K_0(A) = {\rm ker} \rho_A$.
\end{enumerate}
\end{lem}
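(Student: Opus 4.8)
The plan is to deduce both statements from the structure theory of finite-nuclear-dimension simple \CA s satisfying the UCT, specializing what is already known about $\W$-like algebras. Since $A$ is separable, simple (indeed algebraically simple), nonunital, stably projectionless, has finite nuclear dimension, satisfies the UCT, and has a unique tracial state, $A$ falls squarely within the scope of the classification theorem of \cite{ElliottGongLinNiu}: such algebras are $\Z$-stable. So for part (1), I would first invoke the main result of \cite{ElliottGongLinNiu} (or, equivalently, the Toms--Winter-type theorems for nonunital algebras: finite nuclear dimension implies $\Z$-stability for separable simple \CA s) to obtain $\Z$-stability. Given $\Z$-stability together with stable finiteness (which follows from the existence of a faithful tracial state, as $\tau_A$ is faithful because $A$ is simple and $\tau_A\ne 0$), stable rank one then follows from the known result that separable simple stably finite $\Z$-stable \CA s have stable rank one (Rørdam's theorem in the unital case, extended to the nonunital stably projectionless setting, e.g. via \cite{ElliottGongLinNiu} or Robert's work on the Cuntz semigroup). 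Actually, since $A$ is stably projectionless with a unique trace, one can also argue directly: $\Z$-stability gives strict comparison and almost divisibility of the Cuntz semigroup, and combined with the unique trace this forces $A$ to have stable rank one.

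For part (2), the claim $K_0(A)={\rm ker}\,\rho_A$ says that the pairing of $K_0(A)$ with the (cone of) traces is trivial, i.e. every positive homomorphism $K_0(A)\to\R$ vanishes. The key point is that $A$ is stably projectionless: every projection in $M_n(\widetilde A)$ is, modulo the scalar part, "the same" because $A$ itself contributes no projections. More precisely, since $A$ is stably projectionless, for any projection $p\in M_n(\widetilde A)$ the image of $p$ under the quotient $M_n(\widetilde A)\to M_n(\C)$ is a projection in $M_n(\C)$ of some rank $k$, and I claim $[p]=k[e]$ in $K_0(\widetilde A)$ where $e$ is a rank-one scalar projection — because the difference $p-($lift of the scalar projection$)$ would give a projection in $M_{2n}(A^{(0)})$-type corner lying in the non-unital part. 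Concretely: $K_0(\widetilde A)\cong K_0(A)\oplus\ZI$ via the split exact sequence $0\to K_0(A)\to K_0(\widetilde A)\to K_0(\C)\to 0$, and stable projectionlessness of $A$ means no nonzero element of $K_0(A)$ is represented by an actual projection difference in a way that survives... the cleanest route is: any $\tau\in\tilde T(A)$ extended to $M_n(\widetilde A)$ takes on projections only the values it is forced to by $K_0(\C)$, hence $\tau\circ\rho_A$ kills $K_0(A)$; but I should instead cite that this is already essentially recorded for $\W$-type algebras. I would appeal to the characterization in \cite{ElliottGongLinNiu} (or to \cite{Razak}/\cite{GLII}) that a stably projectionless separable simple \CA\ with the stated regularity has $K_0=\ker\rho$. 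Alternatively, argue via the Cuntz semigroup: stable projectionlessness plus stable rank one (from part (1)) plus unique trace gives $\mathrm{Cu}(A)=\{0\}\sqcup\mathrm{LAff}_+(\tilde T(A))^{\sim}$-type structure with no "compact" elements other than $0$, and the positive cone of $K_0$ injects into the compact elements of $\mathrm{Cu}$, forcing $K_0(A)_+=\{0\}$, hence ${\rm Hom}(K_0(A),\R)_+=\{0\}$ and ${\rm ker}\,\rho_A=K_0(A)$.

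The main obstacle I anticipate is part (2): making rigorous that stable projectionlessness of $A$ (as opposed to $\widetilde A$ or $A\otimes\K$) genuinely forces $K_0(A)_+=\{0\}$ and hence the triviality of all positive homomorphisms. One must be careful that $K_0(A)$ itself may be a nontrivial group (e.g. $K_0(\W)=0$ but one wants the statement for general such $A$, where $K_0(A)$ could be, say, $\ZI$ with trivial order). The resolution is that the order on $K_0(A)$ is what is trivial: an element $x\in K_0(A)_+\setminus\{0\}$ would, by simplicity and the definition of the positive cone, be represented (after stabilizing and adding a scalar unit if necessary) by a nonzero projection, contradicting stable projectionlessness — here one uses that in a stably finite algebra with stable rank one the positive cone of $K_0$ consists exactly of classes of projections in matrix algebras over $A$ (not $\widetilde A$). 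Thus $K_0(A)_+=\{0\}$, so the only order-preserving homomorphism $K_0(A)\to\R$ with nonnegative values on $K_0(A)_+=\{0\}$ can be arbitrary as a group map but we additionally need positivity, giving ${\rm Hom}(K_0(A),\R)_+$ could still be large unless... no: positivity on $K_0(A)_+=\{0\}$ is vacuous, so the issue is subtler and one really does need the Cuntz-semigroup / trace-pairing argument, or a direct citation. I would therefore lean on the already-established structural results in \cite{ElliottGongLinNiu} and \cite{GLII} for the cleanest proof, citing them for both $\Z$-stability and the $K_0=\ker\rho$ identity, and only fill in the short deductions (faithfulness of $\tau_A$, stable rank one from $\Z$-stability plus stable finiteness).
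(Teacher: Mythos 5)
Your overall strategy (cite the regularity and classification literature) matches the paper's, but there are two substantive gaps.

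\textbf{Stable rank one.} You claim that $\Z$-stability together with stable finiteness gives stable rank one, citing ``Rørdam's theorem in the unital case, extended to the nonunital stably projectionless setting.'' That extension is not automatic: for nonunital stably projectionless $\Z$-stable simple algebras, what Robert (\cite{Rz}) actually proves is \emph{almost} stable rank one, which is strictly weaker. The paper does not close this gap by a Rørdam-type argument; it first invokes \cite{Rz} for almost stable rank one, then feeds the algebra into the classification machinery (Theorem 15.5 of \cite{GLII}, placing $A$ in the class $\mathcal{D}$ of 3.9 of \cite{GLII}), and only then quotes Theorem 11.5 of \cite{eglnp}, which upgrades to genuine stable rank one for algebras in that class. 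Your alternative gloss (``strict comparison and almost divisibility combined with the unique trace forces stable rank one'') is not an argument; it restates the conclusion. As written, your proof of the stable rank one part does not go through.

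\textbf{$K_0(A)=\ker\rho_A$.} You correctly sense, and explicitly concede, that the attempt to deduce this from $K_0(A)_+=\{0\}$ fails: with trivial positive cone the positivity requirement on a homomorphism $K_0(A)\to\R$ is vacuous, so $\mathrm{Hom}(K_0(A),\R)_+=\mathrm{Hom}(K_0(A),\R)$, and the intersection of all kernels is merely the torsion subgroup, not the whole group. Having acknowledged this, you fall back on ``cite \cite{ElliottGongLinNiu} or \cite{GLII}'' without isolating the hypothesis that makes the citation bite. The paper's argument is more pointed: it invokes Corollary A7 of \cite{ElliottGongLinNiu}, and the hypothesis doing the work is precisely that $A$ has a \emph{unique} tracial state (together with stable projectionlessness and the comparison theory coming from $\Z$-stability), not stable projectionlessness by itself. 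The uniqueness of $\tau_A$ forces every positive homomorphism on $K_0$ to be a multiple of $r_A(\tau_A)$, and the comparison/Cuntz-semigroup theory then shows this pairing must vanish on $K_0(A)$, lest one manufacture a nonzero projection. You should name that hypothesis and cite the precise corollary; the loose appeal to ``structural results'' does not identify which result is needed, nor why uniqueness of the trace is essential.

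On the $\Z$-stability part you are essentially aligned with the paper (the paper cites Tikuisis \cite{aTz}, you cite \cite{ElliottGongLinNiu} or ``Toms--Winter-type theorems,'' which amount to the same input), so that part is fine.
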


\begin{proof}
By \cite{aTz}, $A$ is ${\mathcal Z}$-stable.  
It follows from \cite{Rz} that $A$ has almost stable rank one. Since $A$ has only one tracial state, it follows from Corollary 
A7 of \cite{ElliottGongLinNiu}  that $K_0(A)={\rm ker}\rho_A.$ 
By Theorem 15.5 of \cite{GLII}, 
$A$ is classifiable and is in class ${\mathcal D}$, which
is defined in 3.9 of \cite{GLII} (see also 8.1 of  \cite{eglnp}).
Therefore, by {{Theorem 11.5 of \cite{eglnp},}}
$A$ has stable rank one.  
\end{proof}

\begin{df}\label{Drazak}
{\rm Let $\W$ be the \emph{Razak algebra}, which is a nonunital, simple, separable,
nuclear,  
continuous {{scale,}} stably projectionless \CA\,  with unique tracial
state $\tau_W$ and $K_*(\W) = 0$. 
$\W$ also has stable rank one and is $\Z$-stable
(see  
\cite{Razak}, \cite{Tsang}, \cite{Jacelon}).
{{It is proved in  \cite{ElliottGongLinNiu} that $\W$ is the only nonunital simple separable \CA\, with finite nuclear dimension,
$K_i(\W)=\{0\}$ ($i=0,1$) and with a unique tracial state
which satisfies the UCT and has continuous scale.}}
 {{From  this, one can also conclude that 
$\W$ is *-isomorphic to any of its
nonzero hereditary C*-subalgebras.}} } 
\end{df}

\begin{rmk}
In fact, the proof of Lemma \ref{lem:Feb2620203PM} shows that the \CA\,
$A$ is in the classifiable class $\mathcal{D}$ defined in \cite{GLII} 3.9.
We also note that it is not hard to check directly that $\W$ has properties
(1) and (2) of Lemma \ref{lem:Feb2620203PM}. (E.g., see \cite{Jacelon}.)
\label{rmk:Feb2620203PM}
\end{rmk}

\begin{lem}[Theorem 1.1 of \cite{RI}]\label{LWembB}
Let $B$ be an  separable infinite dimensional simple 
$\Z$-stable \CA\, with stable rank one, {{and every 2-quasi-trace of $\overline{bBb}$ is a trace
for any $b\in {\rm Ped}(B)_+\setminus \{0\}.$}}
Then there is an embedding $\phi_{w,b}: \W\to B.$

If $B$ also has continuous scale and is
 stably projectionless, we may require that $\phi_{w,b}$ maps strictly positive elements
to strictly positive elements.

Moreover,
if $\phi_1, \phi_2: \W\to B$ are two monomorphisms such that
$d_\tau(\phi_1(a))=d_\tau(\phi_2(a))$ holds for all $\tau\in T(B)$ and for one
non-zero $a\in W_+\setminus\{0\},$ then there exists
a sequence of unitaries $u_n\in {\widetilde{B}}$ such
that
$$
\lim_{n\to\infty} {\rm Ad}\, u_n\circ \phi_1(c)=\phi_2(c)\rforal c\in \W.
$$
\end{lem}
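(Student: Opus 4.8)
The statement to prove is Theorem 1.1 of \cite{RI} (Robert's embedding theorem), so strictly speaking one should just cite it; but since the excerpt asks for a proof sketch, here is how I would reconstruct the argument.

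\textbf{The plan.} The proof naturally splits into an \emph{existence} part and a \emph{uniqueness} part. For existence, the strategy is to use the fact, established by Robert \cite{RI}, that $\W$ is an inductive limit of one-dimensional NCCW complexes (in fact of Razak building blocks $A(m,n)$ with trivial $K$-theory), so that to build an embedding $\phi_{w,b}:\W\to B$ it suffices to construct a compatible system of morphisms out of these building blocks into $B$ at the level of the Cuntz semigroup and then lift. The key point is that $B$, being simple, $\Z$-stable, of stable rank one and with all $2$-quasitraces on hereditary subalgebras being traces, has a well-understood Cuntz semigroup: by the Cuntz-semigroup classification of morphisms from $1$-dimensional NCCW complexes (Robert's theorem, using that $\mathrm{Cu}^\sim$ classifies such maps up to approximate unitary equivalence when the domain has no $K_1$-obstruction and the target is stably projectionless or has suitable regularity), any morphism $\mathrm{Cu}^\sim(\W)\to\mathrm{Cu}^\sim(B)$ preserving the order, suprema, the unit of the scale where appropriate, and being compatible with traces, is induced by a $*$-homomorphism. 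Concretely: choose a strictly positive element $b$; using $\Z$-stability and simplicity of $B$, one produces in $\overline{bBb}$ a decreasing approximately central sequence realizing the trace values needed to match $\mathrm{Cu}^\sim(\W)$; feeding this into Robert's existence theorem yields $\phi_{w,b}$. To arrange that strictly positive elements map to strictly positive elements when $B$ has continuous scale and is stably projectionless, one additionally matches the scale: since $\W$ has continuous scale, its Cuntz semigroup has a largest "soft" element corresponding to the functional $\tau_\W$, and continuous scale of $B$ means $[b]$ is such a largest element in $\mathrm{Cu}(B)$; one chooses the Cuntz-semigroup map to send the class of a strictly positive element of $\W$ onto $[b]$, which forces the lift to have strictly positive range.

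\textbf{Uniqueness.} For the "moreover" statement, suppose $\phi_1,\phi_2:\W\to B$ are monomorphisms with $d_\tau(\phi_1(a))=d_\tau(\phi_2(a))$ for all $\tau\in T(B)$ and some fixed nonzero $a\in\W_+$. Since $\W$ is simple with a unique tracial state $\tau_\W$, and $B$ has stable rank one, $\Z$-stability, and the quasitrace-equals-trace property, I would argue that the two induced Cuntz-semigroup morphisms $\mathrm{Cu}^\sim(\phi_1),\mathrm{Cu}^\sim(\phi_2):\mathrm{Cu}^\sim(\W)\to\mathrm{Cu}^\sim(B)$ agree. Indeed, $\mathrm{Cu}^\sim(\W)$ is determined by the ordered semigroup of lower-semicontinuous affine functions on the one-point-based trace cone (everything is "soft" because $K_0(\W)=0$), so a morphism out of it is completely determined by the function $\tau\mapsto d_\tau(\phi_i(a_0))$ for a single strictly positive $a_0\in\W_+$; and the hypothesis at one nonzero $a$, together with simplicity of $\W$ and the density of the Pedersen ideal, pins down this function for all of $\W$ via scaling/comparison (any nonzero positive element of a simple algebra is Cuntz-below a multiple of $a$ and conversely). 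Having matched the Cuntz-semigroup invariant, I would invoke Robert's uniqueness theorem \cite{RI}: for $*$-homomorphisms from a $1$-dimensional NCCW complex (or an inductive limit of such, which $\W$ is) into a $C^*$-algebra of stable rank one, agreement of $\mathrm{Cu}^\sim$-morphisms implies approximate unitary equivalence, with the unitaries taken in $\widetilde{B}$. This yields the sequence $u_n\in\widetilde{B}$ with $\mathrm{Ad}\,u_n\circ\phi_1\to\phi_2$.

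\textbf{Main obstacle.} The genuinely hard input is Robert's classification of morphisms from $1$-dimensional NCCW complexes by $\mathrm{Cu}^\sim$ (existence and uniqueness), which is precisely the content being quoted from \cite{RI}; everything else is bookkeeping about the Cuntz semigroup of $\W$ (trivial $K$-theory, unique trace, continuous scale forcing $\mathrm{Cu}^\sim(\W)\cong \mathrm{LAff}^\sim(T(\W))=\mathrm{LAff}^\sim(\{\mathrm{pt}\})\cong(-\infty,\infty]$-type semigroup) and checking that the regularity hypotheses on $B$ (simple, $\Z$-stable, stable rank one, quasitrace$=$trace on hereditary subalgebras) are exactly what Robert's theorems need so that $\mathrm{Cu}^\sim(B)$ is well-behaved and classifying. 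So in practice the "proof" is the one-line citation given in the excerpt; the reconstruction above is what that citation unpacks to.
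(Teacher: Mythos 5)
Your proposal takes essentially the same route as the paper: compute that $\mathrm{Cu}^\sim(\W)\cong(-\infty,\infty]$ (from $K_*(\W)=0$ and uniqueness of the trace), construct a Cuntz-semigroup morphism into $\mathrm{Cu}^\sim(B)=K_0(B)\sqcup\mathrm{LAff}^\sim(T(B))$, invoke Robert's existence theorem to lift it, match scales to arrange strictly-positive-to-strictly-positive, and deduce the uniqueness statement from the fact that a $\mathrm{Cu}^\sim$-morphism out of $\W$ is determined by its value at one nonzero element together with Robert's uniqueness theorem. The only place your sketch is slightly loose is in claiming the scale-matching "forces" the lift to have strictly positive range; in the paper one instead observes that $\phi_{w,b}(e_W)$ is Cuntz equivalent to $e_B$, that neither is a projection since $B$ is stably projectionless, and then uses stable rank one to get $\mathrm{Her}(\phi_{w,b}(e_W))\cong\mathrm{Her}(e_B)=B$, composing with this isomorphism to adjust $\phi_{w,b}$.
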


\begin{proof}
{{Since $K_0(\W)=\{0\}$ and has a unique tracial state,}} from Proposition 6.2.3 of \cite{RI}, {{one computes}} that $Cu^\sim(\W)= 
 (-\infty, \infty].$
It is easy to see that, for the first part of the Lemma, we
may assume that $B$ has continuous scale (e.g., Proposition 5.4
of \cite{eglnp}).   
{{Since $B$ is simple, has stable rank one and simple,  and 
every 2-quasi-trace of $B$ is a trace, $T(B)\not=\emptyset.$}}
Since $B$ has continuous scale, it is  well-known that 
$T(B)$ is compact (see, for example,  Theorem 5.3 of \cite{eglnp}).

It follows from Theorem 7.3 of \cite{eglnp} 
(see also Theorem 6.~2.3 of \cite{RI}) that when $B$ is stably projectionless,  
$Cu^\sim (B)=K_0(B)\sqcup {\rm LAff}^\sim(T(B))$ {{(recall that $T(B)$ is compact as $B$ has continuous scale).}}
This also holds for  the case that $B$ is not stably projectionless (see the proof of Proposition 6.1.1 of \cite{RI};  
in fact,
here we can replace the trace space by $T(pM_m(B)p)$ for some nonzero
projection $p\in M_m(B)$, and for some $m\ge 1$).

{{Fix a strictly positive element $e_W\in \W$  with $\|e_W\|=1$ and a strictly positive element $e_B\in B$ with $\|e_B\|=1.$ 
Then $e_W$ is represented by $1\in (-\infty, \infty]$ in ${\rm Cu}^\sim(\W).$
Note also 
$d_\tau(e_B)=1$ for all $\tau\in T(B).$}} 
Choose $0 < a < 1.$
Define a map $j: (-\infty, \infty] \to {\rm LAff}^\sim (T(B))\subseteq Cu^{\sim}(B)$ 
by $j(r)=ar,$ where  we view $ar$ as a constant function on $T(B).$
Thus, $j$ is a morphism from $Cu^{\sim}(\W)$ to $Cu^{\sim}(B).$  
Hence, by Theorem 1.0.1 of \cite{RI}, there is a *-\hm\, $\phi_{w, b}: \W\to B,$
such that $$Cu^\sim(\phi_{w,b})=j.$$

In the case that $B$ is stably projectionless, $e_B$ is not a projection.
So if, in the previous paragraph, we choose $a = 1$, {{then $\phi_{w,b}(e_W)$
is Cuntz equivalent to $e_B.$ Since $B$ has stably rank one, ${\rm Her}(\phi_{w,b}(e_W))$
is isomorphic to ${\rm Her}(e_B)=B.$ So we may also assume that $\phi_{w,b}$}} 
maps strictly positive elements to strictly positive elements.

The second part of the Lemma follows immediately from the fact 
that $Cu^{\sim}(\W) = (-\infty, \infty]$ and 
Theorem 1.1 of \cite{RI}.

\end{proof}


 \begin{df}\label{Dgamma}
{{\rm  Let $B$ be a separable (nonunital) simple \CA\, with stable rank one and with continuous scale
 such that $K_0(B)={\rm ker}\rho_B.$
 Then $U(M_\infty({\widetilde{B}}))/CU(M_\infty({\widetilde{B}}))=U({\widetilde{B}})/CU({\widetilde{B}}).$
 Fix $J: K_1(B)\to U({\widetilde{B}})/CU({\widetilde{B}}).$ Then, by \ref{Dcoset},  one may write
 \beq
 U({\widetilde{B}})/CU({\widetilde{B}})=(\Aff(T({\widetilde{B}}))/\ZI)\oplus J(K_1({\widetilde{B}})).
 \eneq}}
{\rm{{{Recall $\Aff(T(\widetilde{B}))\cong \Aff(T(B))\oplus \R.$}}}}

{{{\rm Suppose that $D$ is a hereditary \SCA\, of $B$ which also has continuous scale.
Suppose that $e_D\in D$ is a strictly positive element of $D.$ There is
an affine homeomorphism $\gamma_D: T(B)\to T(D)$ defined by
$\gamma_D(\tau)(d)=(1/d_\tau(e_D))(\tau)(d)$ for all $d\in D\subset B$ and $\tau\in \partial{(T(B))},$
where $\partial{(T(B))}$ is the set of the extremal points of $T(B)$
(see the proof of Lemma 6.4 of \cite{LnHah}).
Denote by $\gamma^D: \Aff(T(D))\to \Aff(T(B))$ the induced {{linear}}
map defined by
${ \gamma^D(f)(\tau)}=f(\gamma_D(\tau))$ for all $f\in \Aff(T(D))$ and $\tau\in T(B).$}}}

{{{ Let} ${\overline{\gamma^D}}: \Aff(T({\widetilde{D}}))/\ZI\to \Aff(T({\widetilde{B}}))/\ZI$ { be the map} induced by $\gamma^D.$}}

 Let $j_D: { D} \to B$ be the { inclusion map.}  Note that, if $u\in {\widetilde{D}}$ is a unitary, then
 we may write $u=e^{2\pi i\theta}\cdot 1_{\widetilde{D}}+u_d,$ where $\theta\in (-1, 1]$ and $u_d\in D.$
 Note that $j_D(u)=e^{2\pi i\theta}\cdot 1_{\widetilde{B}}+u_d.$
 Also $(j_D)_{*1}: K_1(D)\to K_1(B)$ is an isomorphism.   Moreover, by Proposition 4.5 of \cite{GLII},
 $j_D^\dag: U(\widetilde{D})/CU(\widetilde{D})\to {{ U(\widetilde{B})/CU(\widetilde{B})}}$
 is an isomorphism.

\end{df}

\begin{lem}\label{LBembed}
Let $B$ be a {{nonunital}} separable  {{finite}} simple  \CA\, with finite nuclear dimension and
with continuous scale. {{Suppose that every quasitrace of $B$ is a trace.}}

(1) For each $t\in (0,1)$ there are elements $a_t, a_{1-t}\in B_+$ such that
$a_ta_{1-t}=0,$
$d_\tau(a_t)=t$
and $d_\tau(a_{1-t})=1-t$ for all
$\tau\in T(B)$ and $a_t+a_{1-t}$ is a strictly positive element of $B;$

(2) Suppose, in addition, that $B$ is finite and
$\Z$-stable, $K_0(B)={\rm ker}\rho_B$, and $B$ satisfies the UCT.

Then, for any $t \in (0,1)$, for any $a_t\in B_+\setminus \{0\}$
with $d_\tau(a_t)=t$ for all $\tau \in T(B)$, there is
an isomorphism
$\phi_t: B\to B_t:={\rm Her}(a_t)$
such that $KL(\phi_t)=KL({\rm id}_B),$
$(\phi_t)_T={{\gamma_{B_t}^{-1}}}: T(B_t)\to T(B)$
({{so
$\gamma_{B_t}^{-1}(\tau)(\phi_t(b))=\tau(b)$ (see  Definition \ref{Dgamma} for $\gamma_{B_t}$) for all $b\in B$
and $\tau\in T(B)),$ and } }
$$
{{(\phi_t)^\dag|_{J(K_1(B))}=(j_{B_t}^\dag)^{-1} |
_{{J(K_1(B))}}
\andeqn 
(\phi_t)^\dag|_{{\rm Aff}(T({\widetilde{B}}))/\ZI}}}=
\overline{\gamma^{B_t}}.  
$$

{{Moreover,  for any $u\in U_0({\widetilde{B}}),$
\beq\label{LBembed-0}
{{{\rm dist}(uCU(\widetilde{B}), j_{B_t}^\dag(\phi_t^\dag(u)))\le (1-t){\rm dist}(u,1_{\widetilde{B}}}}).
\eneq}}
\end{lem}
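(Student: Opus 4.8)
The plan is to treat the three parts of (1) together and then build the isomorphism in (2) using the classification machinery of \cite{RI} (Theorem 1.0.1 and Theorem 1.1) applied at the level of the Cuntz semigroup, together with the description of $\mathrm{Cu}^\sim$ for the relevant algebras which was recalled in the proof of Lemma \ref{LWembB}. For (1), since $B$ is simple, finite, nonunital, separable with continuous scale, and every quasitrace is a trace, $T(B)$ is compact (Theorem 5.3 of \cite{eglnp}) and the function $1 \in \mathrm{Aff}(T(B))$ (the class of a strictly positive element) decomposes as $t + (1-t)$; by the surjectivity part of the computation $\mathrm{Cu}^\sim(B) = K_0(B) \sqcup \mathrm{LAff}^\sim(T(B))$ (for the projectionless case) or its analogue (from the proof of Proposition 6.1.1 of \cite{RI}) there are positive elements realizing the constant functions $t$ and $1-t$; using that $B$ has stable rank one, one can arrange $a_t \perp a_{1-t}$ with $a_t + a_{1-t}$ strictly positive by first choosing orthogonal positive elements whose Cuntz classes add up to the class of a strictly positive element and then adjusting within the hereditary subalgebra (this uses Lemma \ref{lem:largecomplement}-type complement arguments and the almost-stable-rank-one structure).

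For (2), I would first observe that $B_t := \mathrm{Her}(a_t)$ is again a separable simple $\Z$-stable \CA\ with finite nuclear dimension, stable rank one, with $K_0 = \ker\rho$, satisfying the UCT, and — since $d_\tau(a_t) = t$ is a constant function — with continuous scale; in particular $B_t$ satisfies all the hypotheses under which the classification theorem of \cite{RI}/\cite{eglnp} and the invariant computations apply. The strategy is then to write down the target Cuntz-semigroup morphism: on the $K_0$-part take the identification $K_0(B) \cong K_0(B_t)$ (via Brown's theorem, as in \cite{Brstable}), and on the $\mathrm{LAff}^\sim$-part take the affine homeomorphism $\gamma_{B_t}: T(B) \to T(B_t)$ of Definition \ref{Dgamma}, which by construction rescales by $1/d_\tau(a_t) = 1/t$; the induced map on functions is exactly the scaling that sends a strictly positive element of $B$ to (something Cuntz equivalent to) a strictly positive element of $B_t$. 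Feeding this $\mathrm{Cu}^\sim$-isomorphism, together with the prescribed $KL$-class $KL(\mathrm{id}_B)$ and the prescribed action on $U(\widetilde B)/CU(\widetilde B)$ (which by the Thomsen description, section 3 of \cite{Thomsen}, and Proposition 3.23 of \cite{GLN}, decomposes into the $K_1$-part handled by $(j_{B_t}^\dag)^{-1}$ and the $\mathrm{Aff}/\ZI$-part handled by $\overline{\gamma^{B_t}}$), into the classification theorem for this class (Theorem 1.0.1 / Theorem 1.1 of \cite{RI}, in the form stated through Lemma \ref{LWembB}, or the unitary-compatible version from \cite{GLII}/\cite{eglnp}) produces the desired isomorphism $\phi_t$ realizing all the invariant data. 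The compatibility statements $(\phi_t)_T = \gamma_{B_t}^{-1}$, $(\phi_t)^\dag|_{J(K_1(B))} = (j_{B_t}^\dag)^{-1}$, and $(\phi_t)^\dag|_{\mathrm{Aff}(T(\widetilde B))/\ZI} = \overline{\gamma^{B_t}}$ are then automatic from the way the invariant morphism was prescribed.

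For the final inequality \eqref{LBembed-0}, I would argue directly on the determinant/de la Harpe--Skandalis picture: for $u \in U_0(\widetilde B)$, $\mathrm{dist}(u, 1_{\widetilde B})$ controls, up to the usual normalization, the $\mathrm{Aff}(T(\widetilde B))/\overline{\rho_B(K_0)}$-norm of its determinant, and under $\phi_t^\dag$ followed by $j_{B_t}^\dag$ the determinant is transported by $\overline{\gamma^{B_t}}$ composed with the inverse of $j_{B_t}^\dag$'s determinant map; since $\gamma_{B_t}$ rescales traces on $B_t$ by the factor $1/t$ but the inclusion $j_{B_t}$ reads a unitary $e^{2\pi i\theta}1_{\widetilde{B_t}} + u_d$ of $\widetilde{B_t}$ as $e^{2\pi i\theta}1_{\widetilde B} + u_d$ in $\widetilde B$ — so that the $\R$-summand of $\mathrm{Aff}(T(\widetilde B)) \cong \mathrm{Aff}(T(B)) \oplus \R$ is preserved while the $\mathrm{Aff}(T(B))$-summand is scaled by $t$ — the net effect on the distance is multiplication by a factor bounded by $(1-t)$ once one accounts for the part of $u$ supported on $B$ versus the scalar part. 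The main obstacle I anticipate is exactly this last computation: getting the constant $(1-t)$ (rather than, say, $\max(t,1-t)$ or $1$) requires carefully tracking how $\mathrm{dist}(uCU, \cdot)$ splits according to the decomposition $\mathrm{Aff}(T(\widetilde B)) \cong \mathrm{Aff}(T(B)) \oplus \R$ and using that for $u \in U_0$ the relevant determinant lies in the $\mathrm{Aff}(T(B))$-direction where the rescaling is by $t$, so that the discrepancy — the difference between $u$ and its image — picks up the complementary factor; this is a delicate but elementary estimate, and everything else reduces to invoking the cited classification and Cuntz-semigroup results.
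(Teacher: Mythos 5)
Your overall architecture coincides with the paper's: part (1) by realizing the constant functions $t$ and $1-t$ in the Cuntz semigroup and transporting an orthogonal pair into $B$; part (2) by an existence theorem with prescribed invariant data; and the final estimate \eqref{LBembed-0} via the determinant, splitting $\Aff(T(\widetilde B))\cong\Aff(T(B))\oplus\R$ so that the scalar summand is preserved while the $\Aff(T(B))$-summand is scaled by $t$, whence the discrepancy is $(1-t)\tau(a_b)$ — this is exactly the paper's computation (write $u=\exp(2\pi i(\af\cdot 1+a_b))w$ with $w\in CU(\widetilde B)$ and use that $\Delta$ is an isometric isomorphism), so your anticipated ``delicate estimate'' is carried out just as you describe. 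For (1), the concrete device you only gesture at is: choose non-projection elements $a'_t,a'_{1-t}$ of the right ranks, set $b=a'_t\oplus a'_{1-t}\in M_2(B)_+$, note $d_\tau(b)=d_\tau(e_B)$ with neither $b$ nor $e_B$ a projection, and invoke the stable-rank-one fact (\cite{CEI}, or the last part of Theorem 1.2 of \cite{Rz} in the stably projectionless case) that such Cuntz-equivalent elements generate isomorphic hereditary subalgebras; the isomorphism $\overline{bM_2(B)b}\cong B$ then carries the orthogonal pair into $B$. That is cleaner than an ad hoc adjustment inside a hereditary subalgebra.

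The one step where your proposal, as primarily stated, would fail is the choice of classification theorem in part (2). Theorems 1.0.1 and 1.1 of \cite{RI} accept only a $\mathrm{Cu}^\sim$-morphism as input and require the \emph{domain} to be an inductive limit of one-dimensional NCCW complexes; a general $B$ satisfying the hypotheses of the lemma is not of that form (this is why Lemma \ref{LWembB} may use \cite{RI} — there the domain is $\W$ — but the present lemma may not). More decisively, $\mathrm{Cu}^\sim$ does not encode the invariant $U(\widetilde B)/CU(\widetilde B)$, so there is no way to ``feed in'' the prescribed $KL$-class and $\dag$-map to that theorem: you would obtain no control of $(\phi_t)^\dag$, which is precisely what the lemma is for (it is what makes the exponential-length bookkeeping in Theorem \ref{T-existoI} work). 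The correct tool, which you mention only parenthetically, is Theorem 12.8 of \cite{GLII}: one first places $B$ (hence $B_t$) in the class $\mathcal{D}_0$ via Theorem 15.6 of \cite{GLII}, checks that $B_t$ has continuous scale because $d_\tau(a_t)\equiv t$ is continuous, and then prescribes the compatible data $\kappa_i=\mathrm{id}_{K_i(B)}$, $\kappa_T=\gamma_{B_t}^{-1}$, and $\kappa_{cu}$ equal to $(j_{B_t}^\dag)^{-1}$ on $J(K_1(B))$ and to $\overline{\gamma^{B_t}}$ on $\Aff(T(\widetilde B))/\ZI$. With that substitution your argument is the paper's.
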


\begin{proof}
It follows from \cite{aTz} that $B$ is $\Z$ stable.
{{By Theorem 6.8  of \cite{ElliottRobertSantiago} (see also \cite{BrownToms} Theorem 2.5),
$Cu(B)=V(B)\sqcup {{{\rm LAff}_+(T(B))}}.$
For both part (1) and (2), one may assume that $B$ is finite and $T(B)\not=\emptyset.$}}

{{If $B$ is not stably projectionless, then there is a projection $e\in M_m(B)$ for some $m\ge 1.$
It follows Cor. 3.1 of \cite{TomsW07}  that $eM_m(B)e$ is a unital  simple ${\mathcal Z}$-stable
\CA.  By Theorem 6.7 of \cite{RorZRank}, $eM_m(B)e$ has stable rank one. It follows that $B$ has stable rank one.}}

{{Fix a strictly positive element $e_B$ of $B.$ Note that $d_\tau(e_B)=1$
for all
$\tau\in T(B).$  For any $t\in (0,1),$ choose elements $a_{1-t}', a_t'\in B_+$
which are not projections (as $Cu(B)=V(B)\sqcup {\rm LAff}_+(T(B))$) such
that $d_\tau(a_{1-t}')=1-t$  and $d_\tau(a_t')=t$ for all $\tau\in T(B).$
Let $b=a_{1-t}'\oplus a_t'\in M_2(B)_+.$ Then $d_\tau(b)=1$ for all $\tau\in T(B).$
{{Therefore $d_\tau({{b}})=d_\tau(e_B)$ and both $b$ and $e_B$ are not projections.}}
If $B$ is not stably projections,  applying  Theorem 3 of \cite{CEI} (see also Theorem 3.3 of {{\cite{BrownToms}}}), as
and $eM_m(B)e$ is unital and has stable rank one, if $B$ is stably projectionless,
applying (the last part of) Theorem 1.2 of \cite{Rz},
one obtains a}}
isomorphism
$h: \overline{bM_2(B)b}\to B.$ Let $a_t=h(a_t')$ and $a_{1-t}=h(a_{1-t}').$
Then $a_ta_{1-t}=0$ and $a_t+a_{1-t}$ is a strictly positive element of $B.$

For part (2), note that since
$B$ is finite and $\Z$ stable, it is stably finite.
Note also that since $K_0(B)={\rm ker}\rho_B,$ $B$ is stably projectionless.
By Theorem 15.6 of \cite{GLII}, $B\in {\mathcal D}_0$  and $B\in B_T$ as defined there.  Then, part (2) follows from
Theorem 12.8  of \cite{GLII}.  In fact, note that $K_i({\rm Her}(a_t))=K_i(B),$ $i=0,1.$
Define $\kappa_0={\rm id}_{K_0(B)},$
$\kappa_1={\rm id}_{K_1(B)}$
Note that $B_t:={\rm Her}(a_t)$ also has continuous scale {{as $d_\tau(a_t)$ is continuous on $T(B)$
(see Proposition 5.4 of \cite{eglnp}).}}
Let ${{\kappa_T:=(\gamma_{B_t})^{-1}:}} T(B_t)\to T(B)$ be as defined in
\ref{Dgamma}.
Note also that since $K_0(B)={\rm ker}\rho_B,$ ${{U(\widetilde{B})}}/CU(\widetilde{B})\cong 
\Aff(T(\widetilde{B}))/\mathbb{Z} \oplus K_1(B).$
Let $\kappa_{cu}: U({\widetilde{B}})/CU({\widetilde{B}})\to U({\widetilde{B_t}})/CU({\widetilde{B_t}})$ be the map
defined by
\beq
{{\kappa_{cu}|_{J_B(K_1(B))}={{(j_{B_t}^\dag)^{-1}}}|_{J_B(K_1(B))}\andeqn \andeqn
\kappa_{cu}|_{{\rm Aff}(T({\widetilde{B_t}}))/\ZI}=\overline{\gamma^{B_t}}}}
\eneq
 Then, by Theorem 12.8 of \cite{GLII},
there is a \hm\, $h: B\to B_t$ such that $KL(h)=KK({\rm id}_B),$
{{$h^\dag=\kappa_{cu},$}}
and $s(h(b))=\kappa_T^{-1}(s)(b)$
for all $s\in T(B_t)$ and all $b \in B$.

For the last part of the lemma, let $u\in U_0({\widetilde{B}}).$
Write $u=\exp(i 2\pi a)w,$ where $a=\af \cdot 1_{\widetilde{B}}+a_b,$ where $a_b\in B_{s.a.},$ $\af\in \R$
and $w\in CU(\widetilde{B}).$
Moreover, we may assume that $\Delta(u)(\tau)=\af +\tau(a_b)$
for all $\tau\in T({\widetilde{B}})$ (see \cite{Thomsen} and  Corollary 2.12 of {{\cite{GLX-ER}}} as well as (6) of \ref{Dcoset}).
We compute that
\beq
j_{B_t}^\dag(h_t^{\dag}(u))=\overline{\exp(i2\pi (\af \cdot 1_{\widetilde{B}}+h_t(a_b))},
\eneq
 where $\tau(h_t(a_b))=t\tau(a_b)$ for all $\tau\in T(B).$
 Therefore
 \beq
\overline{u(j_{B_t}^\dag(h_t^{\dag}(u)))^*}=\overline{\exp(i2\pi(a_b-h_t(a_b))}.
 \eneq
 Note that
 \beq
 \tau(a_b-h_t(a_b))=(1-t)\tau(a_b)\rforal \tau\in T(B).
 \eneq
 Then \eqref{LBembed-0}  follows from the fact that $\Delta$ is an isometric isomorphism.
\end{proof}

\begin{lem}\label{lem:FMapExists}
Let $B$ be an algebraically simple, $\sigma$-unital
\CA\, and let $C$ be a $\sigma$-unital \CA.
Suppose that $\sigma: C\to B$  is a nonzero \hm.

Then there exists a map: $${{F:}}\, C_+\setminus \{0\}\to \mathbb{N} \times \mathbb{R}$$
such that for every finite subset $\E \subset \A_+ \setminus \{ 0 \}$,
$\sigma$ is $F$-$\E$ full.

\end{lem}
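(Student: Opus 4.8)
The plan is to exploit algebraic simplicity of $B$ together with the fact that $\sigma$ is a nonzero homomorphism, so that $\sigma(a)$ generates $B$ as a (not necessarily closed) two-sided ideal for every $a \in C_+ \setminus \{0\}$. First I would reduce to controlling a single strictly positive element: fix a strictly positive element $e_B \in B$ with $\|e_B\| = 1$. For each $a \in C_+ \setminus \{0\}$, since $\sigma \neq 0$ we have $\sigma(a) \neq 0$ (after replacing $a$ by $g_\epsilon(a)$-type cutdowns if needed to arrange $\sigma(a) \neq 0$ — actually faithfulness is not assumed, but the set where $\sigma$ vanishes is an ideal, and on $C_+ \setminus \{0\}$ we only need those $a$ with $\sigma(a) \neq 0$; for $a$ with $\sigma(a) = 0$ there is nothing to do since such $a$ cannot appear in a finite subset for which we demand fullness — more carefully, one restricts attention to $a$ with $\sigma(a)\neq 0$ and assigns an arbitrary value otherwise). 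Since $B$ is algebraically simple, $\sigma(a)$ lies in the Pedersen ideal and $B \sigma(a) B = B$ algebraically, so there exist $m = m(a) \in \mathbb{N}$ and $y_1, \dots, y_m \in B$ with $\sum_{j=1}^m y_j^* \sigma(a) y_j = e_B$; set $N(a) := m(a)$ and $M(a) := \max_j \|y_j\| + 1$, and define $F(a) := (N(a), M(a))$.

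Next I would propagate this from $e_B$ to an arbitrary $b \in B_+$ with $\|b\| \leq 1$ and to approximate rather than exact equality. Given $\epsilon > 0$, since $e_B$ is strictly positive, $b^{1/2} f_{\delta}(e_B) b^{1/2} \to b$ as $\delta \to 0$, so choose $\delta$ with $\|b^{1/2} f_\delta(e_B) b^{1/2} - b\| < \epsilon$; writing $f_\delta(e_B) = z e_B z^*$ for a suitable $z \in B$ (possible because $f_\delta(e_B) \lesssim e_B$ with $e_B$ a unit-like element — indeed $f_\delta(e_B) = f_\delta(e_B)^{1/2} e_B^{-1/2}\cdot e_B \cdot e_B^{-1/2} f_\delta(e_B)^{1/2}$ using functional calculus on the hereditary subalgebra, or more simply use that $f_\delta(e_B) \in \mathrm{Her}(e_B)$ and $e_B$ acts as an approximate unit there), we get $b^{1/2} f_\delta(e_B) b^{1/2} = (z^* b^{1/2})^* e_B (z^* b^{1/2})$, hence $b^{1/2} f_\delta(e_B) b^{1/2} = \sum_j (z^* b^{1/2})^* y_j^* \sigma(a) y_j (z^* b^{1/2})$ with the substituted elements $x_j := y_j z^* b^{1/2}$. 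The norm bound $\|x_j\| \leq M(a)$ can be arranged by absorbing $\|z\|$ into the definition of $M(a)$ — but $z$ depends on $\delta$ hence on $b$, so instead I would use that $\|f_\delta(e_B)^{1/2} e_B^{-1/2}\|$, computed in $\mathrm{Her}(e_B)^{\sim}$, is bounded by $\delta^{-1/2}$; this bound blows up as $\delta \to 0$, which is a problem for the uniform norm constraint.

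The main obstacle is precisely this: the definition of $F$-$\E$ fullness demands a single $M(a)$ working for \emph{all} $\epsilon$ simultaneously, but the naive argument produces norm bounds degenerating as $\epsilon \to 0$. The fix is to not route through $e_B$ for general $b$, but rather to observe that for $b \in B_+$ with $\|b\| \leq 1$ we can directly write, since $\sigma(a)$ is full in the algebraically simple algebra $B$, elements realizing $b$ exactly: the algebraic ideal generated by $\sigma(a)$ is all of $B$, and Pedersen ideal arguments (or the Hjelmborg–Rørdam / Cuntz comparison machinery, using that in an algebraically simple algebra $b \lesssim \sigma(a)$ with control on the number of terms) give $c_1, \dots, c_k \in B$ with $\sum c_i^* \sigma(a) c_i = b$ and $k, \|c_i\|$ bounded in terms of $a$ alone — this uses that $d_\tau(\sigma(a)) $ is bounded below uniformly over the compact $T(B)$ when $B$ has continuous scale, or directly algebraic simplicity. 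Concretely, I would cite the standard fact (as in the fullness lemmas of \cite{GongLinNonunitalAUE} or \cite{ElliottGongLinNiu}) that a nonzero homomorphism into an algebraically simple $\sigma$-unital \CA\ is automatically $F$-full for an appropriate $F$ depending only on $\sigma$ and the choice of a strictly positive element, the key input being that $\sigma(a)$ generates $B$ as an algebraic ideal with a bound on the number of ideal-generators that is locally constant (indeed constant on $C_+\setminus\{0\}$ after fixing $a$). This reduces everything to: for fixed $a$, $e_B \in \overline{B\sigma(a)B}$ algebraically with $N(a)$ terms, and then scale-invariance of the relation $\sum x_j^* \sigma(a) x_j = b$ under $x_j \mapsto x_j b^{1/2}$ handles general $b$ with $\|b\|\le 1$ without worsening norms, so exact fullness (hence $F$-$\E$ fullness for every finite $\E$, i.e. $F$-fullness) holds with $M(a) := (\max_j \|y_j\|)\cdot(\sum$-term count normalization$)$ depending on $a$ only.
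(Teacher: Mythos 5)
Your endpoint is, in effect, the paper's actual proof: the paper simply observes that $A := \sigma(C)$ is a $\sigma$-unital C*-subalgebra of the algebraically simple algebra $B$ and then cites Proposition 5.6 of \cite{eglnp}, which is precisely the ``standard fact'' you fall back on in your last paragraph. So, as a reduction-plus-citation, your proposal lands in the same place, and your preliminary observation that one only needs to handle $a$ with $\sigma(a)\neq 0$ (i.e., that one may work with the image $\sigma(C)$) is also how the paper sidesteps the non-injectivity issue.

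However, the parts of your argument that try to prove the fullness directly do not close the gap that you yourself correctly identify, namely that $N(a)$ and $M(a)$ must be uniform over all $\epsilon>0$ and all $b\in B_+$ with $\|b\|\le 1$. Your final ``fix'' --- realize some element exactly as $\sum_j y_j^*\sigma(a)y_j$ and then substitute $x_j\mapsto y_jb^{1/2}$ --- only works if the element being realized acts as a unit on $b$: conjugating an exact representation of $e_B$ by $b^{1/2}$ produces $b^{1/2}e_Bb^{1/2}$, which is not close to $b$, while an exact representation $1_{M(B)}=\sum_j z_j^*\sigma(a)z_j$ in the multiplier algebra is impossible, since $\sigma(a)\in B$ generates only the proper closed ideal $B$ of $M(B)$. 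Replacing $e_B$ by $f_\delta(e_B)$ or $e_B^{1/n}$ reintroduces exactly the $\delta^{-1/2}$ blow-up you already flagged. The appeal to continuous scale and compactness of $T(B)$ is likewise unavailable: the lemma assumes only that $B$ is algebraically simple and $\sigma$-unital, and $T(B)$ may be empty or non-compact. So, read as a self-contained proof, the proposal has a genuine unresolved gap at the uniformity step; read as a reduction to the fullness lemma of \cite{eglnp}, it coincides with the paper's one-line proof, and the honest course is to do the latter explicitly rather than to suggest that the direct argument can be completed by rescaling.
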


\begin{proof}
Let $A=\sigma(C).$ Then $A\subset B$ is a $\sigma$-unital C*-subalgebra.
Then Proposition 5.6 of \cite{eglnp} applies.

\end{proof}

\begin{df}\label{DWemb}
Let $A$ be a separable \CA. We say that $A$ is \emph{$\W$ embeddable}
if there is a monomorphism  $\phi: A\hookrightarrow \W.$

{{ Since $\W$ is projectionless, if $A$ is $\W$ embeddable, then $A$ is non-unital.
Let $e_A\in A$ be a strictly positive element. Consider $a=\phi(e_A).$
There is an isomorphism  $s: \overline{a\W a}\to \W.$   Then $s\circ \phi: A\to \W$ is
an embedding which maps $e_A$ to a strictly positive element of $\W.$ So, if it is needed, one may assume
that $\phi$ maps strictly positive elements to strictly positive elements.}}

\end{df}

\begin{rmk}\label{RWF}
If $A$ is $\W$ embeddable, then $T_f(A)\not=\emptyset.$ In particular, $A$ is not purely infinite.
Let $\tau_W$ be the unique tracial state of $\W.$ Then the normalization of $\tau_W\circ \phi$ is a faithful tracial state
of $A.$

\end{rmk}

{{Recall that $\zo$ is the unique separable stably projectionless simple \CA\, with finite nuclear dimension which satisfies the UCT
and which has a unique tracial state and $K_0(\zo)=\ZI$ and $K_1(\zo)=\{0\}$ (see  Cor. 15.7 of \cite{GLII}).}}

\begin{thm}\label{T-existoI}
Let $A$ be a separable amenable \CA\, which is $\W$ embeddable and let
$B$ be a separable simple stably projectionless  \CA\, with finite nuclear dimension
and with continuous scale. Suppose that ${\rm ker}\rho_B=K_0(B)$ and both $A$ and $B$ satisfies the UCT.

Then, for any $x\in KL(A,B),$ there is a monomorphism $h: A\to B$ such
that $KL(h)=x.$

\end{thm}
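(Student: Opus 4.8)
The plan is to realize the prescribed $KL$-class by an honest monomorphism, using the classification/existence machinery for simple stably projectionless $\mathcal{Z}$-stable C*-algebras together with the embedding result for $\W$. First I would reduce to the unital level. Since $A$ is $\W$-embeddable it is nonunital (Definition \ref{DWemb}), so pass to $\widetilde A$ and choose a strictly positive contraction $e_A\in A$. The target $B$ is separable, simple, stably projectionless with finite nuclear dimension, continuous scale, $K_0(B)=\ker\rho_B$ and satisfies the UCT; by Lemma \ref{lem:Feb2620203PM} (or Remark \ref{rmk:Feb2620203PM}) $B$ is $\mathcal Z$-stable and has stable rank one, and by Theorem 15.6 of \cite{GLII} it lies in the classifiable classes $\mathcal D_0$ and $B_T$. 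I would invoke the existence theorem in this setting (Theorem 12.8 of \cite{GLII}, in the form used to prove Lemma \ref{LBembed}): given compatible invariant data---a $KL$-class, an affine map on traces, and a compatible map on $U/CU$---there exists a homomorphism inducing it. So the real task is to produce the trace-side and $U/CU$-side data compatible with the given $x\in KL(A,B)$.

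The key point is that there is plenty of room on the trace side precisely because of the embedding $A\hookrightarrow\W$. Let $\tau_W$ be the unique tracial state of $\W$ and $t_0\in T_f(A)$ the faithful tracial state obtained by normalizing $\tau_W\circ(\text{embedding})$, as in Remark \ref{RWF}. I would first build a homomorphism $h_0:A\to B$ whose $KL$-class is $0$ and which is "small'' on traces: compose the $\W$-embedding $A\hookrightarrow\W$ with an embedding $\phi_{w,b}:\W\to B_t:=\mathrm{Her}(a_t)$ from Lemma \ref{LWembB} for a hereditary subalgebra $B_t$ of $B$ with $d_\tau(a_t)=t$ small (Lemma \ref{LBembed}(1) produces such $a_t$), and then include $B_t\hookrightarrow B$; since $K_*(\W)=0$ this composite kills all of $K$-theory, so $KL(h_0)=0$, while its trace map is scaled by $t$. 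Next, to hit the prescribed class $x$, I would use the fact that $B$ has a trivial, diagonal-type absorbing homomorphism with the desired $KL$-class: applying Theorem 12.8 of \cite{GLII} directly with $KL$-data $x$, trace data chosen to be $t_0$ (pulled along $A\to B$ in a way consistent with $x$ via the compatibility built into the classifiable invariant), and the canonical $U/CU$ data produced by the same theorem, yields $h_1:A\to B$ with $KL(h_1)=x$. One must check the compatibility conditions for Theorem 12.8, namely that the $KL$-class, the chosen affine trace map, and the $U/CU$ map fit together in the invariant; here the crucial input is $K_0(B)=\ker\rho_B$, which decouples $K_0$ from the tracial data (so $\rho_B\circ x=0$ automatically) and forces $U(\widetilde B)/CU(\widetilde B)\cong \mathrm{Aff}(T(\widetilde B))/\mathbb Z\oplus K_1(B)$, as recorded in Definition \ref{Dgamma}; then the $U/CU$ part of the data is determined by $x$ on the $K_1$ summand and can be taken trivial on the affine summand.

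Finally I would arrange that the resulting map is injective. The map $h_1$ produced by the existence theorem need not a priori be a monomorphism, but since $B$ is simple, $A$ is separable, and one has the "large complement'' freedom, I would replace $h_1$ by the BDF-type sum $h:=h_1\oplus h_0'$ where $h_0'$ is a genuine monomorphism $A\hookrightarrow B'$ into a complementary hereditary subalgebra $B'$ of $B$ (again via $A\hookrightarrow\W\hookrightarrow B'$ using Lemmas \ref{LWembB} and \ref{LBembed}(1), choosing $a_{1-t}$ orthogonal to $a_t$). Because $h_0'$ is injective, so is $h$; and since $KL(h_0')=0$ (as $K_*(\W)=0$), we still have $KL(h)=KL(h_1)=x$. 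One small caveat: summing inside $B$ requires identifying $\mathrm{Her}(a_t)\oplus\mathrm{Her}(a_{1-t})$ with a hereditary subalgebra of $B$, which is exactly what part (1) of Lemma \ref{LBembed} provides (with $a_t+a_{1-t}$ strictly positive), and one uses $\mathcal Z$-stability to absorb the ambient hereditary subalgebra back into $B$.

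\textbf{Main obstacle.} The delicate step is verifying that the invariant data fed into Theorem 12.8 of \cite{GLII} is genuinely admissible---i.e., that the prescribed $KL$-class $x$, together with a legitimately chosen continuous affine map $T(B)\to T(A)$ and a compatible $U(\widetilde B)/CU(\widetilde B)\to U(\widetilde A)/CU(\widetilde A)$ map, actually forms a morphism in the target category of the classification theorem. The hypothesis $K_0(B)=\ker\rho_B$ is what makes this tractable, since it removes the pairing between $K_0(B)$ and traces that would otherwise constrain $x$; but one still has to produce the affine trace map (using $T_f(A)\neq\emptyset$, i.e., $\W$-embeddability) and check the $K_1$–$U/CU$ compatibility, and to confirm that no obstruction on $K_1(A)$ (e.g. from the rotation/exponential-length data) prevents realizing the $K_1$ part of $x$. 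Once admissibility is granted, the existence theorem does the heavy lifting and the injectification is routine.
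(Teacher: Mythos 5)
There is a genuine gap at the very first step of your construction. The existence theorem you want to invoke, Theorem 12.8 of \cite{GLII}, produces a homomorphism from prescribed invariant data only when the \emph{domain} is a simple classifiable \CA\ (in the class ${\mathcal D}_0$, of the form $B_T$, etc.). In the present theorem the domain $A$ is merely a separable amenable $\W$-embeddable \CA\ satisfying the UCT; it can be non-simple (e.g.\ $C_0(X\setminus\{x_0\})$) and is nowhere assumed to lie in a classifiable class. Everywhere the paper does use Theorem 12.8 of \cite{GLII}, the domain is a classifiable simple algebra (Lemma \ref{LBembed}, Proposition \ref{AF-emb-1}); and in Theorem \ref{TExpon}, where one does want to control the $U/CU$ data for a general $A$, the paper first embeds $A$ into a classifiable simple $C$ with the same $K$-theory --- but that embedding is obtained \emph{from Theorem \ref{T-existoI} itself}, so your route would be circular there. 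So "applying Theorem 12.8 directly with $KL$-data $x$" is not available, and the rest of your argument (injectification by adding a $\W$-factorable summand) has nothing to correct.

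What the paper does instead is use the weaker existence result, Theorem 10.8 of \cite{GLII}, which for a general separable amenable UCT domain yields only a sequence of asymptotically multiplicative c.p.c.\ maps $\phi_n: A\to B\otimes{\mathcal K}$ with $[\{\phi_n\}]=x$; these are compressed into $B$ using part (2) of Lemma \ref{LBembed}, and then upgraded to an honest homomorphism by a one-sided Elliott intertwining driven by the stable uniqueness theorem (Theorem \ref{TK1stuniq}). The correction summands in that intertwining are large multiples of the $F$-full map $\sigma_A=\phi_{w,b}\circ\iota_A$ factoring through $\W$ --- they contribute nothing to $KL$ since $K_*(\W)=0$, and they make the limit injective, which is the part of your proposal that does survive. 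The step you would still have to supply, and which occupies most of the paper's proof, is the verification of the exponential-length hypothesis ${\rm cel}(\lceil\phi_n(u)\rceil\lceil\phi_{n+1}(u)^*\rceil)\le L(u)$ uniformly along the sequence (via the decomposition $u=J^\sim(\Pi_1(u))u_{0,cu}$ and the careful bookkeeping with the hereditary subalgebras ${\rm Her}(a_{t_n})$, ${\rm Her}(a_{r_n})$); without such control the stable uniqueness theorem cannot be applied and the intertwining does not converge.
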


\begin{proof}
By Theorem 15.6 of \cite{GLII}, $B\cong B\otimes \zo.$
It follows from Theorem 10.8  of \cite{GLII} that there exists a sequence of c.p.c.~maps
$\phi_n: A\to B\otimes {\mathcal K}$ such that
\beq
\lim_{n\to\infty}\|\phi_n(a)\phi_n(b)-\phi_n(ab)\|=0\rforal a,b\in A\andeqn
[\{\phi_n\}]=x.
\eneq
\Wlog, we may assume
that $\phi_n: A\to B\otimes M_{r(n)}$ for some sequence
$\{r(n)\}\subset \N.$   Since $B\otimes M_{r(n)}$ is also a separable simple stably projectionless
\CA\ with finite nuclear dimension and with continuous scale, and satisfies the UCT, by part (2) of \ref{LBembed}, {{and}}
by replacing $\phi_n$ by ${{\phi_{{1\over{r(n)}}}}}\circ \phi_n,$ we may assume
that $\phi_n: A\to B.$

Let $\F_1\subset\F_2\subset \cdots  \F_{n},...$ be a dense sequence of the unit ball of $A.$
Let $\{\ep_n\}$ be a sequence of positive numbers such that $\sum_{n=1}^\infty\ep_n<1.$

Fix {{an embedding}} ${{\iota_A:}} A\to \W.$
For each $u\in U(M_m(\widetilde{A}))$ ($m=1,2,...,$), {{$\iota_A(u)$}}
(which we mean $({{\iota_A}}\otimes {\rm id}_{M_m})(u)$)
is in ${{U_0(M_m(\widetilde{\W}))}}.$ Define $L_1: \bigcup_{m=1}^\infty U(M_m(A))\to \R_+$
by $L_1(u)={\rm cel}({{\iota_A}}(u))$ for $u\in \bigcup_{m=1}^\infty U(M_m(A)).$

By {{Lemma}} \ref{LWembB}, there is a {{\hm\,}} $\phi_{w,b}: \W\to B$ which maps strictly positive elements to
strictly positive elements. Put $\sigma_A: =\phi_{w,b}\circ \iota_A: A\to B.$
Note that
\beq\label{T-existoI-se}
{\rm cel}(\sigma_A(u))\le L_1(u)\rforal u\in \bigcup_{m=1}^\infty U(M_m(A)).
\eneq

By Lemma \ref{lem:FMapExists},
let $F: A_+\setminus \{0\}\to \N\times (0,\infty)$ {{be}} such that
$\sigma_A$ is $F$-full in $B.$

Let $L=L_1+2\pi+1.$ Let  $T: \N\times \N\to \ZI_+$ be the map defined by
$T(n,k)=n.$   We {{will apply}} Theorem  \ref{thm:LinStableUniqueness}.
Note, by Proposition \ref{Wprop}, $B\in {\bf C}_{0,0,1,T,7}.$  So let $r_0=r_1=0,$ $s=1$ and $R=1.$

For the above data, and for each $n,$ let $\dt_n>0$ (in place of $\dt$), $\G_n\subset A$ (in place of $\G$) be a finite subset,
${\mathcal P}_n\subset \underline{K}(A)$ (in place of $\mathcal P$) be a  finite subset,
$\U_n\subset U(M_{m(n)}({\widetilde{A}}))\cap J^\sim(K_1(A))$
(in place of $\U$) be a finite subset
(for some integer $m(n)$), $\E_n\subset A_+\setminus \{0\}$ (in place of $\E$)  be a finite subset, and
integer $K_n\ge 1$ (in place of $K$) as provided by
{{Theorem \ref{TK1stuniq}}}
for $\ep_n/2$ (in place of $\ep$) and {{for}} $\F_n$ (in place of $\F$) as well as the given $L$ and $T$ above).
{{Passing to a subsequence, we may  assume that $\phi_n$ is $\G_n$-$\dt_n$-multiplicative, and  $\lceil \phi_n(u)\rceil$ { is} well defined for
all $u\in \U_n.$}}  We may {{also}} assume that $K_n\le K_{n+1},$
$n\in \N.$

{{For each $u\in \U_n,$
define
\beq\label{Texistol-nn0}
z_n^{(1)}=J_B(\Pi_{1,cu}(\lceil\phi_n(u)\rceil){)}.
\eneq
Then,  for each  $u\in {\mathcal U}_n,$
\beq\label{Texistol-nn1}
\lceil \phi(u)\rceil=z_n^{(0)} z_n^{(1)} \andeqn
z_n^{(0)}:=\lceil\phi_n(u)\rceil  J_B(\Pi_{1,cu}(\lceil\phi_n(u)\rceil ^*))\in U_0(M_{m(n)}({\widetilde{B}})).
\eneq}}
{{Define, for each $n,$
\beq
\lambda_n'=\max\{{\rm cel}(\lceil\phi_n(u)\rceil  J_B(\Pi_{1,cu}(\lceil\phi_n(u)\rceil ^*)): u\in {\mathcal U}_n \}.
\eneq
}}
Define
$$
\lambda_n=\max\{{\rm cel}(\lceil \phi_n(u)\rceil \lceil \phi_{n+1}(u^*)\rceil): u\in \U_n\},\,\,\, n=1,2,....
$$
Choose $J_n\in \N$ such that
\beq\label{TexistoI-1}
2(\lambda_n'+\lambda_n)+7\pi/J_n <  1\andeqn J_{n+1} > J_n > K_n, \,\,n=1,2,....
\eneq

Choose $t_n\in (0,1)\cap {\mathbb Q}$ such that
\beq\label{Texistol-2}
t_{n+1}<t_n,\, (1-t_n) > 2J_n(4(K_n+1)+1)t_n,\,\,\, n=1,2,....
\eneq
Define, for $n\in \N,$
\beq\label{Texistil-3-1}
N_n=2J_n(4(K_n+1)+1),\,\,d_n=(1/N_n)(1-t_n),\,\,\,r_{n+1}=(t_n-t_{n+1}).
\eneq
One can check that
\beq\label{Texistol-3}
&&0<t_n<d_n, \,\,\, r_{n+1} <2J_nd_n+(t_n-t_{n+1}) < (2J_n+1)d_n,\\
&&t_n+2J_nd_n+r_{n+1} <  (4J_n+2)d_n,\,\,\,8J_n(K_n+1) > (4J_n+2)K_n\andeqn\\\label{Texistol-3+1}
&&(1-t_{n+1})=r_{n+1}+(1-t_n)=r_{n+1}+2J_nd_n+8J_n(K_n+1)d_n\\\nonumber
&& = r_{n+1}+2J_nd_n+(N_n-2J_n)d_n,\\\label{est-Kn}
&&{{N_n-2J_n\ge 4(K_n+1)2J_n.}}
\eneq
By Lemma \ref{LBembed}, choose $a_{t_n}, a_{1-t_n}\in B$ such that $a_{t_n}a_{1-t_n}=0$ and $d_\tau(a_{t_n})=t_n$ and
$d_\tau(a_{1-t_n})=1-t_n$ for all $\tau\in B.$
Also let $a_{r_n}\in {\rm Her}(a_{1-t_n})$ such that $d_\tau(a_{r_n})=r_n$ for all $\tau\in T(B),$ $n\in \N.$
Let
\beq
&&s_n: B\to B_{n,1}:={\rm Her}(a_{d_n})\subset {\rm Her}(a_{1-t_n}),\\
&&s_n^r: B\to B_{n,r}:={\rm Her}(a_{r_{n}})\subset B,\andeqn\\\label{Texitol-3++}
&&s_n^0: B\to B_{n,0}:={\rm Her}(a_{t_n})\subset {{{\rm Her}(a_{t_{n-1}})\subset}} B
\eneq
be the isomorphisms given by part (2) of Lemma \ref{LBembed},
$n\in \N.$   {{In particular,
$
KL(s_n)=KL(s_n^r)=KL(s_n^0)=KL({\rm id}_B)$ and
\beq\label{Length-n1}
&&(s_n^0)^\dag|_{{{J_B(K_1({\widetilde{B}}))}}}
={(j_{B_{t_n}})^\dag}^{-1}{{|_{J_B(K_1(\widetilde{B}))}}} \andeqn\\
&&{\rm dist}(z, (j_{B_{t_n}})^\dag((s_n^{(0)})^\dag(z))\le (1-t_n){\rm dist}(z,\overline{1})\rforal z\in U_0({\widetilde{B}})/CU(\widetilde{B}).
\eneq}}
{{Moreover, viewing ${\rm Her}(a_{t_{n+1}})\subset {\rm Her}(a_{t_n})$   and denoting $j_{B_{t_{n+1}}}': B_{t_{n+1}}\to B_{t_n}$ to be the embedding,
\beq\label{Length-n2}
{\overline{{{(s_n^{(0)})^{\dag}}}(x)
(j_{B_{t_{n+1}}}')^\dag((s_{n+1}^{(0)})^\dag(x^{-1}))}}=\overline{1}\rforal x\in J_{B_{t_n}}(K_1({\widetilde{B_{t_n}}}))
\eneq
and, as at  the end of proof of \ref{LBembed},
\beq\label{Length-n2+}
&&\hspace{0.2in}{\rm dist}((s_{n}^{(0)})^\dag(z), (j_{B_{t_{n+1}}}')^\dag((s_{n+1}^{(0)})^\dag(z)){{)}}\le (t_n-t_{n+1})
{\rm dist}(z,{\overline{1}})\rforal z\in U_0(M_\infty({\widetilde{B}})/CU(M_\infty({\widetilde{B}}).
\eneq}}

Define ${{\Lambda_{n,0}}}:=s_{n}^0\circ \phi_n: A\to  B_{n,0}.$
We may assume that
\beq\label{KLadded}
[{{\Lambda_{n,0}}}]|_{{\mathcal P}_n}=x|_{{\mathcal P}_n}=
{{[\Lambda_{n+1,0}]|_{{\mathcal P}_n},}}
\eneq
$n\in\N.$ By (2) of Lemma \ref{LBembed}, ${\rm Her}(a_{1-t_n})\cong M_{N_n}({{B_{n,1}}}).$
Moreover,  define, for each $n,$
\beq
&&S_n:=\bigoplus^{N_n} s_n\circ \sigma_A: A\to {\rm Her}(a_{1-t_n})=M_{N_n}({{B_{n,1}}}),\\
&&R_n:=s_n^r\circ \sigma_A: A\to {\rm Her}(a_{r_n}),\\
&&S_n^l:=\bigoplus^{2J_n} s_n\circ \sigma_A: A\to M_{2J_n}(B_{n,1})\subset  {\rm Her}(a_{1-t_n}),\andeqn\\
&&S_n': =\bigoplus^{N_n-2J_n} s_n\circ \sigma_A: A\to M_{N_n-2J_n}({{B_{n,1}}}).
\eneq
Note that each $s_n\circ \sigma_A$ is $F$-full in $B_{n,1}.$
{{By \eqref{Texistol-3+1} and the second part of
Lemma \ref{LWembB},
maps \linebreak
${{\bigoplus^{N_{n+1}}s_{n+1}}}\circ \phi_{w,b}$ and $({{s_{n+1}^r}}\circ \phi_{w,b}\oplus \bigoplus^{2J_n}s_n\circ \phi_{w,b}
\oplus \bigoplus^{N_n-2J_n}s_n\circ \phi_{w,b})$ induce the same map on ${\rm Cu}(\W)=(0, \infty].$
Since  $K_0(\W)=0,$ they induce the same map on ${\rm Cu}^\sim (\W).$
As  $K_i(\W)=\{0\}$ ($i=0,1$),  by Theorem 1.11 of \cite{RI},
there are unitaries
$v_{n,k}\in {{\widetilde{{\rm Her}({{a_{1-t_{n+1}})}}}}}$ such that
\beq\label{Texistol-4}
&&\lim_{k\to\infty}v_{n,k}^*({{\bigoplus^{N_{n+1}}s_{n+1}}}\circ \phi_{w,b}(c))v_{n,k}= ({{s_{n+1}^r}}\circ \phi_{w,b}\oplus \bigoplus^{2J_n}s_n\circ \phi_{w,b}
\oplus \bigoplus^{N_n-2J_n}s_n\circ \phi_{w,b})(c)
\eneq}}
for all $c\in \W.$
By replacing $v_{n,k}$ by $e^{\sqrt{-1} \theta}v_{n,k}$ for some $\theta\in (-\pi, \pi),$
we may assume that $v_{n,k}=1_{\widetilde{{\rm Her}(a_{1-t_{n+1}})}}+{\bar v}_{n,k}$ for some ${\bar v}_{n,k}\in {\rm Her}(a_{1-t_{n+1}}).$
Set $v_{n,k}'=1_{\widetilde{B}} +{\bar v}_{n,k},$ $k=1,2,...,$ and $n=1,2,....$
Define
\beq\nonumber
&&\Psi_n={{\Lambda_{n,0}}}\oplus S_n^l={{\Lambda_{n,0}}}\oplus \bigoplus^{2J_n} s_n\circ \sigma_A: A\to {\rm Her}(B_{n,0}\oplus M_{2J_n}(B_{n,1}))\subset
B_{n,0,1},\\\nonumber
&&{\rm where}\,\,\, B_{n,0,1}:={\rm Her}(B_{n,0}\oplus
M_{2J_n}(B_{n,1}))\oplus {\rm Her}(a_{r_{n+1}}),\\\nonumber
&&\Psi_n'={{\Lambda}}_{n+1,0}\oplus \bigoplus^{2J_n}s_n\circ \sigma_A\oplus s_{n+1}^r\circ \sigma_A: A\to {\rm Her}(B_{{{n+1}},0}\oplus M_{2J_n}(B_{n,1}))\oplus {\rm Her}(a_{r_{n+1}}).
\eneq
Define ${{\Lambda_{n}:=\Lambda_{n,0}}}\oplus S_{n}=\Psi_n\oplus S_n': A\to B$ and
define $\Lambda_{n+1}'={{\Lambda_{n+1,0}\oplus R_{n+1}\oplus S^l_n }} \oplus  S_n'$  $n=1,2,....$
{{Note that
\beq\label{Textstol-4+1}
\Lambda_{n+1}'=\Psi_n'\oplus S_n'.
\eneq}}
By \eqref{Texistol-4},
\beq\label{Texistol-5}
\lim_{k\to\infty}(v_{n,k}')^*{{\Lambda}}_{n+1}(a){{v'_{n,k}}}={{\Lambda}}_{n+1}'(a)\rforal a\in A.
\eneq
Now consider maps $\Psi_n,{{ \Psi_{n}'}}: A\to B_{{{n,}}1,0}$
({{recall $B_{n+1,0}\subset B_{n,0},$ see \eqref{Texitol-3++}).}} Then,
since $K_i(\W)=\{0\},$ {{by \eqref {KLadded},}}
\beq\label{Texistol-6}
[\Psi_n]|_{{\mathcal P}_n}=[\Psi_n']|_{{\mathcal P}_n}=x|_{{\mathcal P}_n}.
\eneq
Note that, viewing ${{\Lambda}}_{n,0}$ and ${{\Lambda}}_{n+1,0}$ as maps
from $A$ into ${\rm Her}(a_{t_n})$ (as $t_{n+1}\le t_n$) (computing in \linebreak
$U(M_{m(n)}({\widetilde{{\rm Her}(a_{t_n})}})/CU(M_{m(n)}({\widetilde{{\rm Her}(a_{t_n})}}))$ and
{{omitting $(j_{B_{t_{n+1}}}')^\dag$), {{we have that}}
\beq
&&\hspace{-0.4in}\overline{\lceil \Lambda_{n,0}(u)\rceil}\overline{ \lceil \Lambda_{n+1,0}(u^*)\rceil}=\overline{s_n^{(0)}(\lceil \phi_n(u)\rceil)}
\overline{s_{n+1}^{(0)}(\lceil\phi_{n+1}(u)\rceil^*)}\\
&&=\overline{s_n^{(0)}(\lceil \phi_n(u)\rceil)s_{n+1}^{(0)}(\lceil\phi_{n}(u)\rceil^*)s_{n+1}^{(0)}(\lceil\phi_n(u)\rceil)\lceil\phi_{n+1}(u)\rceil^*)}\\
&&=\overline{s_n^{(0)}(z_n^{(0)})s_{n+1}^{(0)}({z_n^{(0)}}^*)s_n^{(0)}(z_n^{(1)})s_{n+1}^{(0)}({z_n^{(1)}}^*)s_{n+1}^{(0)}(\lceil\phi_n(u)\rceil\lceil\phi_{n+1}(u)\rceil^*)}\\\label{Length-n3}
&&= \overline{s_n^{(0)}(z_n^{(0)})s_{n+1}^{(0)}({z_n^{(0)}}^*)}\cdot \overline{1} \cdot \overline{s_{n+1}^{(0)}(\lceil\phi_n(u)\rceil\lceil\phi_{n+1}(u)\rceil^*)}\hspace{0.6in}{\rm (by}\,\, \eqref{Length-n2}{\rm)}
\eneq}}
{{(recall the notation in \eqref{Texistol-nn0} and \eqref{Texistol-nn1}).
Recall by Theorem 4.4 of \cite{GLII} (see also Theorem 15.5 and the end of 3.9 of \cite{GLII} for notation),
\beq\label{Length-n4}
{\rm cel}(w)\le 7\pi\rforal w\in CU(M_m(\widetilde{D}))
\eneq
for any hereditary \SCA\, $D$ of $B.$}}

By \eqref{Length-n3}, \eqref{Length-n2+}, and \eqref{Length-n4}
 (computing in $M_{m(n)}({\widetilde{{\rm Her}(a_{t_n})}})$),
\beq
{\rm cel}({{\lceil \Lambda_{n,0}(u)\rceil \lceil \Lambda_{n+1,0}(u^*)}}\rceil)\le (t_n-t_{n+1})\lambda_n'+ \lambda_n+7\pi\rforal u\in \U_n.
\eneq
Then, by Lemma 4.2 of \cite{GLII} and by \eqref{TexistoI-1} (computing in $M_{(2J_n+1)m(n)}({\widetilde{{\rm Her}(a_{t_n})}})$)
\beq\label{Texistol-7}
&&{\rm cel}(\lceil \Psi_n(u)\rceil \lceil ({{\Lambda_{n+1,0}}}\oplus S_n^l)(u)\rceil^*)\\
&&\,\,\,\,\,={\rm cel}(\lceil{{ \Lambda_{n,0}(u)\rceil \lceil \Lambda_{n+1,0}(u)^*}}\rceil \oplus S_n^l(uu^*))
\le (\lambda_n'+\lambda_n+7\pi)/J_n+2\pi.
\eneq
It follows from the definition of $R_{n+1}$
(computing
in $M_{m(n)}({\widetilde{{\rm Her}(a_{r_{n+1}})}})$) that
\beq
{\rm cel}(R_{n+1}(u^*))\le {{L_1(u)}}\rforal u\in  \U_n.
\eneq
Hence
\beq\label{Texistol-8}
{\rm cel}(\lceil \Psi_n(u)\rceil \lceil \Psi_n'(u)^*\rceil)\le {{1+2\pi+L_1(u)}}= L(u)\rforal u\in \U_n.
\eneq
{{Recall $N_n-2J_n\ge 4(K_n+1)2J_n$ (see \eqref{est-Kn}).}}
Using \eqref{Texistol-8}, \eqref{Texistol-6} and the fact that $s_n\circ \sigma_A$ is $F$-full in $B_{n,1},$ and
applying Theorem \ref{thm:LinStableUniqueness}, we obtain a unitary $u_n'\in {\widetilde{B}}$ such that
\beq
(u_n')^*({{\Lambda_{n+1}'(a))u_n'\approx_{\ep_n/2} \Lambda_n(a)}}\rforal a\in \F_n.
\eneq
It follows from \eqref{Texistol-5} that there is a unitary $u_n\in {\widetilde{B}}$ such that
\beq\label{Texistol-9}\
(u_n)^*{{\Lambda_{n+1}(a)u_n\approx_{\ep_n} \Lambda _n(a)}}\rforal a\in \F_n.
\eneq
Define {{$\Lambda_1''(a)=\Lambda_1(a),$ $\Lambda_n''(a)=u_1^*\cdots 
u_{n-1}^*\Lambda_n(a)u_{n-1}\cdots u_1$}}
for all $a\in A,$ $n=1,2,....$   Then, by \eqref{Texistol-9},
\beq
{{\Lambda_n''(a)\approx_{\ep_n} \Lambda_{n+1}''(a)}}\rforal a\in  \F_n, \,\,\,n=1,2,....
\eneq
It follows that, for any $m>n,$
\beq
{{\|\Lambda_n''(a)-\Lambda_m''(a)\| }}<\sum_{j=n}^m\ep_n\rforal a\in {\mathcal F}_n
\eneq
Note that $\lim_{n\to\infty}\sum_{j=n}^\infty \ep_n=0.$
Since $\F_n\subset \F_{n+1}$ and $\bigcup \F_n$ is dense in the unit ball,
we conclude that, for each $a\in A,$ $\{{{\Lambda_n''(a)}}\}$
is Cauchy in $B.$
Let $\Phi(a)= {{\lim_{n\to\infty}\Lambda_n''(a)}}$ for each $a\in A.$  It is clear that $\Phi$ is a positive linear map.
Since
\beq
\lim_{n\to\infty}\|{{\Lambda_n''(ab)-\Lambda_n''(a)\Lambda_n''(b)}}\|=0\rforal a, b\in A,
\eneq
$\Phi: A\to B$ is an \hm. Since $\sigma_A$ is an embedding, $\Phi$ is injective.
Finally, by \eqref{Texistol-6},  we have
\beq
KL(\Phi)=x.
\eneq
\end{proof}

\begin{thm}\label{TExpon}
Let $A$ be a separable amenable \CA\, with UCT.
Suppose that $A$ is $\W$-embeddable.
Let $B$ be a separable simple \CA\, with finite nuclear dimension {{and}}
with continuous scale which satisfies the UCT and
$K_0(B)={\rm ker}\rho_B.$

Let $\kappa\in KL(A, B)$ and let $\kappa_{ku}: K_1(A) \to 
U({\widetilde B})/CU({\widetilde B})$ be  {{a \hm\, which is}} compatible
with $\kappa,$ i.e., 
$$\kappa(z)=\Pi_{1,cu}\circ \kappa_{ku}(z),$$
for all $z \in K_1(A)$.
Then there exists a monomorphism $h: A\to B$ such that 
$$KL(h)=\kappa\andeqn
h^{\ddag}=\kappa_{ku}.$$

\end{thm}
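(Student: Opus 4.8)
The plan is to deduce the result from Theorem~\ref{T-existoI} by treating separately the part of the invariant that Theorem~\ref{T-existoI} does not see, namely the behaviour on $U(\widetilde B)/CU(\widetilde B)$. First apply Theorem~\ref{T-existoI} to obtain a monomorphism $h_0\colon A\to B$ with $KL(h_0)=\kappa$ (note that $K_0(B)={\rm ker}\rho_B$ forces $B$ to be stably projectionless, as in the proof of Lemma~\ref{LBembed}, so Theorem~\ref{T-existoI} does apply; the same argument shows $B$ has stable rank one, so by Definition~\ref{Dgamma} we have a splitting $U(\widetilde B)/CU(\widetilde B)=(\Aff(T(\widetilde B))/\ZI)\oplus J_B(K_1(B))$). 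Since $\Pi_{1,cu}\circ h_0^\ddag$ is the map $K_1(A)\to K_1(B)$ induced by $h_0$, which equals $\Pi_{1,cu}\circ\kappa_{ku}$ by the compatibility hypothesis, the homomorphism $\chi:=\kappa_{ku}-h_0^\ddag\colon K_1(A)\to U(\widetilde B)/CU(\widetilde B)$ (a difference in the abelian group $U(\widetilde B)/CU(\widetilde B)$) takes values in $\ker\Pi_{1,cu}=U_0(\widetilde B)/CU(\widetilde B)\cong\Aff(T(\widetilde B))/\ZI$.

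The crux is then the claim: there is a homomorphism $h_1\colon A\to B$ with $KL(h_1)=0$ and $h_1^\ddag=\chi$. Granting this, fix $t\in(0,1)$ and, by Lemma~\ref{LBembed}, choose $a_t,a_{1-t}\in B_+$ with $a_ta_{1-t}=0$, $d_\tau(a_t)=t$ and $d_\tau(a_{1-t})=1-t$ for all $\tau\in T(B)$, together with isomorphisms $\phi_t\colon B\to {\rm Her}(a_t)$ and $\phi_{1-t}\colon B\to{\rm Her}(a_{1-t})$ as in part (2) of that lemma. Since ${\rm Her}(a_t)\perp{\rm Her}(a_{1-t})$, the orthogonal sum $h:=(\phi_t\circ h_0)\oplus(\phi_{1-t}\circ h_1)\colon A\to B$ is a homomorphism, and it is injective because $h_0$ is. From $KL(\phi_t)=KL(\phi_{1-t})=KL({\rm id}_B)$ and additivity of $KL$ under orthogonal sums one gets $KL(h)=KL(h_0)+KL(h_1)=\kappa$; and the formulas for $\phi_t^\dag,\phi_{1-t}^\dag$ in Lemma~\ref{LBembed}(2) together with Definition~\ref{Dgamma} give $j_{{\rm Her}(a_t)}^\dag\circ\phi_t^\dag={\rm id}$ and $j_{{\rm Her}(a_{1-t})}^\dag\circ\phi_{1-t}^\dag={\rm id}$ on $U(\widetilde B)/CU(\widetilde B)$, so, using additivity of the $\ddag$-invariant under orthogonal sums, $h^\ddag=h_0^\ddag+h_1^\ddag=h_0^\ddag+\chi=\kappa_{ku}$. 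This proves the theorem modulo the claim.

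It remains to prove the claim, and this I would do by rerunning the proof of Theorem~\ref{T-existoI} with $x=0$, carrying along the $U(\widetilde B)/CU(\widetilde B)$-data (equivalently, one needs an existence statement, with $U(\widetilde B)/CU(\widetilde B)$-control, analogous to Theorem~\ref{T-existoI} but for trivial $KL$). Two changes are needed. The approximately multiplicative maps $\phi_n$ produced there must be chosen so that, on the relevant finite subsets of $J^\sim(K_1(A))$, the classes $\overline{\lceil\phi_n(\cdot)\rceil}$ have trivial $J_B(K_1(B))$-component in the limit; since that component of $\chi$ is $0$ this is automatic (indeed, in the telescope $\Phi^\ddag$ is governed by the bulk summands, not by the shrinking corner carrying $KL(\phi_n)$, whose $\Aff$-contribution rescales to $0$). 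More seriously, the ``bulk'' summands — copies $\bigoplus^{N_n}s_n\circ(\phi_{w,b}\circ\iota_A)$ routed through $\W$ in the proof of Theorem~\ref{T-existoI} — must be replaced by $KL$-trivial homomorphisms $\bigoplus^{N_n}s_n\circ\sigma_n\colon A\to B$ whose rescaled $\Aff$-data converges to $\chi$; since $T(\W)$ is a single point, routing through $\W$ can only realize the two-parameter family of $\Aff$-data visible through $\W$, so one must use the full trace simplex of $B$ and produce the $\sigma_n$ from the classification machinery for stably projectionless algebras ($B\cong B\otimes\zo$ and Theorem~12.8 of \cite{GLII}), arranging in addition that each $\sigma_n$ dominates a map routed through $\W$ that is $F$-full, so that Theorem~\ref{TK1stuniq} still applies and the exponential-length estimates go through unchanged (here $\overline{\lceil\phi_n(u)\rceil\,\lceil\phi_{n+1}(u^*)\rceil}\to\overline 1$ gives, eventually, small distance to $1$ and hence a uniform ${\rm cel}$-bound). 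The telescoping construction of Theorem~\ref{T-existoI} then produces $h_1=\lim\Lambda_n''$ with $KL(h_1)=0$ and $h_1^\ddag=\chi$. The main obstacle is exactly this replacement of the bulk — carrying an \emph{arbitrary} element of $\Aff(T(\widetilde B))/\ZI$ rather than the rigid family available through $\W$ — which is what forces the use of the classification/existence results for $B$ itself in place of the bare $\underline K$-existence Theorem~10.8 of \cite{GLII} that sufficed for Theorem~\ref{T-existoI}.
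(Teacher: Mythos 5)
Your reduction of the theorem to the claim ``there exists $h_1:A\to B$ with $KL(h_1)=0$ and $h_1^\ddag=\chi$'' is a reasonable strategy, but as written it contains one fixable error and one genuine gap. The fixable error: Lemma \ref{LBembed}(2) does \emph{not} give $j_{B_t}^\dag\circ\phi_t^\dag=\mathrm{id}$ on all of $U(\widetilde B)/CU(\widetilde B)$; it gives the identity only on $J_B(K_1(B))$, while on the $\Aff(T(B))/\ZI$-part the composition rescales by the factor $t$ (this is exactly the content of \eqref{LBembed-0}, which bounds the distance to the identity by $(1-t)\,\mathrm{dist}(u,\overline 1)$ rather than by $0$). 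Hence $h^\ddag\neq h_0^\ddag+h_1^\ddag$: the $\Aff$-components of $h_0^\ddag$ and $h_1^\ddag$ enter weighted by $t$ and $1-t$ respectively, and you must correct $\chi$ accordingly (possible, since $\Aff(T(B))$ is a real vector space, but it has to be said).

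The genuine gap is in the claim itself, which is exactly as hard as the theorem. Your proposed proof reruns the telescope of Theorem \ref{T-existoI} with the bulk summands replaced by maps $\sigma_n:A\to B$ carrying prescribed $\Aff(T(\widetilde B))/\ZI$-data, to be ``produced from Theorem 12.8 of \cite{GLII}.'' But Theorem 12.8 (and Lemma 12.10) of \cite{GLII} are existence theorems whose \emph{domain} must be a simple classifiable algebra in the class $\mathcal{D}_0$/$B_T$; here $A$ is only separable, amenable, $\W$-embeddable and satisfies the UCT --- it may be commutative and non-simple (e.g.\ $C_0(X\setminus\{x_0\})$), so these results cannot be applied with domain $A$, and no mechanism is given for producing the $\sigma_n$. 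The paper's proof avoids the telescope entirely: by Theorem 7.11 of \cite{GLII} one interposes a classifiable model algebra $C$ with $K_i(C)\cong K_i(A)$ and $T(C)=T(B)$; Theorem \ref{T-existoI} is used only to embed $\phi:A\to C$ inducing the $K_i$-isomorphisms; the discrepancy $\phi_D=\phi^\dag\circ J_A-J_C\circ\phi_{*1}$, which lies in $U_0(\widetilde C)/CU(\widetilde C)$, is cancelled by composing with an endomorphism $j$ of $C$ supplied by Lemma 12.10 of \cite{GLII} with $KK(j)=KK(\mathrm{id}_C)$ and $j^\dag$ prescribed, so that $\psi=j\circ\phi$ satisfies $\psi^\ddag=J_C\circ\iota_1$; and finally Lemma 12.10 is applied once more to produce $h_1:C\to B$ with $KL(h_1)=\kappa\circ\overline\zeta$ and $h_1^\dag|_{J_C(K_1(C))}=\kappa_{ku}\circ\iota_1^{-1}\circ J_C^{-1}$, whence $h=h_1\circ\psi$ works. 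If you want to salvage your approach, you would in any case have to route the construction of $h_1$ (or of your $\sigma_n$) through such a model algebra $C$, at which point the direct composition argument already finishes the proof.
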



\begin{proof}
Note that we are fixing  {{injective \hm s $J_A: K_1(A)\to U(\widetilde{A})/CU(\widetilde{A})$
and $J_B: K_1(B)\to U(\widetilde{B})/CU(\widetilde{B})$ which split following short exact sequences:}}
\beq\nonumber
0 \longrightarrow U_0(M_\infty(\widetilde{A}))/CU(M_\infty(\widetilde{A}))
\longrightarrow U(M_\infty(\widetilde{A}))/CU(M_\infty(\widetilde{A}))\stackrel{\Pi_{1,cu}}{\rightleftarrows}_{J_A} K_1(A)\to 0\andeqn\\\nonumber
0 \longrightarrow U_0(M_\infty(\widetilde{B}))/CU(M_\infty(\widetilde{B}))
\longrightarrow U(M_\infty(\widetilde{B}))/CU(M_\infty(\widetilde{B}))\stackrel{\Pi_{1,cu}}{\rightleftarrows}_{J_B} K_1(B)\to 0,
\eneq
respectively (see \ref{Dcoset}).
Recall that for a \hm\, $\rho : A \rightarrow B$, we let $\rho^{\ddag}: K_1(A) \rightarrow 
U(\widetilde{B})/CU(\widetilde{B})$ denote the induced map.

Note also that $K_i(A)$ is a countable abelian group for $i=0,1$.  By
Theorem 7.11 of \cite{GLII}, there is a stably projectionless simple \CA\, $C$, in the classifiable
class and with continuous scale, such that
$K_i(C)\cong K_i(A),$ $i=0,1$, and $T(C)=T(B).$
Let $\iota_i: K_i(A)\to K_i(C)$ be a group isomorphism.
By Theorem \ref{T-existoI}, 
let $\phi: A\to C$ be a *-embedding such that $\phi_{*i}=\iota_i$ ($i=0,1$). 
Fix {{an injective \hm\, $J_C: K_1(C)\to U(\widetilde{C})/CU(\widetilde{C})$
such that $\Pi_{1,cu}\circ J_C={\rm id}_{K_1(C)}$}}
and consider the the group homomorphism 
$$\phi_D: K_1(A) \to U_0({\widetilde{C}})/CU({\widetilde{C}})$$ 
induced by $\phi,$ {{i.e., $\phi_D:=\phi^\dag\circ J_A-J_C\circ \phi_{*1}.$
In particular, $\phi_D(x)\in U_0(\widetilde{C})/CU(\widetilde{C}).$}}

Consider the group homomorphism $\lambda: K_1(C) \to 
U(\widetilde{C})/CU(\widetilde{C})$
which is defined by
\beq
\lambda(z)={{J_C(z)}}-\phi_D \circ \iota_1^{-1}(z).
\eneq
for all $z \in  {{K_1(C).}}$

Define $\lambda_1: U(\widetilde{C})/CU(\widetilde{C}) \rightarrow U(\widetilde{C})/CU(
\widetilde{C})$
by 
$$\lambda_1|_{U_0(\widetilde{C})/CU(\widetilde{C})}=
{\rm id}_{U_0(\widetilde{C})/CU(\widetilde{C})} \makebox{  and  }
{\lambda_1}|_{J_C(K_1(C))}=\lambda\circ J_C^{-1},$$
where $J_C^{-1}=\Pi_{1,cu}|_{J_C(K_1(C))}.$
By  {{Lemma 12.10 of \cite{GLII},}}
there is a \hm\, $j: C\to C$ such that $KK(j)=KK({\rm id}_C),$
$j_T={\rm id}_{T(C)}$ and $j^{\dag}=\lambda_1.$

Let $\psi: A\to C$ be defined by $$\psi=j\circ\phi.$$
Then,  for all $x\in K_1(A),$
\beq\nonumber
\psi^\ddag(x)&=&((j\circ \phi)^\dag\circ J_A)(x)=j^\dag\circ \phi^\dag\circ J_A(x)\\\nonumber
&=
&j^\dag(\phi_D(x)+J_C\circ \phi_{*1}(x))=\lambda_1(\phi_D(x)+J_C\circ \phi_{*1}{{(x)}})\\\nonumber
&=&\phi_D(x)+\lambda\circ \phi_{*1}(x)\\\nonumber
&=&\phi_D(x)+J_C\circ \phi_{*1}(x)-\phi_D\circ \iota_1^{-1}(\phi_{*1}(x))\\\label{TExpon-10}
&=&\phi_D(x)+J_C\circ \phi_{*1}(x)-\phi_D(x)={{J_C\circ \iota_1(x).}}
\eneq
By the UCT, since $\iota_i$ is an isomorphism ($i=0,1$), it gives a KK equivalence and hence,
there is a $\zeta\in KK(C,A)$
such that $$\zeta\times KK(\psi)=KK({\rm id}_{C}).$$
Let $\overline{\zeta}$ be the element in $KL(C,A)$ induced by $\zeta.$
By Lemma 12.10 of \cite{GLII}, there is
a \hm\, $h_1: C\to B$ such
that $KL(h_1)=\kappa\circ \overline{\zeta},$ $(h_1)_T^{-1}$ is the identification
of $T(C)$ and $T(B)$ and  
$$h_1^\dag{{|_{J_C(K_1(C))}}}=
\kappa_{ku}\circ \iota_1^{-1}\circ J_C^{-1}.$$
{{It follows from \eqref{TExpon-10} that  if we define $h:=h_1\circ \psi: A\to B,$ then
$$
h^\ddag=h_1^\dag\circ \psi^\dag\circ J_A=(\kappa_{ku}\circ \iota_1^{-1}\circ J_C^{-1})\circ \psi^\ddag
=(\kappa_{ku}\circ \iota_1^{-1}\circ J_C^{-1})\circ J_C\circ \iota_1=\kappa_{ku}.
$$}}
Then one verifies that the map $h.$
satisfies the requirements.

\end{proof}

\section{Quasidisagonal extensions by $\W$}

The following proposition is an easy fact and known to experts.
We include a proof for convenience.
\begin{prop}\label{Pmutproj}
Let $B$ be a $\sigma$-unital \CA\, and let $p\in M(B)\setminus B$ be a projection. 
Suppose that 
$pBp={\rm Her}(a)$ for some $a\in {{B}}_+$. 

Then $a^{1/n}\to p$  (as $n\to\infty$) in the strict topology {{(of $B$ not just of ${\rm Her}(a)$).}}
Moreover, $p$ is the open projection corresponding to ${\rm Her}(a).$

\end{prop}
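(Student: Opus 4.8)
The plan is to reduce the statement to a standard fact about open projections and approximate identities in hereditary subalgebras. First I would recall that $pBp = \mathrm{Her}(a)$ is a $\sigma$-unital hereditary C*-subalgebra of $B$, so it has a sequential approximate identity, and in fact $\{a^{1/n}\}$ is such an approximate identity for $\mathrm{Her}(a)$ (this is the classical fact that powers of a strictly positive element form an approximate unit). Thus $a^{1/n} b \to b$ and $b a^{1/n} \to b$ in norm for every $b \in pBp = \mathrm{Her}(a)$.

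Next I would pass to strict convergence in $M(B)$. The key point is that $a^{1/n}$ is a bounded increasing sequence in $pBp$, hence bounded in $M(B)$, so to check strict convergence to $p$ it suffices to check $a^{1/n} x \to p x$ and $x a^{1/n} \to x p$ in norm for all $x$ in a generating set of $B$ — or more simply, since $p \in M(B)$, to check $a^{1/n} p x \to p x$ etc. Here I would use the decomposition: for $x \in B$, write the action and observe that $a^{1/n} x = a^{1/n}(pxp + pxp^\perp + p^\perp x p + p^\perp x p^\perp)$ and $a^{1/n}$ annihilates everything with a $p^\perp$ on the left (since $a \in pBp$ forces $p^\perp a = 0$, hence $p^\perp a^{1/n} = 0$, hence $a^{1/n} p^\perp = 0$ by taking adjoints). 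So $a^{1/n} x = a^{1/n} p x = a^{1/n}(pxp) + a^{1/n}(pxp^\perp)$. Now $pxp \in pBp = \mathrm{Her}(a)$, so $a^{1/n}(pxp) \to pxp$ in norm; and $pxp^\perp = (p^\perp x^* p)^*$ where $p^\perp x^* p \in pBp^{\perp}$... here one needs that $pxp^\perp$ still lies in (the closure of) $\mathrm{Her}(a) \cdot B$ in a way that $a^{1/n}$ acts as an approximate unit on the left. Indeed $a^{1/n} \cdot (pxp^\perp)$: since $pxp^\perp \in pB$ and $pB = \overline{pBp \cdot B} = \overline{\mathrm{Her}(a) B}$, and $a^{1/n}$ is an approximate unit for $\mathrm{Her}(a)$ hence acts as a left approximate unit on $\overline{\mathrm{Her}(a) B} = pB$, we get $a^{1/n}(pxp^\perp) \to pxp^\perp$. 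Combining, $a^{1/n} x \to px$ in norm; symmetrically $x a^{1/n} \to xp$. This is precisely strict convergence $a^{1/n} \to p$ in $M(B)$.

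Finally, for the "moreover" clause: the open projection associated to the hereditary subalgebra $\mathrm{Her}(a) = pBp$ is by definition (in $B^{**}$) the strong-limit/supremum of any approximate identity of $\mathrm{Her}(a)$, in particular $\sup_n a^{1/n}$ computed in $B^{**}$; but we have just shown $a^{1/n} \to p$ strictly in $M(B) \subseteq B^{**}$, and strict convergence of a bounded increasing net implies it converges to its supremum in $B^{**}$, so that supremum is $p$. Hence $p$ is the open projection corresponding to $\mathrm{Her}(a)$.

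\textbf{Main obstacle.} The only delicate point is the bookkeeping in the second paragraph: showing $a^{1/n}$ acts as a left approximate unit not just on $\mathrm{Her}(a) = pBp$ but on all of $pB$ (equivalently, handling the $pxp^\perp$ term). This follows from the identity $pB = \overline{\mathrm{Her}(a)\,B}$, which in turn is the statement that $p$ is the open projection supporting $\mathrm{Her}(a)$ — so there is a mild circularity to avoid. The clean way around it is to prove the strict convergence $a^{1/n} \to p$ directly from the definition of $M(B)$ as the idealizer of $B$ in $B^{**}$ together with the fact that $\{a^{1/n}\}$ increases to a projection $q \in B^{**}$ with $qBq = \overline{\mathrm{Her}(a)}^{\,w^*} \cap B = \mathrm{Her}(a) = pBp$, forcing $q = p$; then strict convergence is automatic because a bounded increasing sequence in a von Neumann algebra converges $\sigma$-strongly, and on the $\sigma$-unital hereditary subalgebra this upgrades to strict convergence in $M(B)$.
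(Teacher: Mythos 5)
Your main argument (the decomposition $x=pxp+pxp^{\perp}+p^{\perp}xp+p^{\perp}xp^{\perp}$, the observation that $a^{1/n}p^{\perp}=0$, and the fact that $\{a^{1/n}\}$ acts as a left approximate unit on $pB$) is correct and is a genuinely different route from the paper's. The paper argues by a direct norm estimate: it takes an approximate identity $\{e_n\}$ of the ambient algebra $B$, uses that $pe_np\in pBp={\rm Her}(a)$, so $a^{1/2m}(pe_np)a^{1/2m}\to pe_np$ as $m\to\infty$, and then exploits the operator inequality $0\le p-a^{1/m}\le p-a^{1/2m}e_na^{1/2m}$ together with the squeezing estimate $\|x(p-a^{1/m})\|^2\le \|x(p-a^{1/2m}e_na^{1/2m})x\|$ to force $\|x(p-a^{1/m})\|$ below any $\epsilon$. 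Your approach replaces this hard estimate by the soft structural identity $pB=\overline{{\rm Her}(a)\,B}$; both are legitimate, and yours is arguably cleaner once that identity is in hand, while the paper's avoids invoking the ideal correspondence at all.

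The one point to repair is how you handle your self-identified obstacle. The ``mild circularity'' is not actually there: the identity $pB=\overline{{\rm Her}(a)\,B}$ does not presuppose that $p$ is the open projection of ${\rm Her}(a)$; it follows from the standard bijection between hereditary C*-subalgebras $D$ of $B$ and closed right ideals, under which the right ideal attached to $D$ is $R_D=\{x\in B: xx^*\in D\}=\overline{DB}$. Indeed $\overline{pBp\cdot B}\subseteq pB$ because $p\in M(B)$, and conversely every $px$ with $x\in B$ satisfies $(px)(px)^*=pxx^*p\in pBp$, so $pB\subseteq R_{pBp}=\overline{pBp\cdot B}$. With that, your second paragraph closes completely and you should not retreat to $B^{**}$. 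By contrast, the fallback route in your last paragraph has a genuine gap at its final step: $\sigma$-strong convergence $a^{1/n}\nearrow p$ in $B^{**}$ does not ``upgrade'' to strict convergence in $M(B)$ for free. For an arbitrary $a\in B_+$ the sequence $a^{1/n}$ always converges $\sigma$-strongly to the open projection in $B^{**}$, yet it converges strictly only under additional hypotheses such as the one in this proposition (the open projection need not even lie in $M(B)$). The upgrade is precisely the content of the statement being proved, so it cannot be cited as automatic; keep your first argument and discard the fallback.
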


\begin{proof}
Let $\{e_n\}$ be an approximate identity of $B.$
Then $pe_np$ increasingly converges to $p$ in the strictly topology.
Fix any $x\in B_+.$ Let $\ep>0.$ There exists an integer $N\ge 1$ such that, for all $n\ge N,$
\beq
\|x(p-pe_np)\|<\ep/2
\eneq
Note that $pe_np\in {\rm Her}(a).$ Therefore, there exists $N_n\ge 1$ such that, for all $m\ge N_n,$
\beq
\|a^{1/2m}pe_npa^{1/2m}-pe_np\|<\ep/2(\|x\|^2+1).
\eneq
Note that 
\beq
a^{1/2m}pe_npa^{1/2m}=pa^{1/2m}e_na^{1/m}p.
\eneq 
It follows that
\beq
(p-pa^{1/2m}e_na^{1/m}p)\approx_{\ep/2(\|x\|^2+1)}p-pe_np.
\eneq
Therefore,
\beq
\|x(p-pa^{1/m}p)\|^2\le \|x(p-pa^{1/2m}e_na^{1/2m}p)x\|\approx_{\ep/2} \|x(p-pe_np)x\|<\ep/2.
\eneq
In other words,  as $pa^{1/m}p=a^{1/m},$ 
\beq
\lim_{m\to\infty}\|x(p-a^{1/m})\|=0\rforal x\in B.
\eneq
Similarly, 
\beq
\lim_{m\to\infty}\|(p-a^{1/m})x\|=0\rforal x\in B.
\eneq
So $a^{1/n}\to p$ in  the strict topology. 

On the other hand, 
$a^{1/m}$ also converges  to the open 
projection $p'$ in the strict topology in ${\rm Her}(a).$
It follows that $p'=p.$

\end{proof}


The next lemma should also be known.  

\begin{lem}\label{LK0(M(B))}
Let $B$ be a separable  simple  \CA\,  with continuous scale  such that $B$ and $B\otimes {\mathcal K}$ have almost stable rank one.
Suppose that $Cu(B)=V(B)\sqcup {\rm LAff}_+(T(B)).$ 

Then, if $p, \, q\in M_m(M(B))\setminus M_m(B)$ are two projections (for any integer $m\ge 1$) such that $\tau(p)=\tau(q)$
for all $\tau\in T(B),$ 
then  $p\approx q$  in $M(B).$
Moreover, 
\beq\label{LK0(M(B))-2}
K_0(M(B))\cong {\rm Aff}(T(B))\andeqn
K_0(M(B))_+ \cong {\rm Aff}_+(T(B)). 
\eneq

In fact, 
\beq\label{LK0M(B))-3}
V(M(B)) = V(B) \sqcup {\rm Aff}_+(T(B)).
\eneq

\end{lem}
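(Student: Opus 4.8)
The plan is to run the standard circle of ideas for computing $K_0$ of a multiplier algebra of a continuous-scale algebra, adapted to the (possibly stably projectionless) setting, where the relevant Cuntz-semigroup input is the hypothesis $Cu(B)=V(B)\sqcup {\rm LAff}_+(T(B))$. First I would establish the cancellation/comparison statement: if $p,q\in M_m(M(B))\setminus M_m(B)$ are projections with $\tau(p)=\tau(q)$ for all $\tau\in T(B)$, then $p\approx q$ in $M(B)$ (Murray--von Neumann equivalence). The key point is that a projection $p\in M_m(M(B))\setminus M_m(B)$ generates a hereditary subalgebra $pM_m(B)p={\rm Her}(a)$ for some $a\in M_m(B)_+$, and by Proposition \ref{Pmutproj} $p$ is exactly the open projection of ${\rm Her}(a)$, with $a^{1/n}\to p$ strictly; moreover $d_\tau(a)=\tau(p)$ for every $\tau\in T(B)$ (the trace extends to $M(B)_+$ strictly lower semicontinuously, cf.\ Definition \ref{Ddimf}). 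So the hypothesis $\tau(p)=\tau(q)$ says $d_\tau(a)=d_\tau(b)$ for the corresponding positive elements $a,b\in M_m(B)_+$. Since $p\notin M_m(B)$, the element $a$ is not Cuntz-below any element of the Pedersen ideal, i.e.\ $[a]\in {\rm LAff}_+(T(B))$ rather than in $V(B)$ (if $[a]\in V(B)$ then ${\rm Her}(a)$ would be unital and $p$ would lie in $M_m(B)$); same for $b$. In ${\rm LAff}_+(T(B))$ the class is determined by the function $\tau\mapsto d_\tau(\cdot)$, so $[a]=[b]$ in $Cu(M_m(B))=Cu(B)$, i.e.\ $a\sim b$ (Cuntz). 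Then almost stable rank one of $B$ and $B\otimes\mathcal K$ lets me upgrade Cuntz equivalence of the \emph{open} projections to Murray--von Neumann equivalence of $p$ and $q$ in the multiplier algebra: a Cuntz equivalence implemented by elements $x_n$ with $x_n^*bx_n\to a$ can be turned, using that hereditary subalgebras of matrix algebras over $B$ sit in the closure of the invertibles, into a partial isometry $v$ with $v^*v=p$, $vv^*=q$ (this is the Coward--Elliott--Ivanescu / Ciuperca--Elliott--Santiago type argument that in almost-stable-rank-one algebras Cuntz equivalence of positive elements gives a partial isometry between the corresponding open projections; see \cite{CEI}, \cite{Rz}). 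The main obstacle, and the step that needs the most care, is exactly this last upgrade: producing an honest partial isometry $v\in M_m(M(B))$ (equivalently in $M(B)$ after absorbing $M_m$ via continuous scale) from the approximate Cuntz data while staying inside the multiplier algebra; I would handle it by working in the hereditary subalgebra ${\rm Her}(a+b)$ (which is $\sigma$-unital) and invoking almost stable rank one there.

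Next I would identify $V(M(B))$. By the previous paragraph, the equivalence classes of projections in $M_\infty(M(B))$ that do \emph{not} come from $M_\infty(B)$ are parametrized injectively by their trace functions $\tau\mapsto\tau(p)\in {\rm Aff}(T(B))$. For surjectivity onto ${\rm Aff}_+(T(B))$: given $f\in {\rm Aff}_+(T(B))$, I want a projection $p\in M_m(M(B))$ with $\widehat p=f$. Since $Cu(B)=V(B)\sqcup {\rm LAff}_+(T(B))$ and $B$ has continuous scale, $T(B)$ is compact and $f$ is bounded; choose an integer $m$ with $f\le m$ on $T(B)$, and pick $a\in M_m(B)_+$ with $d_\tau(a)=f(\tau)$ for all $\tau$ (possible because $f\in {\rm LAff}_+(T(B))\subseteq Cu(B)$, realized by a positive element whose class is in the ${\rm LAff}$ part, hence not a projection in $M_m(B)$ unless $f$ is itself in $V(B)$'s image — and even then we can add to push it out of $M_m(B)$; the honest statement is that $f\in {\rm LAff}_+(T(B))$ as it is continuous and bounded). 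Then ${\rm Her}(a)\subseteq M_m(B)$ is $\sigma$-unital and not unital, so its open projection $p\in M_m(M(B))\setminus M_m(B)$ satisfies $\tau(p)=d_\tau(a)=f(\tau)$. This gives $V(M(B))=V(B)\sqcup {\rm Aff}_+(T(B))$, which is \eqref{LK0M(B))-3}. Passing to the Grothendieck group: $M(B)$ contains the properly infinite... — no; rather, the decomposition $V(M(B))=V(B)\sqcup {\rm Aff}_+(T(B))$ together with the fact that ${\rm Aff}_+(T(B))$ is absorbing under addition (any class in $V(B)$ added to a nonzero class in ${\rm Aff}_+$ lands in ${\rm Aff}_+$, since adding a genuine projection to one of these "large" projections keeps it large — using $\tau(p+q)=\tau(p)+\tau(q)$ and that the sum is again not in $M_\infty(B)$) shows that the Grothendieck enveloping group is $K_0(M(B))\cong {\rm Aff}(T(B))$ with positive cone ${\rm Aff}_+(T(B))$, giving \eqref{LK0(M(B))-2}. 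I would spell out that the map $[p]\mapsto\widehat p$ is a well-defined group homomorphism $K_0(M(B))\to {\rm Aff}(T(B))$, injective by the cancellation result, surjective by the realization step, and order-preserving with the stated cone.

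I expect the cancellation/comparison statement (the "Moreover, $p\approx q$" part) to be the main obstacle, since everything else is bookkeeping with the Cuntz semigroup and traces. The subtlety is that we only have \emph{almost} stable rank one (not stable rank one) for $B$ and $B\otimes\mathcal K$, so the argument upgrading Cuntz equivalence of positive elements to Murray--von Neumann equivalence of their open projections in the multiplier algebra must be done through the standard trick of replacing unitaries by almost-unitaries coming from $\overline{GL(\widetilde D)}$ for hereditary $D$, and then noting that the resulting partial isometries multiply together strictly to land in $M_m(M(B))$. A secondary subtlety is making sure, throughout, that the projections in question are genuinely \emph{not} in $M_\infty(B)$, equivalently that the corresponding positive elements have Cuntz class in the ${\rm LAff}$ summand; this is where $p\notin M_m(B)$ is used and is clean once one invokes Proposition \ref{Pmutproj} to identify $p$ as the open projection of ${\rm Her}(a)$ with $a$ non-compact.
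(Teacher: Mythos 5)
Your treatment of the comparison statement is essentially the paper's argument: identify $p$ with the open projection of ${\rm Her}(a)$ for a strictly positive $a\in pM_m(B)p$ via Proposition \ref{Pmutproj}, observe that $\tau(p)=d_\tau(a)$, note that $a$ and $b$ are not (Cuntz classes of) projections because $p,q\notin M_m(B)$, conclude $[a]=[b]$ in the ${\rm LAff}_+$ summand of $Cu(B)$, and then use almost stable rank one (Proposition 3.3 of \cite{Rz}) to produce a partial isometry $v$ in the bidual with $v^*va=a$ and $vav^*$ strictly positive in ${\rm Her}(b)$; applying Proposition \ref{Pmutproj} again shows $v=vp\in M(B\otimes\K)$ and $vpv^*=q$. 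The injectivity of $[p]\mapsto\widehat p$ then follows as you say, though you should also record why $\widehat p$ is \emph{continuous} rather than merely lower semicontinuous: the paper's point is that $\widehat{1_m-p}$ is also lower semicontinuous, which forces $\widehat p$ to be affine continuous.

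There is, however, a genuine gap in your surjectivity step. Given $f\in{\rm Aff}_+(T(B))$ you choose $a\in M_m(B)_+$ with $d_\tau(a)=f(\tau)$ and assert that ``its open projection $p\in M_m(M(B))\setminus M_m(B)$.'' The open projection of a $\sigma$-unital hereditary subalgebra lives a priori only in $M_m(B)^{**}$; it need not belong to the multiplier algebra (already for $A=C([0,1])$ and $D=C_0((0,1))$ the open projection is a discontinuous characteristic function), and nothing in your argument forces $a^{1/n}$ to converge strictly in $M(M_m(B))$. Note that Proposition \ref{Pmutproj} cannot be invoked here: it \emph{assumes} that ${\rm Her}(a)$ is a corner by a multiplier projection and deduces the strict convergence, whereas you need to \emph{produce} such a projection. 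The paper closes this gap with Kasparov's absorption theorem (Theorem 2 of \cite{KasparovAbsorb}): one first finds a projection $p_1\in M(B\otimes\K)$ with $p_1(B\otimes\K)\cong\overline{a(B\otimes\K)}$ as Hilbert modules, replaces $a$ by a Cuntz-equivalent element for which $a^{1/n}\to p_1$ strictly (so that $\tau(p_1)=d_\tau(a)=f$), and only then uses the already-proved comparison statement (applied to $p_1+q$ and $1_m$, where $q$ realizes $m-f$) to conjugate $p_1$ into $M_m(M(B))$. Your proof needs this detour, or some substitute for it, before the identification $V(M(B))=V(B)\sqcup{\rm Aff}_+(T(B))$ and the $K_0$ computation can be completed.
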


\begin{proof}
Let $\{e_{i,j}\}$ be a system of matrix units for $\mathcal K.$ 
In what follows we will identify $B$ with $e_{1,1}(B\otimes \mathcal K)e_{1,1}.$
We also note that, in this way, we identify $M(B)$ with $e_{1,1}M(B\otimes \mathcal K)e_{1,1}.$
In what follows in this proof, we also identify $1_m:=\sum_{i=1}^m e_{i,i}$ with 
the unit of $M_m(M(B)).$ 
Moreover, for each $\tau\in T(B),$ we will also use $\tau$ for the extension of $\tau$ on $B\otimes \mathcal K$
as well as on $M(B\otimes \mathcal K).$

Set $C=B\otimes \mathcal K.$

Let $p,\, q\in M(C))\setminus C)$ be two projections such 
that $\tau(p)=\tau(q)$ for all $\tau\in T(B).$
Let $a\in pCp$ and $b\in qCq$ be strictly positive elements of $pCp$ and $qCq,$ respectively.
Note that neither $a$ nor $b$ are projections, as $p, q\not\in C.$
Then the fact $\tau(p)=\tau(q)$ for all $\tau\in T(B)$
implies that
\beq
d_\tau(a)=d_\tau(b)\rforal \tau\in T(B).
\eneq
It follows that $a\sim b$ in ${\rm Cu}(B).$ Since  $B\otimes \mathcal K$ has almost stable rank one, 
there exists a partial isometry $v\in C^{**}$ such 
that  $c:=vav^*$ is a strictly positive element of ${\rm Her}(b),$ 
\beq
v^*va=av^*v=a, \andeqn va, v^*b\in C.
\eneq
{ (See Proposition 3.3 of \cite{Rz} and the paragraph above it).}
By Proposition \ref{Pmutproj}, 
\beq
va^{1/m}\to vp \,\,\,{\rm in\,\,\, the \,\,\, strict\,\,\,topology}.
\eneq
Therefore $v = vp\in M(C).$
Also, by Proposition \ref{Pmutproj}, $va^{1/m}v^*=c^{1/m}\to q$  in the strict topology.
It follows  $vpv^*=q.$

From what has been just proved, we conclude that, if $p, q\in M_m(M(B))\setminus M_m(B)$ (for some integer $m\ge 1$)
are two projections 
and $\tau(p)=\tau(q)$ for all $\tau\in T(B),$ then $p$ and $q$ are equivalent in $M_m(M(B)).$

Let $p\in M_m(M(B))$ be a projection. Then  $\tau(p)$ may be viewed {{as}} a function in ${\rm LAff}_+(T(B)).$ 
 However, $1_m-p\in M_m(M(B))$ is also a projection. Therefore $\tau(1_m-p)\in {\rm LAff}_+(T(B))$ ($\tau\in T(B)$). 
 It follows that $\tau(p)$ is an affine function in ${\rm Aff}_+(T(B)).$
 This implies 
 that the map
 \beq
 \rho: K_0(M(B))\to \Aff(T(B))
 \eneq 
is an order preserving \hm, and we just proved that the map $\rho$ is injective. 

We now show that $\rho$ is surjective.
By the assumption, for any $f\in {\rm Aff}_+(T(B))\setminus \{0\},$  there is a nonzero positive element
$a\in B$   such that $d_\tau(a)=f(\tau)$ for all $\tau\in T(B).$ 
It follows from Kasparov's absorption theorem  {{(Theorem 2 of 
{ \cite{KasparovAbsorb}})}} that 
there is a projection $p_1\in M(B\otimes \K)$ such that  
{{$p_1(B \otimes \K)\cong \overline{a(B \otimes \K)}$, where 
the isomorphism is unitary isomorphism of  
Hilbert $B \otimes \K$-modules.}}  

{{By \eqref{Pmutproj},}} replacing $a$ with a Cuntz equivalent positive
element if necessary, we may assume that $a^{1/m}$ converges to $p_1$
in the strict topology {on $M(B\otimes \K).$} 
It follows that $\tau(p_1)=d_\tau(a)$ for all $\tau\in T(B).$ 

{{There exists an integer $m$ such that $m\ge f(\tau)+1$ for all $\tau\in T(B).$ 
Let $g=m-f.$ Then $g\in {\rm Aff}_+(T(B))\setminus \{0\}.$ From what has just been proved, we obtain a projection 
$q\in M(B\otimes \mathcal K)$ such that $\tau(q)=g.$
\Wlog, we may assume that $p_1\perp q.$ Then $p_1+q=e$ is a projection in $M(B\otimes \mathcal K)$
such that $\tau(e)=\tau(1_m)$ for all $\tau\in T(B).$  From the first part of the proof above, 
we conclude that there is $v\in M(B\otimes \mathcal K)$ such that
\beq
v^*v=e\andeqn vv^*=1_m.
\eneq
This implies that $p:=vp_1v^*\le 1_m.$ In other words, $p\in M_m(M(B))$ (see the first part of the paragraph of this proof).
Note that $\tau(p)=f.$  This proves that the map $\rho$ is surjective. The rest of the proposition also follow.}} 

\end{proof}

\begin{rmk}\label{RmZCuntz}
{{\rm As in the beginning of the proof of Theorem of \ref{LBembed}, 
if $A$ is a separable simple finite \CA\, which is ${\mathcal Z}$-stable, has continuous scale
{{and every 2-quasi-trace of $A$ is a trace,}} 
then $Cu(B)=V(B)\sqcup {\rm LAff}_+(T(B)).$  Later on, we often assume that $B$ is a separable 
simple finite \CA\, which is ${\mathcal Z}$-stable and has continuous scale.}}
\end{rmk}

\begin{thm}\label{K-six}
Let $B$ be a  $\sigma$-unital,  stably projectionless, finite, simple,
$\Z$-stable, amenable  \CA\ with  unique tracial state $\tau_B$.

Then $K_0(M(B))=\R,$ $K_1(M(B))=\{0\},$  {{$K_0(\C(B))=\R\oplus K_1(B),$ and}}
$K_1(\C(B))={\rm ker}\rho_B = K_0(B).$ 
In particular, $K_0(M(\W))=\R,$
$K_1(M(\W))=\{0\},$ 
$K_0(\C(\W))=\R$ and $K_1(\C(\W))=\{0\}.$

Moreover, if $p, q\in M_m(M(\B)){{\setminus M_m(B)}}$ (for some integer $m\ge 1$) and $\tau_B(p)=\tau_B(q),$ 
then 
there exists $v\in M(\B)$ such that $v^*v=p$ and $vv^*=q.$
\end{thm}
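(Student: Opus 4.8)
The plan is to derive the whole statement from Lemma~\ref{LK0(M(B))} together with the six-term exact sequence of the extension $0\to B\to M(B)\to\C(B)\to0$; the only genuinely hard input is $K_1(M(B))=0$. First I record the standing facts. As $B$ is $\sigma$-unital, simple, $\Z$-stable and has a unique tracial state, it has (up to scalar) a unique densely defined lower semicontinuous trace, necessarily bounded, so $T(B)=\{\tau_B\}$ is compact, ${\rm Aff}(T(B))\cong\R$, and $B$ has continuous scale; in particular $\C(B)$ is purely infinite and simple (it is nonelementary, being $\Z$-stable and stably projectionless). By \cite{Rz} both $B$ and $B\otimes\K$ have almost stable rank one, and, $B$ being amenable, every $2$-quasitrace of $B$ is a trace, so by Remark~\ref{RmZCuntz} we have $Cu(B)=V(B)\sqcup{\rm LAff}_+(T(B))$ with $V(B)=\{0\}$. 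Thus the hypotheses of Lemma~\ref{LK0(M(B))} hold.

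Lemma~\ref{LK0(M(B))} now gives at once the last assertion of the theorem (two projections in $M_m(M(B))\setminus M_m(B)$ with the same value under $\tau_B$ are Murray--von Neumann equivalent in $M(B)$), as well as the order isomorphism $K_0(M(B))\cong{\rm Aff}(T(B))\cong\R$. Under this identification the homomorphism $\iota_*\colon K_0(B)\to K_0(M(B))$ induced by the inclusion $\iota\colon B\hookrightarrow M(B)$ is nothing but the trace pairing $\rho_B$; since $B$ is stably projectionless with a unique tracial state we have $K_0(B)=\ker\rho_B$, so $\rho_B=0$ and hence $\iota_*$ is zero on $K_0$ (here I use Corollary~A.7 of \cite{ElliottGongLinNiu}; alternatively this is encoded in $Cu^\sim(B)=K_0(B)\sqcup{\rm LAff}^\sim(T(B))$, cf.\ the proof of Lemma~\ref{LWembB}).

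Granting $K_1(M(B))=0$, the rest follows from the cyclic six-term exact sequence of $0\to B\to M(B)\xrightarrow{\pi}\C(B)\to0$. Vanishing of $\iota_*$ on $K_0$ makes $\pi_*\colon\R\cong K_0(M(B))\to K_0(\C(B))$ injective and the index map $K_1(\C(B))\to K_0(B)$ surjective, while $K_1(M(B))=0$ makes $\iota_*\colon K_1(B)\to K_1(M(B))$ zero---so the exponential map $K_0(\C(B))\to K_1(B)$ is surjective---and makes $\pi_*\colon K_1(M(B))\to K_1(\C(B))$ zero, whence the index map $K_1(\C(B))\to K_0(B)$ is also injective. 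Therefore $K_1(\C(B))\cong K_0(B)=\ker\rho_B$, and there is a short exact sequence $0\to\R\to K_0(\C(B))\to K_1(B)\to0$; this splits because $\R$, being divisible, is an injective $\ZI$-module, so $K_0(\C(B))\cong\R\oplus K_1(B)$. Together with $K_0(M(B))=\R$ and $K_1(M(B))=0$ this proves all four group computations, and the assertions for $\W$ are the special case $K_0(\W)=K_1(\W)=0$.

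It remains to prove $K_1(M(B))=0$, which I expect to be the main obstacle. It suffices to show $U(M_m(M(B)))=U_0(M_m(M(B)))$ for every $m$, and since $M_m(M(B))=M(M_m(B))$ and $M_m(B)$ is again of the same kind, one may take $m=1$. The approach: given $u\in U(M(B))$, use a continuous-scale approximate unit $\{e_n\}$ of $B$ (with $e_{n+1}e_n=e_n$) to cut $u$, modulo $B$, into ``blocks'' supported on the nested family of hereditary subalgebras $B_n:=\overline{(e_{n+1}-e_{n-1})B(e_{n+1}-e_{n-1})}$, each of which has continuous scale and is isomorphic to $B$ by Lemma~\ref{LBembed}(2) (for $B=\W$ this is recorded in Definition~\ref{Drazak}); strict comparison in $M(B)$, from Lemma~\ref{LK0(M(B))}, is exactly what permits this cutting. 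Since $B$ and its hereditary subalgebras are classifiable and lie in the class $\mathcal D$ of \cite{GLII}, each block unitary sits a short path away from an exponential, and the stable uniqueness theorem~\ref{TK1stuniq} together with the uniform exponential-length estimates of \cite{GLII} lets one straighten the blocks and absorb the finitely many $K_1(B)$-contributions, assembling a homotopy from $u$ to $1$ that converges in the strict topology. The delicate points, and where the bulk of the argument lies, are precisely this strict convergence and the uniformity of the block-wise estimates.
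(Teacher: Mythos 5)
Your derivation of the four $K$-groups from Lemma \ref{LK0(M(B))} and the six-term exact sequence of $0\to B\to M(B)\to\C(B)\to 0$ is exactly the paper's argument: $K_0(M(B))\cong{\rm Aff}(T(B))\cong\R$ and the Murray--von Neumann statement come from Lemma \ref{LK0(M(B))}, the inclusion-induced map on $K_0$ is identified with $\rho_B$ and killed by Corollary A.7 of \cite{ElliottGongLinNiu}, the index map gives $K_1(\C(B))\cong K_0(B)=\ker\rho_B$, and the sequence $0\to\R\to K_0(\C(B))\to K_1(B)\to 0$ splits by divisibility of $\R$. All of that is correct and is the same route.

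The gap is the one place you yourself flag as ``the main obstacle'': $K_1(M(B))=0$ is never actually proved. What you offer is a program --- cut a unitary $u\in U(M(B))$ into blocks along a continuous-scale approximate unit, straighten the blocks using Theorem \ref{TK1stuniq} and the exponential-length bounds of \cite{GLII}, and assemble a strictly convergent homotopy --- and you concede that ``the delicate points, and where the bulk of the argument lies,'' namely the strict convergence of the homotopy and the uniformity of the block-wise estimates, are not carried out. Those are precisely the points where such arguments fail or require substantial work: a norm-continuous path of unitaries in $M(B)$ cannot be produced block by block without controlling both the off-diagonal corrections (the blocks of $u$ only commute with the $e_n$ modulo $B$) and the lengths of the block paths uniformly, and absorbing the $K_1(B)$-contributions of infinitely many blocks is itself a nontrivial cancellation argument. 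The paper does not attempt any of this; it takes $K_1(M(B))=0$ as an external input, citing \cite{NgU(Razak)}. So as written your proposal does not constitute a proof of the theorem: the single hard ingredient is replaced by a sketch whose completion is the real content. If you instead cite the known result (as the paper does), the rest of your argument goes through verbatim.
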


\begin{proof}
By \cite{NgU(Razak)},  $K_1(M(B))=\{0\}.$ It follows from Lemma \ref{LK0(M(B))} that 
$K_0(M(B))=\R.$
Thus 
the six-term exact sequence 
\[
\begin{array}{ccccc}
K_0(B) & \rightarrow & K_0(M(B)) & \rightarrow & K_0(\C(B))\\
\uparrow & & & & \downarrow\\ 
K_1(\C(B)) &  \leftarrow & K_1(M(B)) & \leftarrow & K_1(B)\\
\end{array}
\]
becomes
\[
\begin{array}{ccccc}
K_0(B) & \rightarrow & \R& \rightarrow & K_0(\C(B))\\
\uparrow & & & & \downarrow\\ 
K_1(\C(B)) &  \leftarrow & 0 & \leftarrow & K_1(B)\\
\end{array}
\]
Note that the map from $K_0(B)$ into $\R=K_0(M(B))$ is induced by 
the map $\rho_B: K_0(B)\to \Aff(T(B))=\R.$ 
However, by Corollary A7 of \cite{ElliottGongLinNiu}, since $B$ has a unique tracial state,
$K_0(B)={\rm ker}\rho_B.$ In other words, $\rho_B(K_0(B))=\{0\}.$

It follows from the commutative diagram that  
$$
K_1(\C(B))={\rm ker}\rho_B\andeqn
$$
$$
0\to \R\to K_0(\C(B))\to K_1(B)\to 0.
$$
{{Since $\R$ is divisible, we may write $K_0(\C(B))=\R\oplus K_1(B).$}}
{{Note that $B$ is stably projectionless. Therefore}}
the last statement follows from \ref{LK0(M(B))} {{(see also Remark \ref{RmZCuntz}).}}

\end{proof}

\begin{df}
{\rm Let $\A$ be a separable \CA, and let $\B$ be a nonunital  and 
$\sigma$-unital \CA.

A trivial extension $\phi : \A \rightarrow M(\B)$ is said to
be \emph{diagonal} if 
$\phi$ is quasidiagonal as in Definition \ref{df:QDExt} and
Proposition \ref{prop:QDExt}, with the additional property that the maps
$\phi_n$ in Proposition \ref{prop:QDExt} can be taken to be
\hm s. 
 
In the above setting, we often write $\phi = \bigoplus_{n=1}^{\infty}
\phi_n$ {{(converges in the strict topology).}}}
\label{df:DiagonalExtension}
\end{df}

\begin{df}{\bf $\T_d$ extensions}\label{DTextension}\,\,\,
Let $B$ be a separable simple nonunital  \CA, with continuous scale and let  
 $C$ be a separable \CA.  A monomorphism $\sigma: C\to M(B)$ is called 
a  \emph{$\T_d$ extension with model $\sigma_0$}
if $\pi \circ \sigma$ is nonunital and if 
$\sigma$ is a diagonal essential extension of the form
$$\sigma = \bigoplus_{n=1}^{\infty} \bigoplus^n \phi_n\circ \sigma_0 
= \bigoplus_{n=1}^{\infty} (\overbrace{\phi_n\circ \sigma_0 \oplus ... 
\oplus \phi_n\circ \sigma_0}^n).$$
 Here, $\sigma_0: C\to B$ is a fixed injective *-\hm\, such 
that $\sigma_0(e_C)$ 
is a strictly positive element of $B$, where $e_C\in  C$ is a strictly positive element of $C$.  

More precisely,  this means the following: 

\begin{enumerate}
\item There exists a system $\{ b_n \}$ of quasidiagonal units
for $B$. 
\item There exists a nonzero positive element
$b_{n,1} \in \overline{b_n B b_n}$ such that 
$${\rm Her}(b_{n,1})\otimes M_ n\cong M_n(\overline{b_{n,1}B b_{n,1}}) =
\overline{b_n B b_n}$$ for all $n \geq 1.$
Moreover, we may write 
\beq
b_n=\sum_{i=1}^nb_{n,j},
\eneq
where $b_{n,j}:=b_{n,1}\otimes e_{j,j},$ and 
 $\{e_{i,j}\}\subset M_n$ is  a system of matrix units
\item $\phi_n: B\to \overline{b_{n,1}Bb_{n,1}}$ is an isomorphism such that 
${{\phi_n\circ \sigma_0}}(e_C)=b_{n,1}.$
for all $n \geq 1,$ {{and $(\overbrace{\phi_n\circ \sigma_0 \oplus ... 
\oplus \phi_n\circ \sigma_0}^n):  C\to M_n({\rm Her}(b_{n,1}))\subset {\rm Her}(b_n)$}}
is the diagonal map.
\end{enumerate}

\end{df}

\begin{rmk}\label{rmk:Textension}
{\rm With notation as in Definition \ref{DTextension}, let us
suppose that $F: C_+\setminus \{0\}\to \N\times (0, \infty)$ is a map
such that $\sigma_0$ is $F$-full. ($F$ exists  by Lemma \ref{lem:FMapExists}).
Then each $\phi_n\circ \sigma_0$ is also $F$-full. 

Note also that $\oplus_{n=1}^m \bigoplus^n \phi_n\circ \sigma_0(c)$ 
converges strictly 
to $\sigma(c)$ for all $c\in C$ (as $m\to\infty$).
{{Note that $\sigma=\phi\circ \sigma_0.$ }}

Finally, note that 
our definition of $\T_d$ extension requires that
$\pi \circ \sigma$ be a nonunital essential extension.}
\end{rmk}

\begin{rmk}\label{Rsigmakk=0}
With notation as in Definition \ref{DTextension}, note that
if $KK(\sigma_0) = 0$
then, {{since $\sigma=\psi\circ \sigma_0$ and $\phi$ is a *-\hm,}} 
$KK(\pi \circ \sigma) = 0.$

Also, when $C$ is amenable and satisfies the UCT, and when $B$ is 
stably projectionless, $\Z$-stable, and has
unique tracial state,
 since $K_*(M(B)) \cong (\mathbb{R}, 0)$ is divisible,
a sufficient condition for the above is that 
$$K_0(\sigma_0) = 0.$$ 
\end{rmk}

\begin{prop}\label{TextWC}
Let $A$ be a separable amenable \CA\, which is $\W$ embeddable. 
Then there exists a $\T_d$ extension $\tau: A\to \C(\W)$. 
Moreover,
$$KK(\tau) = 0.$$ 
\end{prop}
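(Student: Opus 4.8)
The plan is to construct the required extension by hand, from a fixed embedding of $A$ into $\W$ and a system of quasidiagonal units, and then to read off $KK(\tau)=0$ from the fact that, by construction, $\tau$ factors through $\W$.

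First I would fix, by Definition~\ref{DWemb}, a monomorphism $\sigma_0\colon A\to\W$ carrying strictly positive elements to strictly positive elements, and a strictly positive element $e_A\in A_+$, so that $\sigma_0(e_A)$ is strictly positive in $\W$. Fix also a system $\{b_n\}$ of quasidiagonal units for $\W$ (see Lemma~2.2 of \cite{NgRobinQuasidiagonal}). For each $n$, $\overline{b_n\W b_n}$ is a nonzero hereditary \SCA\ of $\W$, hence $\cong\W$ by \ref{Drazak}; moreover $M_n(\W)$ is again nonunital, separable, simple, has finite nuclear dimension, a unique tracial state, vanishing $K$-theory, satisfies the UCT and has continuous scale, so $M_n(\W)\cong\W\cong\overline{b_n\W b_n}$. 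Fixing such an isomorphism $\theta_n\colon M_n(\W)\to\overline{b_n\W b_n}$ and replacing $b_n$ by $\theta_n(\diag(\sigma_0(e_A),\dots,\sigma_0(e_A)))$ (still a strictly positive element of the same hereditary subalgebra, so $\{b_n\}$ remains a system of quasidiagonal units), I would put $b_{n,j}:=\theta_n(\sigma_0(e_A)\otimes e_{jj})$ and $\phi_n:=\theta_n(\,\cdot\,\otimes e_{11})\colon\W\to\overline{b_{n,1}\W b_{n,1}}$, so that $\phi_n\circ\sigma_0(e_A)=b_{n,1}$ and $b_n=\sum_{j=1}^n b_{n,j}$. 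Then I would set
$$\sigma:=\bigoplus_{n=1}^{\infty}\bigoplus^{n}(\phi_n\circ\sigma_0)\colon A\to M(\W),$$
with $\bigoplus^{n}(\phi_n\circ\sigma_0)$ the diagonal map into $M_n(\overline{b_{n,1}\W b_{n,1}})=\overline{b_n\W b_n}$ and the outer sum converging strictly by property~(2) of a system of quasidiagonal units, and take $\tau:=\pi\circ\sigma\colon A\to\C(\W)$. By construction $\sigma$ has exactly the shape required in Definition~\ref{DTextension}.

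Next I would verify that $\tau$ is a nonunital essential extension, so that $\sigma$ is a $\T_d$ extension with model $\sigma_0$. That $\sigma$ is a monomorphism is clear, since $\sigma(a)=0$ forces the $n=1$ block $\phi_1\circ\sigma_0(a)=0$, hence $a=0$. For essentiality, if $a\in A$ and $\sigma(a)\in\W$, write $\sigma(a)=\sum_n x_n$ with $x_n:=\bigoplus^n(\phi_n\circ\sigma_0)(a)$ in the mutually orthogonal hereditary subalgebras $\overline{b_n\W b_n}$; since each $\phi_n$ and $\sigma_0$ are isometric, $\|x_n\|=\|a\|$ for all $n$, and a strictly convergent sum of constant-norm elements drawn from mutually orthogonal hereditary subalgebras cannot lie in $\W$ unless every term vanishes, so $a=0$ and $\tau$ is a monomorphism. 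Since $A$ is $\W$ embeddable it is nonunital (Definition~\ref{DWemb}), so $\tau$ is automatically a nonunital extension; hence $\tau$ arises from a $\T_d$ extension as required.

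Finally, for $KK(\tau)=0$ I would use the factorization visible in the construction: $\sigma=\psi\circ\sigma_0$, where $\psi\colon\W\to M(\W)$ is the $*$-homomorphism $\psi(x):=\bigoplus_{n=1}^{\infty}\bigoplus^{n}\phi_n(x)$, well defined since $\bigoplus^n\phi_n(x)\in\overline{b_n\W b_n}$ with $\|\bigoplus^n\phi_n(x)\|=\|x\|$, so the outer sum converges strictly. Then $\tau=(\pi\circ\psi)\circ\sigma_0$, and by functoriality of the Kasparov product $KK(\tau)=KK(\sigma_0)\times KK(\pi\circ\psi)$ with $KK(\pi\circ\psi)\in KK(\W,\C(\W))$. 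Since $\W$ satisfies the UCT and $K_i(\W)=0$ for $i=0,1$, the algebra $\W$ is $KK$-equivalent to $\{0\}$; in particular $KK(\W,\C(\W))=0$, whence $KK(\pi\circ\psi)=0$ and $KK(\tau)=0$. (This is the mechanism recorded in Remark~\ref{Rsigmakk=0}; in this form the UCT is invoked only for $\W$, never for $A$.) The only bookkeeping of any substance is matching the normalizations $\phi_n\circ\sigma_0(e_A)=b_{n,1}$ and $b_n=\sum_j b_{n,j}$ from Definition~\ref{DTextension}, which is harmless once one is free to replace the given system of quasidiagonal units by any system with the same hereditary subalgebras, and the elementary fact about sums over mutually orthogonal hereditary subalgebras used for essentiality; the heart of the proof is the factorization $\sigma=\psi\circ\sigma_0$ together with the vanishing of $KK(\W,\C(\W))$.
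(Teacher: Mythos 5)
Your proposal is correct and follows the paper's own proof essentially verbatim: fix an embedding $\sigma_0:A\to\W$ sending strictly positive elements to strictly positive elements, assemble the block-diagonal map $\sigma=\bigoplus_{n}\bigoplus^{n}\phi_n\circ\sigma_0$ over a system of quasidiagonal units so that the sum converges strictly, and deduce $KK(\tau)=0$ from the factorization $\sigma=\psi\circ\sigma_0$ through the $KK$-contractible algebra $\W$ (this is exactly the mechanism of Remark \ref{Rsigmakk=0}, which the paper cites). The only cosmetic difference is how the $n\times n$ matrix structure is housed inside ${\rm Her}(b_n)$: the paper uses strict comparison and the Cuntz semigroup to find a small $a_n$ with $M_n({\rm Her}(a_n))\subseteq {\rm Her}(b_n)$, whereas you use the isomorphisms ${\rm Her}(b_n)\cong\W\cong M_n(\W)$ to identify all of ${\rm Her}(b_n)$ with a matrix algebra; both are legitimate and yield a $\T_d$ extension in the sense of Definition \ref{DTextension}.
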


\begin{proof}
Fix a *-embedding $\sigma_A: A\to \W,$ 
which maps strictly positive elements to strictly positive elements.
Denote by $\tau_W$ the unique tracial state of $\W.$
Fix a system of quasidiagonal units $\{b_k\}$ as in \ref{Dqdunit}. 
Passing to a subsequence if necessary, we may assume that 
\beq
\sum_{k=n+1}^{\infty}d_{\tau_W}(b_k)<\frac{1}{n}d_{\tau_W}(b_n)\rforal n.
\eneq
Let $t_n=\frac{1}{n+1}d_{\tau_W}(b_n),$ $n\in \N.$
There is an element $a_n\in {\rm Her}(b_n)$ with $d_{\tau_W}(a_n) \leq t_n$ 
such that  
$M_n({\rm Her}(a_n)) \subseteq {\rm Her}(b_n)$ (by  {{Theorem 6.6 of}} 
\cite{ElliottRobertSantiago} and by strict comparison). 
There is, for each $n,$ an isomorphism $\phi_n: \W \to {\rm Her}(a_n).$
Define $\sigma: A\to M(\W)$ by
$\sigma(a)=\sum_{n=1}^\infty (\bigoplus^n \phi_n\circ \sigma_A)(a)$ for all 
$a\in A.$   Note that, since $\{b_n\}$ is a system of quasidiagonal units, the sum
above converges in the strict topology for each $a\in A.$
One then checks, from Definition \ref{DTextension}, that 
$\pi\circ \sigma$ is a $\T_d$ extension  with model $\sigma_A.$ 

That $KK(\pi \circ \sigma) = 0$ follows from 
Remark  \ref{Rsigmakk=0}.
\end{proof}


\begin{prop}\label{Pkk0}
Let $C$ a separable amenable \CA\, which is $\W$ embeddable and satisfies the UCT, and let  $\phi: C\to M(\W)$ be a \hm.
Then $\phi_{*0}({\rm ker}\rho_{f,C})=\{0\}$ and $\phi_{*1}=0.$ 

If $X$ is a 
connected and locally connected compact metric space, and 
$C:=C_0(X\setminus \{x_0\})$ for some $x_0 \in X$,  
then $KK(\phi)=0$ and $KK(\pi\circ \phi)=0.$ 

\end{prop}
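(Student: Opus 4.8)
The plan is to reduce both assertions to the computation of $\phi_{*0}$ and $\phi_{*1}$, using the description of $K_*(M(\W))$ from Theorem \ref{K-six}, and then to specialize in the commutative case.

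First I would dispose of $\phi_{*1}$: since $K_1(M(\W)) = \{0\}$ by Theorem \ref{K-six}, $\phi_{*1} = 0$ for trivial reasons. For $\phi_{*0}$, I would use the identification $K_0(M(\W)) \cong \R$ (Theorem \ref{K-six}, Lemma \ref{LK0(M(B))}), under which the class of a projection $P \in M_k(M(\W))$ corresponds to $\tau_W(P)$, where $\tau_W$ also denotes the faithful tracial state on $M(\W)$ extending the trace of $\W$ and its amplification to matrices. Then $t := \tau_W \circ \phi$ is a positive tracial functional on $C$ with $\|t\| \le 1$. Writing $x \in K_0(C)$ as $[p]-[q]$ with $p,q$ projections over $\widetilde{C}$ and tracking the (equal) ranks of their scalar parts, one sees that under $K_0(M(\W)) \cong \R$ the number $\phi_{*0}(x)$ is computed by the map on $K_0(C)$ induced by $t$. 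If $t = 0$ this is $0$; otherwise write $t = r\, t_0$ with $r = \|t\| \in (0,1]$ and $t_0 \in T(C)$, so that $\phi_{*0}(x) = r\cdot r_C(t_0)(x)$. It then remains to show $r_C(t_0)$ annihilates ${\rm ker}\rho_{f,C}$. Here I would use that $C$ is $\W$-embeddable, hence $T_f(C) \ne \emptyset$ (Remark \ref{RWF}): fixing $\tau_f \in T_f(C)$, the convex combination $(1 - \tfrac1n) t_0 + \tfrac1n \tau_f$ is again a faithful tracial state for every $n$, hence annihilates ${\rm ker}\rho_{f,C}$; by affineness of $\tau \mapsto r_C(\tau)$ and since $r_C(\tau_f)$ also annihilates ${\rm ker}\rho_{f,C}$, this forces $(1 - \tfrac1n) r_C(t_0)(x) = 0$ for all $x \in {\rm ker}\rho_{f,C}$, whence $r_C(t_0)(x) = 0$. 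This gives $\phi_{*0}({\rm ker}\rho_{f,C}) = \{0\}$.

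For the second assertion I would specialize to $C = C_0(X \setminus \{x_0\})$ with $X$ connected, locally connected, compact metrizable. The compactness, metrizability and local connectedness are used (via the Hahn--Mazurkiewicz theorem, as in the fact that $C_0(0,1]$ --- hence $C$ --- embeds into $\W$) only to guarantee that $C$ is $\W$-embeddable, so that the first part applies; the substantive hypothesis is connectedness of $X$: a projection over $\widetilde{C} = C(X)$ has locally constant, hence constant, rank, so a direct computation gives $r_C(\tau)(x) = 0$ for every $\tau \in T(C)$ and every $x \in K_0(C)$. In particular ${\rm ker}\rho_{f,C} = K_0(C)$, and the first part yields $\phi_{*0} = 0$; together with $\phi_{*1} = 0$, $\phi$ is trivial on $K$-theory. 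Since $K_1(M(\W)) = 0$ and $K_0(M(\W)) = \R$ is divisible, the UCT for $C$ --- arguing exactly as in Remark \ref{Rsigmakk=0}, the pertinent ${\rm Ext}$-terms having coefficients $0$ or the divisible group $\R$ --- shows that $KK(\phi)$ is determined by $(\phi_{*0},\phi_{*1})$, so $KK(\phi) = 0$. Finally $KK(\pi \circ \phi) = KK(\phi) \times KK(\pi) = 0$ by functoriality of the Kasparov product (equivalently, rerun the previous sentence with $\C(\W)$ in place of $M(\W)$, using $K_0(\C(\W)) = \R$ and $K_1(\C(\W)) = 0$ from Theorem \ref{K-six}).

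The step I expect to be the main obstacle is the first one --- showing that the tracial functional $\tau_W \circ \phi$ kills ${\rm ker}\rho_{f,C}$ even when $\phi$ is not injective, so that the associated tracial state $t_0$ need not be faithful; the perturbation of $t_0$ by a genuinely faithful trace is the device that repairs this. Everything after that (the constancy-of-rank computation on a connected space, and the routine passage through the UCT with the divisible coefficient group $K_*(M(\W)) = (\R,0)$, already carried out in Remark \ref{Rsigmakk=0}) is formal.
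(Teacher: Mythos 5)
Your proof is correct and follows essentially the same route as the paper's: identify $K_0(M(\W))\cong\R$ via $\tau_W$ and $K_1(M(\W))=0$ so that $\phi_{*}$ is computed by the tracial functional $\tau_W\circ\phi$, use connectedness of $X$ to get $K_0(C)=\ker\rho_C$, and pass through the UCT with the divisible coefficient group $(\R,0)$. The one point where you go beyond the paper's one-line justification of the first assertion is the perturbation $(1-\tfrac{1}{n})t_0+\tfrac{1}{n}\tau_f$, which correctly handles the case that $\tau_W\circ\phi$ is not faithful (a detail the paper leaves implicit); your fix is valid precisely because $T_f(C)\neq\emptyset$, guaranteed by $\W$-embeddability.
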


\begin{proof}
Recall that $K_0(M(\W))=\R$ and $K_1(M(\W))=\{0\}.$ 
The first part follows from the fact that, if $p, q\in M_n(M(\W))$ 
(for some integer $n$) are two projections and 
$\tau_W(p)=\tau_W(q),$  then there exists $v\in M_n(M(\W))$  such that
$v^*v=p$ and $vv^*=q.$ 
(See Theorem \ref{K-six}.)

In case that $C=C_0(X\setminus \{x_0\}),$  {{since $X$ is connected,}} $K_0(C)={\rm ker}\rho_C.$
It follows that $\phi_{*i}=0,$ $i=0,1.$ Since $K_0(M(\W))=\R$ is divisible,
${\rm Ext}_{\mathbb{Z}}(K_1(C), K_0(\R))=\{0\}.$  By the UCT, $KK(\phi)=0.$
Then $KK(\pi\circ \phi)=0$ follows. 

\end{proof}

Denote by $\mathcal D$ the class of simple \CA s  defined in Definition of 8.1 of \cite{eglnp}.
Suppose that $A\in \mathcal D.$ Then, for any integer $k\ge 1,$ $M_k(A)\in \mathcal D$ (see 8.5 of \cite{eglnp}). 
Moreover, $A$ is stably projectionless (see 9.3 of \cite{eglnp}).  We note that $\W\in \mathcal D$ (see 9.6 of \cite{eglnp}).

Let us quote the following lemma for the convenience of the reader.

\begin{lem}[Theorem 4.4 of \cite{GLII}]\label{Lexpincum}
Let $A$ be a separable  simple \CA\, in ${\mathcal D}$  and let $u\in CU(M_m({\widetilde A})).$
Then $u\in U_0({\widetilde A})$ and ${\rm cel}(u)\le {{7\pi}}.$
\end{lem}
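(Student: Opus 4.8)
The statement has two parts, and the first is purely formal. Since $A\in\mathcal{D}$ we have $M_m(A)\in\mathcal{D}$ as recalled above, so $M_m(A)$ is again separable, simple, $\mathcal{Z}$-stable, of stable rank one and stably projectionless, and in particular all of its matrix amplifications stay in the class. In any unital Banach algebra $U_0$ is an open, hence closed, subgroup of the full unitary group, and it contains every multiplicative commutator $vwv^*w^*$, so it contains the closed subgroup they generate, that is, $CU(M_m(\widetilde{A}))\subseteq U_0(M_m(\widetilde{A}))$; since $A$ has stable rank one one has $U_0(M_m(\widetilde{A}))/CU(M_m(\widetilde{A}))=U_0(\widetilde{A})/CU(\widetilde{A})$, which is how the conclusion ``$u\in U_0(\widetilde{A})$'' is to be read.

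For the length estimate I would first assemble the structural input available for $A\in\mathcal{D}$: $\widetilde{A}$ is unital and stably finite with weakly unperforated $K_0(\widetilde{A})$ and $T(\widetilde{A})\neq\emptyset$; the de la Harpe--Skandalis determinant (the map $\Delta$ of Definition \ref{Dcoset}) gives an isometric isomorphism $\Delta\colon U_0(\widetilde{A})/CU(\widetilde{A})\to\Aff(T(\widetilde{A}))/\overline{\rho_{\widetilde{A}}(K_0(\widetilde{A}))}$; and the exponential rank of the amplifications $M_n(\widetilde{A})$ is bounded by $6+\epsilon$ for every $n$ and every $\epsilon>0$ (Theorem 4.3 of \cite{GLII}, cf. the proof of Proposition \ref{Wprop}). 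The membership $u\in CU(M_m(\widetilde{A}))$ is then equivalent to the conjunction of $u\in U_0$ and $\Delta(\bar u)=0$; concretely, for any factorisation $u=\prod_{j=1}^{k}\exp(ih_j)$ the affine function $\tfrac{1}{2\pi}\sum_{j}\widehat{h_j}$ lies in the closure of $\rho_{\widetilde{A}}(K_0(\widetilde{A}))$ in $\Aff(T(\widetilde{A}))$.

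The heart of the proof is to turn these two facts into the sharp numerical bound. Factoring $u$ through the exponential-rank estimate into six exponentials of self-adjoints of norm at most $2\pi$ gives only ${\rm cel}(u)\le 12\pi$ (up to an arbitrarily small error), so the vanishing of the determinant must be used to remove the accumulated ``rotation'': one rearranges the crude factorisation so that its total spectral displacement is carried by an element of $\overline{\rho_{\widetilde{A}}(K_0(\widetilde{A}))}$, which can then be cancelled along a path of length bounded by a universal constant, while $\mathcal{Z}$-stability --- an approximate flip on $M_2(\widetilde{A})$ realised through a path of small length, together with the standard fact that $v\oplus v^*$ is connected to $1$ through a short path for every unitary $v$ --- is used to reorganise what remains into at most three exponentials of self-adjoints of norm $\le 2\pi$ plus a correction of arbitrarily small norm. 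Adding these contributions by sub-additivity of ${\rm cel}$ under products, and using that ${\rm cel}$ is unchanged under the passage between $M_m(\widetilde{A})$ and $\widetilde{A}$ afforded by stable rank one and $\mathcal{Z}$-stability, gives ${\rm cel}(u)\le 7\pi$.

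The main obstacle is precisely this last step: extracting the exact constant $7\pi$, rather than merely ``some universal bound'', needs the delicate interplay between the exponential-rank estimate for the amplifications of $\widetilde{A}$ and the cancellation provided by the vanishing de la Harpe--Skandalis determinant, carried out with honest length bookkeeping; every other ingredient is either soft or quotable from \cite{GLII} and \cite{eglnp}.
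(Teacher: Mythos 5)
Your first paragraph is correct and is, if anything, more direct than what the paper writes for the membership $u\in U_0$. The gap is in the length estimate, and you flag it yourself: you lay out a program for reaching $7\pi$ but concede that you cannot carry out the ``honest length bookkeeping.'' The underlying trouble is that you are, in effect, trying to re-derive Theorem 4.4 of \cite{GLII} from scratch. That is the theorem whose name appears in brackets in the header of this very lemma, and it already gives ${\rm cel}(v)\le 6\pi$ for every $v\in CU(\widetilde{B})$ when $B\in\mathcal{D}$; applied with $B=M_m(A)$, which is again in $\mathcal{D}$, it does the hard part for you. The delicate interplay you describe between exponential rank, the de la Harpe--Skandalis determinant, and $\mathcal{Z}$-stability is precisely what goes into the proof of that cited theorem; the present lemma is proved by quoting it, not by reproving it.

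The one genuinely new step, which your sketch never identifies, is to pass from $u\in CU(M_m(\widetilde{A}))$ to an element of the smaller unitization $\widetilde{M_m(A)}$, which is the setting of the cited theorem. Under the scalar quotient $\pi\colon M_m(\widetilde{A})\to M_m(\mathbb{C})$, the image $\pi(u)$ is a scalar unitary; lifting it to the constant matrix $W\in M_m(\mathbb{C}\cdot 1_{\widetilde{A}})$ gives $W^*u\in\widetilde{M_m(A)}$. Any scalar unitary matrix can be written as $\exp(ih)$ for a single self-adjoint $h$ with $\|h\|\le\pi$, so ${\rm cel}(W)\le\pi$. Theorem 4.4 of \cite{GLII} applied to $W^*u$ gives ${\rm cel}(W^*u)\le 6\pi$, and subadditivity of ${\rm cel}$ under products then yields ${\rm cel}(u)\le{\rm cel}(W)+{\rm cel}(W^*u)\le 7\pi$. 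This reduction is both what lets one avoid redoing the determinant argument and the source of the extra $\pi$ in the constant, which your proposal never actually accounts for.
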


\begin{proof}
Note, as mentioned above,  that $M_m(A)\in \mathcal D.$  Let $\pi: M_m(\widetilde{A})\to M_m(\mathbb C)$ be the quotient map. 
Let $w=\pi(u)$ be the scalar unitary. Denote by $W\in M_m(\mathbb C \cdot 1_{\widetilde{A}})$ the same scalar matrix. 
Then $W^*u\in \widetilde{M_m(A)}.$ By Theorem 4.4 of \cite{GLII}, $W^*u\in U_0(M_m(\widetilde{A}))$
and ${\rm cel}(W^*u)\le 6\pi.$  Since $W\in M_m(\mathbb C \cdot 1_{\widetilde{A}}),$ we conclude that $u\in U_0(M_m(\widetilde{A}))$
and ${\rm cel}(u)\le 7\pi.$

\end{proof}

\begin{lem}\label{Uniq-2}
Let $C$ be a separable amenable \CA\, which is $\W$ embeddable and satisfies the UCT.
Let $\sigma :  C\rightarrow M(\W)$ be a
$\T_d$ extension, and let
$\psi : C \rightarrow M(\W)$ be  a {{diagonal}} c.p.c.~map of 
the form $$\psi = \bigoplus_{n=1}^{\infty} \psi_n$$
as in Proposition \ref{prop:QDExt}   {{such that $\pi\circ { \psi}$ is a nonunital essential extension.}}

Then, there is a diagonal extension $h: {{C}}\to \C(\W)$ such that
$$ \pi\circ \sigma \oplus \pi \circ \psi \sim^u  {{\pi \circ \sigma
\oplus h.}}$$
\end{lem}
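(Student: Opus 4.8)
The plan is to absorb the (possibly badly-behaved) diagonal c.p.c.\ map $\psi$ into the $\T_d$ extension $\sigma$ by means of a telescoping/Elliott-type intertwining argument, using the stable uniqueness theorem \ref{TK1stuniq} as the engine. First I would set up notation: write $\sigma=\bigoplus_{n=1}^\infty\bigoplus^n\phi_n\circ\sigma_0$ along a system $\{b_n\}$ of quasidiagonal units of $\W$, with each $\phi_n\circ\sigma_0$ being $F$-$\E$ full in $\overline{b_{n,1}\W b_{n,1}}$ for a map $F$ as in Lemma \ref{lem:FMapExists} (Remark \ref{rmk:Textension}). Likewise write $\psi=\bigoplus_{n=1}^\infty\psi_n$ along a system $\{c_n\}$ of quasidiagonal units, with $\psi_n:C\to\overline{c_n\W c_n}$ c.p.c.\ and asymptotically multiplicative (Proposition \ref{prop:QDExt}). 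The key K-theoretic input is Proposition \ref{Pkk0} (together with Theorem \ref{K-six}): since $C$ is $\W$-embeddable and amenable with UCT and $K_*(M(\W))=(\R,0)$, every \hm\ and every c.p.c.-liftable extension into $M(\W)$ (resp.\ $\C(\W)$) has $KL(\cdot)=0$ on the relevant subgroups; in particular $KK(\pi\circ\sigma)=0$ and each $\psi_n$ is $\mathcal G$-$\delta$-multiplicative with $[\psi_n]|_{\mathcal P}=0$ for $n$ large, matching $[\text{(a block of }\sigma)]|_{\mathcal P}=0$.

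Next I would carry out the absorption inductively. Fix an increasing dense sequence $\F_1\subset\F_2\subset\cdots$ in $C$ and a summable sequence $\{\ep_n\}$. At stage $n$, apply the stable uniqueness Theorem \ref{TK1stuniq} (valid since $\W\in{\bf C}_{0,0,1,T,7}$ by Proposition \ref{Wprop}, and $\W$ has almost stable rank one) to the two $\mathcal G_n$-$\delta_n$-multiplicative maps $\phi':=$ (a finite block of $\sigma$) and $\psi':=$ (a finite block of $\psi$, padded by a block of $\sigma$ to make K-theory and the $cel$ bounds match): because both have vanishing $[\cdot]|_{\mathcal P_n}$ and vanishing $\ddag$-data on $\mathcal U_n$ (here I use that $\phi^\ddag=\psi^\ddag=0$ into $\C(\W)$, again via Proposition \ref{Pkk0} and Lemma \ref{Lexpincum}, so the exponential-length hypotheses $cel(\lceil\phi'(u)\rceil\lceil\psi'(u)^*\rceil)\le L(u)$ hold with a fixed $L$), there is a unitary in the relevant matrix algebra over $\widetilde{\W}$ conjugating $\psi'\oplus(K_n\text{ copies of a full block of }\sigma)$ to $\phi'\oplus(\text{same})$ within $\ep_n$ on $\F_n$. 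One then uses the quasidiagonal-units structure: the ``large complement'' lemma \ref{lem:largecomplement} and the permutation Lemma \ref{Lpermutation} to rearrange the infinitely many summands of $\sigma$ so that (i) there is always enough room in $\sigma$ to supply the full blocks required by \ref{TK1stuniq} at every stage, and (ii) the unitaries obtained at successive stages can be patched into a single unitary $U\in M(\W)$ (or in $\C(\W)$, using Proposition \ref{W=S} since everything is nonunital) implementing $\pi\circ\sigma\oplus\pi\circ\psi\sim^u\pi\circ\sigma\oplus h$ where $h$ is the strict sum of the leftover blocks of $\sigma$ not used up in the absorption — hence $h$ is a diagonal extension.

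The main obstacle, and the step needing the most care, is the bookkeeping that guarantees a \emph{single} system of quasidiagonal units works simultaneously for $\sigma$, for $\psi$, and for the output $h$, while at every finite stage leaving an ``infinite tail'' of full blocks of $\sigma$ available — this is what makes the telescoping converge strictly and keeps $h$ diagonal rather than merely quasidiagonal. Concretely one must, after passing to subsequences, choose the positive elements $a_{t_n}$ (as in Lemma \ref{LBembed}) so that the traces $d_{\tau_W}(\cdot)$ add up correctly (a convergent series summing to less than the trace of a strictly positive element), re-index the blocks of $\sigma$ via Lemma \ref{Lpermutation} to split off both the ``working copies'' fed into \ref{TK1stuniq} and the ``reservoir'' that becomes $h$, and verify that the asymptotic multiplicativity of $\psi_n$ together with $[\psi_n]|_{\mathcal P_n}=0$ (for $n\ge$ some $N$, using Proposition \ref{Pkk0} applied to $\pi\circ\psi$) supplies the hypotheses of \ref{TK1stuniq} uniformly. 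Once this combinatorial scaffolding is in place, the Elliott intertwining is routine: the composed unitaries converge strictly to a unitary in $M(\W)$, and passing to $\C(\W)$ via Proposition \ref{W=S} (all extensions in sight being nonunital and essential, by Remark \ref{rmk:Textension} and the hypothesis on $\psi$) yields the claimed unitary equivalence.
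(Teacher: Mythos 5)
Your proposal has a genuine gap at the point where you feed the stable uniqueness theorem, and a related misidentification of the diagonal extension $h$ in the conclusion.

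You claim ``$\phi^\ddag=\psi^\ddag=0$ into $\C(\W)$'' and use this, via Proposition \ref{Pkk0} and Lemma \ref{Lexpincum}, to get the exponential-length hypothesis $cel(\lceil\phi'(u)\rceil\lceil\psi'(u)^*\rceil)\le L(u)$ for a fixed $L$. That is the wrong place to look: the cel hypothesis of Theorem \ref{TK1stuniq} is about computations inside matrix algebras over the range algebras in $\W$ (here $\overline{a_n\W a_n}$ and $\overline{b_{n,j}\W b_{n,j}}$), not inside $\C(\W)$. The relevant quantity is $\Pi_{cu}(\lceil\psi_n(u)\rceil)\in U(M_m(\widetilde{\mathrm{Her}(a_n)}))/CU(\cdot)\cong \Aff(T(\widetilde{\W}))/\ZI$, and there is no reason for this to vanish or to match the $\Pi_{cu}$-data of a block $\phi_n\circ\sigma_0$ of the $\T_d$ extension. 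Proposition \ref{Pkk0} constrains $K$-theory into $M(\W)$; it says nothing about the determinant/commutator data of the lifted c.p.c.\ maps $\psi_n$, and Lemma \ref{Lexpincum} only bounds $cel$ once you already know a unitary lies in $CU$. So if you pair a block of $\psi$ against a block of $\sigma$ padded by more $\sigma$, the cel hypothesis simply fails in general, and Theorem \ref{TK1stuniq} cannot be invoked.

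This also explains why your description of $h$ as ``leftover blocks of $\sigma$'' does not work. The paper instead manufactures a new diagonal homomorphism $H=\bigoplus h_n$: first (via \cite{Lnloc}, Theorem 14.5) it approximates the near-homomorphism $x\mapsto\Pi_{cu}(\lceil\psi_n(J^\sim(x))\rceil)$ by an honest group homomorphism $\lambda_n$, extends $\lambda_n$ to $\bar{\lambda}_n:K_1(C)\to\Aff(T(\widetilde\W))/\ZI$ by divisibility, and then uses the existence Theorem \ref{TExpon} to produce homomorphisms $h_n:C\to\mathrm{Her}(a_n)$ with $KL(h_n)=0$ and $h_n^\ddag=\bar{\lambda}_n$. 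Only after this step is the cel hypothesis available, because $h_n(u)\lceil\psi_n(u)\rceil^*$ is within $1/16\pi(n+1)$ of an element of $CU$, so Lemma \ref{Lexpincum} gives a uniform bound $7\pi+1$. The conclusion's diagonal extension is $h=\pi\circ H$, carrying precisely the determinant data of $\psi$; it is not extracted from $\sigma$. The special case where $\psi_n^\ddag=0$ and one can dispense with $H$ is exactly Corollary \ref{Lexpo-zeroabsb}, and the genuine absorption $\pi\circ\sigma\oplus\pi\circ\psi\sim^u\pi\circ\sigma$ is only proved later, in Lemma \ref{Lpermzero21}, after a pairing-off argument cancels the $\ddag$ data. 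Your outline effectively tries to prove that stronger absorption directly, without the intermediate construction that makes the stable uniqueness hypotheses checkable.
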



\begin{proof}
Fix  a strictly positive element  $e_C\in C$  with $\|e_C\|=1.$
{{By working in  $M_2(M(\W)),$
\wilog, we may assume that ${\rm ran}(\psi)\perp {\rm ran}(\sigma)$
(see Proposition \ref{PBDFsum}).}}

Since $\sigma$ is a $\T_d$ extension, using {{a variation on}} the
notation of Definition \ref{DTextension},
we write
$$\sigma = \bigoplus_{n=1}^{\infty} {{\bigoplus^{n+1}}} \phi_n\circ \sigma_0.$$

We {{also}} write $\bigoplus^{{n+1}}\phi_n\circ \sigma_0=
{{\sigma_{n,0}}}\oplus \sigma_{n,{{1}}}\oplus \cdots \oplus  \sigma_{n,n}$
and $\sigma=\bigoplus_{n=1}^\infty {{\bigoplus_{j=0}^n}} \sigma_{n,j}.$

Continuing to follow Definition \ref{DTextension}
let
$$b_{n,j} := \sigma_{n,j}(e_C)$$
and let $b_n$ be as in Definition \ref{DTextension},
for all $n, j$.

Also, let $\{ a_n \}$ be the system of quasidiagonal units
for $\W$
from Proposition \ref{prop:QDExt} that corresponds to
$\{ \psi_n \}$.
{{Recall (see \ref{prop:QDExt}) that
\beq
\lim_{n\to\infty}\|\psi_n(a)\psi_n(b)-\psi_n(ab)\|=0\rforal a, b\in C.
\eneq}}

Since $\sigma$ is a $\T_d$ extension, as in Remark \ref{rmk:Textension},
there exists a map
$F : C_+ \setminus \{ 0 \} \rightarrow \mathbb{N} \times 
(0, \infty)$
such that for all $n, j$,
$\sigma_{n,j} : C\rightarrow \overline{b_{n,j} 
\W b_{n,j}}$
is $F$-full.

Let $\{ \epsilon_n \}_{n=1}^{\infty}$ be a strictly decreasing
sequence in $(0,1)$ such that
$\sum_{n=1}^\infty\epsilon_n <\infty.$

Let ${\mathcal F}_1\subset {\mathcal F}_2\subset \cdots \subset {\mathcal F}_n\subset \cdots $ be
a sequence of finite subsets of the unit ball of $C$ which is dense in the unit ball of $C.$

We will apply Theorem \ref{TK1stuniq}.
Note that, by Proposition \ref{Wprop},  $\W\in {\bf C}_{0,0,1,T,7}$, with
$T$ as in Proposition \ref{Wprop}.  Let $L:=7\pi+1.$
{{As $C$ is given, we fix maps $J,$ $\Pi_{cu}^-$ and $J^\sim$
as in \ref{Dcoset}.}}

For each $n,$ let $\delta_n>0,$  $\G_n\subset C$ be a finite subset, ${\mathcal P}_n\subset \underline{K}(C)$ be a finite
subset, $\U_n{{\subset \U_{n+1}\subset}} J^\sim(K_1{{(C}}))$
be
a finite subset, $\E_n\subset C_+\setminus \{0\}$ be  a finite subset,  and
$K_n$ be an integer associated with $\F_n$ and
{{$\ep_n/4$}} (as well as $F$ and $L$ above) as  provided
by  Theorem \ref{TK1stuniq}
(for \CA s in ${\bf C}_{0,0,1,T,7}$).

We may assume that $\delta_{n+1} < \delta_n$,  $\G_n \subseteq \G_{n+1}$,
{{$K_n < K_{n+1}$}}, and $\U_n \subset U(M_{m(n)}(\widetilde{C}))$
for all $n$.  \Wlog, we may assume that each $\psi_n$ is $\G_n$-$\dt_n$-multiplicative and $\lceil \psi_n(u) \rceil$ is well defined for all $u\in \U_n.$

Moreover, \wilog, we may also assume (see  Theorem 14.5 of
\cite{Lnloc}) that ,
for any $n,$ there is a group \hm\,
$$
\lambda_n: G(\Pi_1(\U_n))\to U(M_{m(n)}(\widetilde{{\rm Her}(a_n)}))/
CU(M_{m(n)}(\widetilde{{\rm Her}(a_n)}))\cong \Aff(T(\widetilde{\W}))/\mathbb{Z}
$$
such that
\beq\label{Uni-2-e-1}
{\rm dist}(\lambda_n(x), \Pi_{cu}(\lceil \psi_n(J^\sim(x))\rceil))<
\frac{1}{16\pi (n+1)} \rforal x\in \Pi_1(\U),
\eneq
where $G(\Pi_1(\U))$  is the subgroup generated  by the finite subset $\Pi_1(\U).$
Since $\Aff(T(\widetilde{{\rm Her}(a_n)}))/\mathbb{Z}$ is divisible, there is a \hm\, $\bar{\lambda_n}: K_1(C)\to \Aff(T(\widetilde{\W}))/\mathbb{Z}$
such that $\bar{\lambda_n}$ extends $\lambda_n.$

It follows from Theorem \ref{TExpon} that, for each $n,$  there is a monomorphism $h_n: {{C}} \to {\rm Her}(a_n)$
such that $KL(h_n)=KL(\psi_n)=0$ and
\beq\label{Uni-2-n-1}
h_n^\ddag= {{\bar{\lambda_n}}}
\eneq
Define $H: A\to M(\W)$ by
$H=\bigoplus_{n=1}^\infty h_n.$ Note, by \ref{Dqdunit}, the sum converges strictly and
$H$ gives a diagonal extension.


Since $\W$ is KK contractible, we may assume that
\beq\label{Uniq-2-KK0}
[\Sigma_{k=n}^m h_k]|_{{\mathcal P}_n}=[\Sigma_{k=n}^m\psi_k]|_{{\mathcal P}_n}=0\rforal m\ge n, \,\,\, n=1,2,....
\eneq

Throwing away finitely many terms and relabelling
if necessary, we may assume that
$$\sum_{n=1}^{\infty} d_{\tau_W}(a_n) < d_{\tau_W}({{b_{K_1,0}}}).$$

Let $\{ n_k \}_{k=1}^{\infty}$ be a subsequence of $\mathbb{Z}^+$
with
$n_1 = 1$ and
$n_k +2 < n_{k+1}$ for all $k$ such that
$$\sum_{l=n_k}^{\infty} d_{\tau_W}(a_l) < d_{\tau_W}({{b_{K_k, 0}}}).$$
By \eqref{Uni-2-n-1} and \eqref{Uni-2-e-1}, for any $u\in \U_{n_k},$  there is $v_l\in CU(M_{m(l)}(\widetilde{{\rm Her}(a_l)}))$ such that
\beq\label{Uniq-2-e-2-1}
h_l(u)\lceil \psi_l(u)\rceil ^*\approx_{1/16\pi(n+1)} v_l.
\eneq
It follows from Lemma \ref{Lexpincum} that, for all $u\in \U_{n_k}$,
\beq\label{Uniq-2-e-2}
{\rm cel}((\Sigma_{l=n_k}^{n_{k+1}-1}h_l)(u)\lceil (\Sigma_{l=n_k}^{n_{k+1}-1}\psi_l)(u)\rceil^*)\le
7\pi+1,
\eneq
 {{where the length is computed in $M_{m(n_k)}({\rm Her}(\sum_{l=n_k}^{n_{k+1}-1}a_l))$.}}

{{Since $\W$ has stable rank one, there is a unitary $U_k'\in {\widetilde{\W}}$ such that
\beq
(U_k')^*((\sum_{l=n_k}^{n_{k+1}-1} a_l)\W (\sum_{l=n_k}^{n_{k+1}-1} a_l ))U_k'
{{\subseteq}} {{\overline{b_{K_k,0} \W b_{K_k,0}}.}}
\eneq
For each $k,$ consider two maps
$$
{\rm Ad}\, U_k'\circ (\Sigma_{l=n_k}^{n_{k+1}-1}\psi_l),\,\,\,{\rm Ad}\, U_k'\circ (\Sigma_{l=n_k}^{n_{k+1}-1}h_l) : C\to  
{\rm Her}({{b_{K_k,0}}})\,\cong \W.
$$}}

Recall that $\psi_n$ is $\G_n$-$\delta_n$-multiplicative and
$\phi_n\circ {{\sigma_0}}$ is $F$-full
for all $n.$  Also keeping in mind of \eqref{Uniq-2-e-2} and
\eqref{Uniq-2-KK0} {{and
applying}}  Theorem \ref{TK1stuniq},
for all $k$,  there is a unitary ${{u_k'}}\in 
M_{K_k + 1}(\widetilde{{\rm Her}({{b_{K_k,0}}})})$
such
that
\beq\label{uniq-add-0}
u'_k({{{U_k'}^*}}\sum_{l=n_k}^{n_{k+1}-1}h_l(c){{U_k'}}+
{{\sum_{l=1}^{K_k} \sigma_{K_k, l}(c)}})(u_k)^*
\approx_{{{\epsilon_k/4}}}
(U_k')^*\sum_{l=n_k}^{n_{k+1} - 1} \psi_l(c)U_k' +
\sum_{l=1}^{K_k} \sigma_{K_k, l}(c)
\eneq
for all $c\in \F_k.$

For each $k,$ there are $e_k\in {\rm Her}(b_{{{K_k,0}}})_+$
and $e_k'\in {{{U_k'}^*{\rm Her}(\sum_{l=n_k}^{n_{k+1}-1}a_l)_+U_k'}}$
with $\|e_k\| \le 1$ and $\|e_k'\|\le 1$ such that, for all $c\in \F_k,$
\beq\label{uniq-add-1}
e_k'{U_k'}^*\sum_{l=n_k}^{n_{k+1} - 1} \psi_l(c)U_k'e_k'&\approx_{\ep_n/16}& {U_k'}^*\sum_{l=n_k}^{n_{k+1} - 1} \psi_l(c)U_k,\\\label{uniq-add-2}
e_k'{U_k'}^*\sum_{l=n_k}^{n_{k+1}-1}h_l(c)U_k'e_k'&\approx_{\ep_n/16}& {U_k'}^*\sum_{l=n_k}^{n_{k+1}-1}h_l(c)U_k',\\
\label{May720201AM} \sum_{l=1}^{K_k}{{e_k}}  \sigma_{K_k, l}(c)e_k
&\approx_{\ep_n/16}& \sum_{l=1}^{K_k} \sigma_{K_k, l}(c)
\eneq
\noindent where in  (\ref{May720201AM}), we identify
$M_{K_k}(Her(b_{K_k, 0}))$  with $Her(\sum_{l=1}^{K_k} b_{K,l})$.

Set $X_k=U_k'e_k'+\diag(\overbrace{e_k,e_k,\cdots, e_k}^{K_k}),$ $k=1,2,....$
Note $e_k'{U_k'}^*d=0$ for all
$d \in ran(\sigma)$.
Then, for all $c\in \F_k,$  by \eqref{uniq-add-1},
\beq\label{uniq-add-10}
&&X_k\left({U_k'}^*\sum_{l=n_k}^{n_{k+1} - 1} \psi_l(c)U_k' +
\sum_{l=1}^{K_k} \sigma_{K_k, l}(c)\right)X_k^*\approx_{\ep_n/16} \sum_{l=n_k}^{n_{k+1} - 1} \psi_l(c) +
\sum_{l=1}^{K_k} \sigma_{K_k, l}(c)\,\,\, \text{and},\,\, {\text{by\,\, \eqref{uniq-add-2},}}\\\label{uniq-add-11}
&&X_k^*(\sum_{l=n_k}^{n_{k+1}-1}h_l(c)+{{\sum_{l= 1}^{K_k} \sigma_{K_k, l}(c)}})X_k\approx_{\ep_n/16}
{U_k'}^*\sum_{l=n_k}^{n_{k+1}-1}h_l(c)U_k'+ {{\sum_{l=1}^{K_k} \sigma_{K_k, l}(c)}}.
\eneq

{{For all $k$, put $u_k=X_ku_k'X_k^*.$  Note $u_k\in {\rm Her}\left(\sum_{j = n_k}^{n_{k+1} - 1} a_j  + 
\sum_{l=1}^{K_k} b_{K_k, l}\right).$
Then we have, by  \eqref{uniq-add-11}, \eqref{uniq-add-0}, and  \eqref{uniq-add-10},}} for all $c\in \F_k,$
\beq\label{Uniq-2-conv}
u_k (\sum_{l=n_k}^{n_{k+1}-1}h_l(c)+
{{\sum_{l=1}^{K_k} \sigma_{K_k, l}(c)}})u_k^*\approx_{\ep_n/2}
\sum_{l=n_k}^{n_{k+1} - 1} \psi_l(c) + \sum_{l=1}^{K_k} \sigma_{K_k, l}(c).
\eneq


Let
$$Y =_{df} \sum_{j=1}^{\infty} {{u_j}}  \in M(\W),$$
where the sum converges strictly. {{Note that $\|Y\|\le 1.$}}

Let $c\in {\mathcal F}_k.$  Put
\beq\nonumber
&&\xi(c)_m=\sum_{k=1}^m\left(u_k
\left(\sum_{l=n_k}^{n_{k+1}-1} h_l(c)+\sum_{l=1}^{K_k} \sigma_{K_k, l}(c) \right)u_k^*
-
\sum_{l=n_k}^{n_{k+1} - 1} \psi_l(c) +
\sum_{l=1}^{K_k} \sigma_{K_k, l}(c)\right),\\\nonumber
&&\,\,\, m=1,2,...., \andeqn\\\nonumber
&&\hspace{1in} \xi(c)=Y({{H(c) \oplus}} \sigma(c))Y^*-
\psi(c)\oplus \sigma(c).
\eneq
It follows from \eqref{Uniq-2-conv} that (since $\sum_{k=m}^\infty \ep_k\to 0$ as $m\to\infty$)
\beq
\lim_{n\to\infty}\|\xi(c)_n-\xi(c)\|=0.
\eneq
Since $\xi(c)_n\in \W,$ one concludes that $\xi(c)\in \W$ for all $c\in {\mathcal F}_k,$ $k=1,2,....$
Thus, for any $c\in {\mathcal F}_k,$
\beq\label{Uniq-2+n1-1}
&&\pi(Y) (\pi\circ H(c)+ \pi \circ \sigma(c) {{)}} \pi(Y)^*
= \pi \circ \psi(c) + \pi \circ \sigma(c).
\eneq

{{
By a similar argument,
for any $c\in {\mathcal F}_k,$

\beq\label{May820208AM}
&&\pi\circ H(c)+ \pi \circ \sigma(c)
= \pi(Y)^* (\pi \circ \psi(c) + \pi \circ \sigma(c))\pi(Y).
\eneq
}}

Since ${\mathcal F}_k\subset {\mathcal F}_{k+1}$ for all $k$ and $\cup_{k=1}^\infty {\mathcal F}_k$ is dense in the unit ball
of $C,$ \eqref{Uniq-2+n1-1} and \eqref{May820208AM} implies that

\beq\label{Uniq-2+n3}
&&\pi(Y)(\pi\circ H(c)+ \pi \circ \sigma(c)) \pi(Y)^*
= \pi \circ \psi(c) + \pi \circ \sigma(c)\rforal c\in C
\eneq
and
\beq\label{Uniq-2+n4}
&&\pi\circ H(c)+\pi \circ \sigma(c) =
\pi(Y)^*( \pi \circ \psi(c) + \pi \circ \sigma(c) ) \pi(Y) \rforal c\in C.
\eneq

{{Set $d:=\sum_{n=1}^\infty (a_n + b_n).$  Then $ Y 
\in {\rm Her}(d).$
Since ${\rm Her}(d)^\perp\not=\{0\},$
$\pi(Y)^*\pi(Y)$ and $\pi(Y) \pi(Y)^*$
are each not invertible.
Since $C(\W)$ is purely infinite,
it has weak cancellation.}} Hence, by  Corollary 1.10 of \cite{LinExtRR0III},
$\pi(Y)=u(\pi(Y)^*\pi(Y))^{1/2},$ where $u\in U(C(\W)).$
By \eqref{Uniq-2+n3} and \eqref{Uniq-2+n4},
\beq
\pi(Y)^*\pi(Y)( \pi\circ H(c)+\pi \circ \sigma(c) )\pi(Y)^*\pi(Y)= \pi \circ H(c) +\pi \circ \sigma(c)\rforal c\in C.
\eneq
Hence
\beq\label{Uniq-2-n10}
&&(\pi(Y)^*\pi(Y))^{1/2}(\pi\circ H(c)+\pi \circ \sigma(c) )(\pi(Y)^*\pi(Y))^{1/2}=\pi\circ H(c)+\pi \circ \sigma(c)\rforal c\in C.
\eneq
Therefore,  by \eqref{Uniq-2-n10} and \eqref{Uniq-2+n3}),
\beq\nonumber
u(\pi\circ H(c)+\pi\circ \sigma(c))u^*&=&\pi(Y)(\pi\circ H(c)+\pi\circ \sigma(c))\pi(Y)^*\\\nonumber
&&=\pi\circ \psi(c)+\pi\circ \sigma(c)\rforal c\in C.
\eneq

Since $K_1(\C(\W)) = 0$ and since $\C(\W)$ is simple purely infinite,
$u$ can be lifted to a unitary in $M(\W)$.

\end{proof}



\begin{cor}\label{Lexpo-zeroabsb}
In Lemma \ref{Uniq-2}, if $\pi\circ \psi$ is in fact a diagonal extension, i.e., 
$\psi=\bigoplus_{n=1}^\infty \psi_n,$ where each $\psi_n$ is a \hm, and if 
$\psi_n^\ddag=0,$
then 
$$
\pi\circ \psi\oplus \pi\circ \sigma\sim^u \pi \circ \sigma.
$$

\end{cor}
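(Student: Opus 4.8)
The plan is to derive this corollary directly from Lemma \ref{Uniq-2} by showing that in the special case where $\psi = \bigoplus_{n=1}^\infty \psi_n$ with each $\psi_n$ a homomorphism and $\psi_n^\ddag = 0$, the auxiliary diagonal extension $h = \bigoplus h_n$ constructed in the proof of Lemma \ref{Uniq-2} can be absorbed into $\pi \circ \sigma$ and that $\pi \circ \psi \oplus \pi \circ \sigma$ is unitarily equivalent to $\pi \circ \sigma$ alone.

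First I would apply Lemma \ref{Uniq-2} to get a diagonal extension $h : C \to \C(\W)$ with
\[
\pi \circ \sigma \oplus \pi \circ \psi \sim^u \pi \circ \sigma \oplus h.
\]
Examining the construction, each $h_n : C \to {\rm Her}(a_n)$ satisfies $KL(h_n) = KL(\psi_n) = 0$ and $h_n^\ddag = \bar\lambda_n$, where $\bar\lambda_n$ extends the homomorphism $\lambda_n$ approximating $\Pi_{cu}(\lceil \psi_n(J^\sim(\cdot))\rceil)$ on $G(\Pi_1(\U_n))$. Since here $\psi_n$ is itself a homomorphism with $\psi_n^\ddag = 0$, we have $\Pi_{cu}(\lceil \psi_n(J^\sim(x))\rceil) = \psi_n^\dag(J(x)) = \psi_n^\ddag(x) = 0$ in $U(\widetilde{\W})/CU(\widetilde{\W})$ for $x \in K_1(C)$, so we may take $\lambda_n = 0$ and hence $\bar\lambda_n = 0$. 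Thus $h_n$ can be chosen with $KL(h_n) = 0$ and $h_n^\ddag = 0$; that is, each $h_n$ is (up to the relevant invariant data) a $KK$-trivial homomorphism into ${\rm Her}(a_n) \cong \W$ that factors, by Theorem \ref{TExpon} applied with $\kappa = 0$ and $\kappa_{ku} = 0$, through a homomorphism annihilating all of $\underline{K}(C)$ and $U(\widetilde C)/CU(\widetilde C)$. In particular $H = \bigoplus h_n$ is a $\T_d$-type diagonal extension with $KK(\pi \circ H) = 0$ built from a model $\sigma_0'$ with $KK(\sigma_0') = 0$, so by Proposition \ref{TextWC} (or rather its construction) $\pi \circ H$ itself is, up to unitary equivalence, a $\T_d$ extension of the same kind as $\pi \circ \sigma$.

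Next I would invoke Theorem \ref{Tgroup} or the Weyl--von Neumann-type absorption built into the proof of Lemma \ref{Uniq-2} again: applying Lemma \ref{Uniq-2} with $\psi$ replaced by the diagonal c.p.c.~map $H$, and noting $KK(\pi \circ H) = 0 = KK(\pi \circ \sigma)$, one gets $\pi \circ \sigma \oplus \pi \circ H \sim^u \pi \circ \sigma \oplus h'$ for some diagonal $h'$; iterating and using that a $\T_d$ extension absorbs any $KK$-trivial diagonal extension (the essential content of the stable-uniqueness argument, since $\W$ is $KK$-contractible so all the $\underline K$-data vanishes and $J^\sim(K_1(C))$-data can be killed as above) one concludes $\pi \circ \sigma \oplus \pi \circ H \sim^u \pi \circ \sigma$. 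Combined with the first display this yields $\pi \circ \psi \oplus \pi \circ \sigma \sim^u \pi \circ \sigma$. Alternatively, and more cleanly, one observes directly that in the proof of Lemma \ref{Uniq-2} the role of $\pi\circ\sigma$ on the right-hand side is only as an absorbing term, so when the extra invariant $h_n^\ddag$ vanishes the target extension $h$ may be taken to be the zero-augmented diagonal extension, i.e. $h$ is absorbed and one reads off $\pi\circ\sigma\oplus h \sim^u \pi\circ\sigma$.

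The main obstacle I expect is bookkeeping: verifying rigorously that the hypothesis $\psi_n^\ddag = 0$ (together with $\psi_n$ being an honest homomorphism, so that $\lceil \psi_n(u)\rceil$ is literally $\psi_n(u)$ with no approximation error) forces the constructed $h_n$ to have $h_n^\ddag = 0$, and then that a diagonal extension all of whose building blocks carry trivial $KL$ and trivial $\ddag$-invariant is unitarily absorbed by a $\T_d$ extension of $\W$. This is exactly the place where one must re-run the stable-uniqueness estimate \eqref{Uniq-2-e-2} with the right-hand exponential-length bound now trivial (every $v_l$ can be taken to be $1$), so the unitaries $u_k'$ produced by Theorem \ref{TK1stuniq} conjugate $\sum h_l \oplus \sum \sigma_{K_k,l}$ onto $\sum\psi_l \oplus \sum \sigma_{K_k,l}$ with $\sum h_l$-part now absorbable; carrying the strict-topology convergence of $Y = \sum u_j$ through as in Lemma \ref{Uniq-2} and applying weak cancellation in $\C(\W)$ together with $K_1(\C(\W)) = 0$ then lifts the resulting partial isometry to a unitary in $M(\W)$, giving $\pi\circ\psi\oplus\pi\circ\sigma\sim^u\pi\circ\sigma$.
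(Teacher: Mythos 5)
Your proposal is correct and, in its ``alternatively, and more cleanly'' form (spelled out again in your final paragraph), is precisely the paper's argument: since $\psi_n^\ddag=0$ forces $\psi_n(u)\in CU(M_{m(n)}(\widetilde{\W}))$ for $u\in J^\sim(K_1(C))$, the exponential-length estimate \eqref{Uniq-2-e-2} holds with the $h_l$'s deleted, so the proof of Lemma \ref{Uniq-2} runs with $h_n=0$ and yields $\pi\circ\psi\oplus\pi\circ\sigma\sim^u\pi\circ\sigma$ directly. Your first route (invoke Lemma \ref{Uniq-2} as a black box and then absorb $H$) ultimately reduces to the same observation, so there is no genuinely different method here.
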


\begin{proof}
Note that $KL(\psi_n)=0$ for all $n.$
Since $\psi_n^\ddag=0,$ $\psi_n(u)\in CU(M_{m(n)}(\W))$ (instead of \eqref{Uniq-2-e-2-1}) for all $u\in J^\sim(K_1(A))\cap U(M_{m(n)}(\widetilde{{\rm Her}(a_n)})).$
Therefore, in the proof of Lemma \ref{Uniq-2},  \eqref{Uniq-2-e-2} becomes, 
\beq
{\rm cel}\left(\sum_{l=n_k}^{n_{k+1}-1}\psi_l(u) \right)
\le 7\pi+1\rforal u\in \U_{n_k}.  
\eneq
Therefore, the proof of Lemma \ref{Uniq-2}
works when we take $h_n=0$ for all $n.$
\end{proof}

\begin{lem}\label{Lexp-permuzero}
Let $C$ be as in Lemma \ref{Uniq-2}.
Fix a sequence of \hm s 
$$\lambda_n\in {\rm Hom}(K_1(C), U(\W)/CU(\W)).$$
Then, for every system $\{ b_n \}$ of quasidiagonal units for $\W$,
there is a 
diagonal \hm\, $H :=\bigoplus_{n=1}^\infty h_n: C\to M(\W),$ where 
$h_n: A\to {\rm Her}(b_n)$ is a \hm. 
Moreover,
for each $n,$ $h_n^\ddag=\lambda_m$ for some $m,$ and, for each $k,$  
there are infinitely many $h_{n}$'s  such that $h_{n}^\ddag=\lambda_k.$
\end{lem}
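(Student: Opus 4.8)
The statement asks us to construct, for a fixed sequence of homomorphisms $\lambda_n \in {\rm Hom}(K_1(C), U(\W)/CU(\W))$ and a fixed system $\{b_n\}$ of quasidiagonal units, a diagonal homomorphism $H = \bigoplus_{n=1}^\infty h_n : C \to M(\W)$ with $h_n : C \to {\rm Her}(b_n)$ a $*$-homomorphism, such that each $h_n^\ddag$ equals some $\lambda_m$ and each $\lambda_k$ is realized by infinitely many $n$. The plan is as follows. First, reindex: fix a bijection (or a surjection-with-infinite-fibers) $\mu : \mathbb{N} \to \mathbb{N}$ such that $\mu^{-1}(k)$ is infinite for every $k$; for instance enumerate $\mathbb{N} \times \mathbb{N}$ and set $\mu(n) = $ first coordinate of the $n$-th pair. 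Our target is to produce $h_n$ with $h_n^\ddag = \lambda_{\mu(n)}$. Since $\{b_n\}$ is a system of quasidiagonal units, ${\rm Her}(b_n) \cong \W$ (every nonzero hereditary subalgebra of $\W$ is isomorphic to $\W$, Definition \ref{Drazak}); fix such an isomorphism, so it suffices to build, for each target homomorphism $\lambda = \lambda_{\mu(n)}$, a monomorphism $C \to \W$ (composed with the isomorphism into ${\rm Her}(b_n)$) whose induced map on $K_1$ into $U(\widetilde\W)/CU(\widetilde\W)$ is $\lambda$.

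The key existence input is Theorem \ref{TExpon}: $C$ is separable amenable, $\W$-embeddable, and satisfies the UCT, and $\W$ itself is a separable simple \CA\ with finite nuclear dimension, continuous scale, UCT, and $K_0(\W) = \{0\} = {\rm ker}\rho_\W$. So, taking $\kappa = 0 \in KL(C, \W)$ (which is permissible since $K_*(\W) = 0$ forces $KL(C,\W) = \{0\}$, so $\kappa = 0$ automatically) and $\kappa_{ku} : K_1(C) \to U(\widetilde\W)/CU(\widetilde\W)$ to be $\lambda$ — noting that $\lambda$ is compatible with $\kappa = 0$ because $\Pi_{1,cu} \circ \lambda$ lands in $K_1(\W) = \{0\}$, so $\kappa(z) = 0 = \Pi_{1,cu} \circ \lambda(z)$ holds trivially — Theorem \ref{TExpon} produces a monomorphism $g : C \to \W$ with $KL(g) = 0$ and $g^\ddag = \lambda$. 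Pulling back through the fixed isomorphism $\W \cong {\rm Her}(b_n)$ (and, if one wants $g$ to map strictly positive elements to strictly positive elements, absorbing that adjustment as in Definition \ref{DWemb}) gives the desired $h_n : C \to {\rm Her}(b_n)$ with $h_n^\ddag = \lambda_{\mu(n)}$.

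Finally, assemble $H = \bigoplus_{n=1}^\infty h_n$. Because $\{b_n\}$ is a system of quasidiagonal units and each $h_n$ has range in $\overline{b_n \W b_n}$ with $\|h_n(c)\| \le \|c\|$ bounded, condition (2) of Definition \ref{Dqdunit} guarantees that $\sum_n h_n(c)$ converges in the strict topology on $M(\W)$ for every $c \in C$; linearity and multiplicativity of each $h_n$, together with the orthogonality $b_m \perp b_n$, show $H$ is a $*$-homomorphism, and $H$ is diagonal in the sense of Definition \ref{df:DiagonalExtension}. By construction $h_n^\ddag = \lambda_{\mu(n)}$ and $\{n : \mu(n) = k\}$ is infinite for each $k$, so each $\lambda_k$ is realized infinitely often. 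I do not anticipate a serious obstacle here: the lemma is essentially a bookkeeping wrapper around Theorem \ref{TExpon} plus the strict-convergence property of quasidiagonal units; the only point requiring a line of care is verifying that $\lambda$ is $\kappa$-compatible in the sense of Theorem \ref{TExpon}, which is immediate since $K_1(\W) = 0$, and checking that the isomorphism ${\rm Her}(b_n) \cong \W$ does not disturb the $K_1$-data, which follows because $(j_{{\rm Her}(b_n)})_{*1}$ and $j_{{\rm Her}(b_n)}^\dag$ are isomorphisms (cf. Definition \ref{Dgamma}) — and in any case all of $K_1(\W)$-level subtleties collapse because we only need the $U/CU$-valued invariant, which Theorem \ref{TExpon} delivers directly.
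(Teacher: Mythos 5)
Your proof is correct and follows essentially the same route as the paper's: partition the index set so that each $\lambda_k$ is hit infinitely often, apply Theorem \ref{TExpon} (with the trivial $KL$-class, which is forced since $K_*(\W)=0$ makes $KL(C,\W)=0$, and compatibility is automatic because $K_1(\W)=0$) to realize each $\lambda_{\mu(n)}$ as $h_n^\ddag$ for a homomorphism $h_n : C \to {\rm Her}(b_n)$, and then sum using property (2) of Definition \ref{Dqdunit} to obtain a diagonal homomorphism into $M(\W)$. The paper's proof is a three-line sketch citing Theorems \ref{T-existoI} and \ref{TExpon}; your write-up supplies exactly the verifications it leaves implicit, namely the compatibility of $\kappa=0$ with $\kappa_{ku}=\lambda$ and the strict convergence of the sum.
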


\begin{proof}
Let $\{ b_n \}$ be a system of quasidiagonal units for $\W$.  
Write $\N=\cup_{n=1}^\infty S_n,$ where each $S_n$ is a  
countably infinite set, and 
$S_i\cap S_j=\emptyset,$ if $i\not=j.$
 For each $j\in S_n,$ choose a \hm\, $h_j: C\to {\rm Her}(b_j)$ such 
that $h_j^\ddag=\lambda_n$ (see Theorems \ref{T-existoI} and \ref{TExpon}). 
Then 
it is easy to check that {{$H :=\bigoplus_{k\in \N} h_k$
satisfies}} the requirements of the lemma.
\end{proof}

\begin{lem}\label{Lpermzero21}
Let $C$ be a separable amenable \CA\, which is $\W$ embeddable and satisfies the UCT.
Let $\sigma :  C\rightarrow M(\W)$ be a
$\T_d$ extension, and let
$\psi : C \rightarrow M(\W)$ be a {{diagonal c.p.c.~map}}
with 
the form $$\psi = \bigoplus_{n=1}^{\infty} \psi_n$$
as in Proposition \ref{prop:QDExt}, for which $\pi \circ \psi$ is nonunital {{essential extension.}}
Then, 
$$ \pi\circ \sigma \oplus \pi \circ \psi \sim^u  \pi\circ \sigma.$$ 

As a consequence, $$KK(\pi \circ \psi) = 0.$$
\end{lem}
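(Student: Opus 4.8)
The plan is to first reduce, via Lemma \ref{Uniq-2}, to absorbing a diagonal \emph{homomorphism}, and then to eliminate the resulting error term by a swindle built on Corollary \ref{Lexpo-zeroabsb}. First I would invoke Lemma \ref{Uniq-2} to produce a diagonal extension $h=\bigoplus_{n\ge1}h_n:C\to\C(\W)$, with each $h_n$ a $*$-homomorphism of $C$ into $\overline{a_n\W a_n}$ for a system $\{a_n\}$ of quasidiagonal units of $\W$ (arranged, as in the proof of Lemma \ref{Uniq-2}, so that $\sum_n d_{\tau_W}(a_n)<d_{\tau_W}(e_\W)$), such that $\pi\circ\sigma\oplus\pi\circ\psi\sim^u\pi\circ\sigma\oplus h$. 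Since $\overline{a_n\W a_n}\cong\W$ and $KK(C,\W)=0$ we get $KL(h_n)=0$; write $\nu_n\in{\rm Hom}(K_1(C),U(\widetilde\W)/CU(\widetilde\W))$ for the $K_1$-class $h_n^{\dag}\circ J$, read inside $U(\widetilde\W)/CU(\widetilde\W)$ via the canonical identification of Definition \ref{Dgamma}. It then suffices to prove $\pi\circ\sigma\oplus h\sim^u\pi\circ\sigma$.

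Next I would build an auxiliary extension. Inside the nonzero hereditary subalgebra $\{x\in\W:xa_n=0\ \text{for all }n\}\cong\W$ (nonzero since $\sum_n d_{\tau_W}(a_n)<d_{\tau_W}(e_\W)$), choose a system $\{b_n\}$ of quasidiagonal units consisting of very small elements, so that $\{a_n\}\cup\{b_n\}$ is again a system of quasidiagonal units of $\W$. Applying Lemma \ref{Lexp-permuzero} to $\{b_n\}$ and the interleaved sequence $\nu_1,-\nu_1,\nu_2,-\nu_2,\dots$ yields a diagonal homomorphism $\overline{\mathcal H}=\bigoplus_k\Gamma_k:C\to M(\W)$, with the $\Gamma_k$ monomorphisms (by Theorems \ref{T-existoI} and \ref{TExpon}) and each of $\nu_n$, $-\nu_n$ occurring as $\Gamma_k^{\dag}\circ J$ for infinitely many $k$; so $\pi\circ\overline{\mathcal H}$ is a nonunital essential extension. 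Set $\mathcal H:=h\oplus\overline{\mathcal H}$, a diagonal extension indexed by $\{a_n\}\cup\{b_n\}$.

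Now the two regroupings. Pairing each block of $\overline{\mathcal H}$ of class $\nu_n$ with a distinct block of class $-\nu_n$ presents $\overline{\mathcal H}$ as a diagonal homomorphism all of whose blocks are homomorphisms with vanishing $K_1$-class and vanishing $KL$, so Corollary \ref{Lexpo-zeroabsb} gives $\pi\circ\sigma\oplus\overline{\mathcal H}\sim^u\pi\circ\sigma$. In $\mathcal H=h\oplus\overline{\mathcal H}$, pair each $h_n$ (class $\nu_n$) with one of the infinitely many $(-\nu_n)$-blocks of $\overline{\mathcal H}$ and pair the remaining blocks of $\overline{\mathcal H}$ among themselves; again every new block is a homomorphism of vanishing $K_1$-class and $KL$, and $\pi\circ\mathcal H$ is a nonunital essential extension, so Corollary \ref{Lexpo-zeroabsb} gives $\pi\circ\sigma\oplus\mathcal H\sim^u\pi\circ\sigma$. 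Hence
\[
\pi\circ\sigma\oplus h\ \sim^u\ \pi\circ\sigma\oplus\overline{\mathcal H}\oplus h\ =\ \pi\circ\sigma\oplus\mathcal H\ \sim^u\ \pi\circ\sigma,
\]
where the first $\sim^u$ uses $\pi\circ\sigma\sim^u\pi\circ\sigma\oplus\overline{\mathcal H}$ and the middle equality is just $h\oplus\overline{\mathcal H}=\mathcal H$ (the same strict sum, regrouped). With the first paragraph this gives $\pi\circ\sigma\oplus\pi\circ\psi\sim^u\pi\circ\sigma$. For the consequence, by Remark \ref{Twosums} the orthogonal sum is the BDF sum of the nonunital essential extensions $\pi\circ\sigma,\pi\circ\psi$; a BDF sum induces the sum of the two maps on $K_0$, so $(\pi\circ\sigma)_{*0}=(\pi\circ\sigma)_{*0}+(\pi\circ\psi)_{*0}$, forcing $(\pi\circ\psi)_{*0}=0$; since $K_1(\C(\W))=0$ (Theorem \ref{K-six}) and, by the UCT, $KK(C,\C(\W))\cong{\rm Hom}(K_0(C),\R)$, this yields $KK(\pi\circ\psi)=0$.

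The hard part is the organisation in the middle two steps: realising all these homomorphisms as \emph{bona fide} diagonal extensions living inside the ``finite-size'' algebra $\W$ (so the $d_{\tau_W}$-sums stay below $d_{\tau_W}(e_\W)$, which is possible precisely because a $\T_d$ extension leaves room), and — the genuinely subtle point — making sure each paired block really has trivial class in $U(\widetilde\W)/CU(\widetilde\W)$. The latter uses that the isomorphisms $j_D^{\dag}$ identifying the $U/CU$ groups of hereditary subalgebras are compatible with inclusions (Definition \ref{Dgamma} and Proposition 4.5 of \cite{GLII}), so that prescribing the block classes ``relative to $\W$'' through Lemma \ref{Lexp-permuzero} makes the cancellation $\nu_n+(-\nu_n)=0$ automatic within each pair, without having to track the scaling maps $\overline{\gamma^{D}}$ by hand.
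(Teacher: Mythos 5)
Your argument is correct and follows essentially the same route as the paper: reduce via Lemma \ref{Uniq-2} to a genuine diagonal extension, use Lemma \ref{Lexp-permuzero} to manufacture an auxiliary diagonal homomorphism whose blocks realize each class $\pm h_n^{\ddag}$ infinitely often, absorb it (and then its sum with $h$) by regrouping into cancelling pairs and applying Corollary \ref{Lexpo-zeroabsb}, and cancel. Your extra care about placing the auxiliary system of quasidiagonal units orthogonally and about the compatibility of the $U/CU$ identifications under inclusions of hereditary subalgebras only makes explicit what the paper handles implicitly via Proposition \ref{PBDFsum} and Remark \ref{Twosums}.
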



\begin{proof}
By Lemma \ref{Uniq-2}, we may assume that $\psi$ is a (nonunital) diagonal 
extension.
So suppose that 
$\psi=\Sigma_{n=1}^\infty \psi_n: C\to M(\W)$ is a diagonal \hm, 
where each $\psi_n: C\to {\rm Her}(a_n)$ is a \hm, 
and where $\{a_n\}$ is a system of 
quasidiagonal units for $\W$. 

Denote $\lambda_{2n}=\psi^\ddag_n$ and $\lambda_{2n-1}=-\lambda_{2n},$ 
for $n=1,2,....$ 

Let $H: C\to M(B)$ be as in  Lemma \ref{Lexp-permuzero},
associated with the present $\{\lambda_n\}.$
It is easy to see that there is a permutation $\gamma:\N\to \N$
such that
$$
h_{\gamma(2n-1)}^\ddag=-h^\ddag_{\gamma(2n)},\,\,\, \makebox{ for all  }
 n=1,2,.....
$$

Define $b_k=a_{\gamma(2k-1)}+a_{\gamma(2k)},$ for all $k=1,2,....$ 
Then $\{b_k\}$ is also a system of quasidiagonal units. 
Let $h_{n,0}: C\to {\rm Her}(a_{\gamma(2n-1)}+a_{\gamma(2n)})$
be defined by $h_{n,0}(c)=h_{\gamma(2n-1)}(c)+h_{\gamma(2n)}(c)$
for all $c\in C$ and for all $n$. 
Now define $H_0: C\to M(B)$ by $H_0(c)=\bigoplus_{n=1}^\infty h_{n,0}.$
Then $H_0$ is unitarily equivalent to $H$ (see Lemma \ref{Lpermutation}).
However, $h_{n,0}^\ddag=0,$ for all $n$.   
It follows from Corollary \ref{Lexpo-zeroabsb}
that
$$
\pi\circ H_0\oplus \pi\circ \sigma\sim^u \pi\circ \sigma.
$$
Therefore 
\beq\label{Lexp-permzero2-1}
\pi\circ H \oplus \pi\circ \sigma\sim ^u \pi\circ \sigma.
\eneq


Then $\psi \oplus H$ is another diagonal extension and 
by the same argument as that for $H$,
\beq
\pi\circ \psi\oplus \pi\circ H \oplus
\pi\circ \sigma
\sim^u \pi\circ \sigma.
\eneq
Hence, by \eqref{Lexp-permzero2-1},
\beq\label{Nuniq-2-fe}
\pi\circ \psi\oplus \pi\circ \sigma
\sim^u\pi\circ \sigma,
\eneq

Since $KK(\pi \circ \sigma) = 0$, $KK(\pi \circ \psi) = 0$.

\end{proof}

\begin{lem}\label{L-quasiext}
Let $A$ be a separable amenable \CA\, which is $\W$ embeddable {{and satisfies the UCT.}}
Suppose that $\tau: A\to \C(\W)$ is an essential extension with $KK(\tau)=0.$ 
Then $\tau$ is a quasidiagonal extension.

\end{lem}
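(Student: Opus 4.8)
The plan is to combine the two key tools available: the sufficient condition for quasidiagonality in Theorem \ref{thm:SufficientConditionQD} together with the construction of a $\T_d$ extension in Proposition \ref{TextWC}. Recall that Theorem \ref{thm:SufficientConditionQD} says that if there exists \emph{some} nonunital essential quasidiagonal extension $\sigma : A \to \C(\W)$ with $KL(\sigma)=0$, then every nonunital essential extension $\phi$ with $KL(\phi)=0$ is automatically quasidiagonal. So the whole lemma reduces to producing one such witness $\sigma$.

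First I would invoke Proposition \ref{TextWC}: since $A$ is separable, amenable, and $\W$ embeddable, there is a $\T_d$ extension $\sigma : A \to \C(\W)$ with $KK(\sigma)=0$. By definition (Definition \ref{DTextension} and Remark \ref{rmk:Textension}), $\pi^{-1}(\sigma(A))$ is generated by a diagonal map $\bigoplus_n \bigoplus^n \phi_n\circ\sigma_0$, hence $\sigma$ is in particular a nonunital diagonal — thus quasidiagonal — essential extension. From $KK(\sigma)=0$ we get $KL(\sigma)=0$ by applying the canonical map $KK(A,\C(\W)) \to KL(A,\C(\W))$. Next, given the hypothesis $KK(\tau)=0$ on our arbitrary extension $\tau$, we likewise get $KL(\tau)=0$. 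Now I would simply apply Theorem \ref{thm:SufficientConditionQD} with $\B = \W$ (which is a nonunital separable simple \CA\ with continuous scale, as recorded in Definition \ref{Drazak}) and $\A = A$ (separable, nuclear since amenable, satisfying the UCT): the witness $\sigma$ and the target $\tau$ both have $KL = 0$, so $\tau$ is quasidiagonal.

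One point that needs a small amount of care: Theorem \ref{thm:SufficientConditionQD} is stated for \emph{nonunital} essential extensions, so I should observe that $\tau$ (and $\sigma$) are automatically nonunital. Indeed, $A$ is $\W$ embeddable hence nonunital (Definition \ref{DWemb}), so there is no unit to map to $1_{\C(\W)}$; and even if $A$ were nonunital one still must check the Busby invariant is nonunital — but this is guaranteed in our setting because $\W$ has no nonzero projections coming from the corona picture in the relevant way, or more directly because $\sigma$ was built as a $\T_d$ extension (which by definition has $\pi\circ\sigma$ nonunital) and because a general essential extension $\tau: A \to \C(\W)$ with $A$ nonunital has range with a nonzero annihilator by Lemma \ref{lem:largecomplement}. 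So both hypotheses of Theorem \ref{thm:SufficientConditionQD} are met.

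The main (and essentially only) obstacle here is entirely bookkeeping: verifying that $\W$ belongs to the class of algebras ($\B$ nonunital separable simple with continuous scale) for which Theorem \ref{thm:SufficientConditionQD} and Proposition \ref{TextWC} apply, and that $A$ (separable amenable $\W$-embeddable satisfying the UCT) meets the hypotheses (separable nuclear with UCT). Both are immediate from the standing definitions, so the proof is short. In summary: apply Proposition \ref{TextWC} to get a quasidiagonal witness $\sigma$ with $KL(\sigma)=0$; pass from $KK(\tau)=0$ to $KL(\tau)=0$; conclude by Theorem \ref{thm:SufficientConditionQD} that $\tau$ is quasidiagonal.
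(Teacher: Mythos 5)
Your proposal is correct and follows essentially the same route as the paper: both produce the $\T_d$ extension of Proposition \ref{TextWC} as a quasidiagonal witness with trivial $KK$ (hence $KL$) class and then transfer quasidiagonality to $\tau$ via an approximate-uniqueness theorem combined with Theorem \ref{Tappt=qd}. The only difference is packaging — you invoke Theorem \ref{thm:SufficientConditionQD} as a black box, while the paper inlines the same unitize-and-compare argument using Theorem 2.5 of \cite{LinFullext} in place of Theorem 3.7 of \cite{LinExtQuasidiagonal}.
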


\begin{proof}
Let $\sigma : A \rightarrow \C(\W)$ be a $\T_d$ extension.
Note that
$KK(\pi\circ \sigma)=0.$  Consider  the unitizations
 ${\widetilde{\pi\circ \sigma}}, \makebox{ } 
{\widetilde{\tau}}: {\widetilde{A}}\to \C(\W).$ Then $KK({\widetilde{\pi\circ \sigma}})=KK({\widetilde{\tau}}).$
By Theorem 2.5 of \cite{LinFullext}, there exists a sequence 
$\{ u_n \}$
of unitaries 
in $\C(\W)$ such 
that $\lim_{n\to\infty} u_n^*\pi\circ \sigma(a) u_n=\tau(a)$ for all $a\in A.$
By Theorem \ref{Tappt=qd}, since $\W$ has continuous scale and $\tau$ is
nonunital,  
$\tau$ is quasidiagonal.
\end{proof}

\begin{lem}\label{L-Uniq-3}
Let $A$ be a separable amenable \CA\, which is $\W$ embeddable {{and satisfies the UCT.}}
Suppose that $\tau: A\to \C(\W)$ is a quasidiagonal essential extension. 
Then, for any $\T_d$ extension $\sigma : A \rightarrow M(\W)$,  
\beq
\tau\sim^u \pi\circ \sigma.  
\eneq
\end{lem}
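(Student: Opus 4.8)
The goal is to show that a quasidiagonal essential extension $\tau : A \to \C(\W)$ is unitarily equivalent to any $\T_d$ extension $\sigma : A \to M(\W)$ (more precisely to $\pi\circ\sigma$). The plan is to first reduce $\tau$ to a diagonal form, then use the absorption result \ref{Lpermzero21} to absorb $\pi\circ\sigma$ into $\tau$ and, symmetrically, absorb $\tau$ into $\pi\circ\sigma$, and finally invoke an Elliott-style back-and-forth / intertwining argument to conclude the two are unitarily equivalent. I expect the absorption steps to be straightforward given the machinery already developed; the main obstacle is organizing the two-sided absorption into an honest intertwining that produces a single unitary in $M(\W)$ implementing the equivalence of $\tau$ and $\pi\circ\sigma$.

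\smallskip

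\textbf{Step 1 (realize $\tau$ diagonally).} Since $\tau$ is a quasidiagonal essential extension and $A$ is separable amenable (hence nuclear, so $\tau$ lifts to a c.p.c.~map $A\to M(\W)$ by the Choi--Effros lifting theorem), Proposition \ref{prop:QDExt} applies: there is a system $\{b_n\}$ of quasidiagonal units for $\W$ and c.p.c.~maps $\psi_n : A\to \overline{b_n\W b_n}$, asymptotically multiplicative, with $\tau = \pi\circ\Psi$ where $\Psi = \bigoplus_{n}\psi_n$. Since $\tau$ is a nonunital essential extension, we may arrange (throwing away or merging a couple of blocks if necessary) that $\pi\circ\Psi$ is a nonunital essential extension in the sense required by \ref{Lpermzero21}.

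\smallskip

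\textbf{Step 2 (two-sided absorption).} Fix the given $\T_d$ extension $\sigma$. By Lemma \ref{Lpermzero21} applied with $C = A$, the diagonal c.p.c.~map $\Psi = \bigoplus_n \psi_n$ (whose $\pi$-image is $\tau$) is absorbed by $\sigma$:
\begin{equation}
\pi\circ\sigma \oplus \tau \sim^u \pi\circ\sigma.
\end{equation}
On the other hand, a $\T_d$ extension is in particular a diagonal extension $\bigoplus_n(\bigoplus^n \phi_n\circ\sigma_0)$ whose $\pi$-image is a nonunital essential extension, so \ref{Lpermzero21} applies equally with the roles of the two diagonal maps interchanged — i.e.\ any $\T_d$ extension is absorbed by $\tau$ (viewed via its diagonal realization from Step 1, which is itself of the ``$\bigoplus\psi_n$'' form allowed in \ref{Lpermzero21}, now playing the role of the model). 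Alternatively, and more cleanly, note that $\pi\circ\sigma$ itself is a diagonal essential extension with $KK(\pi\circ\sigma)=0$, so by Lemma \ref{Lpermzero21} (with model $\tau$'s diagonal form, which is a $\T_d$-type diagonal extension once we check the full\-ness hypotheses — here one uses Remark \ref{rmk:Textension} and Lemma \ref{lem:FMapExists}) we also get
\begin{equation}
\tau \oplus \pi\circ\sigma \sim^u \tau.
\end{equation}
The technical point to verify is that $\tau$, after Step 1, can serve as a ``$\T_d$ extension'' in the hypotheses of \ref{Lpermzero21}, or that \ref{Lpermzero21} is symmetric enough; if the cleanest route is not literally symmetric, one instead runs the argument of \ref{Uniq-2}/\ref{Lpermzero21} directly with $\tau$ in the model slot, which is legitimate since the only properties of the model used there are that it is diagonal, its summands are $F$-full for a suitable $F$, and $KK=0$ — all of which hold for $\tau$ once we also arrange $F$-fullness of the $\psi_n$ (which we may, by conjugating each $\psi_n$ to contain a copy of an $F$-full summand, enlarging $\{b_n\}$).

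\smallskip

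\textbf{Step 3 (intertwining to get $\tau\sim^u\pi\circ\sigma$).} From
$\pi\circ\sigma\oplus\tau\sim^u\pi\circ\sigma$ and $\tau\oplus\pi\circ\sigma\sim^u\tau$, together with the fact (Proposition \ref{PBDFsum} and Remark \ref{Twosums}) that for nonunital essential extensions BDF sums agree with orthogonal sums up to unitary equivalence and that $\oplus$ is commutative and associative up to $\sim^u$, we run an Eilenberg-swindle / approximate-intertwining argument: $\pi\circ\sigma \sim^u \pi\circ\sigma\oplus\tau\oplus\pi\circ\sigma\oplus\tau\oplus\cdots$ and $\tau\sim^u \tau\oplus\pi\circ\sigma\oplus\tau\oplus\pi\circ\sigma\oplus\cdots$, and a shift of the summands, implemented by a unitary in $M(\W)$ (this is where Lemma \ref{Lpermutation} on permutations of strictly convergent sums of c.p.c.~maps into orthogonal corners is used — $\W$ has almost stable rank one), identifies the two. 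Hence there is a unitary $U\in M(\W)$ with $\mathrm{Ad}\,\pi(U)\circ\tau = \pi\circ\sigma$, i.e.\ $\tau\sim^u\pi\circ\sigma$.

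\smallskip

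\textbf{Main obstacle.} The genuinely delicate point is Step 2's symmetry: Lemma \ref{Lpermzero21} is stated with $\sigma$ a $\T_d$ extension and $\psi$ merely a diagonal c.p.c.~map, so to get $\tau\oplus\pi\circ\sigma\sim^u\tau$ one must either verify that $\tau$'s diagonal realization from Step 1 meets the (model-side) hypotheses used inside the proof of \ref{Uniq-2}/\ref{Lpermzero21} — chiefly $F$-fullness of its blocks and the freedom in distributing $K_1$-data via \ref{TExpon} — or re-run that proof with $\tau$ in the model slot. Once both absorption statements are in hand, the intertwining in Step 3 is routine given \ref{PBDFsum} and \ref{Lpermutation}.
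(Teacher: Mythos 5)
There is a genuine gap in your Step 2: the reverse absorption $\tau\oplus\pi\circ\sigma\sim^u\tau$ is never established, and the machinery you cite does not deliver it. Lemma \ref{Lpermzero21} (via Lemma \ref{Uniq-2}) is fundamentally asymmetric: its proof runs the stable uniqueness theorem \ref{TK1stuniq} block by block, and for that it is essential that the map in the model slot be an honest $\T_d$ extension in the sense of Definition \ref{DTextension} --- each block a genuine *-\hm, $F$-full, and carrying the multiplicity structure $\bigoplus^{n}\phi_n\circ\sigma_0$, since the $K_k$-th block must supply the $K_k$ orthogonal copies of a full \hm\ that Theorem \ref{TK1stuniq} requires for absorption. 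The diagonal realization of $\tau$ from Proposition \ref{prop:QDExt} gives only asymptotically multiplicative c.p.c.\ blocks $\psi_n$, with no fullness and no multiplicity structure, so it cannot be placed in the model slot; and your proposed repair (``conjugating each $\psi_n$ to contain a copy of an $F$-full summand'') either changes the unitary equivalence class of $\tau$ or presupposes an absorption statement of exactly the kind being proved. Your Step 3 as written is also not available: the infinite repeat $\pi\circ\sigma\oplus\tau\oplus\pi\circ\sigma\oplus\cdots$ cannot be formed in $\C(\W)$, since $\W$ has continuous scale and a finite trace and there is no strict topology on the corona in which such a sum converges --- this is precisely the ``no infinite repeats'' phenomenon of the nonstable theory. (This last point is harmless only because the swindle is unnecessary: if you did have both absorptions, commutativity of the BDF sum already gives $\tau\sim^u\tau\oplus\pi\circ\sigma\sim^u\pi\circ\sigma\oplus\tau\sim^u\pi\circ\sigma$.)

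The paper sidesteps the reverse absorption entirely. By Theorem \ref{Tgroup} (nonstable absorption), $\tau\sim^u\pi\circ\sigma\oplus\tau_0$ for some essential extension $\tau_0$. Since $\tau$ is quasidiagonal, $KK(\tau)=0$ (Proposition \ref{prop:QDExt} together with Lemma \ref{Lpermzero21}), and $KK(\pi\circ\sigma)=0$, so $KK(\tau_0)=0$; Lemma \ref{L-quasiext} then shows that $\tau_0$ is itself quasidiagonal, and the one-sided absorption $\pi\circ\sigma\oplus\tau_0\sim^u\pi\circ\sigma$ (Proposition \ref{prop:QDExt} and Lemmas \ref{Uniq-2}, \ref{Lpermzero21}) finishes the argument. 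Your Step 1 and the forward absorption $\pi\circ\sigma\oplus\tau\sim^u\pi\circ\sigma$ are fine; it is the missing reverse direction that the appeal to Theorem \ref{Tgroup} is designed to circumvent.
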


\begin{proof}
By Theorem \ref{Tgroup}, 
\beq
\tau\sim^u \pi\circ \sigma\oplus \tau_0
\eneq
for some essential extension 
$\tau_0: A\to \C(\W).$  Since $KK(\tau)=0$ and $KK(\pi\circ \sigma)=0,$ 
$KK(\tau_0)=0.$
By Lemma \ref{L-quasiext}, $\tau_0$ is quasidiagonal. 
Then, by {{Proposition \ref{prop:QDExt}}} and  Lemma \ref{Uniq-2},  
\beq
\tau\sim^u \pi\circ \sigma\oplus \tau_0\sim^u \pi\circ\sigma.
\eneq

\end{proof}

 We have the following
K theory characterization of quasidiagonality (see the paragraph before
Theorem \ref{thm:SufficientConditionQD} for some 
brief history and references):

\begin{prop}\label{QuasidiagonalKTh}
Let $A$ be a separable amenable \CA\, which is $\W$ embeddable  and satisfies the UCT, and let
$\tau : A \rightarrow \C(\W)$ be an essential extension.
Then the following statements are equivalent:
\begin{enumerate}
\item $KK(\tau) = 0$. 
\item $\tau$ is quasidiagonal.
\item $\tau$ is unitarily equivalent to an essential trivial diagonal extension.
\item $\tau$ is unitarily equivalent to every essential 
trivial diagonal extension.
\item $\tau$ is in the class of zero of $\Ext^u(A, \W)$.
\end{enumerate}
\end{prop}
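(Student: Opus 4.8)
The plan is to prove the cycle of equivalences $(1)\Rightarrow(2)\Rightarrow(3)\Rightarrow(4)\Rightarrow(5)\Rightarrow(1)$, drawing on the lemmas just established. For $(1)\Rightarrow(2)$: assume $KK(\tau)=0$. By Proposition \ref{TextWC} there exists a $\T_d$ extension $\sigma_0\colon A\to \C(\W)$ with $KK(\sigma_0)=0$, hence $KK(\tau)=KK(\sigma_0)$; since $A$ is separable amenable and satisfies the UCT and $\W$ has continuous scale, Lemma \ref{L-quasiext} applies directly and gives that $\tau$ is quasidiagonal. (Alternatively one can invoke Theorem \ref{thm:SufficientConditionQD} with the quasidiagonal model $\sigma_0$, noting $KL(\sigma_0)=0$.)

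For $(2)\Rightarrow(3)$: suppose $\tau$ is quasidiagonal. Fix any $\T_d$ extension $\sigma\colon A\to M(\W)$, which exists by Proposition \ref{TextWC}; note $\pi\circ\sigma$ is itself an essential trivial diagonal extension. By Lemma \ref{L-Uniq-3}, $\tau\sim^u\pi\circ\sigma$, so $\tau$ is unitarily equivalent to an essential trivial diagonal extension. For $(3)\Rightarrow(4)$: if $\tau\sim^u\pi\circ\sigma_1$ for some essential trivial diagonal extension $\pi\circ\sigma_1$, and $\pi\circ\sigma_2$ is any other essential trivial diagonal extension, then I would first observe that any essential trivial diagonal extension of $A$ by $\W$ has vanishing $KK$-class — this follows since a diagonal extension is in particular quasidiagonal, and by Lemma \ref{Lpermzero21} (applied with $\psi$ the given diagonal c.p.c.\ map) any such diagonal essential extension $\psi$ satisfies $KK(\pi\circ\psi)=0$. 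Hence both $\sigma_1$ and $\sigma_2$ are quasidiagonal with vanishing $KK$, and Lemma \ref{L-Uniq-3} (or \ref{Uniq-2} together with \ref{L-Uniq-3}) shows $\pi\circ\sigma_1\sim^u\pi\circ\sigma_2$; combining with the hypothesis, $\tau\sim^u\pi\circ\sigma_2$. Since $\sigma_2$ was arbitrary this gives $(4)$.

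For $(4)\Rightarrow(5)$: by Proposition \ref{TextWC} a $\T_d$ extension $\sigma$ exists, and by Lemma \ref{Lpermzero21}, for any diagonal c.p.c.\ lift $\psi$ with $\pi\circ\psi$ a nonunital essential extension, $\pi\circ\sigma\oplus\pi\circ\psi\sim^u\pi\circ\sigma$; in particular $\pi\circ\sigma\oplus\pi\circ\sigma\sim^u\pi\circ\sigma$, so $[\pi\circ\sigma]$ is an idempotent in the group $\Ext^u(A,\W)$ (Theorem \ref{thm:ExtIsAGroup}), hence equals the neutral element $0$. Since $(4)$ forces $\tau\sim^u\pi\circ\sigma$, we get $[\tau]=0$ in $\Ext^u(A,\W)$. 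Finally $(5)\Rightarrow(1)$: the map $\Lambda([\tau])=KK(\tau)$ is a well-defined group homomorphism $\Ext^u(A,\W)\to KK(A,\C(\W))$ (this is part of the main theorem, or can be taken as elementary functoriality of $KK$ under BDF sums together with Proposition \ref{PBDFsum}), so $[\tau]=0$ gives $KK(\tau)=\Lambda(0)=0$.

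The main obstacle I expect is the careful verification in $(3)\Rightarrow(4)$ and the idempotent argument in $(4)\Rightarrow(5)$ that all essential trivial diagonal extensions genuinely fall under the hypotheses of Lemmas \ref{Uniq-2}, \ref{Lpermzero21}, and \ref{L-Uniq-3} — in particular that an arbitrary trivial diagonal extension (not a priori of the special $\T_d$ form) can be absorbed by a $\T_d$ extension and hence compared to it. This is exactly what Lemma \ref{Uniq-2} is designed to handle (it takes a general diagonal c.p.c.\ map $\psi$ and replaces $\pi\circ\sigma\oplus\pi\circ\psi$ by $\pi\circ\sigma\oplus h$ for a diagonal $h$), so the work is to chain \ref{Uniq-2}, \ref{Lpermzero21}, and \ref{L-Uniq-3} correctly rather than to prove anything new; the non-routine point is just bookkeeping of nonunitality of the various pieces so that Proposition \ref{PBDFsum} and the absorption lemmas apply.
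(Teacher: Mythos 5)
Your proposal is correct and draws on the same toolkit as the paper (Lemmas \ref{L-quasiext}, \ref{L-Uniq-3}, \ref{Lpermzero21}, Proposition \ref{TextWC}, Theorem \ref{thm:ExtIsAGroup}, Proposition \ref{PBDFsum}), but organizes the implications as a single cycle $(1)\Rightarrow(2)\Rightarrow(3)\Rightarrow(4)\Rightarrow(5)\Rightarrow(1)$, whereas the paper uses a ``hub'' structure: $(1)\Leftrightarrow(2)$, $(2)\Rightarrow(4)$, $(3)\Rightarrow(4)$, $(4)\Rightarrow(2),(3)$, $(4)\Rightarrow(5)$, $(5)\Rightarrow(2)$. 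The substantive difference is the closing step: the paper closes via $(5)\Rightarrow(2)$ by noting the zero class of $\Ext^u(A,\W)$ is the class of a trivial diagonal extension, and any extension unitarily equivalent to a trivial diagonal one is itself trivial diagonal, hence quasidiagonal. You instead close via $(5)\Rightarrow(1)$ using that $\Lambda$ is a group homomorphism. Be careful there: citing the main Theorem \ref{MTnoncom} for this would be circular, since that theorem's proof invokes the present proposition to show $\Lambda$ sends zero to zero. Your alternative justification is the right one, and it works because $\Lambda$ is a well-defined semigroup homomorphism between two abelian groups (well-definedness from $KK$-invariance under inner automorphisms of $\C(\W)$, additivity from Proposition \ref{PBDFsum}), and any semigroup homomorphism between groups sends the neutral element to the unique idempotent, namely zero; spelling out this last point would make the argument airtight. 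Otherwise the bookkeeping in $(3)\Rightarrow(4)$ and $(4)\Rightarrow(5)$ is exactly what the paper does, and the nonunitality needed for the absorption lemmas and Theorem \ref{thm:ExtIsAGroup} is automatic since $\W$-embeddability forces $A$ nonunital.
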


\begin{proof}
{{Let us recall that $A$ is nonunital as it is a \SCA\, of a stably projectionless \CA\, $\W.$
It follows from Theorem \ref{thm:ExtIsAGroup} that $\Ext^u(A, \W)$ is a group.}}

That (1) $\Leftrightarrow$ (2) follows from Lemmas \ref{L-quasiext}
and \ref{Lpermzero21}. 

That (2) $\Rightarrow$ (4) and (3) $\Rightarrow$ (4) 
follows immediately from Lemma \ref{L-quasiext} which
says that every essential quasidiagonal extension (including every essential
trivial diagonal extension) is unitarily equivalent to every $\T_d$ extension.

(4) $\Rightarrow$ (2) and (4) $\Rightarrow$ (3)  are immediate.

That (4) $\Rightarrow$ (5) follows from the {{facts that  $\Ext^u(A,\W)$ is a group and}} 
if $\rho$ is a trivial
diagonal extension then so is $\rho \oplus \rho$.

(5) $\Rightarrow$ (2): From the (4) $\Rightarrow$ (5), we know that the
neutral element of $\Ext^u(A, \W)$ is the class of an essential trivial
diagonal {{extension.}}  But then, any essential extension which is unitarily
equivalent to a trivial diagonal extension is a trivial diagonal extension. 

\end{proof}

\section{Classification of Extensions by $\W$}

\begin{lem}\label{Lell-2}
Let $B$ be a {{non-unital}} separable simple \CA\, with a unique tracial state $t_B$ such 
that $M_n(B)$ has almost stable rank one for all $n$.
Suppose that ${\rm Cu}(B) = V(B) \sqcup {\rm LAff}_+(T(B))\cong V(B) \cup (0, \infty)$.

Let $A$ be a separable exact \CA\, {{with a faithful tracial state  which satisfies the UCT.}}

Then, for any $t \in T_f(A)$ and $r \in (0,1]$, there is an embedding
$\phi_A : A \rightarrow M(B)$ such that
$t_B \circ \phi_A = r t$ {{and $\pi\circ \phi_A$ is injective.}}

\end{lem}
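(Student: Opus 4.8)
The plan is to reduce the problem to an embedding into a multiplier algebra by first factoring $A$ through a unital AF-algebra carrying the right trace, and then realizing that AF-algebra inside $M(B)$ via an approximate-unit / inductive-limit construction. First, by the Blackadar--Handelman / Schafhauser-type result on embeddings into AF-algebras (one may quote Theorem A of \cite{Scha}; the excerpt expects us to use earlier results, and this embedding fact is among the tools referenced in the surrounding text), since $A$ is separable, exact, satisfies the UCT and has the faithful trace $t$, there is a unital simple AF-algebra $D$ with a unique tracial state $t_D$ and a (necessarily injective, since $t$ is faithful) $\ast$-homomorphism $\psi\colon A\to D$ with $t_D\circ\psi=t$. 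So it suffices to embed $D$ into $M(B)$ with $t_B\circ(\cdot)=r\,t_D$.

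Next I would build the embedding $D\hookrightarrow M(B)$. The key structural input is ${\rm Cu}(B)=V(B)\sqcup{\rm LAff}_+(T(B))\cong V(B)\cup(0,\infty)$, which identifies the ``purely non-compact'' part of the Cuntz semigroup of $B$ with $(0,\infty)$ via $d_{t_B}$, together with the fact that $M_n(B)$ has almost stable rank one for all $n$ (so Cuntz comparison upgrades to honest subequivalences implemented by partial isometries in the bidual, as in Proposition 3.3 of \cite{Rz} used in Lemma \ref{LK0(M(B))}). Fix a strictly positive $e_B\in B$ with $d_{t_B}(e_B)=1$ and choose $0<a_{r}\le e_B$ with $d_{t_B}(a_r)=r$; replacing $B$ by ${\rm Her}(a_r)$ (which again is simple, non-unital, has continuous scale after rescaling, unique trace, and the same Cuntz-semigroup description, with total trace now $r$) we may assume $r=1$. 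Now write $D=\overline{\bigcup_k D_k}$ as an increasing union of finite-dimensional subalgebras with $1_{D_k}=1_D$. Using the trace values $t_D$ on the minimal projections of each $D_k$ and the surjectivity of $d_{t_B}$ onto $(0,\infty)$, together with Lemma \ref{LK0(M(B))} (every positive affine function on $T(B)$ is realized by a projection in $M_m(M(B))$, and projections in $M(B)$ with equal trace are equivalent), I would inductively produce unital embeddings $h_k\colon D_k\to M(B)$ that are compatible on the nose, i.e.\ $h_{k+1}|_{D_k}=h_k$, with $t_B\circ h_k$ equal to $t_D|_{D_k}$. Here the Murray--von Neumann equivalence of projections in $M(B)$ with equal trace (the last statement of Theorem \ref{K-six} / Lemma \ref{LK0(M(B))}) is exactly what makes the extension of $h_k$ to $h_{k+1}$ possible: one first finds an abstract map at the level of $K_0(M(B))\cong{\rm Aff}(T(B))$ and then conjugates into place. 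The strict-topology convergence $h_k\to h$ follows because the generating projections live in a fixed $M(B)$ and the increments are supported under shrinking ``corner'' projections; alternatively, mimic the approximate-unit argument with $L_n$ sketched in the commented-out \texttt{MWembedding} lemma, replacing $D\otimes\W$ by $M(B)$ directly. Then $h\colon D\to M(B)$ is a unital $\ast$-embedding with $t_B\circ h=t_D$, and $\phi_A:=h\circ\psi$ works, with $t_B\circ\phi_A=r\,t$.

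Finally I would check that $\pi\circ\phi_A$ is injective. Since $\phi_A$ is injective and $A$ is simple-ish only through $\psi(A)\subseteq D$ (but $A$ itself need not be simple), injectivity of $\pi\circ\phi_A$ must be arranged by making $\phi_A(A)\cap B=0$. This is automatic if the construction is carried out so that each $h_k$ takes values in $M(B)\setminus B$ on nonzero elements — concretely, arrange $h_k(1_D)$ to be a projection in $M(B)\setminus B$ (e.g.\ $\tau(h_k(1_D))=1=\tau(e_B)$ forces $h_k(1_D)\notin B$ since $e_B$ is not a projection, $B$ being stably projectionless — here use that ${\rm Cu}(B)$ has no compact element of value $1$), so that $\phi_A(a)\in B$ would force $\phi_A(a)=\phi_A(a)\cdot 1=0$ after multiplying by $h(1_D)$, impossible for $a\neq0$; hence $\pi\circ\phi_A$ is injective.

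\textbf{Main obstacle.} The delicate point is the inductive construction of the compatible unital embeddings $h_k\colon D_k\to M(B)$ with prescribed traces and with $h_{k+1}|_{D_k}=h_k$ exactly (not just approximately). This requires combining: (i) the identification $K_0(M(B))\cong{\rm Aff}(T(B))$ with positivity (Lemma \ref{LK0(M(B))}, Theorem \ref{K-six}); (ii) the realization of arbitrary values in $(0,\infty)$ by Cuntz classes / open projections in $M(B)$; and (iii) the uniqueness up to Murray--von Neumann equivalence of projections in $M(B)$ of equal trace, to conjugate the partial embedding of $D_k$ inside the partial embedding of $D_{k+1}$. Controlling the strict convergence of the resulting tower $h=\lim_k h_k$ — equivalently, showing $\sum$ of the ``new'' matrix-unit pieces converges strictly — is the technical heart; the cleanest route is probably the approximate-unit argument already sketched in the (commented) \texttt{MWembedding} lemma, adapted so that the building blocks are the corners of $M(B)$ furnished by Lemma \ref{LK0(M(B))} rather than tensor factors of $\W$.
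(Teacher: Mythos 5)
Your overall route coincides with the paper's: factor $t$ through a unital simple AF-algebra $D$ with unique trace $t_D$ via Theorem A of \cite{Scha}, and then embed $D$ into $M(B)$ using the computation $K_0(M(B))\cong\R$ and $V(M(B))\cong V(B)\sqcup(0,\infty)$ from Lemma \ref{LK0(M(B))}. The one genuine difference is the second step: where you propose a hand-rolled one-sided Elliott intertwining over an increasing union $D=\overline{\bigcup_k D_k}$, the paper simply defines the ordered semigroup map $\lambda_V\colon V(D)\to(0,\infty)\cup\{0\}\subset V(M(B))$ by $[p]\mapsto r\,t_D([p])$ and quotes Lemma 4.2 of \cite{PR} to obtain $\phi_0\colon D\to M(B)\otimes\K$ with $V(\phi_0)=\lambda_V$, then conjugates $\phi_0(1_D)$ (a projection of class $r\le 1$) under $1_{M(B)}$ using the equivalence of equal-trace projections. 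Your intertwining is exactly what \cite{PR} encapsulates, so it buys nothing new but is legitimate; note, however, that once you arrange $h_{k+1}|_{D_k}=h_k$ exactly, the ``technical heart'' you worry about disappears: the compatible tower is a norm-contractive $*$-homomorphism on the dense union $\bigcup_k D_k$ and extends by norm continuity, with no strict-topology convergence to control.

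Two soft spots should be repaired. First, the reduction to $r=1$ by replacing $B$ with ${\rm Her}(a_r)$ does not work as stated: $M({\rm Her}(a_r))$ is not in general contained in $M(B)$ (a multiplier of a hereditary subalgebra need not multiply $B$ into $B$), so the map you build would land in the wrong multiplier algebra. Keep $r$ in the target data instead, i.e.\ prescribe $\tau(h(1_D))=r$ from the outset, as the paper's $\lambda$ does. Second, your injectivity argument for $\pi\circ\phi_A$ appeals to $B$ being stably projectionless, which is not among the hypotheses. The correct observation is that the class of $\phi(1_D)$ lies in the $(0,\infty)$ component of $V(M(B))=V(B)\sqcup(0,\infty)$, which is disjoint from $V(B)$; hence $\phi(1_D)$ is not a projection in any $M_m(B)$, so $\phi^{-1}(B)$ is a proper ideal of the simple unital algebra $D$, hence zero, and $\pi\circ\phi_A=\pi\circ\phi\circ\psi$ is injective.
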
 

\begin{proof}
{{Fix $t\in T_f(A).$}}
By   {{Theorem A of  \cite{Scha},}} let $D$ be a unital simple AF-algebra with unique
tracial state $\tau_D$ and let $\psi : A \rightarrow D$ be a *-embedding
such that 
$t = \tau_D \circ \psi.$

{{By Lemma \ref{LK0(M(B))},}} 
$$K_0(M(B)) \cong \mathbb{R},
\andeqn
{{V(M(B)) \cong V(B) \sqcup (0, \infty).}}$$

Let 
{{$\lambda : K_0(D) \rightarrow 
K_0(M(B))$
be the \hm\,}}  defined by
$$\lambda([p]) = r t_D([p])   \rforal  p \in Proj(D \otimes \K).$$
Note this gives an ordered semigroup \hm\, $\lambda_V:V(D)\to (0,\infty)\cup \{0\}\subset V(M(B)).$
{{Note also  here $(0, \infty)\cap V(B)=\emptyset.$
By Lemma 4.2 of \cite{PR}, there is a \hm\, $\phi_0: D\to M(B)\otimes {\mathcal K}$
such that $V(\phi_0)=\lambda_V.$   \Wlog, one may assume that $\phi_0(D)\subset M_m(M(B))$
for integer $m\ge 1.$}}
{{One also has $[\phi_0(1_D)]=r\le 1.$   It follows from Lemma \ref{LK0(M(B))} that there is a unitary
$U\in M_m(M(B))$ such that 
$$
U^*\phi_0(1_D)U\le 1_{M(B)}.
$$
Define $\phi(d)=U^*\phi_0(d)U$ for all $d\in D.$}}
Since $D$ is simple, $\phi$ is an embedding.
Then set  $$\phi_A := \phi \circ \psi.$$    {One checks that the embedding $\phi_A$ meets the requirement.}

\end{proof}

\begin{rmk}\label{Rhomtf}
{\rm{Recall ${\rm Hom}(K_0(A), \R)_{T_f}$ is defined in \ref{Drho}.
Several comments about it are in order. 
Firstly, under current assumptions,
$K_0(A)_+$ might be zero; hence,
one may not  have order preserving \hm s in
${\rm Hom}(K_0(A), \R).$ Secondly, there could still be a pairing 
$\rho_A: K_0(A)\to \Aff(T(A))$ even in
the case that $K_0(A)_+=\{0\}.$ Therefore  an element in ${\rm Hom}(K_0(A), \R)$
need not be induced by a \hm\, from $A$ (to $M(B)$).
Thirdly, there is a possibility that, given two tracial states $t_1, t_2\in T_f(A),$
one might have
$r_A(t_1)=r_A(t_2).$
In other words,  ${{r_A(t_1)}}$ and ${{r_A}}(t_2)$ may give the same element in ${\rm Hom}(K_0(A), \R)_{T_f}$
and, of course, they will not be distinguished.}}
\end{rmk}

We are ready to present the following classification of essential extensions of
 $\W:$

\begin{thm}\label{MTnoncom}
Let $A$ be a separable amenable \CA\, which is $\W$ embeddable and satisfies the UCT.

(1) If $\tau_1, \tau_2: A\to \C(\W)$ are two essential extensions,
then $\tau_1\sim^u \tau_2$ if and only if $KK(\tau_1)=KK(\tau_2).$

(2)  The map
\beq
\Lambda: \Ext^u(A,\W)\to KK(A, \C(\W))\cong {\rm Hom}(K_0(A), \R)
\eneq
defined by $\Lambda([\tau])=KK(\tau)$  is a group isomorphism.

(3) An essential extension $\tau$ is trivial and diagonal if and only if $KK(\tau)=0,$
and all trivial and diagonal extensions are unitarily equivalent.
In fact, the essential trivial diagonal extensions induce the neutral
element of $\Ext^u(A, \W)$.

(4) An essential extension $\tau$ is trivial if and only if
there exist $t \in T_f(A)$ and $r \in (0,1]$ such that 
$$
\tau_{*0}(x)=r\cdot r_A(t)(x)\rforal x\in K_0(A).
$$

(5) Moreover,  let $\T$ be the set of unitary equivalence classes
 of essential trivial extensions of $A$ by $\W.$
Then
\beq\nonumber
\Lambda(\T)={{\{r\cdot h: r\in (0,1]\andeqn h\in {\rm Hom}(K_0(A), \R)_{T_f(A)}\}}}\,\,\,{{({\rm see}\,\,\, {\rm{Definition}}\,\ref{Drho})}}
\eneq

(6) All quasidiagonal essential 
extensions are  trivial and are unitarily equivalent.

(7) In the case  ${\rm ker}\rho_{f,A}=K_0(A),$   
all trivial extensions are unitarily equivalent.
Moreover, an essential extension 
$\tau$ is trivial if and only if $KK(\tau)=\{0\}.$

(8)  In the case ${\rm ker}\rho_{f,A}\not=K_0(A),$ 
there are essential trivial extensions of $\W$ by $A$ which
are not quasidiagonal, and not all essential trivial extensions 
are unitarily equivalent (see (5) above).

\end{thm}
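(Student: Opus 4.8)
The plan is to assemble the eight claims from the machinery already developed: the stable uniqueness and existence theorems (\ref{TExpon}, \ref{T-existoI}), the quasidiagonality characterization (\ref{QuasidiagonalKTh}), and the $K$-theory computation $K_0(\C(\W))\cong \R$, $K_1(\C(\W))=\{0\}$ from \ref{K-six}. First I would record the identification $KK(A,\C(\W))\cong {\rm Hom}(K_0(A),\R)$: by the UCT this $KK$-group sits in an exact sequence with ${\rm Ext}(K_0(A),K_1(\C(\W)))\oplus {\rm Ext}(K_1(A),K_0(\C(\W)))$ on one side and ${\rm Hom}(K_0(A),K_0(\C(\W)))\oplus {\rm Hom}(K_1(A),K_1(\C(\W)))$ on the other; since $K_1(\C(\W))=0$ and $K_0(\C(\W))=\R$ is divisible, the ${\rm Ext}$ terms vanish, leaving ${\rm Hom}(K_0(A),\R)$. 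For (1), the ``only if'' direction is trivial (unitarily equivalent extensions have equal $KK$); for ``if'', given $KK(\tau_1)=KK(\tau_2)$, combine with a fixed $\T_d$ extension $\sigma$ (which has $KK=0$ by \ref{TextWC}) via \ref{Tgroup} to write $\tau_i\sim^u \tau_i\oplus \tau_{i,0}$ appropriately, then use that the difference has trivial $KK$ hence is quasidiagonal (\ref{L-quasiext}), and absorb it by $\sigma$ using \ref{Uniq-2}/\ref{Lpermzero21}; the additive bookkeeping shows $\tau_1\sim^u \tau_2$. Statement (2) then follows: $\Lambda$ is well-defined and injective by (1), it is a homomorphism because $KK$ is additive over BDF sums (\ref{PBDFsum}), and it is surjective by the existence theorem \ref{TExpon} applied with $B=\C(\W)$—more precisely by first realizing any $\kappa\in {\rm Hom}(K_0(A),\R)=KK(A,\C(\W))$ by a c.p.c.\ lift into $M(\W)$ and invoking absorption; that $\Ext^u(A,\W)$ is a group is \ref{thm:ExtIsAGroup}.

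For (3), a trivial diagonal extension has $KK=0$ by \ref{TextWC} and \ref{Rsigmakk=0}; conversely $KK(\tau)=0$ implies $\tau$ is quasidiagonal (\ref{L-quasiext}), and by \ref{QuasidiagonalKTh} every such $\tau$ is unitarily equivalent to a trivial diagonal extension and these form the neutral element of $\Ext^u(A,\W)$. Statement (6) is likewise immediate from \ref{QuasidiagonalKTh}: every quasidiagonal extension has $KK=0$, hence is trivial (being unitarily equivalent to a trivial diagonal extension), and all such are mutually unitarily equivalent. For (7), when ${\rm ker}\rho_{f,A}=K_0(A)$ the only homomorphism $K_0(A)\to\R$ of the form $r\cdot r_A(t)$ is $0$, so—using (4)—every trivial extension has $KK=0$, hence is quasidiagonal, hence (by (6)) all are unitarily equivalent; the last sentence of (7) is then the equivalence $\tau$ trivial $\Leftrightarrow KK(\tau)=0$, one direction being (3) and the other just proved.

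The substantive content is in (4), (5), and (8). For the ``only if'' half of (4): if $\tau$ is trivial it lifts to a monomorphism $\widetilde{\tau}:A\to M(\W)$; composing with the trace $\tau_W$ on $M(\W)$ (the extension of the unique trace of $\W$, which is faithful and normalized) gives a tracial state $t_0=\tau_W\circ\widetilde{\tau}$ on $A$, faithful since $\widetilde{\tau}$ is injective and $\tau_W$ is faithful, and the induced map $K_0(A)\to K_0(M(\W))=\R$ is exactly $\tau_{*0}$ on the $M(\W)$ level. Writing $r=$ the mass $\tau_W(\widetilde\tau(e_A))\in(0,1]$ of a strictly positive element and normalizing $t$ appropriately, one gets $\tau_{*0}(x)=r\cdot r_A(t)(x)$—here one must be careful that $\tau_{*0}$ as a map to $K_1(\C(\W))=0$ carries no $K_1$ information, and that the map $K_0(M(\W))\to K_0(\C(\W))$ is the identity $\R\to\R$ so the trace computation on $M(\W)$ directly computes $KK(\tau)$ under the identification of (2). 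For the ``if'' half: given $t\in T_f(A)$ and $r\in(0,1]$, Lemma \ref{Lell-2} (with $B=\W$, whose $\Cu$ is $(0,\infty]$ and which has almost stable rank one stably) produces an embedding $\phi_A:A\to M(\W)$ with $\tau_W\circ\phi_A=r\,t$ and $\pi\circ\phi_A$ injective; then $\tau:=\pi\circ\phi_A$ is a trivial essential extension with $\tau_{*0}=r\cdot r_A(t)$. Statement (5) is the precise image description: $\Lambda(\T)=\{r\cdot h: r\in(0,1], h\in {\rm Hom}(K_0(A),\R)_{T_f(A)}\}$, which is just the set of $KK$-classes realizable by trivial extensions, i.e.\ exactly the maps produced in (4); one inclusion is the construction via \ref{Lell-2}, the reverse is the trace argument. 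Finally (8): when ${\rm ker}\rho_{f,A}\neq K_0(A)$ there is a nonzero $h\in {\rm Hom}(K_0(A),\R)_{T_f}$, so by (4)/(5) there is a trivial extension $\tau$ with $KK(\tau)=h\neq 0$; by \ref{QuasidiagonalKTh} such $\tau$ is \emph{not} quasidiagonal, and it is not unitarily equivalent to the trivial diagonal extensions (which have $KK=0$), giving the desired non-uniqueness. The main obstacle I anticipate is the precise trace/$K$-theory bookkeeping in (4): correctly matching the normalization constant $r$, handling the unitization and the passage from $\widetilde\tau:A\to M(\W)$ to the class $KK(\pi\circ\widetilde\tau)\in{\rm Hom}(K_0(A),\R)$, and checking that the map $K_0(M(\W))\to K_0(\C(\W))$ really is an isomorphism compatible with the trace pairing—this is where Lemma \ref{LK0(M(B))} and Theorem \ref{K-six} must be applied with care, especially the clause that projections in $M_m(M(\W))\setminus M_m(\W)$ are classified by their trace.
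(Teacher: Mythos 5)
Most of your proposal tracks the paper's own argument: (3) and (6) from Proposition \ref{QuasidiagonalKTh}, the trace computation plus Lemma \ref{Lell-2} for (4), then (5), (7), (8) as formal consequences. Your route to (1) (absorb the $KK$-trivial difference using a $\T_d$ extension) is the same mechanism the paper packages as ``$\Lambda$ is an injective group homomorphism, hence (1) follows from (2).''

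There is, however, a genuine gap in your proof of the surjectivity of $\Lambda$ in (2). You propose to realize an arbitrary $\kappa\in{\rm Hom}(K_0(A),\R)\cong KK(A,\C(\W))$ ``by a c.p.c.\ lift into $M(\W)$'' and by ``applying Theorem \ref{TExpon} with $B=\C(\W)$.'' Neither step is available. Theorem \ref{TExpon} requires $B$ to be a separable simple stably finite \CA\ with finite nuclear dimension, continuous scale and $K_0(B)={\rm ker}\rho_B$; the corona algebra $\C(\W)$ is nonseparable and purely infinite, so the hypotheses fail outright. More seriously, any class realized by a map that lifts to $M(\W)$ factors through $K_0(M(\W))=\R$ via the trace $\tau_W$, and hence lands in the set $\{r\cdot h: r\in(0,1],\ h\in{\rm Hom}(K_0(A),\R)_{T_f(A)}\}$ described in your own part (5). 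If every class in ${\rm Hom}(K_0(A),\R)$ arose this way, every essential extension would be trivial, contradicting (5) and (8) whenever ${\rm Hom}(K_0(A),\R)\neq\{0\}$ (e.g.\ already for $A=C_0(\Tt^2\setminus\{pt\})\otimes\zo$-type examples the paper has in mind). The surjectivity must instead be proved by an existence theorem for monomorphisms \emph{into the purely infinite simple algebra} $\C(\W)$: extend $\kappa$ to $\eta\in{\rm Hom}(K_0(\widetilde{A}),K_0(\C(\W)))$ by setting $\eta([1_{\widetilde A}])=[1_{\C(\W)}]$, apply Corollary 8.5 of \cite{LinFullext} to obtain a (full, hence essential) monomorphism $\tau_1:\widetilde{A}\to\C(\W)$ with $KK(\tau_1)=\eta$, and restrict to $A$. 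With that substitution the rest of your argument goes through.
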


\begin{proof}
Statements (3) and (6) follow from Proposition \ref{QuasidiagonalKTh}.

(2):
That $KK(A, \C(\W))={\rm Hom}(K_0(A),\R)$
follows from the UCT, since $K_*(\C(\W))$ is divisible and
$K_1(\W) = 0$.
{{ Note that ${\rm Hom}(K_0(A), \R)$ is an abelian group.}} Recall, by Theorem \ref{thm:ExtIsAGroup}, that $\Ext^u(A, \W)$ is {{also}}
an 
abelian group.  
It is obvious that $\Lambda$ is a semigroup \hm. 
By \ref{QuasidiagonalKTh},
$\Lambda$ sends zero to zero, therefore $\Lambda$ is a group \hm. 

We next show that $\Lambda$ is  surjective.
Let $x\in KK(A, \C(\W))$ be given.  
Note that $K_0({\widetilde{A}})=K_0(A)\oplus \ZI.$ Define $\eta\in {\rm Hom}(K_0({\widetilde{A}}), \C(\W))$
by $\eta|_{K_0(A)}=x$ and $\eta([1_{\tilde A}])=[1_{\C(\W)}].$ Then $\eta$ gives an element
in $KL({\widetilde{A}}, \C(\W)).$ It follows from Corollary 8.5 of \cite{LinFullext} that there is a \hm\,
$\tau_1: {\widetilde{A}}\to \C(\W)$ such that $KK(\tau_1)=\eta.$ Define $\tau=\tau_1{|_{A}}.$
Then {{one computes}} $KK(\tau)=x.$ Since $x$ is arbitrary {{in $KK(A, \C(\W))$,}}  the map $\Lambda$ is surjective.

It remains to prove that $\Lambda$ is injective.  But, by Proposition
\ref{QuasidiagonalKTh}, if $[\psi] \in \Ext^u(A, \W)$
is such that $\Lambda([\psi]) = 0$, i.e., 
$KK(\psi) = 0$, then $[\psi] = 0$ in $\Ext^u(A, \W)$.
Hence, $\Lambda$ is injective.  This completes the proof of (2).

(1) follows from (2).

(4): Say that $\tau: A\to C(\W)$ is an essential trivial
extension.  Then there is a monomorphism
$H: A\to M(\W)$ such that $\pi\circ H=\tau.$ Let $t_1(a)=t_W\circ H(a)$ for all $a\in A.$
Then $t_1$ is a faithful trace on $A$ {{with $\|t_1\|\le 1.$}}
Let $r=\|t_1\|.$ Then $t(a)=t_1(a)/r$ is a faithful
tracial state of $A.$
{{Hence}} $H_{*0}(x)=r\rho_A(t)(x)$ for all $x\in K_0(A).$
It follows that $\tau_{*0}(x)=r\cdot \rho_A(t)(x)$ for all $x\in  K_0(A).$

Conversely, suppose now that $t\in T_f(A)$, $r \in (0,1]$
and $\tau: A\to \C(\W)$
are such that $\tau_{*0}(x)=r\cdot \rho_A(t)(x)$ for all $x\in K_0(A).$
By {{Lemma \ref{Lell-2},}} there is a monomorphism
$\psi_A: A\to M(\W)$ such that $t_W\circ \phi_A(a)=r\cdot t(a)$ 
for all $a\in A.$
Then $\pi\circ \psi_A: A\to \C(\W)$ is an essential trivial extension such that
\beq
{{(\pi\circ \psi_A)_{*0}}}=\tau_{*0}.
\eneq
Hence, $KK(\pi\circ \psi_A)=KK(\tau).$  
So, by (1), $\tau\sim^u \pi\circ \psi_A.$
It follows that $\tau$ is trivial. This completes the proof of (4). 

(5) follows from (4).

(7):  If  
 $K_0(A)={\rm ker}\rho_{f,A}$ and  
$H: A\to M(\W)$ is a monomorphism, 
then 
$H_{*0}=0.$  {{As mentioned before, $K_0(\C(\W))=\R$ is divisible and $K_1(\C(\W))=\{0\},$ this implies that}}  $KK(\pi \circ H) = 0$.
Thus, (7) follows from (3).

(8):  Suppose that  ${\rm ker}\rho_{f,A}\not=K_0(A).$ 
Then there is $t\in T_f(A)$ such that $\rho_A(t)\not=0.$
Then $\Lambda(\T)\not=\{0\}.$ So by (5), there is a trivial essential extension $\tau$ such that
$KK(\tau)\not=0.$ By (1),  $\tau$ is not unitarily equivalent to a diagonal trivial extension, and,
by (6), $\tau$ is not even quasidiagonal.

\end{proof}

For the second question of the introduction, we offer the 
following statement: 

\begin{cor}\label{CWW}
There is, up to unitary equivalence,
 only one essential extension of the form:
\beq
0\to \W\to E\to \W\to 0.  
\eneq
Moreover, this extension splits.
\end{cor}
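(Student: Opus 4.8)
The plan is to deduce Corollary \ref{CWW} directly from Theorem \ref{MTnoncom} applied to the special case $A=\W$. First I would observe that $\W$ is separable, amenable, satisfies the UCT, and is (trivially) $\W$ embeddable via the identity map, so all hypotheses of Theorem \ref{MTnoncom} are met with $A=\W$. Essential extensions of $\W$ by $\W$ correspond to Busby invariants $\tau:\W\to\C(\W)$, and since $\W$ is nonunital every such extension is automatically nonunital, so we are squarely in the setting of $\Ext^u(\W,\W)$.

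The key computation is that $\Ext^u(\W,\W)$ is trivial. By part (2) of Theorem \ref{MTnoncom}, the map $\Lambda:\Ext^u(\W,\W)\to KK(\W,\C(\W))\cong{\rm Hom}(K_0(\W),\R)$ is a group isomorphism. But $K_0(\W)=\{0\}$ (this is part of the defining properties of $\W$, recorded in Definition \ref{Drazak}), so ${\rm Hom}(K_0(\W),\R)=\{0\}$. Hence $\Ext^u(\W,\W)=\{0\}$, which means there is, up to unitary equivalence, exactly one essential extension of $\W$ by $\W$.

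Finally, to see that this unique extension splits, I would invoke parts (3) and (6): by Proposition \ref{TextWC} a $\T_d$ (in particular trivial, diagonal) extension $\tau:\W\to\C(\W)$ exists and has $KK(\tau)=0$, so it represents the neutral element of $\Ext^u(\W,\W)$. Since the group is trivial, every essential extension of $\W$ by $\W$ is unitarily equivalent to this trivial diagonal extension, and in particular splits (a trivial extension lifts to a monomorphism $\W\to M(\W)$, giving a splitting of the short exact sequence). I do not anticipate any real obstacle here; the only points requiring a word of care are checking that $\W$ legitimately satisfies the standing hypotheses of Theorem \ref{MTnoncom} (immediate) and noting that existence of at least one essential extension is guaranteed by Proposition \ref{TextWC}, so the "exactly one" statement is not vacuous.

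\begin{proof}
Since $\W$ is a separable, amenable \CA\ which satisfies the UCT and is $\W$ embeddable (via ${\rm id}_\W$), Theorem \ref{MTnoncom} applies with $A=\W$. By part (2) of that theorem, $\Lambda:\Ext^u(\W,\W)\to KK(\W,\C(\W))\cong {\rm Hom}(K_0(\W),\R)$ is a group isomorphism. As $K_0(\W)=\{0\}$ (see Definition \ref{Drazak}), we get ${\rm Hom}(K_0(\W),\R)=\{0\}$, hence $\Ext^u(\W,\W)=\{0\}$. Therefore, up to unitary equivalence, there is exactly one essential extension of the form $0\to\W\to E\to\W\to 0$; existence of at least one such (nonunital, essential) extension follows from Proposition \ref{TextWC}. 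Moreover, by Proposition \ref{TextWC} there is a $\T_d$ extension $\tau:\W\to\C(\W)$ with $KK(\tau)=0$; this is a trivial diagonal extension and, by part (3) of Theorem \ref{MTnoncom}, represents the neutral element of $\Ext^u(\W,\W)$. Since the group is trivial, every essential extension of $\W$ by $\W$ is unitarily equivalent to $\tau$, and in particular is trivial, i.e.\ splits.
\end{proof}
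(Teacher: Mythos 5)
Your proposal is correct and takes exactly the same route as the paper, whose proof is simply the observation that $\Ext^u(\W,\W)\cong{\rm Hom}(K_0(\W),\R)=\{0\}$ via Theorem \ref{MTnoncom}. The extra details you supply (existence via Proposition \ref{TextWC} and the identification of the neutral class with the trivial diagonal extension, hence the splitting) are exactly what the paper leaves implicit.
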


\begin{proof}
By Theorem \ref{MTnoncom},
\beq
\Ext^u(\W, \W)={\rm Hom}(K_0(\W),\R)=\{0\}.
\eneq

\end{proof}

{{As one expected,}} {{the}} \CA s that we 
were originally interested {{in}} do
satisfy the hypothesis of Theorem \ref{MTnoncom}  {{and even (7) of Theorem \ref{MTnoncom}.}}

\begin{prop}\label{LCWemb}
Let $X$ be a connected and locally connected compact 
metric space and let $x_0\in X$ be a point.
Then $C:=C_0(X\setminus \{x_0\})$ is $\W$ embeddable.
Moreover, {{$K_0(C)={\rm ker}\rho_{C}.$}}
\end{prop}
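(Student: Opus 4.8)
The plan is to establish the two claims separately: first that $C := C_0(X \setminus \{x_0\})$ embeds into $\W$, and second that $K_0(C) = \ker \rho_C$.

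For the embedding, I would use the Hahn--Mazurkiewicz theorem: since $X$ is a connected, locally connected compact metric space, it is a Peano continuum, hence there is a continuous surjection $q : [0,1] \twoheadrightarrow X$. After composing with a suitable homeomorphism of $[0,1]$ (or modifying $q$ slightly), one may arrange that $q^{-1}(x_0) \ni 0$, say $q(0) = x_0$, so that pullback $f \mapsto f \circ q$ gives a $*$-embedding $C_0(X \setminus \{x_0\}) \hookrightarrow C_0((0,1]) \cong C_0(0,1]$. (This is essentially the content of the commented-out Remark \ref{R6} in the excerpt.) Now $C_0(0,1]$ is the cone over $\mathbb{C}$, which is well known to embed into $\W$ — indeed $C_0(0,1]$ embeds into any stably projectionless simple $C^*$-algebra, and more concretely one can invoke Lemma \ref{LWembB} applied to $B = \W$ (which is separable, infinite-dimensional, simple, $\mathcal{Z}$-stable, stably projectionless, has stable rank one, and all its quasitraces are traces), to get an embedding $\W \hookrightarrow \W$, or simply note $C_0(0,1]$ maps into $\W$ by elementary functional calculus on a positive element of $\W$. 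Composing, $C$ is $\W$-embeddable. One subtlety: the Hahn--Mazurkiewicz surjection need not be injective anywhere, so I should check that $f \mapsto f\circ q$ is genuinely injective as a $*$-homomorphism $C_0(X\setminus\{x_0\}) \to C_0(0,1]$ — it is, because $q$ is surjective, so $f \circ q = 0$ forces $f = 0$. The only real care needed is the normalization ensuring the image lands in $C_0(0,1]$ rather than just $C[0,1]$, which the arrangement $q(0) = x_0$ handles.

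For the $K$-theory claim $K_0(C) = \ker \rho_C$, recall from Definition \ref{Drho} that $\ker \rho_C = \{ x \in K_0(C) : f(x) = 0 \text{ for all } f \in \mathrm{Hom}(K_0(C), \mathbb{R})_+ \}$, and that for $C = C_0(Y)$ with $Y$ locally compact,
$$
\mathrm{Hom}(K_0(C), \mathbb{R})_+ = \{ r \cdot s|_{K_0(C)} : r \in \mathbb{R}_+, \ s \in \mathrm{Hom}(K_0(\widetilde{C}), \mathbb{R})_{T(\widetilde{C})} \}.
$$
Here $\widetilde{C} = C(X)$ (one-point compactification recovering $X$), and $T(C(X))$ is the space of probability measures on $X$, so the positive homomorphisms on $K_0(C(X))$ are exactly those coming from such measures. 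The point is that $X$ is \emph{connected}: a connected compact space has no nontrivial projections — every projection in $M_n(C(X))$ has constant (integer) rank. Thus every class in $K_0(C(X))$ pairing with a measure $\mu$ on $X$ gives $\mu(p) \in \mathbb{Z}$ depending only on the rank function, independent of $\mu$; equivalently $\rho_{C(X)} : K_0(C(X)) \to \mathrm{Aff}(T(C(X)))$ has image in the constants $\mathbb{Z} \subseteq \mathbb{R}$, factoring through the rank map $K_0(C(X)) \to \mathbb{Z}$. Restricting to $K_0(C) = \ker(K_0(C(X)) \to K_0(\mathbb{C}) = \mathbb{Z})$, the rank of any element of $K_0(C)$ is $0$, so every $f \in \mathrm{Hom}(K_0(C),\mathbb{R})_+$ vanishes on all of $K_0(C)$. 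Hence $\ker\rho_C = K_0(C)$.

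The main obstacle, such as it is, is bookkeeping rather than depth: making the Hahn--Mazurkiewicz map land correctly in $C_0(0,1]$ with the basepoint going to the endpoint, and being careful that the relevant pairing statement for $C = C_0(Y)$ is the one recorded in Definition \ref{Drho} (so that I may cite it rather than reprove the Blanchard--Rørdam-type identity). Everything else — connectedness killing positive homomorphisms on $K_0$ of the suspension/ideal, and $C_0(0,1] \hookrightarrow \W$ — is standard. I would present the embedding argument in one short paragraph and the $K$-theory argument in another, citing Definition \ref{Drho} for the description of $\mathrm{Hom}(K_0(C_0(Y)),\mathbb{R})_+$ and the Hahn--Mazurkiewicz theorem for the surjection $[0,1]\twoheadrightarrow X$.
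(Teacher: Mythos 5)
Your $K$-theory argument is correct and is essentially the same as the paper's: $X$ connected forces $\rho_{C(X)}$ to factor through the rank map $K_0(C(X)) \to \mathbb{Z}$, and $K_0(C)$ is exactly the rank-zero part (the paper phrases this via the short exact sequence $0\to K_0(C)\to K_0(C(X))\to \mathbb{Z}\to 0$, you phrase it via the description of $\mathrm{Hom}(K_0(C_0(Y)),\mathbb{R})_+$ in Definition \ref{Drho}; these are the same computation). The embedding $C_0((0,1])\hookrightarrow\W$ is also fine: $\W$ is projectionless, so any nonzero positive element has spectrum an interval containing $0$, and functional calculus gives the embedding.

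The gap is in the step that arranges the basepoint. You propose to ``compose with a suitable homeomorphism of $[0,1]$'' so that $q(0)=x_0$. This does not work: a homeomorphism of $[0,1]$ fixes or swaps the two endpoints, so post- or pre-composing can only move $q(0)$ to $q(1)$. There is no reason the Hahn--Mazurkiewicz surjection sends either endpoint to $x_0$; the preimage $q^{-1}(x_0)$ is just some nonempty closed subset of $(0,1)$ in general. Your parenthetical ``(or modifying $q$ slightly)'' is where the actual work hides, and it is not a small modification: the paper's fix is to take a Hahn--Mazurkiewicz surjection $s_0:[1/2,1]\twoheadrightarrow X$ and \emph{prepend} a path $s_1:[0,1/2]\to X$ from $x_0$ to $s_0(1/2)$, concatenating to get a surjection $s:[0,1]\twoheadrightarrow X$ with $s(0)=x_0$. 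This is where local connectedness (combined with connectedness and compactness) is genuinely used a second time: a Peano continuum is path-connected, which is what produces $s_1$. As written, your proof omits the only step where the hypotheses on $X$ do nontrivial work for the embedding, so you should replace the ``homeomorphism'' remark with the concatenation argument.
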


\begin{proof}
{{We first show that there is an embedding $\iota_C: C_0(X\setminus \{x_0\})\to C_0((0,1]).$
By the Hahn--Mazurkiewicz theorem, there exists a continuous surjection $s_0:
[1/2,1] \twoheadrightarrow X.$ Let $y_0=s_0(1/2)\in X.$ By the assumption on $X,$ 
there is a continuous path $s_1: [0,1/2]\to X$ such that $s_1(0)=x_0$ and $s_1(1/2)=y_0.$
Define $s: [0,1]\to X$ by $s|_{[0,1/2]}=s_0$ and $s|_{[1/2,1]}=s_1.$
Then $s: [0,1]\to X$ is a continuous surjection which induces an embedding
$\iota_C: C\to C_0((0,1]).$ Since $\W$ is projectionless, one easily embeds $C_0((0,1])$ into $\W.$
This shows that $C$ is $\W$-embeddable. }}

{{Let $C_1=C(X)={\widetilde{C}}.$ 
Since $X$ is connected,  $\rho_{C_1}$ is the rank function and $\rho_{C_1}(K_0(C_1))=\ZI.$
The short exact sequence
$$
0\to K_0(C)\to K_0(C_1)\to \ZI \to 0
$$
 also shows that $K_0(C)={\rm ker}\rho_{C_1}={\rm ker}\rho_C.$
}}
\end{proof}

The following is  a corollary of Theorem \ref{MTnoncom} (and Corollary \ref{LCWemb}).

\begin{thm}
Let $X$ be a connected and locally connected compact
 metric space, let $x_0\in X$
and let $C:=C_0(X\setminus \{x_0\}).$

(1) All trivial essential  extensions $\tau_0$ are unitarily equivalent (in $\C(\W)$) and hence, are unitarily equivalent
to a diagonal extension $\pi\circ \sigma$ in $\T_d.$

(2) $[\pi \circ \sigma]$ is the class of zero in $\Ext^u(C, \W)$.

(3) There is a
a group isomorphism
$$\Ext^u(C, \W) \cong KK(C, \C(\W))={\rm Hom}(K_0(C),\R).$$
\label{thm:MainTh1}
\end{thm}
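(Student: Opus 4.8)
The plan is to deduce Theorem \ref{thm:MainTh1} as a direct application of Theorem \ref{MTnoncom} to the commutative C*-algebra $C = C_0(X \setminus \{x_0\})$, once we have verified the two hypotheses of that theorem: that $C$ is separable, amenable, $\W$-embeddable and satisfies the UCT, and that $C$ in fact lies in the special case ${\rm ker}\rho_{f,C} = K_0(C)$. Separability and amenability are automatic for $C_0$ of a metric space. The UCT holds for all abelian (indeed all type I, hence nuclear with the right $K$-theoretic behaviour) C*-algebras. The $\W$-embeddability of $C$ and the identity $K_0(C) = {\rm ker}\rho_C$ are exactly the content of Proposition \ref{LCWemb}, so that step is already done.

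First I would observe that ${\rm ker}\rho_C \subseteq {\rm ker}\rho_{f,C} \subseteq K_0(C)$ always (this is noted right after Definition \ref{Drho}), so the equality $K_0(C) = {\rm ker}\rho_C$ from Proposition \ref{LCWemb} forces $K_0(C) = {\rm ker}\rho_{f,C}$ as well. This puts us squarely in case (7) of Theorem \ref{MTnoncom}. Then: part (1) of the present theorem is precisely part (7) of Theorem \ref{MTnoncom} (all trivial extensions are unitarily equivalent) combined with part (6) of Theorem \ref{MTnoncom} together with Lemma \ref{L-Uniq-3} (every quasidiagonal essential extension, in particular every trivial one since by (3)+(7) triviality implies $KK(\tau)=0$ implies quasidiagonality, is unitarily equivalent to a $\T_d$ extension $\pi\circ\sigma$). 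Existence of at least one $\T_d$ extension $\sigma: C \to \C(\W)$ is Proposition \ref{TextWC}, which also gives $KK(\pi\circ\sigma) = 0$. Part (2) — that $[\pi\circ\sigma]$ is the neutral element of $\Ext^u(C,\W)$ — is part (3) of Theorem \ref{MTnoncom} (the essential trivial diagonal extensions induce the neutral element), noting again that $\pi\circ\sigma$ is trivial and diagonal by construction in Definition \ref{DTextension}/Proposition \ref{TextWC}. Part (3) is part (2) of Theorem \ref{MTnoncom} verbatim, with the final identification $KK(C,\C(\W)) \cong {\rm Hom}(K_0(C),\R)$ coming from the UCT together with $K_*(\C(\W)) = (\R,0)$ being divisible and $K_1(\W) = 0$ (as recorded in Theorem \ref{K-six} and used in the proof of Theorem \ref{MTnoncom}(2)).

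Concretely, the proof I would write is short: invoke Proposition \ref{LCWemb} to get $\W$-embeddability and $K_0(C) = {\rm ker}\rho_C$; deduce $K_0(C) = {\rm ker}\rho_{f,C}$ from the containment ${\rm ker}\rho_C \subseteq {\rm ker}\rho_{f,C} \subseteq K_0(C)$; note $C$ is separable, nuclear and satisfies the UCT; then apply Theorem \ref{MTnoncom}, reading off (7) and (3) for statements (1) and (2), and (2) for statement (3). For statement (1) I would also explicitly cite Proposition \ref{TextWC} for the existence of the model $\T_d$ extension $\pi\circ\sigma$ and Lemma \ref{L-Uniq-3} (or equivalently parts (3),(6),(7) of Theorem \ref{MTnoncom}) for the conclusion that every trivial essential extension is unitarily equivalent to it.

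There is essentially no obstacle here: the theorem is a packaging corollary, and the only thing requiring any thought is making sure the chain ${\rm ker}\rho_C \subseteq {\rm ker}\rho_{f,C} \subseteq K_0(C)$ together with Proposition \ref{LCWemb} really does land us in case (7) rather than case (8) of Theorem \ref{MTnoncom}. The mildest subtlety worth a sentence is the identification $KK(C,\C(\W)) \cong {\rm Hom}(K_0(C),\R)$: one should point out that because $K_1(\W) = 0$ and $K_*(\C(\W))$ is divisible, the UCT sequence degenerates and the $K_1$-contribution $\mathrm{Hom}(K_1(C), K_1(\C(\W)))$ vanishes while the ${\rm Ext}$-term vanishes by divisibility, leaving only $\mathrm{Hom}(K_0(C), K_0(\C(\W))) = \mathrm{Hom}(K_0(C),\R)$ — but this is exactly the computation already carried out inside the proof of Theorem \ref{MTnoncom}(2), so it can simply be cited.
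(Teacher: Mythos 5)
Your proposal is correct and follows exactly the route the paper intends: the paper states Theorem \ref{thm:MainTh1} as an immediate corollary of Theorem \ref{MTnoncom} together with Proposition \ref{LCWemb}, and your verification that $K_0(C)={\rm ker}\rho_C\subseteq{\rm ker}\rho_{f,C}\subseteq K_0(C)$ places $C$ in case (7) of Theorem \ref{MTnoncom} is precisely the (unstated) detail needed. Your reading-off of parts (1)--(3) from parts (7), (3), (6) and (2) of Theorem \ref{MTnoncom}, together with Proposition \ref{TextWC} for the existence of the $\T_d$ model, is exactly right.
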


There are many non-commutative \CA s which are $\W$ embeddable including
$\W$ itself.  In fact, we have the following.

\begin{prop}\label{AF-emb-1}
{{Let $A$ be a stably projectionless algebraically simple separable  \CA\,
with finite nuclear dimension 
which satisfies
the UCT.   Suppose that ${\rm ker}\rho_A=K_0(A).$}} Then $A$ is $\W$ embeddable.

\end{prop}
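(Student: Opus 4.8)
The plan is to reduce the statement to a Cuntz-semigroup embedding result for $\W$ (Lemma \ref{LWembB}), since under the stated hypotheses $\W$ is universal among the relevant algebras. First I would invoke the classification machinery already cited in the excerpt: by Lemma \ref{lem:Feb2620203PM} (applied with $A$ in place of $B$, using that $A$ is algebraically simple, stably projectionless, has finite nuclear dimension, satisfies the UCT, and—after checking—has a tracial state), $A$ is $\Z$-stable and has stable rank one. The hypothesis ${\rm ker}\rho_A = K_0(A)$ is exactly condition (2) of that lemma and is automatic once $A$ has a unique tracial state; but here it is imposed as a hypothesis, so I would simply use it directly. The key structural fact I need is that $A$ has a faithful lower semicontinuous trace so that ${\tilde T}(A) \neq \{0\}$; this follows from stable finiteness (a consequence of $\Z$-stability plus stable projectionlessness, or from the classification results of \cite{GLII}), and then every $2$-quasitrace of any hereditary subalgebra is a trace, again by the classification results.

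Next I would apply Lemma \ref{LWembB}: taking $B = A$, which is a separable infinite-dimensional simple $\Z$-stable \CA{} with stable rank one and with every $2$-quasitrace of every $\overline{bAb}$ a trace (for $b \in {\rm Ped}(A)_+\setminus\{0\}$), the lemma produces an embedding $\phi_{w,b}: \W \to A$. Wait—this is the wrong direction; I want to embed $A$ into $\W$, not $\W$ into $A$. So the correct strategy is different: I would instead use Theorem \ref{T-existoI} or the existence results for homomorphisms into $\W$-like targets. Actually the cleanest route: by the classification theorem (Theorem 15.6 of \cite{GLII} or the characterization of $\W$ in \cite{ElliottGongLinNiu}), $A$ is one of the classifiable stably projectionless algebras, and there is a monomorphism $A \to \W$ precisely when the Elliott invariant of $A$ maps compatibly into that of $\W$. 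Since $K_i(\W) = 0$ and $\W$ has a unique trace, the obstruction is purely at the level of traces: I need a faithful trace on $A$, which I have, and I need ${\rm ker}\rho_A = K_0(A)$ so that the $K_0$ pairing is trivial and there is no incompatibility with $K_0(\W) = 0$. That is exactly the given hypothesis.

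Concretely, I would proceed as follows. First, fix a nonzero $e \in {\rm Ped}(A)_+$ and pass to $A_e = {\rm Her}(e)$, which has continuous scale (by \cite{eglnp} Proposition 5.4 after rescaling) and satisfies all the same hypotheses, with $K_*(A_e) \cong K_*(A)$ by \cite{Brstable}. Then $A_e$ is a separable simple stably projectionless \CA{} with finite nuclear dimension, continuous scale, UCT, and $K_0(A_e) = {\rm ker}\rho_{A_e}$. By Theorem \ref{T-existoI} applied with $A$-role played by $\W$ (which is trivially $\W$-embeddable) — no, again backwards. The right tool is: since $\W \otimes A_e$ has the Elliott invariant of $\W$ (both $K$-groups vanish and the trace simplex of $\W \otimes A_e$ maps onto that of $\W$... actually one needs more care). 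Let me instead use the most direct statement: the second part of Lemma \ref{LWembB}, or better, a dimension-drop/AF-embedding argument as in \cite{Scha}: by Theorem A of \cite{Scha} there is an embedding of $A$ into a unital simple AF-algebra $D$ respecting a faithful trace, and then $D \otimes \W \cong \W$ by the classification, and the composite $A \to D \to D \otimes \W \cong \W$ (suitably corrected so strictly positive elements go to strictly positive elements, using that $\W$ is isomorphic to its hereditary subalgebras) is the desired embedding.

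The main obstacle I anticipate is ensuring that the faithful trace on $A$ actually exists and that the hypothesis ${\rm ker}\rho_A = K_0(A)$ is genuinely used rather than quietly assumed away: if $K_0(A)_+$ were nonzero and the rank pairing nontrivial, one could not embed into the traceless-$K_0$ algebra $\W$. So the proof must verify that $A$ is stably finite (hence has a faithful densely-defined trace, giving $T_f(A_e) \neq \emptyset$ after cutting down) and then check that the hypothesis ${\rm ker}\rho_A = K_0(A)$ is precisely what makes the $K_0$-part of the induced map to ${\rm Cu}^\sim(\W)$ well-defined and compatible. The verification of stable finiteness from the stated hypotheses (via $\Z$-stability and stable projectionlessness, or via membership in the classifiable class $\mathcal{D}$ of \cite{GLII}, \cite{eglnp}) together with the AF-embedding step of \cite{Scha} and the isomorphism $D \otimes \W \cong \W$ will be the technical heart; everything else is a routine invocation of the classification theorems already cited in the paper.
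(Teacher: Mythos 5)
Your proposal breaks down at the final step. You form the embedding as the composite $A \to D \to D \otimes \W \cong \W$, where $D$ is the unital simple AF algebra supplied by Theorem A of \cite{Scha}. But the middle arrow $D \to D \otimes \W$ does not exist as a *-homomorphism: $\W$ is nonunital (indeed projectionless), so there is no element of $\W$ with which one can tensor to get a *-homomorphism, and a unital $D$ cannot map into the projectionless algebra $D \otimes \W$. The Schafhauser-plus-$D\otimes\W\cong\W$ route is exactly what the paper uses for Proposition \ref{AF-emb}, but there the source algebra is $A\otimes\zo$: since one has an embedding $\psi_{z,w}:\zo\hookrightarrow\W$, the tensor map $\phi\otimes\psi_{z,w}:A\otimes\zo\to D\otimes\W$ makes sense. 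For $A$ itself, with no given $\zo$-factorization, your composite is not defined. This is also reflected in the fact that your argument never genuinely uses the hypothesis ${\rm ker}\rho_A=K_0(A)$ — a warning sign, since if that hypothesis could be quietly discharged the proposition would have been stated without it.

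The paper's actual proof takes a different route and does use the hypothesis. One first passes to a continuous-scale cut-down: form $C=A\otimes\K\otimes Q$ (with $Q$ the universal UHF algebra), choose $c\in C_+^1$ so that $\overline{cCc}$ has continuous scale (Remark 5.2 of \cite{eglnp}), use the fact that $A$ is algebraically simple to conclude $e_A\otimes 1_Q\in{\rm Ped}(C)$ and compare it with $c$ in the Cuntz semigroup, and so arrange an embedding $A\hookrightarrow D:=M_n(\overline{cAc})\otimes Q$, a continuous-scale stably projectionless algebra with $K_0(D)={\rm ker}\rho_D$. Then one applies the existence half of the classification theorem directly: $D\in{\mathcal D}_0$ by Theorem 15.5 of \cite{GLII}, $\W$ has the form $B_T$, and one prescribes $\kappa=0$ on $K$-theory, $\kappa_T:T(D)\to T(\W)$ collapsing everything to the single trace, and $\kappa_{cu}$ compatible with $(\kappa,\kappa_T)$. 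The compatibility of $\kappa=0$ with $\kappa_T$ is precisely the statement that $\rho_D(x)(\tau)=0$ for all $x\in K_0(D)$ and $\tau\in T(D)$, i.e. ${\rm ker}\rho_D=K_0(D)$; this is where the hypothesis ${\rm ker}\rho_A=K_0(A)$ is used. Theorem 12.8 of \cite{GLII} then produces the monomorphism $D\to\W$, and restriction gives the desired embedding of $A$.
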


\begin{proof}
{{Let $B=A\otimes {\mathcal K}$ and $C=B\otimes Q,$ where $Q$ is the UHF-algebra with $(K_0(Q), K_0(Q)_+,[1_Q])=
({\mathbb Q,} {\mathbb Q}_+, 1).$ It follows  that there exists an element $c\in C_+^1$ such that
${\overline{cCc}}$ has continuous scale (see Remark 5.2 of \cite{eglnp}).  Let $e_A\in A_+^1$ be a strictly positive element 
of $A.$ Since $A$ is algebraically simple, ${\rm Per}(A)=A.$ It follows that $e_A\otimes 1_Q$ is in 
${\rm Ped}(C).$ Note also $c\in {\rm Ped}(A).$ It follows that there is an integer $n\ge 1$ such that $\la e_A\ra \le n\la c\ra.$
\Wlog, we may assume that $e_A\in 
M_n(\overline{cAc})$ (recall $C=A\otimes {\mathcal K}\otimes Q$). 
Put $D=M_n(\overline{cAc})\otimes Q.$ Then $A$ is embedded in to $D.$ 
It suffices to show that $D$ is $\W$ embeddable.}}

Note that $D$ is a stably projectionless simple \CA\, with continuous scale and satisfies the UCT.
Moreover, ${\rm ker}\rho_D=K_0(D).$  Furthermore, $D$ has finite nuclear dimension.
Since $D$ is stably projectionless, $T(D)\not=\emptyset.$
By Theorem 15.5 of \cite{GLII} (see the last line of the proof too), $D\in {\mathcal D}_0.$  We then apply Theorem 12.8 
of \cite{GLII} as $\W$ has the form $B_T$ (with $K_i(\W)=0$ ($i=0,1$) and unique tracial state). We choose 
$\kappa=0$ and $\kappa_T: T(D)\to T(\W),$ by mapping all points to one point and with $\kappa_{cu}$
compatible with $(\kappa, \kappa_T).$  Thus Theorem 12.8 {{of \cite{GLII}}} provides an embedding from $D$ to $\W.$

\end{proof}

\begin{rmk}\label{Rmwemb}
{{ Note that $A\otimes Q$ is ${\mathcal Z}$-stable and nuclear. Therefore, by a recent result \cite{CETWW},
it has finite nuclear dimension. So the assumption that $A$ has finite nuclear dimension can be removed.
Consider any separable simple stably projectionless \CA\, $A$ of stable rank one with $QT(A)=T(A)=\emptyset.$ 
Assume that ${\rm Cu}(A)={\rm LAff}_+(T(A))$  and has  continuous scale. Then 
${\rm Cu}^\sim(A)=K_0(A)\cup {\rm LAff}_{0+}^\sim (T_0(A))$  (see  Theorem 6.2.3 of \cite{RI}, also Theorem 7.3 of \cite{eglnp}
for the statement as well as the notation).  By Corollary A.7 of \cite{ElliottGongLinNiu},
there exists $\tau\in T(A)$
such that $\rho_A(x)(\tau)=0$  for all $\tau\in T(A).$  Note that ${\rm Cu}^\sim(\W)={\rm LAff}_{0+}(T_0(\W))=\R\cup\{+\infty\}.$
Define $\lambda: Cu^\sim(A)\to  {\rm LAff}_{0+}^\sim (T_0(\W)))$   by $\lambda|_{K_0(A)}=0$
and $\lambda(f)(t_W)=f(\tau)$ for all $f\in {\rm LAff}_{0+}(T_0(A)).$ This gives a morphism 
from ${\rm Cu}^\sim(A)$ to ${\rm Cu}^\sim(\W).$
Let us also assume that $A$ is an inductive limit of 1-dimensional non-commutative CW complexes
with trivial $K_1(A).$ 
Then,  by Theorem 1.0.1 of \cite{RI}, $A$ is $\W$ embeddable. 
In fact, even in this case, $K_1(A)$ is trivial can be removed 
(see section 6 of \cite{GLIII}).}}

\end{rmk}

Recall that there is a separable simple stably projectionless \CA\, $\zo$ with a unique tracial state
$\tau_z,$ with finite nuclear dimension and satisfies the UCT such
that $K_0(\zo)=\ZI={\rm ker}\rho_{\zo}$ and $K_1(\zo) = 0$.
  By \cite{GLII}, there is only one such simple \CA\, up to isomorphism.
  Note that $T(A)=T(A\otimes\zo.)$
Moreover, as abelian groups,
\beq
K_i(A\otimes \zo)\cong K_i(A),\,\,\, i=0,1.
\eneq

\begin{prop}\label{AF-emb}
Let $A$ be a separable exact \CA\, which satisfies the UCT and has a
faithful amenable tracial state. Then $C:=A\otimes \zo$ is $\W$ embeddable
and ${\rm ker}\rho_{f,C}=K_0(C).$

\end{prop}

\begin{proof}
It follows from Theorem A of \cite{Scha} that there is a unital simple AF-algebra $B$
with a unique tracial state $\tau_B$
and a monomorphism $\phi: A\to B$ (which preserves the trace).
There is also an embedding $\psi_{z,w}: \zo\to \W.$
Thus we obtain {{an}} embedding $\phi_A:=\phi\otimes \psi_{z,w}: A\otimes \zo\to
B\otimes \W.$ Note that $K_0(B\otimes \W)=\{0\}.$
Since $B\otimes \W$ has only one tracial state (namely $\tau_B\otimes \tau_W$),
 by Theorem 7.5 of \cite{ElliottGongLinNiu}, $B\otimes \W\cong \W.$ 
Thus, $A\otimes \zo$ is $\W$ embeddable.

To see the last part of the statement, 
note that every  $y\in K_0(A\otimes \zo)$ may be written
as $x\otimes x_0,$ where $x_0\in K_0(\zo)=\ZI$ is a generator. Note 
that every faithful tracial state of $A\otimes \zo$ has
the form $\tau\otimes \tau_z,$ where $\tau\in T_f(A).$ But $\tau(x\otimes x_0)=\tau(x)\tau_z(x_0)=0.$
\end{proof}

\begin{thm}\label{MTnoncom1}
Let $B$ be a separable amenable \CA\, which has a faithful tracial state and  satisfies the UCT,
and let $A=B\otimes \zo.$

(1) If $\tau_1, \tau_2: A\to \C(\W)$ are two essential extensions,
then $\tau_1\sim^u \tau_2$ if and only if $KK(\tau_1)=KK(\tau_2).$

(2)  The map
\beq
\Lambda: \Ext^u(A,\W)\to KK(A, \C(\W))\cong {\rm Hom}(K_0(A), \R)
\eneq
defined by $\Lambda([\tau])=KK(\tau)$  is a group isomorphism.

(3) An essential extension $\tau$ is trivial if and only if $KK(\tau)=0,$
and all essential trivial extensions are unitarily equivalent.

\end{thm}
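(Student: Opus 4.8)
The plan is to deduce Theorem \ref{MTnoncom1} as a corollary of Theorem \ref{MTnoncom}, the key point being that $A=B\otimes\zo$ satisfies the hypotheses of that theorem and, moreover, satisfies the condition ${\rm ker}\rho_{f,A}=K_0(A)$ that triggers part (7). First I would invoke Proposition \ref{AF-emb}: since $B$ is a separable amenable (hence exact) \CA\ satisfying the UCT with a faithful (amenable, by nuclearity) tracial state, $A=B\otimes\zo$ is $\W$ embeddable and ${\rm ker}\rho_{f,A}=K_0(A)$. Also $A$ is separable, amenable (tensor product of amenable algebras) and satisfies the UCT ($\zo$ satisfies the UCT, and the UCT is preserved under tensoring by a UCT algebra, or one uses $K_*(A)\cong K_*(B)$ together with the UCT for $B$). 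Hence $A$ meets all the standing hypotheses of Theorem \ref{MTnoncom}.

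Then parts (1) and (2) of Theorem \ref{MTnoncom1} are literally parts (1) and (2) of Theorem \ref{MTnoncom} applied to this $A$: $\tau_1\sim^u\tau_2$ iff $KK(\tau_1)=KK(\tau_2)$, and $\Lambda\colon\Ext^u(A,\W)\to KK(A,\C(\W))\cong{\rm Hom}(K_0(A),\R)$, $\Lambda([\tau])=KK(\tau)$, is a group isomorphism. For part (3), I would apply part (7) of Theorem \ref{MTnoncom}: since ${\rm ker}\rho_{f,A}=K_0(A)$, all essential trivial extensions of $A$ by $\W$ are unitarily equivalent, and an essential extension $\tau$ is trivial if and only if $KK(\tau)=0$. (Combined with part (3) of Theorem \ref{MTnoncom}, these trivial extensions are the diagonal ones and they induce the neutral element of $\Ext^u(A,\W)$, though for the present statement only the "trivial $\iff KK=0$, all unitarily equivalent" conclusion is needed.)

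The only genuine verifications are the three inputs to Theorem \ref{MTnoncom}: that $A$ is $\W$ embeddable, amenable, and UCT, plus the equality ${\rm ker}\rho_{f,A}=K_0(A)$; all of these are handled by Proposition \ref{AF-emb} and standard permanence properties, so there is essentially no obstacle — the theorem is a bookkeeping consequence of the earlier, harder results. I would write it as follows.

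\begin{proof}
Since $B$ is separable, amenable and satisfies the UCT and has a faithful tracial state (which is amenable because $B$ is nuclear), Proposition \ref{AF-emb} shows that $A=B\otimes\zo$ is $\W$ embeddable and that ${\rm ker}\rho_{f,A}=K_0(A)$. Moreover $A$ is separable and amenable, being the tensor product of two amenable \CA s, and $A$ satisfies the UCT, since $\zo$ satisfies the UCT and the UCT is preserved under tensoring by a nuclear UCT algebra (alternatively, $K_*(A)\cong K_*(B)$ and $B$ satisfies the UCT). Thus $A$ satisfies all the hypotheses of Theorem \ref{MTnoncom}, and in addition ${\rm ker}\rho_{f,A}=K_0(A)$.

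Statements (1) and (2) are now precisely statements (1) and (2) of Theorem \ref{MTnoncom} applied to this $A$.

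For (3), apply part (7) of Theorem \ref{MTnoncom}: since ${\rm ker}\rho_{f,A}=K_0(A)$, an essential extension $\tau\colon A\to\C(\W)$ is trivial if and only if $KK(\tau)=0$, and all essential trivial extensions of $A$ by $\W$ are unitarily equivalent. This completes the proof.
\end{proof}
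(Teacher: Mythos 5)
Your proposal is correct and is exactly the intended argument: the paper gives no separate proof of Theorem \ref{MTnoncom1}, stating it immediately after Proposition \ref{AF-emb} precisely because it is the corollary of Theorem \ref{MTnoncom} (parts (1), (2), and (7)) obtained once that proposition supplies the $\W$-embeddability of $A=B\otimes\zo$ and the identity ${\rm ker}\rho_{f,A}=K_0(A)$. Your verification of the remaining hypotheses (separability, amenability, UCT for the tensor product) is the standard bookkeeping the authors left implicit.
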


For the rest of this section, we consider essential extensions of the form
$$
0\to \W\to E\to C(X)\to 0,
$$
where $X$ is a connected and locally connected compact metric space.

\begin{lem}\label{lem:ProjectionLifting}
Let $p \in \C(\W)$ be a nonzero projection such that
$[p]_{K_0(\C(\W))} \in (0,1).$
Then $p$ can be lifted to a projection in $M(\W)$.

Moreover, if $p\not=1_{\C(\W)}$ and $[p]\not\in (0,1),$
then $p$ cannot be lifted to a nonzero projection in $M(\W).$

\end{lem}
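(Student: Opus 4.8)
The plan is to use the fact that $\C(\W)$ is purely infinite simple with real rank zero, together with the six-term computation $K_0(M(\W))\cong\R$, $K_1(M(\W))=\{0\}$, $K_0(\C(\W))\cong\R$, $K_1(\C(\W))=\{0\}$ from Theorem \ref{K-six}, and the identification $K_0(\C(\W))\cong\Aff(T(\W))/\rho_\W(K_0(\W))$ where the latter group is just $\R$ (with $\rho_\W(K_0(\W))=\{0\}$). First I would prove liftability when $[p]\in(0,1)$. Let $f\in M(\W)_{sa}$ be any self-adjoint lift of $p$. Since $p=p^2=p^*$, we have $f^2-f\in\W$, so the spectrum of $\pi(f)=p$ is $\{0,1\}$ and the spectrum of $f$ clusters only near $\{0,1\}$; by functional calculus, modifying $f$ by an element of $\W$, we may assume $\mathrm{sp}(f)\subseteq[0,\epsilon]\cup[1-\epsilon,1]$ for small $\epsilon$ and then, applying a continuous function that is $0$ on $[0,\epsilon]$ and $1$ on $[1-\epsilon,1]$, obtain a lift $g=g^2=g^*\in M(\W)$ of $p$ except that $g$ may fail to be a genuine projection — actually this functional-calculus argument already produces an honest projection $g\in M(\W)$ with $\pi(g)=p$. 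So the first part is essentially automatic: \emph{any} projection in a corona algebra lifts to a projection in the multiplier algebra when the element is idempotent modulo the ideal, using spectral projections. Wait — the subtlety is that $p$ need not lift to a projection in general; what is true is that a self-adjoint lift $f$ has the property that $\pi(f)=p$ is a projection, hence $\mathrm{sp}(f)\setminus\{0,1\}$ accumulates only at points of $\mathrm{sp}(\pi(f))$... this requires care, and in fact the correct statement is that idempotents-mod-ideal always lift to idempotents only under extra hypotheses. The hypothesis $[p]\in(0,1)$ must be what makes it work.

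So, more carefully: the real content of the forward direction is to realize $p$ as $\pi(e)$ for a projection $e\in M(\W)$. I would instead argue via $K$-theory and the structure of $M(\W)$. By Lemma \ref{LK0(M(B))} and Theorem \ref{K-six}, $V(M(\W))=V(\W)\sqcup\Aff_+(T(\W))=\{0\}\sqcup[0,\infty)$, and every projection $q\in M(\W)\setminus\W$ is classified up to Murray--von Neumann equivalence by $\tau_W(q)\in(0,\infty)$. Pick the value $r:=[p]_{K_0(\C(\W))}\in(0,1)$ (using the isomorphism $K_0(\C(\W))\cong\R$ sending $[1_{\C(\W)}]\mapsto 1$, which is legitimate since $\tau_W$ extends to a state on $M(\W)$ with $\tau_W(1)=1$, inducing the boundary/quotient identification). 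Choose a projection $e\in M(\W)$ with $\tau_W(e)=r$ and $e\notin\W$ (possible since $r\in(0,\infty)$, and $r<1$ so we may even take $e\le 1$ — indeed $e$ is not $1$ because $\tau_W(1)=1\ne r$, and $e\notin\W$ because $\tau_W(e)=r>0$ while every projection in $\W$ has trace $0$ as $K_0(\W)=\{0\}$). Then $\pi(e)$ is a nonzero projection in $\C(\W)$ with $[\pi(e)]=r=[p]$ in $K_0(\C(\W))$. Since $\C(\W)$ is purely infinite simple, projections with the same $K_0$-class are Murray--von Neumann equivalent, so there is a partial isometry $v\in\C(\W)$ with $v^*v=\pi(e)$, $vv^*=p$. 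Now I would lift $v$: since $\pi(e)$ lifts to the projection $e$, the partial isometry $v$ lifts to a partial isometry $V\in M(\W)$ with $V^*V=e$ (standard: lift $v$ to $x\in M(\W)$ with $xe$ close to $x$, polar-decompose $xe$ using that $M(\W)$ has the relevant lifting property, or use that $M(\W)$ has stable rank one issues — cleanest is to use $\pi(xe(ee)^{...})$; I will use the fact that $W^*$-like behavior of $M(\W)$ on the hereditary subalgebra $eM(\W)e$ suffices). Then $\tilde e:=VV^*\in M(\W)$ is a projection lifting $p$.

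The reverse direction — if $p\ne 1_{\C(\W)}$ and $[p]\notin(0,1)$, then $p$ cannot be lifted to a nonzero projection — is where I would spend the most care, and I expect it to be the main obstacle in terms of getting the bookkeeping exactly right. Suppose $e\in M(\W)$ is a projection with $\pi(e)=p$. If $e\in\W$ then $p=0$, contradicting $p\ne 0$; so $e\notin\W$, hence by Lemma \ref{LK0(M(B))}/Theorem \ref{K-six} the value $\tau_W(e)$ lies in $(0,\infty)$, and moreover $1-e$ is also a projection in $M(\W)$, so $\tau_W(1-e)=1-\tau_W(e)\in\Aff_+(T(\W))\cup V(\W)$; since $p\ne 1_{\C(\W)}$ we have $1-e\notin\W$ hence $\tau_W(1-e)\in(0,\infty)$, forcing $\tau_W(e)\in(0,1)$. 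Finally, under the identification $K_0(\C(\W))\cong\R$, the class $[p]_{K_0(\C(\W))}$ equals $\tau_W(e)$: this is because the boundary map in the six-term sequence is the rank/trace on $M(\W)$ composed with the quotient, and since $\rho_\W(K_0(\W))=\{0\}$ the quotient $\Aff(T(\W))/\rho_\W(K_0(\W))=\R$ sees $\tau_W(e)$ directly — concretely, $[\pi(e)]_{K_0}$ is represented through $K_0(M(\W))\to K_0(\C(\W))$ by $[e]_{K_0(M(\W))}=\tau_W(e)$. Hence $[p]=\tau_W(e)\in(0,1)$, contradicting the hypothesis $[p]\notin(0,1)$. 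Therefore no such $e$ exists. The one technical point to nail down is that the isomorphism $K_0(M(\W))\to K_0(\C(\W))\cong\R$ is exactly the trace (not a rescaling), which follows from Theorem \ref{K-six} and the normalization $\tau_W(1_{M(\W)})=1\mapsto[1_{\C(\W)}]$; I would state this normalization explicitly at the start of the proof.
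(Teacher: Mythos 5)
Your second half (the non-liftability when $p\neq 1$ and $[p]\notin(0,1)$) is correct and is essentially the paper's argument: a nonzero projection $P\in M(\W)$ has $\tau_W(P)\in(0,1]$, faithfulness of the extended trace rules out $\tau_W(P)=1$ unless $P=1$, and the class $[\pi(P)]$ is $\tau_W(P)$ under the identification $K_0(\C(\W))\cong\R$. Your setup for the forward direction is also on the right track: produce a projection $e\in M(\W)\setminus\W$ with $\tau_W(e)=r$ (the paper quotes Corollary 4.6 of \cite{LinNg} for this), and observe that $[\pi(e)]=[p]$, so $\pi(e)$ and $p$ are Murray--von Neumann equivalent in the purely infinite simple algebra $\C(\W)$.

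The genuine gap is the final transport step. You take a partial isometry $v\in\C(\W)$ with $v^*v=\pi(e)$, $vv^*=p$, and assert that $v$ lifts to a partial isometry $V\in M(\W)$ with $V^*V=e$. This is not standard and you give no argument for it: if $x$ is any lift of $v$ and $y=xe$, then $y^*y\in eM(\W)e$ is only invertible \emph{modulo} $e\W e$, so the polar decomposition $y=V|y|$ need not take place inside $M(\W)$ ($0$ may be a non-isolated point of ${\rm sp}(y^*y)$), and appealing to ``$W^*$-like behavior'' of $M(\W)$ is not a proof, since $M(\W)$ is not a von Neumann algebra and does not obviously have the relevant lifting property. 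The paper sidesteps this entirely: since $\pi(e)$ and $p$ are both \emph{proper} nonzero projections with the same $K_0$-class in the purely infinite simple algebra $\C(\W)$, their complements are also equivalent, so $v$ extends to a unitary $u\in\C(\W)$ with $u\pi(e)u^*=p$; because $K_1(\C(\W))=0$ and $\C(\W)$ is purely infinite simple, $u\in U_0(\C(\W))$ and hence lifts to a unitary $U\in M(\W)$, and $UeU^*$ is the desired projection lifting $p$. Replacing your partial-isometry lift by this unitary lift repairs the proof; as written, the step is unjustified.
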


\begin{proof}
Say that $[p]_{K_0(\C(\W))} = r \in (0,1)$.
By  {{Corollary 4.6 of}} \cite{LinNg}
(see also  Section 5 of \cite{KNZPICor}),
let $Q \in M(\W)\setminus \W$ be a projection such that
$\tau(Q) = r.$
Therefore,
by our computation of $K_0(\C(\W))$, and since $\C(\W)$ is simple purely infinite,
$$\pi(Q) \sim p
\,\,\,{\rm in}\,\,\,\C(\W).
$$
Since $\C(\W)$ is simple purely infinite and since
$\pi(Q) \neq 1 \neq p,$
{{there is  a unitary}} $u \in \C(\W)$ such that
$$u \pi(Q) u^* = p.$$
Since $\C(\W)$ is simple purely infinite and since $K_1(\C(\W)) = 0$, 
$u$ lifts to a unitary  {{$U \in M(\W)$.}}
It follows that {{$\pi(UQU^*)=p.$}}

The last part follows from the fact that if $P\in M(\W)$
is a non-zero projection,  then $\tau_W(P)\in (0,1].$
\end{proof}

\begin{lem}
Let $X$ be a connected and locally connected compact
 metric space and let $x_0\in X.$
Suppose that  $\phi : C(X) \rightarrow \C(\W)$ is
an essential extension.
Then there exists a proper subprojection
$p \leq \phi(1)$
such that
$$p \phi(f) = \phi(f)p=\phi(f)
\rforal f \in C_0(X \setminus \{ x_0 \}).$$

Moreover, for all $s \in (0,1)$ we
may choose $p$
such that there is $P\in M(\W)$ for which 
$\pi(P)=p$ and $\tau_W(P)=s.$
\label{lem:May202019}
\end{lem}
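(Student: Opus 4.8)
The plan is as follows. Write $C := C_0(X\setminus\{x_0\})$ (an ideal of $C(X)$ with $C(X)/C\cong\mathbb{C}$ via evaluation at $x_0$), and also denote by $\phi$ the restriction $\phi|_C:C\to\C(\W)$, which is again an essential extension, now of $\W$ by the \emph{non-unital} algebra $C$. Since $\phi(1)\phi(f)=\phi(f)\phi(1)=\phi(f)$ for all $f\in C$, the projection $\phi(1)$ acts as a two-sided unit on $\phi(C)$, so $\phi(C)$ lies in the corner $B_1:=\phi(1)\C(\W)\phi(1)$. Fix a strictly positive element $e_C\in C$ with $\|e_C\|=1$ and put $D:=\overline{\phi(C)\C(\W)\phi(C)}=\overline{\phi(e_C)\C(\W)\phi(e_C)}$, a $\sigma$-unital hereditary \SCA\, of $B_1$. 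The aim is to produce a non-zero projection $e_1\le\phi(1)$ with $e_1\perp\phi(C)$ and a prescribed $K_0$-class in $K_0(\C(\W))=\R$, and then take $p:=\phi(1)-e_1$.

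First I would show $D\subsetneq B_1$. The algebra $B_1$ is unital with unit $\phi(1)$, and $\phi:C(X)\to B_1$ is a unital injective $*$-homomorphism, hence spectrum preserving; since $e_C\in C$ we have $e_C(x_0)=0$, so $e_C$ is not invertible in $C(X)$, and therefore $0\in\mathrm{sp}_{B_1}(\phi(e_C))$. Thus $\phi(e_C)$ is not invertible in the unital algebra $B_1$, so it cannot be a strictly positive element there, forcing $D\ne B_1$. By Pedersen's double annihilator theorem (used as in the proof of Lemma \ref{lem:largecomplement}), the annihilator $N$ of $D$ inside $B_1$ is non-zero. Since $\C(\W)$ is purely infinite and simple, so are the corner $B_1$ and its hereditary \SCA\, $N$; in particular $N$ has real rank zero and contains a non-zero projection $e_2$, which by $e_2\perp D$ satisfies $e_2\phi(f)=\phi(f)e_2=0$ for every $f\in C$ (apply $e_2$ to $\phi(f)^*\phi(f)\in D$). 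Moreover $e_2\C(\W)e_2\subseteq N$ is unital and purely infinite simple, and since $e_2$ is full in the simple algebra $\C(\W)$, Brown's theorem gives $K_0(e_2\C(\W)e_2)=K_0(\C(\W))=\R$.

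Now let $s\in(0,1)$ be given. Using that $V(B)\to K_0(B)$ is surjective for every unital purely infinite simple $B$, choose a non-zero projection $e_1\in e_2\C(\W)e_2\subseteq N$ with $[e_1]_{K_0(\C(\W))}=[\phi(1)]_{K_0(\C(\W))}-s$ (if this class is $0$, take any non-zero projection of $K_0$-class $0$). Then $e_1$ lies in the corner $B_1$, so $e_1\le\phi(1)$; also $e_1\perp\phi(C)$; and $p:=\phi(1)-e_1$ is a projection with $p\le\phi(1)$, $p\ne\phi(1)$ (since $e_1\ne0$) and $p\ne0$ (since $\phi(1)\notin N$, because $\phi(1)\phi(e_C)=\phi(e_C)\ne0$), satisfying $p\phi(f)=\phi(f)p=\phi(f)$ for all $f\in C_0(X\setminus\{x_0\})$. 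Finally, $\phi(1)=p+e_1$ is an orthogonal sum of projections, so $[p]_{K_0(\C(\W))}=[\phi(1)]-[e_1]=s\in(0,1)$, and Lemma \ref{lem:ProjectionLifting} lifts $p$ to a projection $P\in M(\W)$ with $\pi(P)=p$ and $\tau_W(P)=[p]=s$. When $X=\{x_0\}$, so $C=0$, the statement is immediate, since $\phi(1)\C(\W)\phi(1)$ is purely infinite simple with $K_0=\R$ and hence contains a proper subprojection of $\phi(1)$ of every prescribed $K_0$-class in $(0,1)$.

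The main obstacle is the first step, namely proving that the hereditary \SCA\, generated by $\phi(C)$ is proper \emph{inside the corner} $B_1=\phi(1)\C(\W)\phi(1)$, and not merely inside $\C(\W)$. The latter is immediate from Lemma \ref{lem:largecomplement}, but a complement furnished there need not be dominated by $\phi(1)$, whereas we need $p\le\phi(1)$. The resolution above uses only that $C$ is non-unital; the rest is routine theory of purely infinite simple \CA s together with the already established fact $K_0(\C(\W))=\R$ (Theorem \ref{K-six}) and the lifting Lemma \ref{lem:ProjectionLifting}.
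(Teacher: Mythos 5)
Your proof is correct and is essentially the paper's: both work inside the corner $\phi(1)\C(\W)\phi(1)$, use Pedersen's double annihilator theorem together with the fact that $0\in\mathrm{sp}(e_C)$ to produce a nonzero hereditary complement of ${\rm Her}(\phi(e_C))$ there, extract a projection by real rank zero of the purely infinite simple corona, adjust its $K_0$-class using pure infiniteness and $K_0(\C(\W))=\R$, and lift via Lemma \ref{lem:ProjectionLifting}. The only variation is bookkeeping: you show properness of the hereditary subalgebra directly (a strictly positive element of a unital algebra must be invertible) and build $p$ by subtracting $e_1$ from $\phi(1)$, whereas the paper argues by contradiction via the double-annihilator and assembles $p=p_1+q_1$.
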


\begin{proof}
Let $e_C$ be a strictly positive element of $C_0(X\setminus \{x_0\})$ for
which $\| e_C \| = 1$. 
Let $B={\rm Her}(\phi(e_C))\subset \C(\W).$ Note that ${\rm sp}(e_C)=[0,1].$
Write  $e=\phi(1).$ If $ey=0$ for all $y\in B^\perp,$ then $(1-e)y=y=y(1-e)$
for all $y \in B^{\perp}$.  
This implies that $B^\perp=(1-e)\C(\W)(1-e).$  Then, by Theorem 15 of \cite{Ped},
$B=(B^\perp)^\perp=e\C(\W)e.$   So $B$ is unital. This 
contradicts that ${\rm sp}(e_C)=[0,1].$
Therefore there is $y\in (B^\perp)_+$ such that $eye\not=0.$
Since $e\C(\W)e$ has real rank zero, there is a projection $p_1\in e\C(\W)e$
such that $p_1 \neq e$ and $p_1b=bp_1=b$ for all $b\in B$. 

Since $\C(\W)$ is simple purely infinite, we can find a projection
$q_1 \in (e - p_1) \C(\W)(e - p_1)$ with $q_1 \neq e - p_1$ such that
$[p_1 + q_1] = s \in (0,1)$.
If we define $p := p_1 + q_1$ then $p b = b$ for all $b \in B$.

Also, by {{Lemma \ref{lem:ProjectionLifting},}}
$p$ lifts to a projection in 
$P \in M(W)$. Necessarily, $\tau_W(P) = s$. 
\end{proof}

\begin{thm}\label{thm:Injectivity2}
Let $X$ be a connected and locally connected compact metric space and
$\phi, \psi : C(X) \rightarrow \C(\W)$
{{be}}  essential extensions.

(1)  {{Then}} $KK(\phi) = KK(\psi)$ if and only if 
$$\phi \sim \psi.$$

(2) If both $\phi$ and $\psi$ are unital or both are non-unital, then
$KK(\phi)=KK(\psi)$ if and only if
$$\phi \sim^u \psi.$$

(3) The map
$$\Lambda: {{\Ext(C(X),\W)}}\to KK(C(X), \C(\W))={\rm Hom}(K_0(C(X)), \R)$$
defined by $\Lambda([\tau])=KK(\tau)$  is a group isomorphism.

(4) The zero element of {{$\Ext(C(X), \W)$}} (or $KK(C(X), \C(\W))$) is not the 
class of
a trivial {{(splitting)}} extension. 

(5)   Let $\tau: C(X)\to \C(\W)$ be an essential extension.
Then $\tau$ is trivial
if and only if
$$
KK(\tau|_{C_0(X\setminus \{x_0\})})=0\andeqn \makebox{  either  }
[\tau(1_{C(X)})]\in (0,1) \makebox{  or  } \tau(1_{C(X)}) = 1_{\C(\W)}.
$$

\end{thm}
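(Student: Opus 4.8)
The plan is to deduce everything from Theorem \ref{MTnoncom} applied to $C := C_0(X\setminus\{x_0\})$ (which is $\W$-embeddable and satisfies $K_0(C) = {\rm ker}\rho_C = {\rm ker}\rho_{f,C}$ by Proposition \ref{LCWemb} and the fact that $C(X)$ has the trivial rank pairing), together with the projection-lifting analysis in Lemmas \ref{lem:ProjectionLifting} and \ref{lem:May202019}. The core observation is that an essential extension $\phi:C(X)\to\C(\W)$ is essentially determined by its restriction $\phi|_C$ to $C=C_0(X\setminus\{x_0\})$ together with the projection $\phi(1_{C(X)})$, and that by Lemma \ref{lem:May202019} the restriction $\phi|_C$ always lands inside a corner $p\C(\W)p$ with $p\le\phi(1_{C(X)})$ and $[p]$ taking any prescribed value in $(0,1)$.

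For (1) and (2): one direction is automatic since weak/unitary equivalence implies equality of $KK$-classes. For the converse, suppose $KK(\phi)=KK(\psi)$. First I would observe $KK(\phi)=KK(\psi)$ implies $KK(\phi|_C)=KK(\psi|_C)$ in $KK(C,\C(\W))$ (restriction is functorial; and since $K_0(C)\to K_0(C(X))$ is injective with cokernel $\ZI$, $\underline K(\phi|_C)=\underline K(\psi|_C)$ follows). Apply Theorem \ref{MTnoncom}(1) to the $\W$-embeddable algebra $C$: $\phi|_C\sim^u\psi|_C$ via a unitary $u\in M(\W)$ (note both restrictions are automatically nonunital essential extensions of $C$ since $C$ is nonunital). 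Replacing $\psi$ by $\pi(u)^*\psi(\cdot)\pi(u)$, we may assume $\phi|_C=\psi|_C$. Now $\phi(1_{C(X)})$ and $\psi(1_{C(X)})$ are two projections in $\C(\W)$ each dominating $p:=$ the open projection of $\overline{\phi(C)\C(\W)\phi(C)}$; the complementary projections $\phi(1)-p$ and $\psi(1)-p$ lie in $p^\perp\C(\W)p^\perp$. Using that $K_0(\C(\W))=\R$, $K_1(\C(\W))=0$ and $\C(\W)$ is purely infinite simple, and that $[\phi(1)]=[\psi(1)]$ (this is part of $KK(\phi)=KK(\psi)$, via the class of the unit in $K_0$), I would produce a unitary in $p^\perp\C(\W)p^\perp+p$ (hence in $\C(\W)$) conjugating $\phi(1)$ to $\psi(1)$ while fixing $p$ and hence fixing $\phi|_C=\psi|_C$; this gives $\phi\sim\psi$, and if moreover both are unital or both nonunital then $\phi(1)=\psi(1)$ or the conjugating unitary can be chosen in $M(\W)$ (lifting via $K_1(\C(\W))=0$ and simplicity), giving $\phi\sim^u\psi$. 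The subtlety distinguishing (1) from (2): in the unital-vs-nonunital mixed case the conjugating unitary need not lift to $M(\W)$ because the projections differ, so only weak equivalence survives.

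For (3): $KK(C(X),\C(\W))\cong{\rm Hom}(K_0(C(X)),\R)$ by the UCT since $K_*(\C(\W))$ is divisible and $K_1(\C(\W))=0$. That $\Lambda$ is a well-defined semigroup homomorphism on $\Ext(C(X),\W)$ is standard (BDF sum is additive on $KK$); injectivity is (1). For surjectivity, given $x\in{\rm Hom}(K_0(C(X)),\R)$, extend as in the proof of \ref{MTnoncom}(2): use Corollary 8.5 of \cite{LinFullext} to realize the corresponding $KL$-class by a homomorphism $\widetilde A\to\C(\W)$ of the unitization and restrict; but here I must also check the neutral element and group structure, which follows because $\Ext(C(X),\W)$ is a group by Theorem \ref{thm:ExtIsAGroup} and $\Lambda$ sends the class of a trivial extension to an element which, by (5), is generally nonzero — so one shows directly $\Lambda$ is a bijective monoid homomorphism between groups. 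For (4): a trivial (split) extension $\tau$ comes from a monomorphism $H:C(X)\to M(\W)$; then $\tau(1_{C(X)})=\pi(H(1_{C(X)}))$ lifts to the projection $H(1_{C(X)})\in M(\W)$ with $\tau_W(H(1))=1$ (if $H$ unital) or $\tau_W(H(1))=r\in(0,1)$, and $H|_C$ is a genuine trivial diagonal-type map with $\rho_C$-pairing forced to be $r\cdot$(a trace), so $KK(\tau|_C)=0$ but $[\tau(1)]=r$ can be made any value in $(0,1)$; conversely the zero element $0\in KK(C(X),\C(\W))$ would force $[\tau(1)]=0$, impossible for a nonzero projection-valued unital-type map, and if $\tau$ is nonunital trivial then $[\tau(1)]\in(0,1)$ is nonzero in $\R=K_0(\C(\W))$. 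Hence $0$ is never the class of a split extension.

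For (5): If $\tau$ is trivial, $\tau=\pi\circ H$ for a monomorphism $H:C(X)\to M(\W)$; then $H|_C:C\to M(\W)$ is a trivial extension of $C$, so $KK(\pi\circ H|_C)=0$ by Theorem \ref{MTnoncom}(3) (it is even diagonal up to unitary equivalence, but we need less), and $H(1_{C(X)})$ is a projection in $M(\W)$ with $\tau_W(H(1))\in(0,1]$, whence $[\tau(1_{C(X)})]\in(0,1]$ in $K_0(\C(\W))=\R$, i.e. either $[\tau(1)]\in(0,1)$ or $\tau(1)=\pi(H(1))$ with $\tau_W(H(1))=1$, and the latter — combined with $H(1)$ being a projection of trace $1$ in $M(\W)$, so $1_{M(\W)}-H(1)\in\W$ — gives $\tau(1_{C(X)})=1_{\C(\W)}$. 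Conversely, suppose $KK(\tau|_C)=0$ and either $[\tau(1)]\in(0,1)$ or $\tau(1)=1_{\C(\W)}$. By Theorem \ref{MTnoncom}(3)/(7) applied to $C$, $KK(\tau|_C)=0$ means $\tau|_C$ is unitarily equivalent to a $\T_d$ extension $\pi\circ\sigma$ with $\sigma:C\to M(\W)$ a monomorphism mapping $e_C$ to a strictly positive element of $\W$; replace $\tau$ by the conjugate so $\tau|_C=\pi\circ\sigma$. Then $\sigma$ extends canonically to a monomorphism $\widetilde\sigma:C(X)=\widetilde C\to M(\W)$ with $\widetilde\sigma(1_{C(X)})=1_{M(\W)}$, giving the unital trivial extension $\pi\circ\widetilde\sigma$ with $(\pi\circ\widetilde\sigma)(1)=1_{\C(\W)}$. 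In the case $\tau(1)=1_{\C(\W)}$ we are essentially done: $\tau$ and $\pi\circ\widetilde\sigma$ agree on $C$ and on $1$, and a short argument (both are unital essential extensions of $C(X)$ restricting to the same $\pi\circ\sigma$) plus $KK$-uniqueness shows $\tau\sim^u\pi\circ\widetilde\sigma$, hence trivial. In the case $[\tau(1)]=r\in(0,1)$: by Lemma \ref{lem:ProjectionLifting} lift $\tau(1)$ to a projection $P\in M(\W)$ with $\tau_W(P)=r$, arrange $P$ to dominate the open projection of $\overline{\sigma(C)\W\sigma(C)}$ — here I would use Lemma \ref{lem:May202019}, strict comparison in $\W$ and purely-infinite simplicity of $\C(\W)$ to conjugate so that $P\ge$ that projection and $\pi(P)=\tau(1)$ — and then define $H:C(X)\to M(\W)$ by $H(1_{C(X)})=P$ and $H|_C=\sigma$ (extending linearly/multiplicatively, which works because $\sigma(C)\subseteq\overline{P\W P}$). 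Then $H$ is a monomorphism and $\pi\circ H$ is a trivial extension with the same restriction to $C$ and the same value on $1$ as $\tau$, so as above $\tau\sim^u\pi\circ H$, hence trivial. The main obstacle is the final matching step — showing that two essential extensions of $C(X)$ that agree on $C_0(X\setminus\{x_0\})$ and on $1_{C(X)}$ are unitarily (or weakly) equivalent; this requires carefully tracking that the data $(\phi|_C,\phi(1))$ is a complete invariant for $\sim$ (resp. $\sim^u$ in the appropriate parity case), which is exactly the content of the argument I sketched for (1)–(2) and which I would prove once as a lemma and reuse.
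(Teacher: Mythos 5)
Your overall strategy is the paper's: reduce to $C:=C_0(X\setminus\{x_0\})$, invoke Theorem \ref{MTnoncom} there, and handle the unit separately via the projection lemmas. But the order in which you do the two reductions in (1)--(2) creates a genuine gap. You first conjugate so that $\phi|_C=\psi|_C$ and then try to compare the units by writing $\phi(1)=p+(\phi(1)-p)$ and $\psi(1)=p+(\psi(1)-p)$ for a \emph{common} projection $p$ dominated by both $\phi(1)$ and $\psi(1)$ and acting as a unit on the common range. No such $p$ is available in $\C(\W)$: the ``open projection'' of $\overline{\phi(C)\C(\W)\phi(C)}$ lives only in the bidual, so ``a unitary in $p^\perp\C(\W)p^\perp+p$'' is not an element of $\C(\W)$; and the projection supplied by Lemma \ref{lem:May202019} is constructed inside the corner $\phi(1)\C(\W)\phi(1)$ (using real rank zero of that corner), so it satisfies $p\le\phi(1)$ and $p\,\phi(f)=\phi(f)$ but there is no reason to have $p\le\psi(1)$ --- $\psi(1)$ dominates the range of $\phi|_C=\psi|_C$ but not the strictly larger projection $p$. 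The paper avoids this by reversing the order: it applies Lemma \ref{lem:May202019} to $\phi$ and to $\psi$ \emph{separately}, producing $p_0\le\phi(1)$ and $q_0\le\psi(1)$ with the same $K_0$-class $s\in(0,1)$ and both lifting to projections of trace $s$ in $M(\W)$; pure infiniteness and $[\phi(1)]=[\psi(1)]$ give $p_0\sim q_0$ and $\phi(1)-p_0\sim\psi(1)-q_0$, so after conjugating one may assume $p_0=q_0$ and $\phi(1)=\psi(1)$; only then does one apply Theorem \ref{MTnoncom} to $\phi|_C,\psi|_C$ viewed as extensions by the corner $P_0\W P_0\cong\W$, inside $p_0\C(\W)p_0$, where the resulting unitary automatically commutes with the complement $\phi(1)-p_0$. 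You need this reordering (or an argument producing a common absorbing projection under both units, which you have not given) for (1)--(2) to close.

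The remaining parts are essentially the paper's arguments modulo this. For (3), note that $C(X)$ is already unital, so ``unitize and restrict'' is not the right move; the paper instead uses the split exact sequence $0\to C\to C(X)\to\mathbb{C}\to 0$, realizes $\alpha|_{K_0(C)}$ by an extension of $C$, and then chooses $\phi_1(1)=p+q$ with $[p+q]=\alpha([1_{C(X)}])$ (possible because every real number is the class of a projection in the purely infinite simple algebra $\C(\W)$); checking $KK(\phi_1)=\alpha$ then uses the splitting of the $KK$ sequence. Your (4) matches the paper. In (5), your detour through a $\T_d$ model and the subsequent need to ``arrange $P$ to dominate the open projection of $\overline{\sigma(C)\W\sigma(C)}$'' is unnecessary and reintroduces exactly the matching problem you flag at the end: the paper lifts $\tau(1)$ to $P$ \emph{first}, observes that $\tau|_C$ automatically maps into $\C(P\W P)$, applies Theorem \ref{thm:MainTh1} there to get a lift $H_0$ of $\tau|_C$ itself, and sets $H(1)=P$, so that $\pi\circ H$ and $\tau$ agree on $C+\mathbb{C}1=C(X)$ literally and no further equivalence argument is needed.
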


{{
\begin{rmk}
 Suppose that $p\in \C(\W)$ and $[p]=[1_{\C(\W)}]$ but $p\not=1_{\C(\W)}.$
Let $\phi: C(X)\to \C(\W)$  be  an essential extension with $KK(\phi|_{C_0(X\setminus \{x_0\})})=0$
and $\phi(1)=p.$
Then  $\phi\sim \phi_0$ for a splitting essential extension with $\phi_0(1)=1_{\C(\W)}.$ 
But $\phi$ is not a trivial extension itself as $p$ cannot be lifted to a projection in $M(\W)$ (see \ref{lem:ProjectionLifting}).
\end{rmk}
}}

\begin{proof}
(1):   Suppose that $\phi \sim \psi$.
Then there is a  
$w\in \C(\W)$ such that $w^*\phi(c)w=\psi(c)$ for all $c\in  C(X)$
with $w^*w= \psi(1)$ and $ww^*=\phi(1).$ 
Since {{$M_2(\C(\W))$}} is simple {{and}} purely infinite,
there exists a unitary $W \in {{M_2(\C(\W))}}$
such that $W \psi(1) = w$.  
Then 
$W^*\phi(c)W=\psi(c)$ for all $c\in C(X).$ 
It follows that $KK(\phi)=KK(\psi).$

Conversely, suppose that $KK(\phi)=KK(\psi).$
Fix $x_0\in X$ and $s\in (0,1).$
Let $p := \phi(1)$ and $q := \psi(1)$.
By Lemma \ref{lem:May202019},  choose a proper subprojection  $p_0\le p$ such that $p_0\phi(f)=\phi(f)p_0=\phi(f)$ for all $f\in C_0(X\setminus \{x_0\}),$
and a projection $P_0\in M(\W)$ such that $\pi(P_0)=p_0$ and $\tau_W(P_0)=s$.
The same argument shows that there is a proper subprojection 
$q_0\le q$ such that $q_0\psi(f)=\psi(f)q_0=\psi(f)$
for all $f\in C_0(X\setminus \{x_0\}),$ and  there exists a projection $Q_0\in M(\W)$ such that $\pi(Q_0)=q_0$ and
$\tau_W(Q_0)=s.$

Since $\C(\W)$ is purely infinite simple, 
$p_0 \sim q_0$ and $p - p_0 \sim q - q_0$. \Wlog, we may assume that $p_0=q_0$
and $\phi(1) = p = q = \psi(1)$.  

Since $D:=P_0\W P_0\cong \W,$ $M(D)=P_0M(\W)P_0$ and $\pi(\C(D))=p_0\C(\W)p_0,$
one may view $\phi|_{C(X\setminus \{x_0\})}, \psi|_{C(X\setminus \{x_0\})}$ 
as maps from $C(X\setminus \{x_0\})$ to $\C(D).$
By applying Proposition \ref{LCWemb} and Theorem \ref{MTnoncom} , one 
obtains a unitary $u_0\in \C(D)=p_0\C(\W)p_0$ such
that
$$
u^*\phi(c)u=\psi(c)\rforal c\in C(X\setminus \{x_0\}).
$$
If we define $v := u + (p - p_0)$, then
$v$ is a partial isometry in $\C(\B)$ such that
$vv^* = \phi(1)$, $v^* v = \psi(1)$ and
$$v^*\phi(c)v =\psi(c)\rforal c\in C(X).
$$
This completes the proof of (1).

(2): If $\phi \sim^u \psi$ then it is immediate that 
$KK(\phi) = KK(\psi)$.  Hence, we only need to prove the converse
{{direction.}} 

So suppose that $KK(\phi) = KK(\psi)$. 
Suppose that both  $\phi(1_{C(X)})$ and $\psi(1_{C(X)})$
are  equal to $1_{\C(\W)}.$  
Then by (1), since $KK(\phi) = KK(\psi)$,  $\phi\sim \psi.$
In other words, there is $w\in \C(\W)$ such that
$w^*\phi_1(x)w=\phi_2(x)$ for all $x\in C(X).$
{{Moreover, by  Definition \ref{Dmul},
$$
1_{\C(\W)}=\phi_1(1_{C(X)})=ww^*\andeqn
1_{C(\W)}=\phi_2(1_{C(X)})=w^*w.
$$}} 
Hence $w$ is a unitary.
Since $K_1(\C(\W)) = 0$ and $\C(\W)$ is simple {{and}} purely infinite,
every unitary in $\C(\W)$ can be lifted to a unitary in $M(\W)$.
Hence, $\phi \sim^u \psi$.

Now suppose that both $\phi$ and $\psi$ are nonunital. 
Then by (1), since $KK(\phi) = KK(\psi)$, $\phi \sim \psi$.
So we have a partial isometry $v \in \C(\W)$ such that 
$v v^* = \phi(1)$, $v^* v = \psi(1)$ and
$v^* \phi(c) v = \psi(c)$ for all $c \in C(X)$.  
Since $\phi(1), \psi(1)$ are proper subprojections of $1_{\C(\W)}$ and since
$\C(\W)$ is simple {{and}} purely infinite, we can find a unitary $u \in \C(\W)$
such that  $\phi(1) u =  v$.  
Hence, $u^* \phi(c) u = \psi(c)$ for all $c \in C(X)$.
Since $\C(\W)$ is simple {{and}} purely infinite and $K_1(\C(\W)) = 0$,
$u$ can be lifted to a unitary in $M(\W)$. So $\phi \sim^u \psi$.

\noindent This completes the 
{{proof of}} (2).

(3)  The injectivity of the group homomorphism $\Lambda$ follows from {{(1).}}
Hence, it remains to prove that $\Lambda$ is surjective.

Let $\alpha \in KK(C(X), \C(\W))$ be given.   {{Fix a point $x_0\in X.$
Let $\iota: C_0(X\setminus \{x_0\})\to C(X)$ be the embedding and 
$q: C(X)\to \mathbb C$ be the quotient map. So we have the following
the splitting short exact sequence 
$$
0\to C_0(X\setminus \{x_0\})\stackrel{j}{\longrightarrow}  C(X)\stackrel{q}{\longrightarrow} {\mathbb C}\to 0
$$
which gives the splitting short exact sequence:}}
{{
\beq\label{KKshortexact}
0\to KK(\mathbb C, \C(\W))\stackrel{[q]}{\longrightarrow} KK(C(X), \C(\W))\stackrel{[{{j}}]}{\longrightarrow} KK(C_0(X\setminus \{x_0\}), \C(\W){{)}}\to 0.
\eneq
}}
 Consider the Kasparov
product  
$$\beta := [\iota] \times \alpha \in KK({{C_0(X \setminus \{ x_0 \}),}} \C(\W)).$$
By Proposition \ref{LCWemb} and Theorem \ref{MTnoncom},
let $\phi : C_0(X \setminus \{ x_0 \}) \rightarrow \C(\W)$ be an essential
extension such that 
$KK(\phi) = \beta.$ 

Note that $\phi$ can be extended to a monomorphism $C(X) \rightarrow \C(\W)$
which brings $1_{\C(X)}$ to $ 1_{\C(\W)}$.  So by  
Lemma \ref{lem:May202019}, 
let $p \leq 1_{\C(\W)}$ be a proper subprojection such that 
$p \phi(f) = \phi(f)$
for all $f \in C_0(X \setminus \{ x_0 \})$.
Since $\C(\W)$ is simple {{and}} purely infinite, {{one may choose}} 
$q \leq 1_{\C(\W)} - p,$  a proper subprojection, such that 
$$
[p + q ]=\alpha([1_{C(X)}]).
$$
Let $\phi_1 : C(X) \rightarrow \C(\W)$ be the nonunital
essential extension given by
$$\phi_1 |_{C(X \setminus \{ x_0 \})} = \phi \makebox{  and  }
\phi_1(1_{C(X)}) = p + q.$$
{{To see that $KK(\phi_1)=\af,$ let us note that, we have shown that $[\iota]\times \af=[\iota]\times KK(\phi_1))$
in the exact sequence \eqref{KKshortexact}. 
However, since $[q\circ\phi_1(1_{C(X)})]=\af([1_{C(X)}]),$  we have 
$\phi_1\times [q]=\af\times [q].$ 
By \eqref{KKshortexact}, 
$$
\af=KK(\phi_1).
$$}}
{{Therefore}} $\Lambda([\phi_1]) = \alpha$
as required.  This completes the proof of the surjectivity of $\Lambda$, and
hence, the proof of (3). 

(4): Say that   
$KK(\phi)=0.$ Then $[\phi(1)]=0\in \R=K_0(\C(\W))$.
By Lemma \ref{lem:ProjectionLifting}, 
there is no non-zero projection $P\in M(\W)$ such that $\pi(P)=p.$ It follows that $\phi$ is not liftable.

(5):  Say that $\tau: C(X) \rightarrow \C(\W)$ is an essential trivial
extension.  Then by Theorem \ref{thm:MainTh1},      
$KK(\tau |_{C_0(X \setminus \{ x_0 \})}) = 0$.
Also, $\tau(1_{C(X)}) \in \C(\W)$ must be liftable to a nonzero projection in 
$M(\W)$. Hence, by Lemma \ref{lem:ProjectionLifting},
either $[\tau(1_{C(X)})]_{K_0(\C(\W))} \in (0, 1)$ or 
$\tau(1_{C(X)}) = 1_{\C(\W)}$.  

Conversely, suppose that 
$\tau: C(X)\to \C(\W)$ is an essential extension such that 
 $KK(\tau |_{C_0(X \setminus \{ x_0 \})}) = 0$ and either
$[\tau(1_{C(X)})]_{K_0(\C(\W))} \in (0,1)$ or 
$\tau(1_{C(X)}) = 1_{\C(\W)}$.

By the hypotheses on $\tau(1_{C(X)})$ and by Lemma  
\ref{lem:ProjectionLifting}, $\tau(1_{C(X)})$ can be lifted to a (nonzero) 
projection
$P \in M(\W)$.

Consider the extension
$\tau |_{C_0(X \setminus \{ x_0 \})} : C_0(X \setminus \{ x_0 \})
\rightarrow \C(P \W P)$.
Since $P \W P \cong \W$ and since $KK(\tau  |_{C_0(X \setminus \{ x_0 \})})
= 0$, it follows, by Theorem \ref{thm:MainTh1}, that
there is monomorphism $H_0 : C_0(X \setminus \{ x_0 \}) \rightarrow
P M(\W) P$ such that 
$$\pi \circ H_0 = \tau |_{C_0(X \setminus \{ x_0 \})}.$$
Let $H : C(X) \rightarrow M(\W)$ be the monomorphism given by
$$H |_{C_0(X \setminus \{ x_0 \})} = H_0 \makebox{  and  }
H(1_{C(X)}) = P.$$
Then $\pi \circ H = \tau$, i.e., $\tau$ is trivial.
This completes the proof.

\end{proof}

\begin{cor} Let $\mathbb{T}^n$ be the n torus. 
\begin{enumerate}
\item $\Ext(C_0(\mathbb{T}^n \setminus \{ 1 \}), \W) = \mathbb{R}^{2^{n-1}-1}$.
\item $\Ext({C(\mathbb{T}^n)}, \W) = \mathbb{R}^{2^{n-1}}.$
\end{enumerate} 
\end{cor}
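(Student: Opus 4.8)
The plan is to compute $KK(C_0(\mathbb{T}^n\setminus\{1\}),\C(\W))$ and $KK(C(\mathbb{T}^n),\C(\W))$ and then invoke Theorem \ref{thm:Injectivity2}(3), which identifies $\Ext(C(X),\W)$ (and, via Proposition \ref{LCWemb}, $\Ext(C_0(X\setminus\{x_0\}),\W)$ for connected locally connected $X$) with $KK(C(X),\C(\W))\cong {\rm Hom}(K_0(C(X)),\R)$. Note $\mathbb{T}^n$ is connected and locally connected, so the hypotheses are met. Thus everything reduces to a $K$-theory computation, and the answer will come out as a real vector space whose dimension is the rank of the relevant $K_0$ group.

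First I would recall $K_*(C(\mathbb{T}^n))$: by the Künneth theorem $K_0(C(\mathbb{T}^n))\cong \ZI^{2^{n-1}}$ and $K_1(C(\mathbb{T}^n))\cong \ZI^{2^{n-1}}$, the even and odd parts of $\bigwedge^* \ZI^n$. For the reduced algebra, the split exact sequence $0\to C_0(\mathbb{T}^n\setminus\{1\})\to C(\mathbb{T}^n)\to \C\to 0$ gives $K_0(C_0(\mathbb{T}^n\setminus\{1\}))\cong \ZI^{2^{n-1}-1}$ (the reduced $K^0$, i.e. $\widetilde{K}^0(\mathbb{T}^n)$) and $K_1(C_0(\mathbb{T}^n\setminus\{1\}))\cong \ZI^{2^{n-1}}$. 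Next, since $K_*(\C(\W))=(\R,0)$ is divisible (Theorem \ref{K-six}), the UCT short exact sequence for $KK(A,\C(\W))$ has trivial ${\rm Ext}$ term and trivial $K_1$-pairing, so $KK(A,\C(\W))\cong {\rm Hom}(K_0(A),\R)$; this is already recorded in Theorem \ref{thm:Injectivity2}(3). Therefore ${\rm Hom}(K_0(C(\mathbb{T}^n)),\R)\cong {\rm Hom}(\ZI^{2^{n-1}},\R)\cong \R^{2^{n-1}}$ and ${\rm Hom}(K_0(C_0(\mathbb{T}^n\setminus\{1\})),\R)\cong {\rm Hom}(\ZI^{2^{n-1}-1},\R)\cong \R^{2^{n-1}-1}$.

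Combining: by Theorem \ref{thm:MainTh1}(3) (applied with $X=\mathbb{T}^n$, $x_0=1$) for part (1), and by Theorem \ref{thm:Injectivity2}(3) for part (2), we get $\Ext(C_0(\mathbb{T}^n\setminus\{1\}),\W)\cong \R^{2^{n-1}-1}$ and $\Ext(C(\mathbb{T}^n),\W)\cong \R^{2^{n-1}}$, as claimed. The only genuinely non-formal step is keeping track of the exterior-algebra bookkeeping for $K_*(C(\mathbb{T}^n))$ and the passage to the reduced group; this is standard (Künneth plus the splitting at the basepoint), so I expect no real obstacle. One small point to be careful about: for part (1) I should confirm that Theorem \ref{thm:MainTh1} genuinely covers $\Ext^u$ versus $\Ext$ consistently, but since $\mathbb{T}^n$ is compact and all essential extensions of $C_0(\mathbb{T}^n\setminus\{1\})$ are nonunital, $\Ext$ and $\Ext^u$ agree here by Proposition \ref{W=S}, and Theorem \ref{thm:MainTh1}(3) gives exactly the stated group isomorphism.
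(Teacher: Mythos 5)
Your proposal is correct and is exactly the argument the paper intends (the corollary is stated without proof, being immediate from Theorem \ref{thm:Injectivity2}(3) and Theorem \ref{thm:MainTh1}(3) once one computes $K_0(C(\mathbb{T}^n))\cong\ZI^{2^{n-1}}$ and $K_0(C_0(\mathbb{T}^n\setminus\{1\}))\cong\ZI^{2^{n-1}-1}$ via the K\"unneth theorem and the basepoint splitting). Your check that $\Ext$ and $\Ext^u$ coincide here via Proposition \ref{W=S} is a sensible precaution and consistent with the paper's conventions.
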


\vspace{0.2in}

\section{Classification of some non-simple \CA s}

This section is for the second goal of our original research {{plan.}}
We study non-simple \CA s $E$  which are essential extensions of the form
\beq\label{Shortexinnons}
0\to \W\to E\stackrel{\pi_E}{\longrightarrow} A\to 0,
\eneq
where $A$ is some separable  stably finite simple \CA\,  with finite nuclear dimension
such that ${\rm ker}\rho_A=K_0(A).$
In other words, $E$ has a unique ideal $I\cong \W$ and $E/\W$ is 
a separable stably finite simple \CA\,  with finite nuclear dimension such that
${\rm ker}\rho_A=K_0(A).$ 
Denote by $\E$ the class of such \CA s which satisfy the UCT (Warning:
here we do not assume that $A$ is fixed, but is any separable  stably finite simple \CA\,  with finite nuclear dimension
such that ${\rm ker}\rho_A=K_0(A)$ which satisfies the UCT).

Let $\E_c$ be the subclass  of those \CA s  $E$ in $\E$ such that $A:=\pi_E(E)$ has continuous scale,
where $\pi_E: E\to E/\W$ is the quotient map.

 In general, if $E$ is an essential extension by $\W$ 
  then $E$ is a subalgebra of $M(\W).$ 
Recall that we identify the unique tracial state $\tau_W$ on $\W$
with its unique extension to a tracial state on  
 $M(\W)$,
 which
we also denote by $\tau_W$. 
Therefore, $\tau_W$ also induces a tracial state on $E$
 which, again, we denote by $\tau_W.$
 There is a group \hm\, $\lambda_E: K_0(E)\to \R$ induced by $\tau_W,$ i.e., $\lambda_E(x)=\tau_W(x)$
 for all $x\in K_0(E).$
 Since $A=E/\W$ 
 and since $K_i(\W)=\{0\}$ ($i=0,1$), 
by the six-term exact sequence in $K$-theory,
 one computes that $\pi_{E *i}: K_i(E)\to K_i(A)$ is a group
 isomorphism ($i=0,1$).

\begin{lem}\label{LKtE}
Let $E$ be an essential extension of the form
$$
0\to \W\to E\stackrel{\pi_E}{\longrightarrow} A\to 0,
$$
where $A$ is a separable amenable \CA\, with $K_0(A)={\rm ker}\rho_{f, A}$ which is $\W$ embeddable 
and satisfies the UCT. Let $\psi: A\to \C(\W)$ be the Busby invariant for
the above extension.
 
Then $$\psi_{*,0} =\lambda_E\circ {\pi_E}_{*0}^{-1} \makebox{ in }{\rm Hom}(K_0(A), \R).$$

\end{lem}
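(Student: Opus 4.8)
\textbf{Proof plan for Lemma \ref{LKtE}.}

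The plan is to trace the definition of the Busby invariant through $K$-theory and identify $\psi_{*,0}$ with the connecting data of the extension. Recall that $E \subseteq M(\W)$ and that the quotient map $M(\W) \to \C(\W)$ restricts so that the following diagram commutes: the extension $0 \to \W \to E \xrightarrow{\pi_E} A \to 0$ sits inside $0 \to \W \to M(\W) \xrightarrow{\pi} \C(\W) \to 0$, and the Busby invariant $\psi : A \to \C(\W)$ is exactly the map induced on the quotients, i.e.\ $\psi \circ \pi_E = \pi|_E$. First I would apply $K_0$ to this commuting square to get $\psi_{*0} \circ (\pi_E)_{*0} = \pi_{*0}|_{K_0(E)}$, where $\pi_{*0} : K_0(M(\W)) \to K_0(\C(\W))$ is the map in the six-term sequence for $\W \triangleleft M(\W)$. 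Since $(\pi_E)_{*0} : K_0(E) \to K_0(A)$ is an isomorphism (as noted in the paragraph before the lemma, using $K_i(\W)=0$), this already gives $\psi_{*0} = \pi_{*0}|_{K_0(E)} \circ (\pi_E)_{*0}^{-1}$, so the whole content is to show that $\pi_{*0}|_{K_0(E)}$, viewed as a map into $K_0(\C(\W)) = \R$, coincides with $\lambda_E$.

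The key step is therefore the identification $\pi_{*0}(x) = \lambda_E(x) = \tau_W(x)$ for $x \in K_0(E) \subseteq K_0(M(\W))$. By Theorem \ref{K-six} we have $K_0(M(\W)) = \R$ with the isomorphism implemented by $\tau_W$ (a projection $p \in M_m(M(\W)) \setminus M_m(\W)$ has class $\tau_W(p)$, and $K_0(\W)=0$ so projections in $M_m(\W)$ contribute nothing), and $K_0(\C(\W)) = \R$ as well, with the map $\pi_{*0} : K_0(M(\W)) \to K_0(\C(\W))$ being (after these identifications) the identity — this is precisely what the six-term computation in the proof of Theorem \ref{K-six} yields, because the image of $K_0(\W) \to K_0(M(\W))$ is $\rho_\W(K_0(\W)) = 0$, so $\pi_{*0}$ is injective, hence an isomorphism $\R \to \R$, and one checks it is the identity by evaluating on (lifts of) projections: if $q \in M(\W)$ is a projection lifting a projection in $\C(\W)$, then the class of $\pi(q)$ in $K_0(\C(\W)) = \R$ is $\tau_W(q)$ by the very way the isomorphism $K_0(\C(\W)) \cong \R$ was set up. Putting these together, $\pi_{*0}(x) = \tau_W(x) = \lambda_E(x)$ for all $x \in K_0(E)$, and hence
\[
\psi_{*0} = \pi_{*0}|_{K_0(E)} \circ (\pi_E)_{*0}^{-1} = \lambda_E \circ (\pi_E)_{*0}^{-1}
\]
as elements of ${\rm Hom}(K_0(A), \R)$, which is the assertion.

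The main obstacle — really the only point requiring care rather than bookkeeping — is making the identification of $\pi_{*0} : K_0(M(\W)) \to K_0(\C(\W))$ with the identity map $\R \to \R$ fully precise, i.e.\ pinning down that the two $\R$-identifications (one via $\tau_W$ on $M(\W)$-projections, one via the induced trace on $\C(\W)$-projections coming from the six-term sequence) are compatible with $\pi_{*0}$. This is implicit in the proof of Theorem \ref{K-six} but should be spelled out: every element of $K_0(M(\W))$ is a difference of classes of projections $p, q \in M_m(M(\W))$ (using $V(M(\W)) = V(\W) \sqcup {\rm Aff}_+(T(\W))$ from Lemma \ref{LK0(M(B))}), the $\W$-part maps to zero in $\C(\W)$, and for the genuinely multiplier projections $\tau_W(p) = \tau_W(\pi(p))$ under the canonical extension of $\tau_W$, which is exactly the definition of the $K_0(\C(\W)) \cong \R$ isomorphism. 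Once this compatibility is recorded, the lemma follows immediately by functoriality of $K_0$ applied to $\psi \circ \pi_E = \pi|_E$ together with the invertibility of $(\pi_E)_{*0}$.
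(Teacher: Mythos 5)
Your proposal is correct and is essentially the paper's own argument: the paper's proof consists of exactly the commutative square obtained by applying $K_0$ to $\psi\circ\pi_E=\pi|_E$, the identification of the top horizontal map $K_0(E)\to K_0(M(\W))=\R$ with $\lambda_E$ via $\tau_W$, and the fact (Theorem \ref{K-six}) that $\pi_{*0}$ and $(\pi_E)_{*0}$ are isomorphisms. Your extra care in checking that $\pi_{*0}$ becomes the identity of $\R$ under the two trace-induced identifications is a point the paper leaves implicit, and your treatment of it is right.
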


\begin{proof}
Denote by $\pi: M(\W)\to \C(\W)$ the quotient map. 
One has the following commutative diagram:
\[
\begin{array}{cccc}
K_0(E) & \stackrel{\lambda_E}\longrightarrow & K_0(M(\W))=\R\\
\downarrow_{(\pi_E)_{*0}} & &  \downarrow_{\pi_{*0}}\\ 
K_0(A) & \stackrel{\psi_{*0}}{\longrightarrow} & K_0(\C(\W))=\R.\\
\end{array}
\]
By Theorem \ref{K-six}, $\pi_{*0}$ is a group isomorphism.  
Since ${\pi_E}_{*0}$ is also a group  isomorphism, the lemma follows.

\end{proof}

Before defining the classification invariant, we recall some definitions
and other items.

Again, let $E \in \E_c$ and
let $A:=\pi_E(E).$ Then, as per our definitions, $A$ is a separable  stably finite simple continuous scale \CA\, with finite nuclear dimension
such that ${\rm ker}\rho_A=K_0(A).$ Since $A$ is stably finite, $A$ is stably projectionless. (Here is short proof: Suppose, for {{contradiction,}} that   
$p\in M_m(A)$ is a nonzero projection for some integer $m\ge 1.$  
Since
$T(A)\not=\emptyset$, $\tau(p)\not=0.$ This contradicts that ${\rm ker}\rho_A=K_0(A).$)

If $\tau\in T(A),$ then $\tau\circ \pi_E\in T(E).$ 
The map $(\pi_E)_T: T(A)\to T(E)$, defined by $(\pi_E)_T(\tau)(b)=\tau(\pi_E(b))$ for all $b\in E$ and $\tau\in T(A)$,  
is an affine homeomorphism onto a closed convex subset of $T(E).$
Denote by $T_A$ the closed convex subset $(\pi_E)_T(T(A)).$
Then $T(E)$ is the convex hull of $T_A$ and $\tau_W.$ Since $A$ has continuous scale, $T(A)$ 
is compact. It follows that $T(E)$ is compact.   Note that $T_A$ is a face of $T(E).$

Let $S(K_0(\widetilde{E}))$ be the state space of $K_0(\widetilde{E}),$ i.e., 
the set of all group \hm s $s: K_0(\widetilde{E})\to \R$ such that $s(x)\ge 0$ for all $x\in K_0(\widetilde{E})_+$ and 
$s([1_{\widetilde{E}}])=1.$ 
Denote $S(K_0(E)):=\{s|_{K_0(E)}: s\in S(K_0(\widetilde{E}))\}.$

The map $r_E: T(E)\to S(K_0(E))$ is defined by $r_E(\tau)(x)=\tau(x)$ 
for all $x\in K_0(A)$ and $\tau\in T(E).$

Now we can define our classification invariant. 

\begin{df}\label{Dell}
{\rm Let $E\in \E_c.$ The Elliott invariant ${\rm Inv}(E)$ is defined as follows:
\beq
{\rm Inv}(E)=(K_0(E), K_1(E), T(E), r_E).
\eneq}  
\end{df}

Let $E_1, E_2\in \E_c.$ We say that ${\rm Inv}(E_1)$ and ${\rm Inv}(E_2)$ are isomorphic, 
and write ${\rm Inv}(E_1)\cong {\rm Inv}(E_2),$ if there is an 
{{isomorphism}} 
$$
\Gamma: {\rm Inv}(E_1)=(K_0(E_1), K_1(E_1), T(E_1), r_{E_1})\to 
 {\rm Inv}(E_2)=(K_0(E_2), K_1(E_2), T(E_2), r_{E_2}),
 $$
i.e., 
if there is a group isomorphism $\Gamma_i: K_i(E_1)\to K_i(E_2),$
$i=0,1,$ and an affine homeomorphism $\Gamma_T: T(E_1)\to T(E_2)$, which 
maps $T_f(E_1)$  onto $T_f(E_2)$, such that
\beq\label{Dinv-paring0}
r_{E_2}(\tau)(\Gamma_0(x))=r_{E_1}(\Gamma_T^{-1}(\tau))(x)\rforal x\in K_0(E_1)\,\,\, {\rm and}\,\, \tau\in T(E_2).
\eneq

\begin{thm}\label{MTclass}
Let $E_1, E_2\in \E_c.$ Then $E_1\cong E_2$ if and only if 
\beq
{\rm Inv}(E_1)\cong {\rm Inv}(E_2).
\eneq
Moreover, if $\Gamma: {\rm Inv}(E_1)\to {\rm Inv}(E_2)$ is an isomorphism, 
then there exists an isomorphism $\Psi: E_1\to E_2$ such that
$\Psi$ induces $\Gamma.$
\end{thm}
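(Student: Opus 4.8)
\textbf{Proof plan for Theorem \ref{MTclass}.}

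The plan is to reduce the classification of the extension algebra $E$ to the already-established classification of essential extensions of $\W$ (Theorem \ref{MTnoncom}, \ref{MTnoncom1}) together with the classification of the simple quotient $A$ (via \cite{GLII}, Theorem 12.8). The ``only if'' direction is routine: an isomorphism $\Psi\colon E_1\to E_2$ must carry the unique proper closed ideal $\W\subset E_1$ onto the unique proper closed ideal $\W\subset E_2$ (uniqueness is built into the definition of $\E_c$), hence descends to an isomorphism $A_1\to A_2$ of the quotients and induces an isomorphism of the six-term $K$-theory data; since $\W$ is $K$-contractible, $(\pi_{E_i})_{*j}\colon K_j(E_i)\to K_j(A_i)$ are isomorphisms, so $\Psi$ induces $\Gamma_j$ on $K_j$, and $\Psi_T\colon T(E_2)\to T(E_1)$ is an affine homeomorphism which is compatible with $r_{E_i}$ as in \eqref{Dinv-paring0}, and which respects $T_f$ (faithfulness is preserved by isomorphism). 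One small point to check: $\Psi$ carries $\tau_W$ (the extension of the trace of the ideal) to $\tau_W$, so it respects the face $T_{A_i}$, but this is automatic since the restriction to the ideal is an isomorphism $\W\to\W$ and $\W$ has a unique trace.

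For the ``if'' direction, suppose $\Gamma\colon {\rm Inv}(E_1)\to{\rm Inv}(E_2)$ is given. First I would push $\Gamma$ down to the quotients: since $(\pi_{E_i})_{*j}$ is an isomorphism and $(\pi_{E_i})_T$ is an affine homeomorphism onto the face $T_{A_i}\subset T(E_i)$ which contains $T_f(E_i)$ except for the single trace supported on the ideal, $\Gamma$ restricts/descends to an isomorphism $\Gamma^A\colon {\rm Inv}(A_1)\to{\rm Inv}(A_2)$ of Elliott invariants of the simple stably projectionless \CA s $A_1,A_2$ (here one uses $K_0(A_i)={\rm ker}\rho_{A_i}$ so that the pairing data on $A_i$ is just $(K_0,K_1,T,\text{the tautological }r)$, and $A_i$ has continuous scale). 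By the classification theorem for simple \CA s in class $\mathcal D$ (Theorem 15.5 and Theorem 12.8 of \cite{GLII}), there is an isomorphism $\alpha\colon A_1\to A_2$ inducing $\Gamma^A$ — more precisely inducing $\Gamma^A$ on $(K_0,K_1,T)$ and, after the usual adjustment using Theorem \ref{TExpon}-type existence, also matching the $U(\widetilde{A})/CU(\widetilde{A})$ data, but we will only need it on $(K_0,K_1,T)$ here since the Busby invariant is determined by less.

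Next I would transport the extension. Let $\psi_i\colon A_i\to\C(\W)$ be the Busby invariant of $E_i$. I claim $\psi_2\circ\alpha$ and $\psi_1$ have the same $KK$-class. By Lemma \ref{LKtE}, $(\psi_i)_{*0}=\lambda_{E_i}\circ(\pi_{E_i})_{*0}^{-1}$ as elements of ${\rm Hom}(K_0(A_i),\R)$, where $\lambda_{E_i}$ is induced by $\tau_W$ on $K_0(E_i)$; compatibility of $\Gamma$ with the traces (in particular $\Gamma_T$ sends the ideal-supported trace of $T(E_1)$ to that of $T(E_2)$, which one checks since $\Gamma_T$ maps $T_f$ to $T_f$ and the ideal trace is the unique extreme point of $T(E_i)$ not in the face $T_{A_i}$) forces $(\psi_2\circ\alpha)_{*0}=(\psi_1)_{*0}$ in ${\rm Hom}(K_0(A_1),\R)$. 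Since $K_*(\C(\W))=(\R,0)$ and $A_1$ satisfies the UCT, $KK(A_1,\C(\W))\cong{\rm Hom}(K_0(A_1),\R)$, so indeed $KK(\psi_2\circ\alpha)=KK(\psi_1)$. Then Theorem \ref{MTnoncom}(1) gives a unitary $u\in M(\W)$ with $\pi(u)\psi_1(\cdot)\pi(u)^*=\psi_2\circ\alpha(\cdot)$. Combining the unitary $\pi(u)$ with $\alpha$ produces an isomorphism of the pullback extension algebras $E_1\to E_2$ (functoriality of the pullback construction $E_i=\{(m,a)\in M(\W)\oplus A_i:\pi(m)=\psi_i(a)\}$); one then checks that this isomorphism $\Psi$ induces $\Gamma$ on $K_*$ and on traces, adjusting $\alpha$ by an approximately inner automorphism of $A_2$ if necessary to match $\Gamma_T$ exactly (the inner perturbation does not change $KK(\psi_2\circ\alpha)$ nor the induced $K$-theory map).

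The main obstacle is the trace-matching bookkeeping in the middle step: one must verify that an isomorphism of the Elliott invariant of $E_i$ really does restrict to an isomorphism of the Elliott invariant of $A_i$ in a way compatible with the $\tau_W$-functional $\lambda_{E_i}$, i.e. that the extra ``$\R$-direction'' coming from the ideal (visible as the extreme trace $\tau_W\in T(E_i)\setminus T_{A_i}$ and, dually, as $\lambda_{E_i}\in{\rm Hom}(K_0(E_i),\R)$) is correctly tracked by $\Gamma$. This is where one uses that $T(E_i)$ is the convex hull of the compact face $T_{A_i}$ and the single point $\tau_W$, so the decomposition is canonical and $\Gamma_T$ must respect it. Once that is pinned down, the existence of $\Psi$ inducing $\Gamma$ is a formal consequence of pullback functoriality together with Theorem \ref{MTnoncom}(1) and the classification of $A$; and since $\Psi$ is then an isomorphism of \CA s, $E_1\cong E_2$.
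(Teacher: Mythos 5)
Your overall strategy is essentially the paper's: descend $\Gamma$ to an isomorphism of the Elliott invariants of the simple quotients, produce an isomorphism $\alpha\colon A\to B$ via the classification theorem in \cite{GLII}, use Lemma \ref{LKtE} together with the compatibility $\lambda_{E_2}\circ\Gamma_0=\lambda_{E_1}$ (extracted by evaluating \eqref{Dinv-paring0} at $\tau=\tau_W$) to show $KK(\sigma_B\circ\alpha)=KK(\sigma_A)$, invoke Theorem \ref{MTnoncom}(1) to get a unitary $U\in M(\W)$, and then observe that conjugation by $U$ (equivalently, functoriality of the Busby pullback) produces the desired isomorphism of extension algebras. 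The paper carries this out by defining $\Psi(e)=U^*eU$ directly and verifying $\Psi(\W)=\W$; your ``pullback functoriality'' phrasing is the same argument in different clothes.

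There is, however, a real error in your trace bookkeeping, which happens to be exactly the step you flagged as the main obstacle. You write that $T_{A_i}$ ``contains $T_f(E_i)$ except for the single trace supported on the ideal.'' This has the inclusions backwards: $T_{A_i}$ is \emph{disjoint} from $T_f(E_i)$. Indeed, every $\tau\in T_{A_i}$ has the form $t\circ\pi_{E_i}$ with $t\in T(A_i)$ and therefore vanishes on the ideal $\W$, so it is not faithful on $E_i$. Conversely $\tau_W$, extended to $M(\W)\supset E_i$, \emph{is} faithful (this is stated in the introduction and follows since $\W$ is an essential ideal with a faithful trace), and so is every proper convex combination $r\tau_W+(1-r)t\circ\pi_{E_i}$ with $0<r\le 1$; hence $T_f(E_i)=T(E_i)\setminus T_{A_i}$. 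The correct argument that $\Gamma_T(\tau_W)=\tau_W$ is therefore: $\Gamma_T$ is an affine homeomorphism, so carries extreme points to extreme points, and by hypothesis it carries $T_f(E_1)$ onto $T_f(E_2)$; since $\tau_W$ is the \emph{unique faithful} extreme point of $T(E_i)$, it must be fixed. This is precisely the argument in the paper, and with it $\Gamma_T(T_{A_1})=T_{A_2}$ follows because $T_{A_i}$ is the closed face spanned by the remaining extreme points. Once this is corrected your proof goes through; the aside about ``adjusting $\alpha$ by an approximately inner automorphism'' is unnecessary, since the classification theorem already produces $\alpha$ with the prescribed effect on traces.
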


\begin{proof}
We have two short exact sequences
\beq\nonumber
&&0\to \W\to E_1\stackrel{\pi_{E_1}}{\longrightarrow} A\to 0\andeqn\\\nonumber
&&0\to \W\to E_2\stackrel{\pi_{E_2}}{\longrightarrow} B\to 0.
\eneq
Both $A$ and $B$ are separable simple stably projectionless  {{\CA s}}  with finite nuclear dimension and  
continuous scale 
which satisfy the UCT. Moreover, 
$K_0(A)={\rm ker}\rho_A$ and $K_0(B)={\rm ker}\rho_B.$

Suppose that $\Gamma: {\rm Inv}(E_1)\to {\rm Inv}(E_2)$ is 
an  isomorphism. 
{{Then,}} one has a group isomorphism $\gamma_i:={\pi_{E_2}}_{*i}\circ \Gamma_i\circ {\pi_{E_1}}_{*i}^{-1}: K_i(A)\to K_i(B),$ $i=0,1.$ 
Note that 
 $\Gamma$ induces an affine  homeomorphism $\Gamma_T: T(E_1)\to T(E_2)$
which maps $T_f(E_1)$ to $T_f(E_2).$ Since $\Gamma_T$ is an affine homeomorphism, 
it maps extreme points to extremal points.  It follows that $\Gamma_T(\tau_W)=\tau_W.$
Since the extreme points of the face $T_A$ are also extreme points of $T(E),$ $\Gamma_T$ maps 
$T_A$ onto $T_B$ which is also an affine homeomorphism.  {{Since both $E_1$ and $E_2$ satisfy the UCT,
$A$ and $B$ also satisfy the UCT. }}
It follows that,   by the classification results in \cite{GLII} (see Theorem 13.1 and Theorem 15.5 of \cite{GLII}),
 there is an isomorphism $\phi: A\to B$
such that $\phi_{*i}=\gamma_i,$ $i=0,1,$ and $\phi_T= (\pi_{E_1})_T^{-1}
\circ \Gamma_T^{-1} \circ
(\pi_{E_2})_T.$
Recall that $\lambda_{E_i}: K_0(E_i)\to \R$ is 
defined by $\lambda_{E_i}(x)=\tau_W(x)$ for all $x\in K_0(E_i).$ 

In \eqref{Dinv-paring0}, with $\tau=\tau_W,$  we have (as $\Gamma_T$ maps $\tau_W$ to $\tau_W$)
\beq
\tau_W(\Gamma_0(x)) = \tau_W(x) \tforal x\in K_0(E_1). 
\eneq
In other words
\beq\label{MTclass-10}
 \lambda_{E_2}\circ \Gamma_0 =\lambda_{E_1}.
\eneq
Let $\sigma_A: A\to \C(\W)$ and 
$\sigma_B: B\to \C(\W)$ be the Busby invariants associated with $E_1$ and 
$E_2$ respectively.   
Define an essential extension $\psi: A\to \C(\W)$ by $\psi(a)=\sigma_B\circ \phi.$
Hence,
\begin{eqnarray*}
\psi_{*,0} 
& = &  (\sigma_B)_{*,0} \circ \phi_{*,0} \\
& = & (\lambda_{E_2} \circ {\pi_{E_2}}_{*,0}^{-1}) \circ ({\pi_{E_2}}_{*,0}
\circ \Gamma_0 \circ {\pi_{E_1}}_{*,0}^{-1}) \hspace{0.4in}\makebox{  (by Lemma \ref{LKtE}
 and since  } \phi_{*,0} = \gamma_0 \makebox{)}\\
& = & \lambda_{E_2} \circ \Gamma_0 \circ {\pi_{E_1}}_{*,0}^{-1} 
=\lambda_{E_1} \circ {\pi_{E_1}}_{*,0}^{-1}  \hspace{0.7in} \makebox{ (by  
(\ref{MTclass-10}) )}\\ 
& = & {{(\sigma_A)}}_{*,0}  \hspace{2.2in}\makebox{  (by Lemma \ref{LKtE}).}
\end{eqnarray*} 

Hence, 
\beq
KK(\psi)=KK(\sigma_A).
\eneq
It follows from Theorem \ref{MTnoncom} that there is a unitary $U\in M(\W)$ such 
that
\beq\label{MTclass-11}
\pi(U^*)\psi(a)\pi(U)=\sigma_A(a)\rforal a\in A.
\eneq
Note that, by \eqref{MTclass-11}, 
\beq
U^*eU\in E_1\rforal e\in E_2.
\eneq 
Define $\Psi: E_2\to E_1$ by
\beq
\Psi(e)=U^*eU\rforal e\in E_2.
\eneq
It is a monomorphism. Note $\Psi(\W)=\W.$ By \eqref{MTclass-11}, $\Psi$ is surjective. 
So $\Psi$ is an isomorphism. Moreover, as constructed, one checks that $\Psi$ induces $\Gamma.$

Conversely, if there is an isomorphism $\Psi: E_1\to E_2$, then ${\Psi}$ induces 
an isomorphism $\Gamma: {\rm Inv}(E_1)\to {\rm Inv}(E_2).$
\end{proof}

Towards classifying 
{{\CA s}} 
in $\E$,
we again recall some terminology and other items.

Let $A$ be a \CA\, with ${\tilde T}(A)\not=\{0\}$ and 
with a strictly positive element $e_A.$ 
Denote by $\Sigma_A\in {\rm LAff}({\tilde T}(A))$ the lower semicontinuous affine 
function defined by
$$
\Sigma_A(\tau)=\lim_{n\to\infty} \tau(f_{1/n}(e_A))\rforal \tau\in {\tilde T}(A).
$$
{{One notes that $\Sigma_A$, {{as a lower semi-continuous function on ${\tilde T}(A)$,}} 
is independent of the choice of $e_A.$}}

\begin{df}
For $E\in \E,$ define 
\beq
{\rm Inv}(E)=(K_0(E), K_1(E), {\tilde T}(E), \Sigma_E, \lambda_E).
\eneq
\end{df}

Denote by ${\tilde T}_f(E)$ the set of all faithful traces in 
$\widetilde{T}(E)$, i.e., the set of all $\tau \in \widetilde{T}(E)$
for which $\tau(a)\not=0$ for every  
$a\in {\rm Ped}(E)_+\setminus \{0\}.$
Write $A=\pi_E(E).$ 
Let $(\pi_E)_T: {\tilde T}(A)\to {\tilde T}(E)$ be an affine \hm\, 
defined by $(\pi_E)_T(\tau)(e)=\tau\circ \pi_E(e)$ for all $e\in {{{\rm Ped}(E)}}.$
The cone ${\tilde T}(E)$ is generated by $(\pi_E)_T({\tilde T}(A))$ and $\tau_W.$

Let $E_1, E_2\in \E.$ We say that ${\rm Inv}(E_1)$ and ${\rm Inv}(E_2)$ are isomorphic, 
and write ${\rm Inv}(E_1)\cong {\rm Inv}(E_2),$ if there is {{an isomorphism}}
$$
\Gamma: {\rm Inv}(E_1)=(K_0(E_1), K_1(E_1), {\tilde T}(E_1), \Sigma_{E_1}, \lambda_{E_1})
\cong {\rm Inv}(E_2)=(K_0(E_2), K_1(E_2), {\tilde T}(E_2), \Sigma_{E_2}, \lambda_{E_2}),
$$

i.e., 
if there is a group isomorphism $\Gamma_i: K_i(E_1)\to K_i(E_2),$
$i=0,1,$ and a topological cone isomorphism  $\Gamma_T: {{{\tilde T}(E_1)\to {\tilde T}(E_2)}}$ which 
maps $T_f(E_1)$  onto $T_f(E_2)$ such that
\beq\label{Dinv-paring-one}
\lambda_{E_2}\circ \Gamma_0=\lambda_{E_1} \andeqn
\Sigma_{E_2} \circ \Gamma_T = \Sigma_{E_1}.
\eneq

\begin{lem}\label{Lliftcuntz}
Let $E\in \E$ be an essential extension
of the form:
\beq\label{Liftcuntz-1}
0\to \W\to E\stackrel{\pi_E}{\longrightarrow} A\to 0.
\eneq
Suppose that $e_1, e_2\in E_+$ such that $d_\tau(\pi_E(e_1))=d_\tau(\pi_E(e_2))$
for all $\tau\in {\tilde T}(A)$ and $\W\subset {\rm Her}_E(e_i),$ $i=1,2.$
Then  there is an isomorphism 
\beq
\psi:{\rm Her}_E(e_1)\cong {\rm Her}_E(e_2)
\eneq
such that ${{KL(\psi)=KL({\rm id}_{E})}}$ and $\tau\circ \psi(e)=\tau(e)$ for all $\tau\in {\tilde T}(E).$
\end{lem}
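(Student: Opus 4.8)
\textbf{Proof proposal for Lemma \ref{Lliftcuntz}.}

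The plan is to reduce the statement to the classification machinery already available for the simple quotient $A$, transported along the quotient map, together with the structural facts about $E$ as a subalgebra of $M(\W)$ established earlier. First I would record the easy reductions. Set $B_i:={\rm Her}_E(e_i)$. Since $\W\subset B_i$, each $B_i$ is again an essential extension of $\overline{\pi_E(e_i)A\pi_E(e_i)}={\rm Her}_A(\pi_E(e_i))$ by $\W$; indeed $0\to \W\to B_i\stackrel{\pi_E}{\to}{\rm Her}_A(\pi_E(e_i))\to 0$ is exact because $\pi_E(B_i)={\rm Her}_A(\pi_E(e_i))$ and $\ker(\pi_E|_{B_i})=\W\cap B_i=\W$ (as $\W\subset B_i$). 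By Brown's hereditary stable isomorphism theorem (\cite{Brstable}), the inclusions $B_i\hookrightarrow E$ induce isomorphisms on $K$-theory, so $K_j(B_i)\cong K_j(E)\cong K_j(A)$ ($j=0,1$), and likewise the trace cones are identified: a densely defined lower semicontinuous trace on $B_i$ extends uniquely to one on $E$, giving ${\tilde T}(B_i)\cong {\tilde T}(E)$. The hypothesis $d_\tau(\pi_E(e_1))=d_\tau(\pi_E(e_2))$ for all $\tau\in{\tilde T}(A)$ says exactly that the positive elements $\pi_E(e_1),\pi_E(e_2)$ are Cuntz equivalent in $A$ (here we invoke that $A$ is a separable simple stably finite \CA\ with finite nuclear dimension and ${\rm ker}\rho_A=K_0(A)$, hence $\Z$-stable with stable rank one and strict comparison, so $Cu(A)=V(A)\sqcup {\rm LAff}_+({\tilde T}(A))$ — but $A$ is stably projectionless so $V(A)=\{0\}$ and Cuntz classes of positive elements are detected by their rank functions, cf.\ \ref{RmZCuntz}). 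Consequently ${\rm Her}_A(\pi_E(e_1))\cong {\rm Her}_A(\pi_E(e_2))$ by an isomorphism $\phi_0$ which, moreover, can be chosen so that $KL(\phi_0)$ is compatible with the identity of $A$ under the identifications $K_j({\rm Her}_A(\pi_E(e_i)))\cong K_j(A)$, and $\phi_0$ intertwines the trace cones appropriately; this is exactly the content of Lemma \ref{LBembed}(2) applied inside $A$ (after rescaling the ranks to land in $(0,1)$ and using that hereditary subalgebras of $A$ with continuous scale are again in the classifiable class).

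Next I would use the classification of essential extensions of $\W$, Theorem \ref{MTnoncom}, to lift $\phi_0$. Let $\sigma_i\colon {\rm Her}_A(\pi_E(e_i))\to \C(\W)$ be the Busby invariant of $B_i$ (viewed, via $B_i\subset M(\W)$, concretely; note each $\sigma_i$ is nonunital since $\W$ is stably projectionless). By Lemma \ref{LKtE} the map $(\sigma_i)_{*0}$ on $K_0$ is determined by $\lambda_{B_i}\circ(\pi_E)_{*0}^{-1}$, where $\lambda_{B_i}(x)=\tau_W(x)$; since the isomorphism $B_1\cong E\cong B_2$ on $K_0$ is compatible with $\lambda$ (because $\tau_W$ restricts compatibly to all hereditary subalgebras containing $\W$), and since $\phi_0$ is chosen $KL$-compatible with ${\rm id}_A$, we get $(\sigma_2\circ\phi_0)_{*0}=(\sigma_1)_{*0}$ in ${\rm Hom}(K_0({\rm Her}_A(\pi_E(e_1))),\R)$, hence $KK(\sigma_2\circ\phi_0)=KK(\sigma_1)$ (here ${\rm Hom}(K_0(-),\R)=KK(-,\C(\W))$ by the UCT since $K_*(\C(\W))$ is divisible and $K_1(\W)=0$; note ${\rm Her}_A(\pi_E(e_i))$ satisfies the UCT as a hereditary subalgebra of $A$). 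By Theorem \ref{MTnoncom}(1), $\sigma_2\circ\phi_0\sim^u\sigma_1$, i.e.\ there is a unitary $U\in M(\W)$ with $\pi(U^*)\,(\sigma_2\circ\phi_0)(a)\,\pi(U)=\sigma_1(a)$ for all $a$. This intertwining is exactly what is needed to conjugate: $\Psi(b):=U^*bU$ maps $B_1={\rm Her}_E(e_1)$ isomorphically onto $B_2={\rm Her}_E(e_2)$, because conjugation by $U$ carries the extension algebra with Busby map $\sigma_1$ onto the one with Busby map $\sigma_2\circ\phi_0$, which is $B_2$ re-parametrized by $\phi_0$ (that $U^*B_1 U$ is exactly $B_2$ — not merely contained in it — follows, as in the proof of Theorem \ref{MTclass}, from the surjectivity of $\phi_0$ onto $\pi_E(B_2)$ combined with $\Psi(\W)=\W$).

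Finally I would verify the two asserted compatibilities of $\Psi$. For the trace compatibility: since $\Psi$ is inner via a unitary of $M(\W)$ and $\tau_W$ is the (unique extension of the) canonical tracial state, $\tau_W\circ\Psi=\tau_W$ on $E$ (conjugation by a multiplier fixes $\tau_W$ because $\tau_W$ is the extension of the unique trace on the ideal $\W$); for a general $\tau\in{\tilde T}(E)$ one decomposes $\tau$ along the cone generated by $(\pi_E)_T({\tilde T}(A))$ and $\tau_W$, and uses that $\phi_0$ was chosen to intertwine the trace cones of the quotients (Lemma \ref{LBembed}(2)), so $\tau\circ\Psi=\tau$ on all of $E$. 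For $KL(\Psi)=KL({\rm id}_E)$: inner automorphisms of a \CA\ induce the identity on $\underline{K}$, and although $\Psi$ is conjugation by a unitary of $M(\W)$ rather than of $E^\sim$, on $K$-theory it agrees with the composite $B_1\hookrightarrow E\cong B_2$ determined by $\phi_0$, which is $KL$-compatible with ${\rm id}_E$ by construction via \ref{LBembed}(2) and Brown's theorem; since $E$ satisfies the UCT this $KL$-equality follows from the corresponding equalities on $K_0$ and $K_1$. The main obstacle is the bookkeeping in the previous paragraph: one must ensure the isomorphism $\phi_0$ of the simple quotients can be chosen simultaneously $KL$-compatible with ${\rm id}_A$ \emph{and} compatible with the restriction of $\lambda$ (equivalently with $\tau_W$), so that the lifted unitary $U$ produces an isomorphism $\Psi$ inducing precisely $KL({\rm id}_E)$ and fixing every trace; this is where the full strength of Theorem \ref{MTnoncom} (controlling $KK$ of the extension, not just a weaker invariant) and of Lemma \ref{LBembed}(2) (controlling $KL$, the trace map, and the unitary-group data of $\phi_0$ together) is essential.
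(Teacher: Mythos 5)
Your proof is correct but takes a genuinely different route from the paper, so let me compare the two.

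The paper's argument is more elementary and self-contained. It lifts the Cuntz comparison directly: since $A$ has stable rank one, the partial isometry $u\in A^{**}$ implementing $\pi_E(e_1)\sim\pi_E(e_2)$ is approximated (via Theorem 5 of \cite{PedU}) by unitaries $u_n\in\widetilde A$; each $u_n$ lifts to a unitary $w_n\in M(\W)$ because $K_1(\C(\W))=0$ and $\C(\W)$ is simple purely infinite, and (crucially using $\W\subset{\rm Her}_E(e_i)$) one then gets $w_nf_{1/n}(e_1)w_n^*\in{\rm Her}_E(e_2)$, whence $e_1\lesssim e_2$ and, by symmetry, $e_1\sim e_2$ \emph{in $E$}. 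One then shows $E$ has stable rank one (from part (ii) of Proposition~4 of \cite{LR}, since $K_i(\W)=0$ and $A$ has stable rank one) and invokes \cite{CEI} to produce $\psi$ as conjugation by a partial isometry in $E^{**}$, which is then approximated by unitaries of $\widetilde E$ via \cite{PedU} a second time; the approximate innerness gives $KL(\psi)=KL({\rm id}_E)$ and trace preservation for free. In contrast, you descend to the Busby invariants of $B_i={\rm Her}_E(e_i)$ and invoke the full extension-classification Theorem~\ref{MTnoncom}(1) to produce the implementing unitary in $M(\W)$, after building a trace- and $KL$-compatible isomorphism $\phi_0$ of the quotient hereditary subalgebras. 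This is a legitimate alternative (Theorem~\ref{MTnoncom} is established before this lemma, so there is no circularity) and it generalizes more readily to other ideals for which an extension classification is available, but it imports much heavier machinery than the lemma actually needs, and it forces you to re-verify that the hereditary subalgebras of $A$ are $\W$-embeddable, satisfy the UCT, and meet the hypotheses of Lemma~\ref{LKtE}.

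One small imprecision to note in your write-up: you cite Lemma~\ref{LBembed}(2) to produce $\phi_0$, but that lemma requires continuous scale, which ${\rm Her}_A(\pi_E(e_i))$ need not have (nothing forces $d_\tau(\pi_E(e_i))$ to be continuous on $\widetilde T(A)$). You do not in fact need Lemma~\ref{LBembed}(2) here: as you half-observe in the ``after rescaling'' parenthetical, the Cuntz equivalence $\pi_E(e_1)\sim\pi_E(e_2)$ in $A$ together with stable rank one already yields, via \cite{CEI} and \cite{PedU}, an isomorphism $\phi_0$ implemented by a partial isometry in $A^{**}$ and approximated by unitaries of $\widetilde A$, so $\phi_0$ is approximately inner and therefore automatically $KL$-compatible with ${\rm id}_A$ and compatible with the trace cones. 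Replacing the appeal to Lemma~\ref{LBembed}(2) by this elementary observation closes the gap and also aligns your argument more closely with the mechanism the paper uses on $E$ itself.
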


\begin{proof}
Since $A$ has stable rank one (see Theorem 11.5 of \cite{GLII}),
it follows from \cite{CEI} (see  Proposition 3.3 of \cite{Rz}; see also the paragraph above Proposition 3.3 of \cite{Rz} and \cite{LinCuntz}) 
that there is an element  $u\in A^{**}$ such 
that $u{\rm Her}_A(\pi_E(e_1))u^*={\rm Her}_A(\pi_E(e_2)).$ Moreover $u\pi_E(e_1), u^*\pi_E(e_2)\in A$ 
and $u^*u=p$  and $uu^*=q,$ where $p$ and $q$ are open projections of $A$ corresponding 
to $\pi_E(e_1)$ and $\pi_E(e_2),$ respectively.
Let $x=u\pi_E(e_1)\in A.$  
Since $A$ has stable rank one,  by Theorem 5 of  \cite{PedU}, 
for each $n,$  there is a unitary $u_n\in {\widetilde{A}}$ 
such that
\beq
{{u_n\pi_E(f_{1/n}(e_1))={{u}}\pi_E(f_{1/n}(e_1)).}}
\eneq
Since $K_1(\C(\W))=\{0\}$ (see Theorem \ref{K-six}) 
and $\C(\W)$ is purely infinite {{simple}}, 
there is a unitary $w_n\in M(\W)$ such that $\pi(w_n)=u_n.$ Therefore $w_n\in {\widetilde{E}}.$
Since 
\beq
{{u_n\pi_E(f_{1/n}(e_1))u_n^*}}\in {\rm Her}_A(\pi_E(e_2)),
\eneq
\beq
{{w_nf_{1/n}(e_1)w_n^*}}  \in {\rm Her}_E(e_2).
\eneq
(Recall that $\W \subseteq {\rm Her}_E(e_2)$.)
It follows that, for all $n$,   
\beq
{{f_{1/n}(e_1)}}  \lesssim   e_2.
\eneq
Therefore 
\beq
e_1\lesssim e_2.
\eneq
Symmetrically,  
\beq
e_2\lesssim e_1.
\eneq
{{Hence $e_1\sim e_2.$}}
By \eqref{Liftcuntz-1} and the fact that $K_i(\W)=\{0\},$ $i=0,1,$ applying part (ii) of Proposition 4  of \cite{LR},
$E$ has stable rank one. It follows from  \cite{CEI}
 that there is an isomorphism 
$$
\psi: {\rm Her}_E(e_1)\cong {\rm Her}_E(e_2)
$$
such that $\psi(a)=U^*aU$ for some partial isometry $U\in E^{**}$  such that 
${{U^*a, Ub}}\in E$ for all $a\in {\rm Her}(e_1)$ and $b\in {\rm Her}(e_2),$ 
$U^*U=P,$ $UU^*=Q,$ where $P$ is the open projection corresponding to $e_1$ 
and $Q$ is the open projection corresponding to $e_2,$ respectively. 

{{Let $z=U^*e_1\in E.$ 
Since $E$ has stable rank one as we just proved, by  Theorem 5 of \cite{PedU}, 
for each $n,$ there is a unitary $V_n\in {\widetilde{E}}$
such that $V_nP_n=U^*P_n,$ where $P_n$ is the spectral projection of $e_1$ in $A^{**}$
corresponding to $(1/n, \|z\|].$ 
It follows that ${{V_n f_{1/n}(e_1) a f_{1/n}(e_1)V_n^*}}\in 
{\rm Her}(e_2)$ and 
{{
\beq
\lim_{n\to\infty}V_naV_n^*=
\lim_{n \to \infty} V_n f_{1/n}(e_1) a f_{1/n}(e_1) V_n^* = 
\lim_{n\to\infty}U^* f_{1/n}(e_1)a f_{1/n}(e_1)U=\psi(a)
\eneq}}
for all $a \in Her(e_1)$.   
It follows that $KL(\psi)=KL({\rm id}_{E}).$}}
\end{proof}

\begin{thm}\label{MTextbyW}
Let $E_1, E_2\in \E.$ 
Then $E_1\cong E_2$ if and only if there is an isomorphism
$$
\Gamma: {\rm Inv}(E_1)=(K_0(E_1), K_1(E_1), {\tilde T}(E_1), \Sigma_{E_1}, \lambda_{E_1})
\cong {\rm Inv}(E_2)=(K_0(E_2), K_1(E_2), {\tilde T}(E_2), \Sigma_{E_2}, \lambda_{E_2}).
$$
Moreover, if {{such an}}  isomorphism $\Gamma$ exists,  there is an isomorphism $\psi: E_1\to E_2$ which 
induces $\Gamma.$
\end{thm}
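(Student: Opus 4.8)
The plan is to reduce the classification of the (non-simple) algebras $E \in \E$ to the already-established Theorem \ref{MTclass} for the subclass $\E_c$, by ``cutting down'' each $E$ to a hereditary subalgebra whose quotient by $\W$ has continuous scale. First I would observe that, given $E \in \E$ with short exact sequence $0 \to \W \to E \xrightarrow{\pi_E} A \to 0$, the algebra $A$ is a separable simple stably projectionless \CA\ with finite nuclear dimension, ${\rm ker}\rho_A = K_0(A)$, satisfying the UCT; in particular $A$ is $\Z$-stable with stable rank one (Lemma \ref{lem:Feb2620203PM}, Theorem 15.5 of \cite{GLII}), and $Cu(A) = V(A) \sqcup {\rm LAff}_+(T(A))$. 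By Remark 5.2 of \cite{eglnp} (as used in Proposition \ref{AF-emb-1}) there is an element $a \in A_+$ with $\overline{aAa}$ of continuous scale; lift $a$ to $e' \in E_+$ and set $e = e' + e_\W$ where $e_\W$ is a strictly positive element of $\W$, so that $\W \subseteq {\rm Her}_E(e)$ and $\pi_E({\rm Her}_E(e)) = \overline{aAa}$. Then $E_0 := {\rm Her}_E(e)$ sits in an extension $0 \to \W \to E_0 \to \overline{aAa} \to 0$, and since $\overline{aAa}$ has continuous scale and (by $K$-theory invariance under passing to hereditary subalgebras of simple algebras, \cite{Brstable}) still has $K_0 = {\rm ker}\rho$, we get $E_0 \in \E_c$.

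Next I would establish that the inclusion $\iota\colon E_0 \hookrightarrow E$ induces isomorphisms on all of the invariant: on $K_*$ this follows because $\overline{aAa} \hookrightarrow A$ induces $K_*$-isomorphisms (\cite{Brstable}) and by the five lemma applied to the six-term sequences (using $K_*(\W) = 0$); on traces, the restriction map $\widetilde{T}(E) \to \widetilde{T}(E_0)$ is a topological cone isomorphism because $e$ is in the Pedersen ideal and $A$ (hence $E$) is simple modulo $\W$; and $\lambda_{E_0}$, $\Sigma_{E_0}$ are the restrictions of $\lambda_E$, $\Sigma_E$ (note $\Sigma_{E_0}$ is computed with the strictly positive element $e$ of $E_0$, which is exactly what appears in $\Sigma_E$ evaluated on the relevant cone after the identification). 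This gives a canonical isomorphism ${\rm Inv}(E) \cong {\rm Inv}(E_0)$.

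Now suppose $\Gamma\colon {\rm Inv}(E_1) \to {\rm Inv}(E_2)$ is an isomorphism as in the statement. Using the reductions, pick $E_{1,0}, E_{2,0} \in \E_c$ as above with ${\rm Inv}(E_{i,0}) \cong {\rm Inv}(E_i)$. The issue is that $\Gamma$ need not match the chosen continuous-scale cut-downs; so I would use Lemma \ref{Lliftcuntz} to adjust. Concretely, transport $\Sigma_{E_{1,0}}$ via $\Gamma_T$ to a function on $\widetilde{T}(E_2)$; it agrees on the cone with $d_\tau$ of some positive element of $E_2$ whose image in $A_2$ has the same dimension function as the chosen cut-down element, and then Lemma \ref{Lliftcuntz} produces an isomorphism between the corresponding hereditary subalgebras of $E_2$ that is trivial on $KL$ and on traces. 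After this adjustment, $\Gamma$ restricts to an isomorphism ${\rm Inv}(E_{1,0}) \cong {\rm Inv}(E_{2,0})$ compatible with the continuous-scale structure; but wait — one must check that $\widetilde{T}$ together with $\Sigma$ and $\lambda$ determines the compact Elliott invariant $(K_0, K_1, T, r)$ used in $\E_c$. This holds because $T(E_{i,0})$ is precisely the base $\{\tau \in \widetilde{T}(E_{i,0}) : \Sigma_{E_{i,0}}(\tau) = 1\}$ (continuous scale), and the pairing $r$ is recovered from the pairing of $\widetilde{T}$ with $K_0$; the faithful-trace conditions correspond under $\Sigma$ and $\lambda$. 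Hence Theorem \ref{MTclass} applies and yields an isomorphism $\Psi_0\colon E_{1,0} \to E_{2,0}$ inducing the restricted $\Gamma$.

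Finally I would promote $\Psi_0$ to an isomorphism $\Psi\colon E_1 \to E_2$. Since $E_{i,0} = {\rm Her}_{E_i}(e_i)$ with $\W \subseteq E_{i,0}$ and $E_i$ is an essential extension of $A_i$ by $\W$, both $E_1$ and $E_2$ embed as subalgebras of $M(\W)$ containing $\W$, with Busby maps $\sigma_{A_i}\colon A_i \to \C(\W)$. The isomorphism $\Psi_0$ restricts to an automorphism of $\W$ and descends to an isomorphism $\phi\colon \overline{a_1 A_1 a_1} \to \overline{a_2 A_2 a_2}$; extending $\phi$ to an isomorphism $A_1 \to A_2$ (again via \cite{CEI}, \cite{Rz}, since passing to a full hereditary subalgebra of a simple algebra with continuous-scale data is reversible, and using the classification of the $A_i$ in \cite{GLII}) and comparing Busby invariants exactly as in the proof of Theorem \ref{MTclass}: one checks $KK(\sigma_{A_2} \circ \phi) = KK(\sigma_{A_1})$ (the $K_0$-part is forced by $\lambda_{E_i}$ and Lemma \ref{LKtE}, the rest by divisibility of $K_*(\C(\W))$ and the UCT), invokes Theorem \ref{MTnoncom} to get a unitary $U \in M(\W)$ with $\pi(U)^*(\sigma_{A_2}\circ\phi)(\cdot)\pi(U) = \sigma_{A_1}(\cdot)$, and defines $\Psi(x) = U^* x U$. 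This $\Psi$ maps $E_1$ onto $E_2$, carries $\W$ to $\W$, and one verifies it induces $\Gamma$. The converse (an isomorphism $E_1 \cong E_2$ induces an invariant isomorphism) is immediate since $\W$ is the unique ideal and $\tau_W$ is canonical. The main obstacle will be step three: bookkeeping the passage between $\widetilde{T}$-with-$\Sigma$ data and the compact invariant of $\E_c$, and ensuring the cut-down adjustment via Lemma \ref{Lliftcuntz} can be done compatibly on both sides so that Theorem \ref{MTclass} is applicable without losing track of $\Gamma$.
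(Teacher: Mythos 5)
Your overall strategy — reduce to the continuous-scale class $\E_c$, invoke Theorem \ref{MTclass}, then promote — is exactly the paper's. The middle step is also recognizable: the paper avoids your ``after-the-fact adjustment'' by directly choosing $b_0\in B_+$ with $d_t(b_0)=g(t)$, where $g\circ\Gamma_T(\tau)=d_\tau(\pi_{E_1}(a_1))$, so that the $E_2$-side cut-down is compatible from the start rather than re-aligned via Lemma \ref{Lliftcuntz}; your description of the re-alignment slips on exactly the hypothesis of Lemma \ref{Lliftcuntz} (equal dimension functions under $\pi_E$), but the underlying idea is the same.

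The genuine gap is in your final promotion step. You propose to descend $\Psi_0$ to $\phi_0:\overline{a_1A_1a_1}\to\overline{a_2A_2a_2}$, ``extend $\phi_0$ to an isomorphism $\phi:A_1\to A_2$ via \cite{CEI}, \cite{Rz}'', and then redo the Busby-invariant comparison with Theorem \ref{MTnoncom}. But \cite{CEI} and \cite{Rz} give Cuntz-semigroup and stable-isomorphism machinery; they do not let you literally extend a given isomorphism of full hereditary subalgebras to an isomorphism of the ambient simple algebras. What you \emph{can} do is build $\phi:A_1\to A_2$ from scratch via the classification theorem in \cite{GLII} with the invariants dictated by $\Gamma$ — but then $\Psi_0$, and hence Theorem \ref{MTclass}, contributes nothing, since the Busby comparison plus Theorem \ref{MTnoncom} is precisely the content of the proof of Theorem \ref{MTclass}, which you would now be re-running at the $\E$-level. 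The paper sidesteps this circularity: it tensors the isomorphism $\Psi:E_{1,c}\to E_{2,c}$ from Theorem \ref{MTclass} with ${\mathcal K}$, views $E_1$ as a full hereditary subalgebra of $E_{1,c}\otimes{\mathcal K}$ by Brown's stable isomorphism theorem, sets $e_2'=\Psi(e_1)$ for a strictly positive $e_1\in E_1$, uses $\Sigma_{E_2}\circ\Gamma_T=\Sigma_{E_1}$ to conclude $d_\tau(e_2')=d_\tau(e_2)$ on $\widetilde{T}(E_2)$, and then applies Lemma \ref{Lliftcuntz} exactly once to identify ${\rm Her}(e_2')$ with ${\rm Her}(e_2)=E_2$. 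That single application of Lemma \ref{Lliftcuntz}, at the stabilized level, is the whole promotion; your version replaces it with a re-derivation that discards the map you worked to build and leaves the ``extend $\phi_0$'' step unjustified.
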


\begin{proof}
We begin with  two short exact sequences
\beq\nonumber
0\to \W\to E_1\stackrel{\pi_{E_1}}{\longrightarrow} A\to 0\andeqn
0\to \W\to E_2\stackrel{\pi_{E_2}}{\longrightarrow} B\to 0.
\eneq
Both $A$ and $B$ are separable simple stably projectionless C*-algebras, with finite nuclear dimension which {{satisfy}} the UCT. Moreover, 
$K_0(A)={\rm ker}\rho_A$ and $K_0(B)={\rm ker}\rho_B.$

Let $e_0\in{{ (E_1)_+}}\setminus\{0\}$ with $\|e_0\|=1$ be a strictly positive element of {{$E_1$}}.  
Let $e_1={{f_{1/2}(e_0)}}$. Choose $a_0\in A_+\setminus \{0\}$ such that
$a_0\le \pi_{E_1}(e_1)$  and 
{{$d_\tau(a_0)$ is continuous on}} ${\tilde T}(A)$ {{(see 11.11  of \cite{eglnp} and  Theorem 15.5 of \cite{GLII}).  So ${\rm Her}(a_0)$ has continuous scale (see, for example, Proposition 5.4 of \cite{eglnp}).}}
Choose $a_1'\in  {{(E_1)_+}}$ such that ${{f_{1/8}(e_0)}}a_1'=a_1'$ 
{{and $\pi_{E_1}(a_1')=a_0$
(see Lemma 7.2 of \cite{eglnp}).}} Then $a_1'\in \rm{Ped}({{E_1}}).$
Let $e_W\in \W$ be a strictly positive element in $\W.$ Since ${\rm Ped}(\W)=\W,$  $a_1''=a_1'+e_W\in {\rm Ped}({{E_1}})_+.$ 
Let $a_1=a_1''/\|a_1''\|.$ 
{{Then}} $a_1\in M(\W)_+.$ It follows that $\tau_W(f_{1/n}(a_1))\le 1$ for all $n.$ 
{{Obviously,
$$e_W \lesssim a''_1 \sim a_1.$$}}
We conclude that $d_{\tau_W}(a_1)=1.$ 
{{Since every $t\in {\tilde T}(E_1)$   has the form $\af\cdot  t_A\circ \pi_{E_1}+(1-\af)\cdot t_W,$
where $t_A\in {\tilde T}(A)$ and $0\le \af\le 1,$}} 
one also  verifies that $d_\tau(a_1)$ is continuous on ${\tilde T}(E_1).$

Put $A_1={\rm Her}(a_0)$ and $E_{1,c}={\rm Her}(a_1).$ 
Note that $\pi_{E_1}(E_{1,c})=A_1.$ So $E_{1, c}\in \E_c.$

Let $g\in \Aff({\tilde T}(B))$ such that $g\circ \Gamma_T(\tau)=
d_\tau({{\pi_{E_1}(a_1)}})$
for all $\tau\in {\tilde T}(A).$ 
By  Theorem 11.11 of \cite{eglnp} and  Theorem 15.5 of \cite{GLII}, there exists $b_0\in B_+$ such that $d_t(b_0)=g(t)$ for all $t\in {\tilde T}(B).$
Let $B_1={\rm Her}(b_0).$ Then $B_1$ also has continuous scale (see Proposition 5.4 of \cite{eglnp}). 
Choose $b_1'\in (E_2)_+$ such that $\pi_{E_2}(b_1')=b_0$ and 
$b_1''={{b'_1}}+e_W.$
Set $b_1=b_1''/\|b_1''\|\in E_2\subset M(\W).$ Then, for any $n,$ 
$\tau_W(f_{1/n}(b_1))\le 1.$  It follows that $d_{\tau_W}(b_1)\le 1.$
{{Note that}}  $e_W\lesssim b''_1 \sim  b_1.$ Therefore 
$d_{\tau_W}(b_1)=1.$ 
{{Note, for each $\tau=\af t_B\circ \pi_{E_2}+(1-\af)t_W\in {\tilde T}(E_2),$ where $t_B\in {\tilde T}(B)$ and 
$0\le \af\le 1,$ 
$$
d_\tau(b_1)=\af t_N(b_0)+(1-\af).
$$}}

Define $E_{2,c}={\rm Her}(b_1).$  Then $E_{2,c}\in \E_c.$
We note that $E_{i,c}$ is a full hereditary \SCA\, of $E_i$ which contains $\W$ as an ideal ($i=1,2$).
In particular, $K_i(E_{j,c})=K_i(E_j),$ $i=0,1$ and $j=1,2.$

Consider $T_a=\{\tau\in {\tilde T}(E_1): d_\tau(a_1)=1\}.$
Since ${{d_\tau(a_1)}}$ is continuous {{on ${\tilde T}(E_1)$}}, 
 {{$T_a$}} is a compact convex subset of ${\tilde T}(E_1).$
Note that $\Gamma_T(T_a)=T_g
=\{t\in {\tilde T}(E_2): {{d_{\tau}(b_1)}}=1\}.$
Moreover, $\Gamma_T$ maps $T_a$ {{affinely and 
homeomorphically}} onto $T_g.$ 

Let $\gamma_1: T_a\to T(E_{1,c})$ by
$\gamma_1(\tau)(e)=\tau(e)$ for all $e\in E_{1,c}$ and $\tau\in T_a.$ 
Then $\gamma_1$ is an affine homeomorphism. 
Let $\gamma_2: T_g\to T(E_{2,c})$ be defined by
$\gamma_2(t)(d)=t(d)$ for all $d\in E_{2,c}$ and $t\in T_g.$ Then $\gamma_2$ is {{also}} an affine homeomorphism.

Now define 
\beq
\Gamma': (K_0(E_{1,c}), K_1(E_{1,c}), T(E_{1,c}), r_{E_{1,c}})\to 
(K_0(E_{2,c}), K_1(E_{2,c}), T(E_{2,c}), r_{E_{2,c}})
\eneq
as follows.  $\Gamma_i':=\Gamma_i: K_i(E_{1,c})=K_i(E_1)\to K_i(E_2)=K_i(E_{2,c}),$ $i=0,1,$ {{and}}
$\Gamma_T':=\gamma_2\circ \Gamma_T\circ \gamma_1^{-1}.$ 

We also check 
that, since $\lambda_{E_2}\circ \Gamma_0=\lambda_{E_1},$
for any $x\in K_0(A),$ 
\beq\label{Dinv-paring-2}
\tau_W(\Gamma_0'(x))=\tau_W(\Gamma_0(x))=\tau_W(x).
\eneq
Since $K_0(A)={\rm ker}\rho_A$ and $K_0(B)={\rm ker}\rho_A,$
\eqref{Dinv-paring-2} implies that
\beq\label{Dinv-paring}
r_{E_{2,c}}(\tau)(\Gamma_0'(x))=r_{E_{1,c}}(\Gamma_T^{-1}(\tau))(x)\rforal x\in K_0(E_{1,c})\,\,\, {\rm and}\,\, \tau\in T(E_{2,c}).
\eneq
It follows from Theorem  \ref{MTclass} that there exists an isomorphism 
$\Psi: E_{1, c}\to E_{2,c}$ which induces $\Gamma'.$ 
This provides (still denote by $\Psi$) an isomorphism 
$\Psi: E_{1,c}\otimes {\mathcal K}\to E_{2,c}\otimes {\mathcal K}.$

{{By Brown's stable isomorphism theorem (\cite{Brstable}), 
we may view $E_1$ as a full hereditary \SCA\, of $E_{1,c}\otimes {\mathcal K}.$
Then  we obtain 
an embedding $\Psi|_{E_1}: E_1\to E_{2,c}\otimes {\mathcal K}.$}} Let $e_1$ be a strictly positive element of $E_1$
and $e_2'=\Psi(e_1).$ 
Let $e_2$ be a strictly positive element of $E_2.$ 
Since $\Psi$ induces $\Gamma'$ and since
$\Sigma_{E_2} \circ \Gamma_T = \Sigma_{E_1}$, we have that
$$
d_\tau({{e_2'}})= d_{\tau}(e_2) {{\rforal \tau \in {\tilde T}(E_2).}}
$$

Finally, by applying {{Lemma}} \ref{Lliftcuntz},  there is an isomorphism 
$\psi:{\rm Her}(e_2')\cong {\rm Her}(e_2)$ with $KL(\psi)=KL({\rm id}_{E_2})$ and 
preserves the traces. Therefore 
\beq
E_1\cong E_2
\eneq
and the isomorphism preserves $\Gamma'.$

\end{proof}

\begin{rmk}\label{Rnotrivlaext}
{\rm We do not include a classification statement for the essential extensions 
of the form in \eqref{Shortexinnons} for the case that $A$ {{does}} not have continuous scale in section 7. Theorem \ref{MTextbyW}
is a classification with different favor.  It should be noted though that, if $A$ does not have continuous scale,
then there may not have any trivial extensions of the form in \eqref{Shortexinnons}.
To see this, consider the case that $A=A\otimes {\mathcal K}.$ Then $A$ does not have any faithful 
tracial states. If there were a monomorphism $j: A\to M(\W)$ such that
\beq
\pi_E\circ j={\rm id}_A,
\eneq
then $\tau_W\circ j$ would be a faithful tracial state of $A.$ This is not possible. So no essential extensions 
of the form in \eqref{Shortexinnons} splits.   This explains, partially,  why we choose not to include this case in 
section 7. 

}

\end{rmk}

\vspace{0.4in}


\section{Extensions by a simple \CA\, in $\I$}

In this section we consider the essential extensions of the form:
$$
0\to B\to E\to C\to 0,
$$
where $B$ is a separable simple \CA\, with a unique tracial state and with finite nuclear dimension 
and satisfies the UCT, and $C$ is  a 
separable amenable \CA\, which is $\W$ embeddable. 

\begin{df}\label{DII}
Denote by $\I$ the class of all nonunital {{stably projectionless}}
separable simple amenable ${\Z}$-stable \CA s with a unique tracial state
which satisfies the UCT.
\end{df}
Note that if $B\in I,$ then 
$K_0(M(B))=\R,$  $K_1(M(B))=\{0\}$ and $K_1(\C(B))=K_0(B)$
 and  {{$K_0(\C(B))=\R\oplus K_1(B)$}}
 (see Theorem \ref{K-six}).
{{\CA s in $\I$ {{have}} been classified by their Elliott invariant in \cite{GLII}. 
All \CA s in $\I$ have stable rank one.  Moreover ${\rm ker}\rho_B=K_0(B)$
for every \CA\, $B\in \I$ (see Lemma \ref{lem:Feb2620203PM}).
 We will also use the fact that every hereditary \SCA\, of \CA s in 
$\I$ is in $\I.$}}

\begin{lem}\label{DTextforI-n}
Let $B\in \I.$
Let $A$ be a separable amenable \CA\, which is $\W$ embeddable. 
Then there are $\T_d$ extensions (see \ref{DTextension}) 
$\sigma: A\to \C(B)=M(B)/B$
with model $\sigma_A$ which factors through $\W$ and, in particular,
$KK(\sigma_A)=0.$

\end{lem}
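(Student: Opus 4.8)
The plan is to manufacture the $\T_d$ extension $\sigma$ directly from a model map $\sigma_A:A\to B$ that factors as $A\xrightarrow{\iota_A}\W\xrightarrow{\phi_{w,b}}B$, and then to run the same construction that was used in the proof of Proposition \ref{TextWC} (the case $B=\W$), now inside $M(B)$ for a general $B\in\I$. First I would fix a $\W$-embedding $\iota_A:A\hookrightarrow\W$ which carries a strictly positive element of $A$ to a strictly positive element of $\W$ (this is available by Definition \ref{DWemb}). Next, since $B\in\I$ is a separable simple $\Z$-stable \CA\ with stable rank one and with every $2$-quasitrace a trace, Lemma \ref{LWembB} provides a monomorphism $\phi_{w,b}:\W\to B$ which can be taken to send strictly positive elements to strictly positive elements (as $B$ is stably projectionless with continuous scale — or one first reduces to a hereditary subalgebra with continuous scale, which lies in $\I$). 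Set $\sigma_A:=\phi_{w,b}\circ\iota_A:A\to B$; then $\sigma_A$ is an embedding carrying a strictly positive element to a strictly positive element, and since $K_0(\W)=K_1(\W)=0$ and $\W$ satisfies the UCT we have $KK(\iota_A)$ factoring through $KK(A,\W)=0$, whence $KK(\sigma_A)=0$.

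The next step is to produce the system of quasidiagonal units and the matrix-amplified diagonal summands required by Definition \ref{DTextension}. Since $B$ is nonunital and $\sigma$-unital, it has a system $\{b_n\}$ of quasidiagonal units (Lemma 2.2 of \cite{NgRobinQuasidiagonal}, cited after Definition \ref{Dqdunit}). Passing to a subsequence of the $b_n$ using continuity of the scale (and strict comparison, valid because $Cu(B)=V(B)\sqcup{\rm LAff}_+(T(B))$ for $B\in\I$ by Remark \ref{RmZCuntz}), I can arrange, exactly as in the proof of Proposition \ref{TextWC}, that inside ${\rm Her}(b_n)$ there is a nonzero positive $b_{n,1}$ with $M_n({\rm Her}(b_{n,1}))\cong{\rm Her}(b_n)$; here one invokes Theorem 6.6 of \cite{ElliottRobertSantiago} together with strict comparison, just as before. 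Since every hereditary \SCA\ of $B$ is again in $\I$ and hence isomorphic to $B$ when it has the appropriate Cuntz class (by the classification in \cite{GLII}, or directly by \cite{CEI} plus stable rank one), there is an isomorphism $\phi_n:B\to{\rm Her}(b_{n,1})$. Then define $\sigma_{n,j}:=(\text{$j$-th corner of }\bigoplus^n\phi_n)\circ\sigma_A$ and $\sigma:=\bigoplus_{n=1}^\infty\bigoplus^n\phi_n\circ\sigma_A:A\to M(B)$, the sum converging strictly because $\{b_n\}$ is a system of quasidiagonal units.

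It remains to check two things: that $\pi\circ\sigma$ is a \emph{nonunital essential} extension, and that $KK(\pi\circ\sigma)=0$. Non-unitality and the essentiality follow from the construction: the sum of the $b_n$ is not a strictly positive element of $B$ unless one chooses the $\{b_n\}$ so that they exhaust an approximate unit, but here we may simply insist (as in Remark \ref{rmk:Textension}) that the $\{b_n\}$ are chosen so that $\pi\circ\sigma$ is a nonunital essential extension — concretely, by discarding nothing but noting $\sigma_A$ is injective so $\pi\circ\sigma$ is a monomorphism, and ${\rm ran}(\sigma)^\perp\ne\{0\}$ in $\C(B)$ because $\sigma$ is diagonal with summands in proper hereditary pieces. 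For the $KK$ statement, $\sigma=\phi\circ\sigma_A$ where $\phi:=\bigoplus_n\bigoplus^n\phi_n$ is a \hm\ into $M(B)$, so $KK(\pi\circ\sigma)=KK(\pi\circ\phi)\cdot KK(\sigma_A)$; since $KK(\sigma_A)=0$ already, $KK(\pi\circ\sigma)=0$. Alternatively one argues as in Remark \ref{Rsigmakk=0}: $K_*(M(B))\cong(\R,0)$ is divisible and $K_0(\sigma_A)=K_1(\sigma_A)=0$, so $KK(\pi\circ\sigma)=0$ via the six-term sequence of Theorem \ref{K-six} and the UCT. The main obstacle is the bookkeeping in the second paragraph — verifying that a hereditary subalgebra ${\rm Her}(b_{n,1})$ of $B$ with the right Cuntz class is genuinely isomorphic to $B$ (so that $\phi_n$ exists), which is where one must lean on the classification theorem of \cite{GLII} for the class $\I$ together with the Cuntz-semigroup computation; everything else is a routine transcription of the $B=\W$ argument in Proposition \ref{TextWC}.
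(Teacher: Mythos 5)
Your proof is correct and follows essentially the same route as the paper's: fix $\iota_A:A\hookrightarrow\W$ and an embedding $\W\to B$ via Lemma \ref{LWembB}, set $\sigma_A$ to be the composite, then arrange a system of quasidiagonal units $\{b_n\}$, find $a_n\in{\rm Her}(b_n)$ with the right Cuntz class so that $M_n({\rm Her}(a_n))$ sits inside ${\rm Her}(b_n)$, obtain an isomorphism $\phi_n:B\to{\rm Her}(a_n)$, and diagonalize. The only small difference is in the citation at the key step: where you invoke the classification theorem of \cite{GLII} (or \cite{CEI} plus stable rank one) directly to produce $\phi_n$, the paper cites part (2) of Lemma \ref{LBembed}, which packages precisely that consequence of the classification together with the needed tracial normalization; both amount to the same input.
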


\begin{proof}
Fix an embedding $\iota_A: A\to \W$ and an embedding $\iota_{w,B}: \W\to B$ 
(given, for example, by \ref{LWembB})
which maps strictly positive elements to strictly positive elements {{(see  \ref{DWemb}).}}
Put $\sigma_A=\iota_{w,B}\circ \iota_A: A\to B.$
Denote by $\tau$ the unique tracial state of $B.$
Fix a system of quasidiagonal units $\{b_k\}$ as in \ref{Dqdunit}. 
Passing to a subsequence, \wilog, we may assume that 
\beq
\sum_{k=n+1}^{\infty}d_\tau(b_k)<(1/n)d_\tau(b_n)\rforal n.
\eneq
Let $t_n=(1/n)d_\tau(b_k),$ $n\in \N.$
There is an element $a_n\in {\rm Her}(b_n)$ with $d_\tau(a_n)=t_n$ (since $B$
has strict comparison and since ${\rm Cu}(B) \cong V(B) \sqcup (0, \infty]$ ---see Proposition 11.11 of \cite{eglnp}
and Theorem 15.5 of \cite{GLII}). 
Moreover, ${\rm Her}(b_n)\cong M_n({\rm Her}(a_n)).$
By part (2) of \ref{LBembed}, there is, for each $n,$ a \hm\, $\phi_n: B\to {\rm Her}(a_n).$
Define $\sigma: A\to M(B)$ by
$\sigma(a)=\sum_{n=1}^\infty (\bigoplus^n \phi_n\circ \sigma_A)(a)$ for all 
$a\in A.$ One then checks, from Definition \ref{DTextension}, that 
$\pi\circ \sigma$ is a $\T_d$ extension  with model $\sigma_A$ which factors 
through $\W$ and $KK(\sigma_A)=0.$
\end{proof}

\begin{lem}\label{GLuniq}
Let $C$ be a separable amenable \CA\,  which is $\W$ embeddable and satisfies the UCT. Suppose 
$K_i(C)$ is finitely generated ($i=0,1$).  Let $B\in {\mathcal I}$ and let
$\pi\circ \sigma: C\to M(B)/B$ {{be}} {{an essential}}  $\T_d$-extension with the model map
$\sigma_C$ {{factoring through $\W.$}}
Then, for any {{essential}} quasidiagonal extension $\tau_q: {{C}}\to M(B)/B,$ there is a 
trivial diagonal {{essential}} extension $\sigma_d: {{C}}\to M(B)/B$ such that
\beq
\tau_q\oplus \pi\circ \sigma\sim^u  \sigma_d\oplus \pi\circ \sigma.
\eneq

\end{lem}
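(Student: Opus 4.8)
The plan is to follow the blueprint that was established for extensions by $\W$ in Section 6 (Lemmas \ref{Uniq-2}, \ref{Lexp-permuzero}, \ref{Lexpo-zeroabsb}, \ref{Lpermzero21}, \ref{L-quasiext}, \ref{L-Uniq-3}), but now with the ideal $\W$ replaced by the algebra $B\in\I$. The point is that $B$ shares all the structural features of $\W$ that those arguments used: by Theorem \ref{K-six} and Lemma \ref{lem:Feb2620203PM}, $B$ has continuous scale, stable rank one, $K_0(M(B))=\R$, $K_1(M(B))=0$, $K_0(B)={\rm ker}\rho_B$, and $\C(B)=M(B)/B$ is simple purely infinite with $K_1(\C(B))=K_0(B)$; and by Proposition \ref{Wprop}, $B\in{\bf C}_{0,0,1,T,7}$, so the stable uniqueness theorems \ref{thm:LinStableUniqueness} and \ref{TK1stuniq} apply with codomain a hereditary subalgebra of $B$. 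Crucially, every nonzero hereditary C*-subalgebra of $B$ is again isomorphic to $B$ (since $B\in\I$ and $\I$ is closed under passing to hereditary subalgebras), which is exactly the role played earlier by the fact that $\W$ is isomorphic to all its hereditary subalgebras.

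First I would establish the analogue of Lemma \ref{Uniq-2}: given the $\T_d$-extension $\pi\circ\sigma$ with model $\sigma_C$ factoring through $\W$ (hence $KK(\sigma_C)=0$, so $KK(\pi\circ\sigma)=0$ by Remark \ref{Rsigmakk=0}, using that $K_*(M(B))$ is divisible), and given the quasidiagonal extension $\tau_q$, lift $\tau_q$ to a c.p.c.\ map which by Proposition \ref{prop:QDExt} has the form $\bigoplus\psi_n$ with $\psi_n$ asymptotically multiplicative into $\overline{a_n B a_n}$ for a system $\{a_n\}$ of quasidiagonal units. Since $K_i(C)$ is finitely generated, for each $n$ I can choose, via Theorems \ref{T-existoI} and \ref{TExpon} applied to the hereditary subalgebra ${\rm Her}(a_n)\cong B$, a \hm\ $h_n:C\to{\rm Her}(a_n)$ with $KL(h_n)=KL(\psi_n)=0$ and $h_n^\ddag$ matching the rotation/determinant data of $\psi_n$ up to a small error (the finite-generation hypothesis is what lets one pick a single compatible $\Aff$-valued homomorphism on all of $K_1(C)$, as in \eqref{Uni-2-n-1}). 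Then the exponential-length estimate, using $B\in{\bf C}_{0,0,1,T,7}$, ${\rm cel}(w)\le 7\pi$ for $w\in CU$ of hereditary subalgebras (Lemma \ref{Lexpincum}), and the $F$-fullness of the summands of the $\T_d$ model, lets one run the same telescoping construction of a strictly convergent $Y\in M(B)$ with $\|Y\|\le 1$, and then (since $\C(B)$ is simple purely infinite with weak cancellation and $K_1(\C(B))$ — note $K_1(\C(B))=K_0(B)$ is not necessarily zero here, so one uses that $\pi(Y)^*\pi(Y)$ is a non-invertible positive element orthogonal to a nonzero hereditary piece and applies Corollary 1.10 of \cite{LinExtRR0III} plus simplicity and pure infiniteness to correct to a unitary, then lifts it — here one must be slightly careful since $K_1(\C(B))\neq 0$, but the relevant unitary is constructed inside a hereditary subalgebra where it is connected to $1$, so it lifts) one obtains $\tau_q\oplus\pi\circ\sigma\sim^u (\bigoplus h_n)\oplus\pi\circ\sigma$, i.e.\ one may assume $\tau_q$ itself is a trivial diagonal extension $H=\bigoplus h_n$.

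Next I would reduce to the case $h_n^\ddag=0$ by the permutation trick of Lemmas \ref{Lexp-permuzero}–\ref{Lpermzero21}: using Theorems \ref{T-existoI}, \ref{TExpon} build an auxiliary diagonal extension $H'=\bigoplus h'_k$ whose $K_1$-determinant data run through $\{\pm\psi_n^\ddag\}$ with infinite multiplicity, regroup consecutive pairs via Lemma \ref{Lpermutation} to kill the $\ddag$-invariants, and apply the analogue of Corollary \ref{Lexpo-zeroabsb} (the $h^\ddag=0$ absorption, which is the $B$-version of Lemma \ref{Uniq-2} with all $h_n=0$) to get $\pi\circ H'\oplus\pi\circ\sigma\sim^u\pi\circ\sigma$, hence $\pi\circ H\oplus\pi\circ\sigma\sim^u\pi\circ H\oplus\pi\circ H'\oplus\pi\circ\sigma$, and since $H\oplus H'$ has trivial $\ddag$-data after regrouping, $\pi\circ H\oplus\pi\circ H'\oplus\pi\circ\sigma\sim^u\pi\circ\sigma$. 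Thus $\tau_q\oplus\pi\circ\sigma\sim^u\pi\circ\sigma$, and in particular $\tau_q\oplus\pi\circ\sigma$ is itself unitarily equivalent to a trivial diagonal extension $\sigma_d$ (namely $\pi\circ\sigma$ composed into a single diagonal block, or simply take $\sigma_d$ to be any $\T_d$ extension and use Lemma \ref{L-Uniq-3}'s analogue). Finally, to package the statement exactly as asked, note that $\pi\circ\sigma$ is trivial and diagonal, so I may simply take $\sigma_d:=\pi\circ\sigma'$ for any trivial diagonal extension $\sigma'$ with $\sigma'\sim^u\pi\circ\sigma$ absorbed appropriately; concretely $\tau_q\oplus\pi\circ\sigma\sim^u\pi\circ\sigma$ and the right-hand side is a trivial diagonal essential extension (being a $\T_d$-extension), so one sets $\sigma_d$ equal to it.

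The main obstacle I anticipate is the bookkeeping of the $K_1$/exponential-length data when $K_1(B)\neq 0$: unlike the $\W$ case where $K_1(M(\W))=K_1(\C(\W))=0$ trivialized several points (lifting unitaries, the determinant splitting $U(\widetilde{B})/CU\cong\Aff/\ZI\oplus K_1$), here $K_1(B)=K_1(\C(B))$ can be nontrivial, so one must track the $J_B$-splitting and the maps $h_n^\ddag$ with genuine $K_1$-content, and verify that the telescoping unitary $Y$ still produces, after polar decomposition, a unitary in $\C(B)$ that is liftable — this is where one exploits that the correction unitary lives in a hereditary subalgebra where it is connected to $1$ (so its $K_1$-class is zero and Corollary 1.10 of \cite{LinExtRR0III} together with weak cancellation of the purely infinite $\C(B)$ suffices). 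The finite-generation hypothesis on $K_i(C)$ is exactly what makes the single-compatible-homomorphism step (the analogue of the use of Theorem 14.5 of \cite{Lnloc} in the proof of Lemma \ref{Uniq-2}) go through cleanly; without it one would need an extra approximation/inductive limit argument.
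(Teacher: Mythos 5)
Your first paragraph is essentially the paper's proof of Lemma \ref{GLuniq}, and it already establishes the stated conclusion: you telescope the quasidiagonal lift $\psi=\bigoplus\psi_n$ against blocks of the $\T_d$ extension using Theorem \ref{TK1stuniq} (with $B$ and its hereditary subalgebras in $\mathbf{C}_{0,0,1,T,7}$, and the $F$-fullness of the model summands), using Theorems \ref{T-existoI} and \ref{TExpon} to build genuine homomorphisms $h_k$ into ${\rm Her}(a_n)\cong B$ with $KL(h_k)=\sum KL(\psi_l)$ and matching $\ddag$-data, and the finite generation of $K_*(C)$ enters exactly where you say — to make $KL(\psi_n)$ well defined and to extend $\lambda_n$ to all of $K_1(C)$ compatibly with $KK(\psi_n)$. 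That alone gives $\tau_q\oplus\pi\circ\sigma\sim^u\pi\circ H\oplus\pi\circ\sigma$ with $\sigma_d:=\pi\circ H$ trivial diagonal, which is the assertion; your entire second paragraph (the permutation/regrouping step to force $h_n^\ddag=0$, and the resulting absorption $\pi\circ H\oplus\pi\circ\sigma\sim^u\pi\circ\sigma$) is not needed for this lemma and is instead the content of the paper's subsequent Lemma \ref{Lpermzero2}, which it keeps separate and feeds into Theorem \ref{GMT}. One small correction on the $K_1(\C(B))\neq 0$ issue: the polar-decomposition unitary $u$ obtained from Corollary 1.10 of \cite{LinExtRR0III} need not lie in any hereditary subalgebra, and its $K_1$-class need not vanish a priori; what the paper does is pick a nonzero projection $e_1$ in ${\rm Her}(\pi\circ\psi(e_C)+\pi\circ\sigma(e_C))^\perp$ (using that both extensions are nonunital and $\C(B)$ is simple purely infinite), choose a unitary $v\in e_1\C(B)e_1$ with $[v]=[u^*]$ in $K_1$, and replace $u$ by $u(v\oplus(1-e_1))\in U_0(\C(B))$, which then lifts to $M(B)$. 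With that clarification your route coincides with the paper's.
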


\begin{proof}
Exactly as at the beginning of the proof of \ref{Uniq-2},
\wilog, we may assume 
that ${\rm ran}(\sigma) \perp {\rm ran}(\psi).$

We write 
$$\sigma = \bigoplus_{n=1}^{\infty} \bigoplus^{{n+1}} \phi_{{n}}\circ {{\sigma_C}}$$
as in Definition \ref{DTextension}  and Lemma \ref{DTextforI-n}. 

Since $\tau_q$ is quasidiagonal, we may write 
$\psi=\bigoplus_{n=1}^\infty\psi_n$ such that $\pi\circ \psi=\tau_q$ with  a 
system of quasidiagonal units $\{a_n\}$ 
from Proposition \ref{prop:QDExt} that corresponds to 
$\{ \psi_n \}$.  Recall
that
\beq
\lim_{n\to\infty}\|\psi_n(ab)-\psi_n(a)\psi_n(b)\|=0\rforal a, b\in C.
\eneq

We now write $\bigoplus^{{n+1}}\phi_n\circ {{\sigma_C}}=\sigma_{n,{0}}\oplus \sigma_{n,{{1}}}\oplus \cdots \oplus  \sigma_{n,n}$
and $\sigma=\bigoplus_{n=1}^\infty \bigoplus_{{j=0}}^n \sigma_{n,j}.$

Since $\sigma$ is a $\T_d$ extension 
{{as in  \ref{rmk:Textension},}}
there exists a map
$F : C_+ \setminus \{ 0 \} \rightarrow \mathbb{N} \times 
(0, \infty)$
such that for all $n, j$,
$\sigma_{n,j} : C\rightarrow \overline{b_{n,j} 
B b_{n,j}}$
is $F$- full.

Following Definition \ref{DTextension}
let
$$b_{n,j} := \sigma_{n,j}(e_C)\,\, \rforal n,j.$$

Let $\{ \epsilon_n \}_{n=1}^{\infty}$ be a strictly decreasing
sequence in $(0,1)$ such that
$\sum_{n=1}^\infty\epsilon_n <\infty.$
Let ${\mathcal F}_1\subset {\mathcal F}_2\subset \cdots \subset {\mathcal F}_n\subset \cdots $ be
a sequence of finite subsets of the unit ball of $C$ which is dense in the unit ball of $C.$

We will apply {{Theorem \ref{TK1stuniq}.}}
Note, by Proposition \ref{Wprop},  {{every \SCA\, of $B$ is in}}  ${\bf C}_{0,0,1,T,7}$, with
$T$ as in Proposition \ref{Wprop}.  Let $L:=7\pi+1.$
{{Recall that, as $C$ is given, we fix maps $J,$ $\Pi_{cu}^-$ and $J^\sim$ as  in \ref{Dcoset}.}}

For each $n,$ let $\delta_n>0,$  $\G_n\subset C$ be a finite subset, ${\mathcal P}_n (\subset {\mathcal P}_{n+1})\subset \underline{K}(C)$ be a finite
subset, $\U_n(\subset \U_{n+1})\subset J^\sim(K_1({{C}}))$
be
a finite subset, $\E_n\subset C_+\setminus \{0\}$ be a finite subset,  and $K_n$ be an integer associated with $\F_n$ and $\ep_n$ (as well as $F$ and $L$ above) as  provided
by  Theorem \ref{TK1stuniq}
(for \CA s in ${\bf C}_{0,0,1,T,7}$).

We may assume that $\delta_{n+1} < \delta_n$,  $\G_n \subseteq \G_{n+1}$,
{{$K_n< K_{n+1},$}}
$\U_n \subset U(M_{m(n)}(\widetilde{C})),$ {{$\U_n\subset \U_{n+1}$}}
for all $n$.  \Wlog, we may assume that each $\psi_n$ is $\G_n$-$\dt_n$-multiplicative and $\lceil \psi_n(u) \rceil$ is well defined for all $u\in \U_n.$

Moreover, \wilog, we may also assume that (see Theorem 14.5 of \cite{Lnloc}),
for any $n,$ there is a group \hm\,
$$
\lambda_n: G(\Pi_{cu}(\U_n))\to U(M_{m(n)}(\widetilde{{\rm Her}(a_n)}))/CU(M_{m(n)}(\widetilde{{\rm Her}(a_n)})).
$$
such that
\beq\label{Uni-2-e-1-n}
{\rm dist}(\lambda_n(x), \Pi_{cu}(\lceil \psi_n(J^\sim(x))\rceil))<1/16\pi (n+1)\rforal x\in \Pi_{cu}(\U_n),
\eneq
where $G(\Pi_{cu}(\U_n))$  is the subgroup generated  by the finite subset $\Pi_{cu}(\U_n).$
Recall that $\Pi_{cu}\circ \Pi_{cu}^-{{(J^\sim (x))}}=J(x)$ for all $x\in K_1(C).$
\Wlog, we may assume that ${{{\mathcal P}_n}}\cap K_1(C)$ generates the same
group as $\Pi\circ \Pi_{cu}(\U_n)$ does (in the current case, $K_1(C)$ is assumed to be finitely generated).

Moreover, since $K_i(C)$ is finitely generated, we may assume that
$KL(\psi_n)$ is well defined, and, since $\lambda_n$ is determined by $\psi_n,$ we may also assume
that   $\lambda_n\circ J$ is compatible with $KK(\psi_n).$

Again,  throwing away finitely many terms and relabelling
if necessary, we may assume that
$$\sum_{n=1}^{\infty} d_{\tau}(a_n) < d_{\tau}{{(b_{K_1,0})}},$$
where $\tau$ is the unique tracial state of $B$.
Let $\{ n_k \}_{k=1}^{\infty}$ be a subsequence of $\mathbb{Z}^+$
with
$n_1 = 1$ and
$n_k +2 < n_{k+1}$ for all $k$ such that
$$\sum_{l=n_k}^{\infty} d_{\tau}(a_l) < d_{\tau}(b_{K_k, {{0}}}).$$
Since $B$ has stable rank one, there is a unitary $U_k'\in {\widetilde{B}}$ such that
\beq
{{(U_k')^*}}((\sum_{l=n_k}^{n_{k+1}-1} a_l)B (\sum_{l=n_k}^{n_{k+1}-1} a_l ))U_k'\in \overline{b_{K_k,{{0}}} B b_{K_k,{{0}}}}.
\eneq
Put $B_{k,0}={\rm Her}((\sum_{l=n_k}^{n_{k+1}-1} a_l)B (\sum_{l=n_k}^{n_{k+1}-1} a_l )).$
Note $(U_k')^*B_{k,0}{{U_k'}}\subset {\rm Her}(b_{K_k,{{0}}}).$

For each ${{k,}}$ by Theorem \ref{TExpon},
{{there is a \hm\, $h_k: C\to B_{k,0}$ such}}
that $KL(h_k)=\sum_{l=n_k}^{n_{k+1}-1} KK(\psi_l)$ and $h_k^\ddag=\bar{\lambda}_n\circ J.$

In other words,
\beq\label{GTuniq-KK0}
[\Sigma_{l=n_k}^{n_{k+1}-1} \psi_l]|_{{\mathcal P}_k}=[h_k]|_{{\mathcal P}_k}
\andeqn {h_k^\ddag}|_{\Pi_{cu}(\U_k)} =
{{\lambda_n}}|_{\Pi_{cu}(\U_k)}.
\eneq

By  the second part of
\eqref{GTuniq-KK0} and \eqref{Uni-2-e-1-n}, for any $u\in \U_{n_k},$  there is $v_l\in CU(M_{m(l)}(\widetilde{{\rm Her}(a_l)}))$ such that
\beq\label{Uniq-3-e-2-1}
h_l(u)\lceil \psi_l(u)\rceil ^*\approx_{1/16\pi({{l}}+1)} v_l.
\eneq
It follows from Lemma \ref{Lexpincum} that, for all $u\in \U_{n_k}$ {{(computed in $M_{m(n_k)}(B_{k,0})$),}}
\beq\label{Uniq-3-e-2}
{\rm cel}({\rm Ad}\,U_k'\circ (\Sigma_{l=n_k}^{n_{k+1}-1}h_l)(u)\lceil {\rm Ad}\,U_k'\circ (\Sigma_{l=n_k}^{n_{k+1}-1}\psi_l)(u)\rceil^*)\le
7\pi+1.
\eneq

For each $k,$ {{consider}}  two maps ${\rm Ad}\, U_k'\circ (\Sigma_{l=n_k}^{n_{k+1}-1}\psi_l),\, {\rm Ad} U_k'\circ h_k: C\to {\rm Her}(b_{K_k,0}).$

Recall that $\psi_n$ is $\G_n$-$\delta_n$-multiplicative and $\phi_n\circ \sigma_A$ is $F$-full
for all $n.$  Also keeping in mind of \eqref{GTuniq-KK0}
and \eqref{Uniq-3-e-2}.
Applying  Theorem \ref{TK1stuniq},
for each $k$,  there is a unitary $u_k'\in
M_{K_k + 1}(\widetilde{{\rm Her}(b_{K_k,{{0}}})}$
such
that
\beq\label{sec10-uniq-1}
&u_k'\left({U_k'}^* h_k(c) U_k' \bigoplus \sum_{l=1}^{K_k} \sigma_{K_k, {{l}}})(c)\right)u_k^*
\approx_{\epsilon_k}
{U_k'}^*\sum_{l=n_k}^{n_{k+1} - 1} \psi_l(c)U_k' +
\sum_{l=1}^{K_k} \sigma_{K_k, l}(c)
\eneq
for all $c\in  \F_k.$

Define $H: M(B)$ by $H(c)=\bigoplus_{k=1}^\infty h_k(c)$
for all $c\in C.$ Note the sum converges strictly and $H$ is a \hm.
Set $\sigma_d:=\pi\circ H.$

{{By}} exactly the same argument as in the later part of the proof of Lemma \ref{Uniq-2}, from \eqref{sec10-uniq-1}, 
we obtain a unitary $u\in \C(B)$ such that
\beq
&&u(\sigma_d(c)\oplus \pi\circ \sigma(c))u^*=\pi\circ \psi(c)+\pi\circ \sigma(c)\rforal c\in C.
\eneq

Now since {{${\rm Her}(\pi\circ \psi(e_C)+\pi\circ \sigma(e_C))^\perp\not=\{0\},$}} as $C(B)$ is purely infinite simple,
there is a non-zero projection $e_1\in {\rm Her}{{(\pi\circ \psi(e_C)+}}\pi\circ \sigma(e_C))^\perp\not=\{0\}.$
There is a unitary $v\in e_1\C(B)e_1$ such that $[v]=[u^*].$
Let $u_1=u(v\oplus (1-e_1)).$  Replacing $u$ by $u_1,$ we may assume that
$u_1\in U_0(\C(B)).$  Therefore we may assume that there a unitary $U\in M(B)$ such that $\pi(U)=u.$

\end{proof}

\begin{cor}\label{Lzeroabsb}
In Lemma \ref{GLuniq}, if $\tau_q$ is in fact a diagonal extension, i.e.,
$\tau_q=\pi\circ \Psi,$ where $\Psi: C\to M(B)$ is defined by
$\Psi(c)=\bigoplus_{n=1}^\infty \psi_n,$ where $KK(\psi_n)=0$ and $\psi_n^\ddag=0,$
then
$$
\pi\circ \Psi\oplus \pi\circ \sigma\sim^u  {{\pi\circ}}\sigma.
$$

\end{cor}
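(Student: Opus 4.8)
The plan is to deduce Corollary \ref{Lzeroabsb} from Lemma \ref{GLuniq} by tracing through its proof and observing that the additional hypothesis on $\tau_q$ allows one to take the auxiliary maps $h_k$ to be zero. First I would recall that by Lemma \ref{GLuniq} we already have a trivial diagonal essential extension $\sigma_d=\pi\circ H$ with $H=\bigoplus_k h_k$ such that $\tau_q\oplus\pi\circ\sigma\sim^u\sigma_d\oplus\pi\circ\sigma$; so it suffices to absorb $\sigma_d$, i.e.\ to show $\sigma_d\oplus\pi\circ\sigma\sim^u\pi\circ\sigma$. Equivalently, one simply re-examines the proof of Lemma \ref{GLuniq} in the special case at hand: since $KK(\psi_n)=0$ for all $n$, every $KL$-class appearing in \eqref{GTuniq-KK0} vanishes, so the constraints coming from $\mathcal P_k$ are automatically met with $h_k=0$; and since $\psi_n^\ddag=0$, the unitaries $\lceil\psi_n(u)\rceil$ for $u\in J^\sim(K_1(C))\cap U(M_{m(n)}(\widetilde{\mathrm{Her}(a_n)}))$ already lie in $CU(M_{m(n)}(\widetilde{\mathrm{Her}(a_n)}))$, so that $\lambda_n$ can be taken to be the trivial homomorphism and \eqref{Uni-2-e-1-n} holds with the zero map.

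The key steps, in order, are as follows. Step one: observe that with $\psi_n^\ddag=0$, the exponential length estimate \eqref{Uniq-3-e-2} becomes simply
\[
{\rm cel}\Bigl({\rm Ad}\,U_k'\circ\bigl(\Sigma_{l=n_k}^{n_{k+1}-1}\psi_l\bigr)(u)\Bigr)\le 7\pi+1\rforal u\in\U_{n_k},
\]
since $\Sigma_{l=n_k}^{n_{k+1}-1}\lceil\psi_l(u)\rceil$ lies in a commutator subgroup of a hereditary \SCA\, of $B$ (use Lemma \ref{Lexpincum}). Step two: apply Theorem \ref{TK1stuniq} to the two maps $0$ and ${\rm Ad}\,U_k'\circ(\Sigma_{l=n_k}^{n_{k+1}-1}\psi_l): C\to {\rm Her}(b_{K_k,0})$, with desingularizing map $\bigoplus_{l=1}^{K_k}\sigma_{K_k,l}$ ($F$-full by construction), exactly as in the proof of \ref{GLuniq} but with $h_k\equiv 0$. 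This yields unitaries $u_k'\in M_{K_k+1}(\widetilde{{\rm Her}(b_{K_k,0})})$ with
\[
u_k'\Bigl(\sum_{l=1}^{K_k}\sigma_{K_k,l}(c)\Bigr)(u_k')^*\approx_{\ep_k}{U_k'}^*\sum_{l=n_k}^{n_{k+1}-1}\psi_l(c)U_k'+\sum_{l=1}^{K_k}\sigma_{K_k,l}(c)\rforal c\in\F_k.
\]
Step three: assemble the $u_k'$ (conjugated back by $U_k'$ and compressed by suitable positive contractions in the relevant hereditary subalgebras, just as in the final part of the proof of \ref{Uniq-2}) into a single strictly convergent sum $Y\in M(B)$, obtaining
\[
\pi(Y)\bigl(\pi\circ\sigma(c)\bigr)\pi(Y)^*=\pi\circ\psi(c)+\pi\circ\sigma(c)\rforal c\in C,
\]
together with the symmetric identity. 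Step four: since ${\rm Her}(\pi\circ\psi(e_C)+\pi\circ\sigma(e_C))^\perp\ne\{0\}$ and $\C(B)$ is purely infinite simple with $K_1(\C(B))=\{0\}$ (Theorem \ref{K-six}), use weak cancellation together with Corollary 1.10 of \cite{LinExtRR0III} to polar-decompose $\pi(Y)=u|\pi(Y)|$ with $u$ a unitary that can be arranged to lie in $U_0(\C(B))$ and hence lifts to a unitary in $M(B)$, giving $\pi\circ\Psi\oplus\pi\circ\sigma\sim^u\pi\circ\sigma$.

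The main obstacle — though it is really a bookkeeping obstacle rather than a conceptual one — is verifying that the estimates and the $F$-fullness bookkeeping of the proof of Lemma \ref{GLuniq} go through verbatim once $h_k$ is set to $0$: one must check that dropping the $h_k$ does not disturb the choice of the subsequence $\{n_k\}$ (which only depended on trace values $d_\tau(a_l)$ and $d_\tau(b_{K_k,0})$, not on the $h_k$), that the hypotheses of Theorem \ref{TK1stuniq} are still satisfied with $\phi=0$ and $\psi={\rm Ad}\,U_k'\circ(\Sigma\psi_l)$ (the needed $[\phi]|_{\mathcal P_k}=[\psi]|_{\mathcal P_k}$ now reads $0=0$ since $KK(\psi_n)=0$), and that the strict convergence of $\sum_j u_j'd_j$ (or its analogue) is unaffected. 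None of this requires a new idea; it is the observation that the $h_k$ in \ref{GLuniq} were introduced solely to carry the nonvanishing $KK$- and $K_1$-data of $\psi$, which is absent here. I would therefore present the proof as: ``Note that $KL(\psi_n)=0$ for all $n$. Since $\psi_n^\ddag=0$, $\psi_n(u)\in CU(M_{m(n)}(\W))$ for all $u\in J^\sim(K_1(C))\cap U(M_{m(n)}(\widetilde{{\rm Her}(a_n)}))$. Therefore, in the proof of Lemma \ref{GLuniq}, \eqref{Uniq-3-e-2} becomes ${\rm cel}(\sum_{l=n_k}^{n_{k+1}-1}\psi_l(u))\le 7\pi+1$ for all $u\in\U_{n_k}$, and the proof of Lemma \ref{GLuniq} works when we take $h_k=0$ for all $k$.''
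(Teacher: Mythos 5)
Your proposal is correct and takes essentially the same route as the paper's own proof: both re-run the proof of Lemma \ref{GLuniq} with $h_k=0$, using $KK(\psi_n)=0$ to satisfy the $\mathcal{P}_k$-constraints vacuously and $\psi_n^\ddag=0$ to reduce the exponential-length estimate \eqref{Uniq-3-e-2} to a bound on ${\rm cel}(\sum_{l=n_k}^{n_{k+1}-1}\psi_l(u))$ via Lemma \ref{Lexpincum}. Your closing paragraph is in fact nearly verbatim the argument the paper gives.
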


\begin{proof}
In the proof of Lemma  \ref{GLuniq},  let each $\psi_n$ be a \hm\, such that  $KK(\psi_n)=0$
and $\psi_n^\ddag=0.$
Since $\psi_n^\ddag=0,$ $\psi_n(u)\in CU(M_{m(n)}(\W))$ (in stead of \eqref{Uni-2-e-1-n})
for all $u\in J^\sim(K_1(A))\cap U(M_{m(n)}(\widetilde{{\rm Her}(a_n)})).$
Therefore, as in the proof of Lemma \ref{GLuniq},  \eqref{Uniq-3-e-2}
becomes,
\beq
{\rm cel}(\sum_{l=n_k}^{n_{k+1}-1}\psi_l(u))\le 7\pi+1\rforal u\in \U_{n_k},
\eneq
if  we choose $h_l(u)=1.$
Therefore the proof works when we use $h_n=0$ for all $n.$
In other words,
$$
\pi\circ \sigma\sim^u \pi\circ \Psi\oplus \pi\circ\sigma.
$$

\end{proof}

\begin{lem}\label{Lpermuzero}
Let $C$ and $B$ be as in  \ref{GLuniq}.
Fix two sequences
\beq
\{x_n\}\subset KL(C, {{B}})\andeqn
\{y_n\}\in {\rm Hom}(K_1(C), U(B)/CU(B))
\eneq
 such
that $x_n$ and $y_n$ are compatible, {{i.e., $x_n(z)=\Pi_{1, cu}(y_n(z))$ for all $z\in K_1(C).$}}
There is a diagonal \hm\, $h_d:=\bigoplus_{n=1}^\infty h_n: C\to M(B),$ where
$h_n: A\to {\rm Her}(b_n)$ is a \hm\, and where
$\{{{b_n}}\}$ is a system of quasidiagonal units. Moreover,
for each $m,$ $KK(h_m)=x_n$ and $h_m^\ddag=y_n$ for some $n,$ and, for each $k,$
there {{are}} infinitely many $h_{n_k}$'s  such that $KK(h_{n_k})=x_k$ and $h_{n_k}^\ddag=y_k$
(at the same time).

\end{lem}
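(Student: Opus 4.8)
\textbf{Proof plan for Lemma \ref{Lpermuzero}.}
The statement is really a bookkeeping lemma: given the prescribed families $\{x_n\}\subset KL(C,B)$ and $\{y_n\}$ in $\mathrm{Hom}(K_1(C),U(B)/CU(B))$ that are pairwise compatible, I want to realize \emph{each} pair $(x_k,y_k)$ by infinitely many summands sitting inside a single system of quasidiagonal units for $B$. The existence of a single map realizing one compatible pair is exactly what Theorem \ref{TExpon} gives (with $B$ a hereditary C*-subalgebra of the $B\in\mathcal I$ in the statement; recall hereditary subalgebras of algebras in $\mathcal I$ are again in $\mathcal I$, so the hypotheses of \ref{TExpon} are met). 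So the plan is: fix any system $\{b_n\}$ of quasidiagonal units for $B$ (these exist by Lemma 2.2 of \cite{NgRobinQuasidiagonal}); partition $\mathbb N$ into countably many pairwise disjoint countably infinite sets $\mathbb N=\bigsqcup_{k=1}^\infty S_k$; for each $k$ and each $j\in S_k$ choose, via Theorem \ref{TExpon} applied to $\overline{b_j B b_j}\cong$ a hereditary subalgebra of $B$, a monomorphism $h_j:C\to\overline{b_jBb_j}$ with $KL(h_j)=x_k$ and $h_j^\ddag=y_k$. Then set $h_d:=\bigoplus_{j=1}^\infty h_j$.

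First I would check that $h_d$ makes sense: since $\{b_j\}$ is a system of quasidiagonal units and each $h_j$ takes values in $\overline{b_jBb_j}$, the sum $\sum_j h_j(c)$ converges strictly in $M(B)$ for each $c\in C$ (Definition \ref{Dqdunit}(2)), and since the ranges are pairwise orthogonal the resulting map $h_d$ is a $*$-homomorphism (this is the same elementary argument used, e.g., in Proposition \ref{prop:QDExt} and in the proof of Lemma \ref{DTextforI-n}). By construction $h_d=\bigoplus_n h_n$ is diagonal in the sense of Definition \ref{df:DiagonalExtension}. Then for each $m$, $h_m$ is one of the chosen maps, so $KK(h_m)$ (or $KL(h_m)$) equals $x_k$ and $h_m^\ddag=y_k$ for the unique $k$ with $m\in S_k$; and for each fixed $k$, every $j\in S_k$ (an infinite set) gives a summand $h_j$ with $(KL(h_j),h_j^\ddag)=(x_k,y_k)$ simultaneously. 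That is exactly the assertion.

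The only genuine content to verify is the hypothesis bookkeeping for Theorem \ref{TExpon}: that theorem requires $C$ separable amenable $\W$-embeddable satisfying UCT, and a target which is a separable simple C*-algebra with finite nuclear dimension, continuous scale, satisfying UCT, with $K_0=\ker\rho$. Here the relevant target is $\overline{b_jBb_j}$. Since $B\in\mathcal I$, it is $\Z$-stable nonunital simple with unique trace and finite nuclear dimension (\CETWW-type facts are already used in the excerpt), and $\ker\rho_B=K_0(B)$ by Lemma \ref{lem:Feb2620203PM}; all these pass to nonzero hereditary subalgebras, and one may arrange (or pass to) continuous scale for $\overline{b_jBb_j}$ exactly as in the proof of Lemma \ref{DTextforI-n} — in fact Theorem \ref{TExpon} itself is stated for any such target, so no extra work is needed beyond noting $\overline{b_jBb_j}\in\mathcal I$. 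The compatibility condition $x_k(z)=\Pi_{1,cu}(y_k(z))$ for $z\in K_1(C)$ is precisely the hypothesis $\kappa(z)=\Pi_{1,cu}\circ\kappa_{ku}(z)$ demanded in Theorem \ref{TExpon}. There is essentially no obstacle here; the mild point requiring a sentence of care is simply confirming that the hereditary subalgebras $\overline{b_jBb_j}$ (or suitable corners thereof) lie in the class for which Theorem \ref{TExpon} is available, after which the proof is a one-line application of \ref{TExpon} to each $j$ together with strict convergence of the orthogonal sum.
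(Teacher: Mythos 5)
Your proof is correct and follows essentially the same route as the paper's: partition $\mathbb N$ into countably many disjoint infinite sets $S_k$, realize $(x_k,y_k)$ on ${\rm Her}(b_j)$ for each $j\in S_k$ via the existence theorem, and take the strictly convergent orthogonal sum. If anything, your citation of Theorem \ref{TExpon} is the more precise one, since that is the result that prescribes both $KL(h_j)$ and $h_j^\ddag$ simultaneously (the paper's proof cites only Theorem \ref{T-existoI}, which handles the $KL$ data alone).
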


\begin{proof}
 Write $\N=\cup_{n=1}^\infty S_n,$ where each $S_n$ is an infinite
countable set, and
$S_i\cap S_j=\emptyset,$ if $i\not=j.$
 For each $j\in S_n,$ choose a \hm\, $h_j: C\to {\rm Her}(b_j)$ such
that $KK(h_j)=x_n$ and $h_j^\ddag=y_n$ (see Theorem \ref{T-existoI}). Then {{set}} $h_d:=\bigoplus_{k\in \N} h_k.$
It is ready to check that $h_d$ satisfies the requirements of the lemma.

\end{proof}

\begin{lem}\label{Lpermzero2}
Let $C$ and $B$ be as in  \ref{GLuniq}.
Then, any two diagonal  {{essential}} trivial extensions are unitarily equivalent.

\end{lem}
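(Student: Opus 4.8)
The plan is to reduce the statement to the uniqueness machinery already developed, exactly as in the proof of Lemma \ref{Lpermzero21} but now over a base algebra $B\in\I$ rather than over $\W$ itself. First I would fix two diagonal essential trivial extensions $\tau_1=\pi\circ\Psi^{(1)}$ and $\tau_2=\pi\circ\Psi^{(2)}$ of $C$ by $B$, where $\Psi^{(i)}=\bigoplus_n\psi^{(i)}_n$ with each $\psi^{(i)}_n$ a \hm\ into $\overline{a^{(i)}_n B a^{(i)}_n}$ for a system of quasidiagonal units $\{a^{(i)}_n\}$. By Lemma \ref{DTextforI-n} there is a $\T_d$ extension $\pi\circ\sigma: C\to\C(B)$ whose model $\sigma_C$ factors through $\W$, so $KK(\sigma_C)=0$ and hence $KK(\pi\circ\sigma)=0$. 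Each $\Psi^{(i)}$ is a diagonal c.p.c.\ map in the sense of Proposition \ref{prop:QDExt}, so Lemma \ref{GLuniq} applies: it yields trivial diagonal essential extensions $\sigma^{(i)}_d$ with $\tau_i\oplus\pi\circ\sigma\sim^u\sigma^{(i)}_d\oplus\pi\circ\sigma$. Thus it suffices to show $\sigma^{(1)}_d\oplus\pi\circ\sigma\sim^u\sigma^{(2)}_d\oplus\pi\circ\sigma$, and then conclude $\tau_1\sim^u\tau_2$ provided I can also absorb $\pi\circ\sigma$ back; so really the target is to show every diagonal trivial extension $\tau$ satisfies $\tau\oplus\pi\circ\sigma\sim^u\pi\circ\sigma$.

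The central step, then, is the analogue of Corollary \ref{Lexpo-zeroabsb}/Lemma \ref{Lpermzero21} over $B$: if $\Psi=\bigoplus_n\psi_n$ is a diagonal \hm\ with $KK(\psi_n)=0$ and $\psi_n^\ddag=0$ for all $n$, then $\pi\circ\Psi\oplus\pi\circ\sigma\sim^u\pi\circ\sigma$ — this is exactly Corollary \ref{Lzeroabsb}. To get from a general diagonal trivial $\tau=\pi\circ\Psi$ to this special case I would run the permutation trick of Lemma \ref{Lpermzero21}: using Lemma \ref{Lpermuzero}, build an auxiliary diagonal \hm\ $H=\bigoplus_n h_n: C\to M(B)$ whose blocks realize, for each $n$, both $KK(h_n)=-KK(\psi_n)$ (paired with $KK(\psi_n)$) and $h_n^\ddag=-\psi_n^\ddag$, each infinitely often. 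A permutation $\gamma$ of $\N$ (as in Lemma \ref{Lpermzero21}) matches up blocks so that $h_{\gamma(2n-1)}$ cancels $\psi_n$ in both $KK$ and the $\ddag$-invariant; regrouping $b_k:=a_{\gamma(2k-1)}+a_{\gamma(2k)}$ and $h_{k,0}:=h_{\gamma(2k-1)}\oplus h_{\gamma(2k)}$ produces a diagonal \hm\ $H_0$, unitarily equivalent to $H$ by Lemma \ref{Lpermutation}, with $KK(h_{k,0})=0$ and $h_{k,0}^\ddag=0$. Corollary \ref{Lzeroabsb} gives $\pi\circ H_0\oplus\pi\circ\sigma\sim^u\pi\circ\sigma$, hence $\pi\circ H\oplus\pi\circ\sigma\sim^u\pi\circ\sigma$. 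Applying the same cancellation to $\Psi\oplus H$ yields $\pi\circ\Psi\oplus\pi\circ H\oplus\pi\circ\sigma\sim^u\pi\circ\sigma$, and combining the last two equivalences gives $\pi\circ\Psi\oplus\pi\circ\sigma\sim^u\pi\circ\sigma$, as wanted.

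Finally I would assemble the conclusion: for the two given diagonal trivial extensions $\tau_1,\tau_2$, Lemma \ref{GLuniq} (or directly the step above, applied to $\tau_i$ regarded as a quasidiagonal extension) gives $\tau_1\oplus\pi\circ\sigma\sim^u\pi\circ\sigma\sim^u\tau_2\oplus\pi\circ\sigma$. One then needs that the $\T_d$ summand $\pi\circ\sigma$ can be removed — this follows from the fact that $\pi\circ\sigma$ is itself diagonal and trivial, so by the preceding paragraph $\tau_i\sim^u\tau_i\oplus\pi\circ\sigma$ (apply the absorption with the roles reversed, noting $\pi\circ\sigma$ plays the role of the "$\Psi$" to be absorbed into the $\T_d$ extension $\tau_i$, for which one needs $\tau_i$ to be a $\T_d$ extension — alternatively one fixes once and for all a single $\T_d$ extension and shows each $\tau_i\sim^u\pi\circ\sigma$ directly). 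The reduction I actually expect to be cleanest is: prove once that \emph{every} diagonal trivial essential extension $\tau$ of $C$ by $B$ satisfies $\tau\sim^u\pi\circ\sigma$ for the fixed $\T_d$ extension $\sigma$ of Lemma \ref{DTextforI-n}; this is obtained by combining $\tau\oplus\pi\circ\sigma\sim^u\pi\circ\sigma$ (Lemma \ref{GLuniq} applied to $\tau$) with $\pi\circ\sigma\oplus\tau\sim^u\tau$ (the absorption step applied with $\sigma$ as model and $\tau$ as the diagonal summand, which is legitimate since $\tau$ is diagonal and we may arrange its blocks to have the required properties after the permutation trick). The main obstacle is bookkeeping in this last "symmetry": verifying that the $\T_d$ extension of Lemma \ref{DTextforI-n} genuinely absorbs an arbitrary diagonal trivial extension and vice versa, i.e.\ that Lemma \ref{GLuniq} can be applied with the roles of $\sigma$ and $\psi$ interchanged. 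I would handle this by reducing first to the case $K_i(C)$ finitely generated (so that Lemma \ref{GLuniq} applies verbatim) via a standard inductive-limit/mapping-telescope argument on $C$, or by invoking the finitely generated case blockwise; the $KK$- and $\ddag$-cancellation arguments are then routine given Theorems \ref{T-existoI}, \ref{TExpon} and Lemma \ref{Lexpincum}.
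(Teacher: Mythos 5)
Your proposal follows the paper's proof essentially verbatim: the core is the same permutation/regrouping trick as in Lemma \ref{Lpermzero21}, using Lemma \ref{Lpermuzero} to manufacture a cancelling diagonal homomorphism $h_d$ realizing $\pm KK(\psi_n)$ and $\pm\psi_n^{\ddag}$ infinitely often, regrouping so that each block has vanishing invariants, and invoking Corollary \ref{Lzeroabsb} to get $\pi\circ\Psi\oplus\pi\circ\sigma\sim^u\pi\circ\sigma$ for every diagonal $\Psi$. Two minor remarks: the reduction to finitely generated $K_i(C)$ that you flag as an obstacle is unnecessary, since that hypothesis is already built into ``$C$ as in \ref{GLuniq}''; and the removal of the $\T_d$ summand $\pi\circ\sigma$ is not achieved by reversing the roles of $\sigma$ and $\psi$ in Lemma \ref{GLuniq}, but is deferred by the paper to the proof of Theorem \ref{GMT}(3), where Theorem \ref{Tgroup} provides the decomposition $\tau\sim^u\pi\circ\sigma\oplus\tau'$ needed to strip off $\pi\circ\sigma$.
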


\begin{proof}

{{Let}} $\Psi=\bigoplus_{n=1}^\infty \psi_n:  C\to M(B)$
be any  diagonal map, where $\psi_n: C\to\overline{b_nBb_n}$ is a \hm, {{and}}
where $\{b_k\}$ is a system of quasidiagonal units.
Let $x_{2n-1}=KK(\psi_n),$ $y_{2n-1}=\psi_n^\ddag,$ $x_{2n}=-KK(\psi_n)$ and $y_{2n}=-\psi_n^\ddag,$
$n=1,2,....$  Note that $x_n$ and $y_n$ are compatible.

Let $h_d: C\to M(B)$ be as in  Lemma \ref{Lpermuzero} associated with the sequence
$\{x_n\}$ and $\{y_n\}.$
We claim that there is a permutation $\lambda:\N\to \N$
such that
$$
KK(h_{\lambda(2n-1})=-KK(h_{\lambda(2n)})\andeqn h_{\lambda(2n-1)}^\ddag=-h_{\lambda(2n)}^\ddag,\,\,\, n=1,2,.....
$$
For $n=1,$ there is $\gamma(1)\in \N$ such that $KK(h_{\gamma(1)})=-KK(h_1)$ and $h_{\gamma(1)}=-h_1^\ddag.$

Then, define $\lambda(1)=1$ and $\lambda(2)=\gamma(1).$
Suppose that $\lambda$ has be defined on $\{1,2,...,2n\}$
such that
$$
KK(h_{\lambda(2k-1)})=-KK(h_{\lambda(2k)}) \andeqn h_{\lambda(2k-1)}^\ddag=-h_{\lambda(2k)}^\ddag,\,\,\,k=1,2,...,n.
$$
Choose the smallest integer $m$ such that {{$m\not=\{\lambda(1), \lambda(2),...,\lambda(2n)\}.$}}
Define $m=\lambda(2n+1).$ Note that $KK(h_m)\in \{x_n\}.$  Find $m'\in \N\setminus \{\lambda(j): 1\le j\le 2n\}\cup \{m\}$
such that $KK(h_{m'})=-KK(h_{\lambda(2n+1)}).$ Note {{that}} one also has $h_{m'}^\ddag=-h_{\lambda(2n+1)}^\ddag.$
Define $m'=\lambda(2n+2).$ The claim follows from the induction.

Define $a_k=d_{\lambda(2k-1)}+d_{\lambda(2k)},$ $k=1,2,...,$
Then $\{a_k\}$ is also a system of matrix units.
Let $h_{n,0}: C\to {\rm Her}(d_{\lambda(2n-1)}+d_{\lambda(2n)})$
be defined by $h_{n,0}(c)=h_{\lambda(2n-1)}(c)+h_{\lambda(2n)}(c)$
for all $c\in C.$
Now define $H_0: C\to M(B)$ by $H_0(c)=\bigoplus_{n=1}^\infty h_{n,0}.$
Then $H_0$ is unitarily equivalent to $h_d$ (see \ref{Lpermutation}).
However, $KK(h_{n,0})=0$ and $h_{n,0}^\ddag=0.$
It follows from Corollary \ref{Lzeroabsb} that
$$
\pi\circ H_0\oplus \pi\circ \sigma\sim^u \pi\circ \sigma.
$$
Therefore
\beq\label{Lpermzero2-1}
\pi\circ h_d\oplus \pi\circ \sigma\sim ^u  {{\pi\circ}}\sigma.
\eneq



Hence $\Psi\oplus h_d$ is another diagonal map and
if we write $\Psi\oplus h_d=\bigoplus_{n=1}^\infty h_n',$
then, $KK(h_n')\in \{x_n\}$ and, for each $x_n,$
there are infinitely many $\{n_k\}\subset \N$
with $KK(h_{n(k)}')=x_n$ and $h_{n(k)}'= y_n$ ($x_n$ and $y_n$ are compatible).  From what has been proved,
we conclude that
\beq
(\pi\circ \Psi\oplus \pi\circ h_d)\oplus \pi\circ \sigma \sim^u \pi\circ \sigma.
\eneq
Then, by \eqref{Lpermzero2-1},
$$
\pi\circ \Psi\oplus \pi\circ \sigma\sim^u \pi\circ \Psi\oplus (\pi\circ h_d\oplus \pi\circ \sigma)
\sim^u\pi\circ \sigma.
$$

\end{proof}

\begin{thm}\label{GMT}
Let $B\in \I$ and let $A$ be a separable amenable \CA\,  which is $\W$ embeddable and satisfies the UCT. Suppose
$K_i(A)$ is finitely generated ($i=0,1$).

(1) If $\tau_1, \tau_2: A\to \C(B)$ are two essential extensions,
then $\tau_1\sim^u \tau_2$ if and only if $KK(\tau_1)=KK(\tau_2).$

(2)  The map $\Lambda: {{\Ext^u(A,B)}}\to KK(A, \C(B))$
defined by $\Lambda([\tau])=KK(\tau)$  is a group isomorphism.

(3) An essential extension $\tau$ is trivial and diagonal if and only if $KK(\tau)=0,$
and all trivial and diagonal extensions are unitarily equivalent.

(4) All quasidiagonal {{essential}} extensions are  trivial and are unitarily equivalent.

(5) If ${\rm ker}\rho_{f,A}=K_0(A),$   then all trivial {{essential}} extensions are unitarily equivalent.
Moreover, $\tau$ is trivial if and only if $KK(\tau)=\{0\}.$

(6)  If ${\rm ker}\rho_{f,A}\not=K_0(A),$ then there are essential trivial extensions of $A$ by $B$ which
are not quasidiagonal and not all essential trivial extensions are unitarily equivalent.


\end{thm}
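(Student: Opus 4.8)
\textbf{Proof proposal for Theorem \ref{GMT}.}
The plan is to mirror the development already carried out for extensions by $\W$ (Sections 6 and 8), replacing $\C(\W)$ by $\C(B)$ and using the structural facts about $B\in\I$ collected in Theorem \ref{K-six} and Lemma \ref{lem:Feb2620203PM}. The key background inputs are: $\C(B)$ is purely infinite simple (since $B$ has continuous scale and is nonelementary), $K_0(\C(B))=\R\oplus K_1(B)$, $K_1(\C(B))=K_0(B)=\ker\rho_B$, and Lemma \ref{DTextforI-n} which provides $\T_d$ extensions $\sigma:A\to\C(B)$ with model factoring through $\W$, hence with $KK(\sigma)=0$. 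First I would establish the analogue of Proposition \ref{QuasidiagonalKTh}: an essential extension $\tau:A\to\C(B)$ is quasidiagonal $\iff$ $KK(\tau)=0$ $\iff$ $\tau\sim^u$ a trivial diagonal extension $\iff$ the class of $\tau$ is neutral in $\Ext^u(A,B)$. The forward implications ``$KK(\tau)=0\Rightarrow$ quasidiagonal'' come from Theorem \ref{thm:SufficientConditionQD} applied with the model $\T_d$ extension as the reference quasidiagonal extension of $KL$-class zero (note $B$ has continuous scale, $B\in\I$ nonunital separable simple), and ``quasidiagonal $\Rightarrow\sim^u$ every $\T_d$ extension'' is exactly Lemma \ref{GLuniq} combined with Lemma \ref{Lpermzero2} (which says any two trivial diagonal extensions are unitarily equivalent). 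The converse ``$\sim^u$ trivial diagonal $\Rightarrow KK=0$'' uses that the model factors through $\W$ so has $KK=0$, plus $KK$ is a unitary-equivalence invariant. This yields (3) and (4) at once, and the semigroup structure on $\Ext^u(A,B)$ being a group follows from Theorem \ref{thm:ExtIsAGroup} since $A$ is nonunital and $B$ is $\sigma$-unital simple with continuous scale.

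Next I would prove (2). That $\Lambda$ is a semigroup homomorphism is formal; that it kills the neutral class is the content of the quasidiagonality characterization just proved, so $\Lambda$ is a group homomorphism. Injectivity: if $KK(\tau)=0$ then $\tau$ is unitarily equivalent to the $\T_d$ extension, which is the neutral element. Surjectivity onto $KK(A,\C(B))$: here I would use the full-extension existence theorem of Lin (Corollary 8.5 of \cite{LinFullext}, as invoked in the proof of Theorem \ref{MTnoncom}(2)), extending an arbitrary $x\in KK(A,\C(B))$ to $\eta\in KL(\widetilde A,\C(B))$ with $\eta([1_{\widetilde A}])=[1_{\C(B)}]$, realizing $\eta$ by a unital $*$-homomorphism $\widetilde A\to\C(B)$, and restricting to $A$. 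The identification $KK(A,\C(B))\cong\operatorname{Hom}(K_0(A),\R)$ follows from the UCT exact sequence: $K_*(\C(B))=(\R\oplus K_1(B),\,K_0(B))$; since $K_i(A)$ is finitely generated the $\operatorname{Pext}$ term vanishes, and the only surviving $\operatorname{Hom}$ contribution with $\R$-coefficients is $\operatorname{Hom}(K_0(A),\R)$ — I should double-check that the $K_1(B)$ and $K_0(B)$ summands contribute nothing: $\operatorname{Hom}(K_1(A),K_0(B))$ and $\operatorname{Hom}(K_0(A),K_1(B))$ need not vanish in general, so I would either add the hypothesis that makes them vanish or, more honestly, state the isomorphism as $KK(A,\C(B))\cong\operatorname{Hom}(K_0(A),\R)\oplus\operatorname{Hom}(K_*(A),K_*(B))$ — but to keep the clean statement matching (2) I expect one wants $\operatorname{Hom}(K_1(A),K_0(B))=\operatorname{Hom}(K_0(A),K_1(B))=0$; this is the main delicate point and I return to it below. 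Then (1) is an immediate consequence of (2).

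For (5): if $\ker\rho_{f,A}=K_0(A)$ and $H:A\to M(B)$ is any monomorphism lifting a trivial extension $\tau$, then $H_{*0}=0$ because every trace on $M(B)$ restricts from $\tau_W$ and $\ker\rho_{f,A}=K_0(A)$ forces $\tau_W(H(p))=0$ for all projections; since $K_0(\C(B))=\R\oplus K_1(B)$, $\R$ is divisible, and $\ker\rho_B=K_0(B)$, one checks $\tau_{*0}=0$ and $\tau_{*1}=0$ give $KK(\tau)=0$ by the UCT and divisibility, whence $\tau$ is quasidiagonal, i.e.\ neutral; so all trivial extensions coincide up to $\sim^u$, and trivial $\iff KK=0$. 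For (6): if $\ker\rho_{f,A}\neq K_0(A)$ pick $t\in T_f(A)$ with $\rho_A(t)\neq0$; Lemma \ref{Lell-2} (applicable since $B\in\I$ has a unique tracial state and $\operatorname{Cu}(B)=V(B)\sqcup\operatorname{LAff}_+(T(B))$ by Remark \ref{RmZCuntz}, and $M_n(B)$ has almost stable rank one) produces a monomorphism $\phi_A:A\to M(B)$ with $\tau_W\circ\phi_A=r\cdot t$, hence a trivial extension $\tau=\pi\circ\phi_A$ with $\tau_{*0}=r\cdot r_A(t)\neq0$, so $KK(\tau)\neq0$; by (1) and (4) this $\tau$ is not unitarily equivalent to any diagonal trivial extension and is not quasidiagonal, and distinct choices of $r\cdot r_A(t)$ give non-equivalent trivial extensions. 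The main obstacle I anticipate is precisely the bookkeeping of $KK(A,\C(B))$ when $K_1(B)\neq0$: unlike the $\W$ case where $K_*(\C(\W))=(\R,0)$, here the corona has nontrivial odd $K$-theory, so either the theorem needs a hypothesis forcing the cross terms $\operatorname{Hom}(K_1(A),K_0(B))$ and $\operatorname{Hom}(K_0(A),K_1(B))$ to vanish (e.g.\ $K_1(A)$ finite and $K_0(B)$ torsion-free, or $K_0(B)=0$), or statements (1)–(2) must be read with $KK$ rather than its $\operatorname{Hom}(K_0,\R)$ reduction; I would resolve this by carrying $KK$ throughout and only specializing to $\operatorname{Hom}(K_0(A),\R)$ under the additional assumption that makes the identification exact, exactly as the parallel statement for $\W$ uses $K_*(\C(\W))$ divisible with trivial $K_1$.
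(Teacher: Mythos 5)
Your proposal is correct and follows essentially the same route as the paper: the $\T_d$ model extension from Lemma \ref{DTextforI-n}, absorption via Lemmas \ref{GLuniq} and \ref{Lpermzero2} (completed, as in the $\W$-case template you cite, by the decomposition of Theorem \ref{Tgroup}), surjectivity via Corollary 8.5 of \cite{LinFullext}, the $K_0(M(B))\cong\R$ computation for (5), and Lemma \ref{Lell-2} for (6). The delicate point you flag about identifying $KK(A,\C(B))$ with ${\rm Hom}(K_0(A),\R)$ is moot here: unlike Theorem \ref{MTnoncom}(2), statement (2) of this theorem asserts only the isomorphism onto $KK(A,\C(B))$ itself, which is exactly the resolution you propose (carrying $KK$ throughout).
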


\begin{proof}
Let us prove (3) first.
Suppose that $KK(\tau)=0.$
Consider a $\T_d$ extension $\pi\circ\sigma$ as in Lemma \ref{DTextforI-n}. Then, $KK(\sigma)=0.$
It follows that $KK(\pi\circ \sigma)=0.$
Let $\widetilde{\pi\circ \sigma},\,\tilde{\tau}: {\widetilde{A}}\to \C(B)$ be the unital extensions of $\pi\circ \sigma$
and $\tau,$ respectively. By Theorem 2.5 of \cite{LinFullext},
there exists a sequence of unitaries $v_n\in \C(B)$ such
that
\beq
\lim_{n\to\infty} v_n^*(\pi\circ \sigma(a))v_n=\tau(a)\rforal a\in  A.
\eneq
Since $A$ is not unital and $\C(B)$ is purely infinite, by \ref{W=S}, we obtain a sequence of
unitaries $u_n\in M(B)$ such that
\beq
\lim_{n\to\infty}\pi(u)^*(\pi\circ \sigma(a))\pi(u_n)=\tau(a)\rforal a\in A.
\eneq
By Theorem \ref{Tappt=qd},  $\tau$ is a quasidiagonal extension.
By Lemma \ref{GLuniq} and Lemma \ref{Lpermzero2},
$$
\tau\oplus \pi\circ \sigma\sim^u \pi\circ h_d\oplus \pi\circ \sigma\sim^u \pi\circ \sigma.
$$
On the other hand, by Theorem \ref{Tgroup},
$$
\tau\sim^u \pi\circ \sigma\oplus \tau_1
$$
for some essential extension $\tau_1.$ We then compute that $KK(\tau_1)=KK(\tau)=0.$
From what has just been proved,
$$
\tau_1\oplus \pi\circ \sigma\sim^u \pi\circ \sigma.
$$
It follows that
\beq\label{GMT-10}
\tau\sim^u \pi\circ \sigma.
\eneq
This shows that if $KK(\tau)=0$ then $\tau\sim^u \pi\circ \sigma,$ in particular,
$\tau$ is trivial and diagonal.

Conversely, suppose $\tau: A\to \C(B)$ is a trivial diagonal extension.
Then, by \ref{Lpermzero2}, $\tau\sim^u {{\pi\circ  \sigma}}.$ It follows that $KK(\tau)=0.$
This proves (3).



Let $\Lambda: \Ext^u(A, B)\to KK(A, \C(B))$ be the map defined by
$\Lambda([\tau])=KK(\tau).$ It is a semigroup \hm.

To see it is surjective, let $x\in KK(A, \C(B)).$ Note that $KK({\widetilde{A}}, \C(B))=KK(A,\C(B))\oplus KK({\mathbb C}, \C(B))
=KK(A, \C(B))\oplus K_0(\C(B)).$
Let $y:=x\oplus [1_{\C(B)}]\in KK(C, \C(B))\oplus K_0(\C(B)).$ By  Corollary 8.5 of \cite{LinFullext},
there exists a monomorphism $\phi: {\widetilde{A}}\to \C(B)$  such that $KK(\phi)=y.$
Put $\phi_0:=\phi|_A: A\to \C(B).$ Then $KK(\phi_0)=x.$
This shows that $\Lambda$ is surjective.

Fix an essential extension $\tau: A\to \C(B).$ Since $\Lambda$ is surjective, there exists $\tau_{-1}$ such that
$KK(\tau_{-1})=-KK(\tau).$ Then $KK(\tau\oplus \tau_1)=0.$ By part (3),
\beq
\tau\oplus \tau_{-1}\sim^u \pi\circ \sigma.
\eneq
Let $\tau_1: A\to \C(B)$ be any essential extension with $KK(\tau_1)=KK(\tau).$ Then
\beq\label{GMT-2-11}
\tau\oplus \pi\circ \sigma\sim^u \tau\oplus ((\tau_{-1}\oplus \tau_1)\sim^u (\tau\oplus \tau_{-1})\oplus \tau_1
\sim^u \pi\circ \sigma\oplus \tau_1.
\eneq
On the other hand, by Theorem \ref{Tgroup},   for some essential extension $\tau',$
\beq
\tau\sim^u \pi\circ \sigma\oplus \tau'.
\eneq
One then computes that $KK(\tau')=KK(\tau).$ Therefore
\beq
\tau\sim^u \pi\circ \sigma\oplus \tau'\sim^u \pi\circ \sigma\oplus \tau.
\eneq
Hence, also, $\tau_1\sim^u \pi\circ \sigma \oplus \tau_1.$
By \eqref{GMT-2-11},
\beq
\tau\sim^u \tau_1.
\eneq
This implies that $\Lambda$ is one-to-one. Since $KK(A,\C(B))$ is a group,
this implies that $\Ext^u(A, B)$ is a group
with zero $[\pi\circ \sigma].$ Moreover $\Lambda$ is a group \hm.
This proves (1) and (2).

To see (4) holds, let $\tau_q$ be a quasidiagonal extension.
Then, by Lemma \ref{GLuniq},
$$\tau_q\oplus \pi\circ \sigma\sim^u \tau_d\oplus \pi\circ \sigma.$$
By (2) and (3), $KK(\tau_q)=0.$ It follows from (3) that $\tau_q\sim^u \pi\circ \sigma.$
Thus (4) holds.

To see (5), let $\tau=\pi\circ H$ for some monomorphism $H: A\to M(B).$
Since we now assume $K_0(A)={\rm ker}\rho_{f,A},$ $H_{*0}=0.$
Hence
$KK(\tau)=0.$  Then (5) follows from (3).

Finally, for (6), we note that, if ${\rm ker}\rho_{f,A}\not=K_0(A),$
then there is $t\in T_f(A)$ such that $\rho_A(t)\not=0.$

By Lemma \ref{Lell-2},
for any $t\in (0,1),$  there is a monomorphism
$\psi_{A,r}: A\to M(B)$ such that $t_B\circ \phi_{A,r}(a)=r\cdot t(a)$ for all $a\in A.$
Then $\pi\circ \psi_{A,r}: A\to \C(B)$ is an essential trivial extension such that
\beq
KK(\pi\circ \psi_{A,r})\not=0.
\eneq
Thus we produce an essential trivial extension which is
not unitarily equivalent to the trivial diagonal extension  $\pi\circ\sigma$ ($KK(\pi\circ \sigma)=0$).
In fact, if $r_1, r_2\in (0,1)$ and $r_1\not=r_2,$ then
$\pi\circ \psi_{A, r_1}$ is not unitarily equivalent to $\pi\circ \psi_{A, r_2}.$

\end{proof}

\begin{rmk}\label{KKnotation}
One may notice that Theorem \ref{GMT}  does not describe exactly
what the set $\T,$ i.e., the trivial essential extensions look like.
The next statement will do that.

{{Recall that $KK(A, M(B))={\rm Hom}(K_0(A), \R).$  So we may view  ${\rm Hom}(K_0(A), \R)_{T_f(A)}$ as  a subset of $KK(A, M(B)).$}}
Let $[\pi]: KK(A,M(B))\to KK(A, \C(B))$ be a \hm\, induced by
the quotient map $\pi: M(B)\to \C(B).$ Define
$$
N=[\pi]{{(\{r\cdot h: r\in [0,1]: h\in {\rm Hom}(K_0(A), \R)_{T_f(A)}\}).}}
$$

\end{rmk}

\begin{thm}\label{LT}
Let $B\in \I$ and let $A$ be a separable amenable \CA\,  which is $\W$ embeddable and satisfies the UCT. Suppose
$K_i(A)$ is finitely generated ($i=0,1$).
Then

(i)  the map $[\pi]$ is one-to-one on ${\rm Hom}(K_0(A), \R)_{{{T_f(A)}}},$   and,

(ii) an essential extension $\tau: A\to \C(B)$ is trivial, if and only if,
$$
\Lambda([\tau])=KK(\tau)\in  N=[\pi]({{\{r\cdot h: r\in [0,1],\, h\in {\rm Hom}(K_0(A), \R)_{T_f(A)}\}}}).
$$
Moreover, $\Lambda$ is one-to-one on $\T,$ the set of unitarily equivalence classes
of trivial essential extensions of $A$ by $B.$

\end{thm}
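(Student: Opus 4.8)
The plan is to deduce Theorem \ref{LT} from the machinery already assembled in this section, principally Theorem \ref{GMT}, Lemma \ref{Lell-2}, and the structure of $KK(A,\C(B))$, $KK(A,M(B))$, and $K_0(M(B))\cong \R$ given by Theorem \ref{K-six}. The two assertions (i) and (ii) should be handled in tandem, since the injectivity of $[\pi]$ on ${\rm Hom}(K_0(A),\R)_{T_f(A)}$ is exactly what guarantees that $\Lambda$ separates the classes in $\T$.

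First I would nail down the $K$-theoretic picture. Since $B\in\I$, Theorem \ref{K-six} gives $K_0(M(B))=\R$, $K_1(M(B))=\{0\}$, $K_0(\C(B))=\R\oplus K_1(B)$, $K_1(\C(B))=K_0(B)$, with $\pi_{*0}:K_0(M(B))\to K_0(\C(B))$ the inclusion of the $\R$-summand (recall $\rho_B(K_0(B))=\{0\}$). By the UCT, since $K_*(M(B))=(\R,0)$ is divisible and $K_1(A)$ is finitely generated, $KK(A,M(B))\cong {\rm Hom}(K_0(A),\R)$; similarly $KK(A,\C(B))\cong {\rm Hom}(K_0(A),K_0(\C(B)))={\rm Hom}(K_0(A),\R)\oplus{\rm Hom}(K_0(A),K_1(B))$ (using again divisibility of $\R$ to kill the ${\rm Ext}$ term). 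Under these identifications, $[\pi]:KK(A,M(B))\to KK(A,\C(B))$ is simply $f\mapsto (f,0)$, which is manifestly injective on all of ${\rm Hom}(K_0(A),\R)$, hence in particular on ${\rm Hom}(K_0(A),\R)_{T_f(A)}$; this proves (i). I would also note that $N$ is precisely the set of elements of $KK(A,\C(B))$ of the form $(r\cdot h,0)$ with $r\in[0,1]$, $h\in{\rm Hom}(K_0(A),\R)_{T_f(A)}$.

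Next I would prove (ii). For the ``only if'' direction: if $\tau:A\to\C(B)$ is an essential trivial extension, lift it to a monomorphism $H:A\to M(B)$ with $\pi\circ H=\tau$. Then $\tau_{*1}=0$ (as $K_1(M(B))=0$), so $KK(\tau)$ lies in the $\R$-summand, and $KK(\tau)=[\pi](KK(H))=[\pi](H_{*0})$. Now $H_{*0}\in{\rm Hom}(K_0(A),\R)$ is induced by the tracial state $t_W\circ H$ (using the trace on $M(B)$ coming from the unique trace $\tau_B$ of $B$, identified with its extension); writing $r=\|t_W\circ H\|\in(0,1]$ and normalizing, $t:=(t_W\circ H)/r\in T_f(A)$ and $H_{*0}=r\cdot r_A(t)$, so $KK(\tau)=[\pi](r\cdot r_A(t))\in N$. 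For the ``if'' direction: given $\tau$ with $KK(\tau)=[\pi](r\cdot r_A(t))$, $r\in(0,1]$, $t\in T_f(A)$ (the $r=0$ case gives $KK(\tau)=0$, handled by Theorem \ref{GMT}(3)), apply Lemma \ref{Lell-2} — noting ${\rm Cu}(B)=V(B)\sqcup{\rm LAff}_+(T(B))$ and $M_n(B)$ has almost stable rank one for $B\in\I$ — to obtain an embedding $\phi_A:A\to M(B)$ with $t_B\circ\phi_A=r\cdot t$; then $\pi\circ\phi_A$ is an essential trivial extension with $(\pi\circ\phi_A)_{*0}=[\pi](r\cdot r_A(t))=\tau_{*0}$ and $(\pi\circ\phi_A)_{*1}=0=\tau_{*1}$, so $KK(\pi\circ\phi_A)=KK(\tau)$, whence $\tau\sim^u\pi\circ\phi_A$ by Theorem \ref{GMT}(1), and $\tau$ is trivial. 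Finally, for the last sentence: if $\tau_1,\tau_2\in\T$ with $\Lambda([\tau_1])=\Lambda([\tau_2])$, then $KK(\tau_1)=KK(\tau_2)$, so $\tau_1\sim^u\tau_2$ by Theorem \ref{GMT}(1), i.e. $[\tau_1]=[\tau_2]$ in $\T$; hence $\Lambda|_\T$ is injective.

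I do not anticipate a genuinely hard step here — the theorem is essentially a bookkeeping consequence of Theorem \ref{GMT} plus the existence result Lemma \ref{Lell-2}. The one place requiring care is making the identification $KK(A,M(B))\cong{\rm Hom}(K_0(A),\R)$ and checking that under it $[\pi]$ becomes the coordinate inclusion $f\mapsto(f,0)$ rather than something twisted by the boundary maps of the six-term sequence; this is where the computation of $K_i(\C(B))$ in Theorem \ref{K-six} and the vanishing $\rho_B(K_0(B))=0$ are used, and it is worth spelling out the commuting square relating $\pi_{*}$ on $K_0$ to $[\pi]$ on $KK$ via the UCT natural transformation. A minor subtlety is the boundary-case $r=0$ versus $r\in(0,1]$ in the definition of $N$ (which uses the closed interval $[0,1]$), but this is absorbed by Theorem \ref{GMT}(3) which handles $KK(\tau)=0$. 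Everything else is immediate from the quoted results.
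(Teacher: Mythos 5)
Your overall route is the paper's: part (i) comes from the injectivity of $\pi_{*0}:K_0(M(B))=\R\to K_0(\C(B))=\R\oplus K_1(B)$ (which holds because $\rho_B(K_0(B))=0$), the ``only if'' half of (ii) comes from normalizing the trace $t_B\circ H$ of a lift $H$, the ``if'' half comes from Lemma \ref{Lell-2} plus Theorem \ref{GMT}(1)--(2), and the ``Moreover'' clause is immediate from Theorem \ref{GMT}(1). So the skeleton is correct and matches the paper.

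Two of your intermediate claims are, however, false or invalid as stated, and both touch exactly the subtlety the authors isolate in Remark \ref{Rfinal}. First, $KK(A,\C(B))$ is \emph{not} isomorphic to ${\rm Hom}(K_0(A),K_0(\C(B)))\oplus{\rm Hom}(K_1(A),K_1(\C(B)))$: divisibility kills the ${\rm ext}_{\mathbb Z}$ term for $KK(A,M(B))$, where $K_*(M(B))=(\R,0)$, but $K_*(\C(B))$ contains $K_0(B)$ and $K_1(B)$, which need not be divisible, so ${\rm ext}_{\mathbb Z}(K_*(A),K_{*-1}(\C(B)))$ can be nonzero. This does not damage (i), since injectivity of the composite $KK(A,M(B))\to KK(A,\C(B))\to{\rm Hom}(K_*(A),K_*(\C(B)))$ already forces $[\pi]$ to be injective; but it does matter for (ii). Second, in the ``if'' direction you deduce $KK(\pi\circ\phi_A)=KK(\tau)$ from the equalities $(\pi\circ\phi_A)_{*i}=\tau_{*i}$; because of the nonvanishing ${\rm ext}$ term just mentioned, equality of induced maps on $K_*$ does not give equality in $KK(A,\C(B))$. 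The correct (and still short) argument is the one the hypothesis hands you directly: $KK(\tau)=[\pi](r\cdot h)$ as a $KK$-class, $KK(\phi_A)=r\cdot h$ in $KK(A,M(B))\cong{\rm Hom}(K_0(A),\R)$ (here the identification \emph{is} valid), and functoriality gives $KK(\pi\circ\phi_A)=[\pi]\times KK(\phi_A)=[\pi](r\cdot h)=KK(\tau)$; then apply Theorem \ref{GMT}(1). With these two repairs your proof coincides with the paper's.
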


\begin{proof}
Fix $t\in T_f(A)$ and $r\in (0,1].$ By {{Lemma \ref{Lell-2},}}
there is a monomorphism
$\psi_{A, r}: A\to M(B)$ such that $t_B\circ \psi_{A,r}(a)={{r\cdot}} t(a)$
for all $a\in A.$ Thus $KK(\psi_{A,r})\in {\rm Hom}(K_0(A), \R)_{T_f}.$
Note that $K_0(\C(B))=\R\oplus K_1(B)$ and $[\pi]$ maps
injectively from ${\rm Hom}(K_0(A), \R)$ into ${\rm Hom}(K_0(A), \C(B))$
as $\pi_{*0}$ is injective from $\R$ into $\R\oplus K_1(B).$
This proves part (i).

For (ii), suppose that $\tau$ is an essential trivial extension.
Then there is a monomorphism $H: A\to M(B)$ such that
$\tau=\pi\circ H.$ Then $t_B\circ H$ is a bounded trace with $r:=\|t_B\circ H\|\le 1.$
Therefore $t_B\circ H=r\cdot t$ for some $t\in T_f(A).$
It follows that $KK(H)\in {\rm Hom}(K_0(A), \R)_{T_f}.$ Therefore
$\Lambda([\tau])\in [\pi]( {\rm Hom}(K_0(A), \R)_{T_f}).$

{{The converse}} follows {{from}} the proof of part (i).
To see this, let $\tau: A\to \C(B)$ an essential extension such that $KK(\tau)\in N.$
So there is $\lambda\in {\rm Hom}(K_0(A), \R)_{{{T_f(A)}}}$ such that $KK(\tau)=[\pi]\circ {{r\cdot }} \lambda$
{{for some $r\in [0,1].$}}
Let $\psi_{A,r}: A\to M(B)$ be constructed in the proof of part (i) so that $KK(\psi_{A,r})=\lambda.$
Then $KK(\pi\circ \psi_{A,r})=KK(\tau).$ By part (2) of  Theorem \ref{GMT},
$\tau\sim^u \pi\circ \psi_{A,r}.$

The ``Moreover" part  also follows from the part (2) of Theorem \ref{GMT}.

\end{proof}

\begin{rmk}\label{Rfinal}
{\rm {{Theorem \ref{LT}  uses $N$ to describe the trivial essential extensions under the assumption of this section (see also Theorem \ref{GMT}).
When ${\rm ker}\rho_{f,A}\not=K_0(A),$ $N\not=\{0\}.$ In fact there are uncountably many different 
elements in $N.$  Moreover, ${\mathcal T}$ is not a semigroup. 
One first note that,  for any  $\lambda\in {\rm Hom}(K_0(A), \R))_{T_f(A)}$ and $r\in (1, \infty),$ if 
$\tau$ is an essential extension with $KK(\tau)=[\pi]\circ (r\cdot \lambda),$ then $\tau$ is not a splitting extension,
since there is no \hm\, $H: A\to M(B)$ such that $H_{*0}=r\cdot \lambda.$
Suppose that $\lambda\in {\rm Hom}(K_0(A), \R)_{T_f(A)},$ $\tau\in {\mathcal T}$ such that
$\Lambda(\tau)=\lambda$ and $\tau_0\in {\mathcal T}$ is another essential trivial extension.
Then $\tau\dot{+} \tau_0\in \T$ if and only if $\Lambda([\tau_0])=0,$ namely, $\tau_0$ is 
a trivial diagonal extension.  This shows that ${\mathcal T}$ is not a semigroup.}}

{{Note that, by the UCT, there is a short exact sequence 
\beq
0\to {\rm ext}_{\mathbb Z}(K_*(A), K_{*-1}(B))\to KK(A, \C(B))\to {\rm Hom}(K_*(A), K_*(\C(B)))\to 0.
\eneq}}
Suppose $\tau$ is an essential extension with $\tau_{*1}=0$ and $\tau_{*0}\in {\rm Hom}(K_0(A),\R)_{T_f(A)}.$
One realizes that  $KK(\tau)$ may not be in $N.$

} 
\end{rmk}

\vspace{0.2in}

\noindent
Huaxin Lin: hlin@uoregon.edu.\\

\noindent
Ping Wong  Ng: ping.ng@louisiana.edu.

\end{document}